\theoremstyle{plain}
\newtheorem{thm}{Theorem}[section]
\newtheorem{lem}[thm]{Lemma}
\newtheorem{cor}[thm]{Corollary}
\theoremstyle{definition}
\newtheorem{defi}[thm]{Definition}
\theoremstyle{remark}
\newtheorem{rem}[thm]{Remark}
\renewcommand{\div}{\operatorname{div}}
\begin{document}

\title{Some infinite matrix analysis, a Trotter product formula for dissipative operators, and an algorithm for the incompressible Navier-Stokes equation}
\author{J\"org Kampen }
\maketitle

\begin{abstract}
We introduce a global scheme on the $n$-torus of a controlled auto-controlled incompressible Navier-Stokes equation in terms of a coupled controlled infinite ODE-system of Fourier-modes with smooth data. We construct a scheme of global approximations related to linear partial integro-differential equations in dual space which are uniformly bounded in dual Sobolev spaces with polynomially decaying modes. Global boundedness of solution can be proved using an auto-controlled form of the scheme with damping via a time dilatation transformation. The scheme is based on some infinite matrix algebra related to weakly singular integrals, and a Trotter-product formula for dissipative operators which leads to rigorous existence results and uniform global upper bounds. For data with polynomial decay of the modes (smooth data) global existence follows from the preservation of upper bounds at each time step, and a compactness property in strong dual Sobolev spaces (of polynomially decaying Fourier modes). The main difference to schemes considered in \cite{KAC,KNS,K3,KB1,KB2, KB3, KHyp} is that we have a priori estimates for solutions of the iterated global linear partial integro-differential equations. Furthermore, the external control function is optional, controls only the non-dissipative zero modes and is, hence, much simpler. The analysis of the scheme leads to an algorithmic scheme  of the Navier-Stokes equation on the torus for regular data based on the Trotter-type product formula for specific dissipative operators. The infinite systems are written also in a real sinus and cosinus basis for numerical purposes. 
The main intention here is to define an algorithm which uses the damping via dissipative modes and which converges in strong norms for arbitrary viscosity constants $\nu>0$. Finally, we discuss some notions of the concept of 'turbulence', which may be a concept to be described by dynamical properties, as has been suggested by other authors for a long time (cf. \cite{CFNT, FJRT,FJKT}). An appendix contains a detailed analysis of Euler type Trotter product schemes and criteria of convergence. In this latest version of the paper convergence is reconsidered in detail in the appendix.
\end{abstract}


2010 Mathematics Subject Classification. 35Q30, 76D03.
\section{Introduction}
The formal transformation of partial differential equations to infinite systems of ordinary differential equations via representations in a Fourier basis may be useful if the problem is posed on a torus. Formal representations of solutions are simplified at least if the equations are linear. Well, even nonlinear infinite ODE-representations may be useful in order to define solution schemes. However, in order to make sense of an exponential function of an infinite matrix applied to an infinite vector the sequence spaces involved have to be measured in rather strong norms. Schur \cite{S} may have been the first who provided necessary and sufficient conditions such that infinite linear transformations make sense. His criteria were in the sense of $l^1$-sequence spaces and are far to weak for our purposes here. However, dealing with the incompressible Navier-Stokes equation we shall assume that the initial vector-valued data function $\mathbf{h}=\left(h_1,\cdots ,h_n\right)^T$ satisfies
\begin{equation}
h_i\in C^{\infty}\left({\mathbb T}^n\right),
\end{equation}
where ${\mathbb T}^n$ denotes the $n$-torus (maybe the only flat compact manfold of practical interest when studying the Navier-Stokes equation).
In the following let ${\mathbb Z}$ be the set of integers, let ${\mathbb N}$ be the set of natural numbers including zero, let ${\mathbb R}$ denote the field of real numbers, and let $\alpha,\beta\in {\mathbb Z}^n$ denote multiindices of the set ${\mathbb Z}^n$ of $n$-tuples of integers. Differences of multiindices $\alpha-\beta\in {\mathbb Z}^n$ are built componentwise, and as usual for a multiindex $\gamma$ with nonnegative entries we denote $|\gamma|=\sum_{i=1}^n|\gamma_i|$. Consider a smooth function $\mathbf{h}=\left(h_1,\cdots ,h_n \right)^T$ with $h_i\in C^{\infty}\left({\mathbb T}^n_l\right)$, where ${\mathbb T}^n_l={\mathbb R}^n/l{\mathbb Z}^n$ is the torus of dimension $n$ and size $l>0$. Note that the smoothness of the functions $h_i$ means that there is a polynomial decay of the Fourier modes, i.e., for a torus of size $l=1$ for the modes of multivariate differentials of the function $h_i$ we have
\begin{equation}\label{alphamodes}
\begin{array}{ll}
D^{\gamma}h_{i\alpha}:=\int_{{\mathbb T}^n}D^{\gamma}_xh_i(x)\exp\left(-2\pi i\alpha x \right)dx\\
\\
=(-1)^{|\gamma|}\int_{{\mathbb T}^n}h_i(x)\Pi_{i=1}^n(-2\pi i\alpha_i)^{\gamma_i}\exp\left(-2\pi i\alpha x \right)dx\\
\\
=\Pi_{i=1}^n(2\pi i\alpha_i)^{\gamma_i}h_{i\alpha},
\end{array}
\end{equation}
where as usual for a multiindex $\gamma$ with nonnegative entries $\gamma_i\geq 0$ the symbol $D^{\gamma}_x$ denotes the multivariate derivative with respect to $x$ and of order $\gamma_i$ with respect to the $i$th component of $x$.
Here $h_{i\alpha}$ is the $\alpha$th Fourier mode of the function $h_i$, and $D^{\gamma}h_{i\alpha}$ is the $\alpha$th Fourier mode of the function $D^{\gamma}_xh_i$.
Since $D^{\gamma}_xh_i$ is smooth this function has a Fourier decomposition on ${\mathbb T}^n$.
 The sequence $\left( D^{\gamma}h_{i\alpha} \right)_{\alpha\in {\mathbb Z}^n}$ of the associated $\alpha$-modes exists in the space of square integrable sequences $l^2\left({\mathbb Z}^n\right)$. Hence, we have polynomial decay of the modes $h_{i\alpha}$ as $|\alpha|\uparrow \infty$. This polynomial decay is important in the following in order to make sense of certain matrix operations with certain multiindexed infinite matrices and multiplications of these matrices with infinite vectors, because we are interested in regular solutions. In the following we have in mind that a multiindexed vector
\begin{equation}
\mathbf{u}^F:=\left( u_{\alpha} \right)^T_{\alpha\in {\mathbb Z}^n} 
\end{equation}
(the superscript $T$ meaning 'transposed') with (possibly complex) constants $u_{\alpha}$ (although we consider only real solutions in this paper) which decay fast enough as $|\alpha|\uparrow \infty$ corresponds to a function
 \begin{equation}\label{uclass}
  u\in C^{\infty}\left({\mathbb T}^n_l\right), 
 \end{equation}
where
\begin{equation}
u(x):=\sum_{\alpha\in {\mathbb Z}^n}u_{\alpha}\exp{\left( \frac{2\pi i\alpha x}{l}\right) }.
\end{equation}
We are interested in real functions, and real Fourier systems will be considered along with complex Fourier systems. The advantage of complex Fourier systems is that the notation os more succinct, and they lead to real solutions as well. However, form the numerical point of view it may be an advantage to control the erroer on real spaces.   Therefore it make sense to deal with complex Fourier systems and compare them to real Fourier systems on an algorithmic or numerical level at decisive steps where in order to ensure that the approximative solution constructed is real or that the error of the approximative solution is real. More precisely, if the latter representation can be rewritten with real functions $\cos\left( \frac{2\pi i\alpha x}{l}\right)$ and $\sin\left( \frac{2\pi i\alpha x}{l}\right)$ and with real coefficients of these functions, then the function in (\ref{uclass}) has values in ${\mathbb R}$, and this is the case we have in mind in this paper. Later we shall write a system in  the basis
\begin{equation}\label{realbasis*}
\left\lbrace \sin\left(\frac{\pi \alpha x}{l}\right) \cos\left(\frac{\pi \alpha x}{l}\right) \right\rbrace_{\alpha\in{\mathbb N}^n},
\end{equation}
where ${\mathbb N}$ denotes the set of natural numbers including zero, i.e., we have $0\in {\mathbb N}$. We shall see that the analysis obtained in the complex notation can be transferred to real analysis with respect to the basis (\ref{realbasis*}).

We say that the infinite vector $\mathbf{u}^F$ is in the dual Sobolev space of order $s\in {\mathbb R}$, i.e.,
\begin{equation}\label{hs}
\mathbf{u}^F\in h^s\left({\mathbb Z}^n\right) 
\end{equation}
 in symbols, if the corresponding function $u$ defined in (\ref{uclass}) is in the Sobolev space $H^s\left({\mathbb T}^n_l\right)$ of order $s\in {\mathbb R}$. We may also define the dual space $h^s$ directly defining
 \begin{equation}
 \mathbf{u}^F\in h^s\left({\mathbb Z}^n\right)\leftrightarrow \sum_{\alpha \in{\mathbb Z}^n}|u_{\alpha}|^2\left\langle \alpha\right\rangle^{2s}< \infty, 
 \end{equation}
where
\begin{equation}
 \left\langle \alpha\right\rangle :=\left(1+|\alpha|^2 \right)^{1/2}. 
\end{equation}
The two definitions are clearly equivalent.
  
  Note that being an element of the function space in (\ref{hs}) for nonnegative $s$ means that we have some decay of the $\alpha$-modes. The less the order of deacy the less regularity we have. For example for constant $\alpha$-modes the corresponding function in classical $H^s$-space
 \begin{equation}
 \sum_{\alpha\in {\mathbb Z}^n}C\exp(\frac{2\pi i\alpha x}{l})
 \end{equation}
is a formal expression of $C$ times the $\delta$ distribution and we have
\begin{equation}
 \delta\in H^s~~\mbox{if}~~s<-\frac{n}{2}. 
\end{equation}
Now we may rephrase the ideas in \cite{KAC}, \cite{KNS}, \cite{K3}, \cite{KB2}, and \cite{KB3} in this context. We shall deviate from these schemes as we simplify the control function and define an iterated scheme of global equations corresponding to linear partial integro-differential equations approximating the (controlled) incompressible Navier Stokes equation. We also apply the auto-control transformation in order to sharpen analytical results and get global smooth solutions which are uniformly bounded in time. Note that the latter idea can be made consitent with efficiency if we consider stepsizes of size $0.5$ in the time dilatation transformation considered below, which then stretches a time interval $\left[0,0,5 \right]$ to a time interval of length $\left[0,\frac{1}{\sqrt{3}} \right]$ which is a small increment of amount in time compared to the advantage of a strong potential damping term stabilizing the scheme. Formally, the modes $(v_{i\alpha})_{\alpha\in {\mathbb Z}^n},~1\leq i\leq n$, of the velocity function $v_i,~1\leq i\leq n,$ of the incompressible Navier-Stokes equation satisfy the infinite ODE-system (derivation below)
\begin{equation}\label{navode200first}
\begin{array}{ll}
\frac{d v_{i\alpha}}{dt}=\sum_{j=1}^n\nu \left( -\frac{4\pi^2 \alpha_j^2}{l^2}\right)v_{i\alpha}
-\sum_{j=1}^n\sum_{\gamma \in {\mathbb Z}^n}\frac{2\pi i \gamma_j}{l}v_{j(\alpha-\gamma)}v_{i\gamma}\\
\\
+2\pi i\alpha_i1_{\left\lbrace \alpha\neq 0\right\rbrace}\frac{\sum_{j,k=1}^n\sum_{\gamma\in {\mathbb Z}^n}4\pi^2 \gamma_j(\alpha_k-\gamma_k)v_{j\gamma}v_{k(\alpha-\gamma)}}{\sum_{i=1}^n4\pi^2\alpha_i^2},
\end{array} 
\end{equation}
where the modes  $v_{i\alpha}$ depend on time $t$ and such that for all $1\leq i\leq n$ and all $\alpha\in {\mathbb Z}^n$ we have
\begin{equation}
v_{i\alpha}(0)=h_{i\alpha}.
\end{equation}
In the infinite system and for fixed $\alpha$ we call the equation in (\ref{navode200first}) with left side $\frac{d v_{i\alpha}}{dt}$ the $\alpha$-mode equation.
Some simple but important observations are in order here. First note that the damping
\begin{equation}
\sum_{j=1}^n\nu \left( -\frac{4\pi^2 \alpha_j^2}{l^2}\right)
\end{equation}
is not equal to zero unless $|\alpha|=\sum_{i=1}^n|\alpha_i|=0$. Hence we have damping except for the zero-modes $v_{i0}$ (where the subscript $0$ denotes the $n$-tuple of zeros). Second, note that the zero modes $v_{i0}$  contribute to the $\alpha$-mode equation for $\alpha\neq 0$ only via the convection terms. This is because for the second term on the right side of (\ref{navode200first}) only the term with $\gamma=\alpha$, i.e., the summand 
\begin{equation}
-\sum_{j=1}^n\frac{2\pi i \alpha_j}{l}v_{j0}v_{i\alpha}
\end{equation}
contains a zero mode. Third, note that the pressure term in (\ref{navode200first}) has no zero mode summands since
\begin{equation}
\begin{array}{ll}
2\pi i\alpha_i1_{\left\lbrace \alpha\neq 0\right\rbrace}\frac{\sum_{j,k=1}^n\sum_{\gamma\in {\mathbb Z}^n}4\pi^2 \gamma_j(\alpha_k-\gamma_k)v_{j\gamma}v_{k(\alpha-\gamma)}}{\sum_{i=1}^n4\pi^2\alpha_i^2}=\\
\\
2\pi i\alpha_i1_{\left\lbrace \alpha\neq 0\right\rbrace}\frac{\sum_{j,k=1}^n\sum_{\gamma\in {\mathbb Z}^n\setminus \left\lbrace \alpha,0\right\rbrace} 4\pi^2 \gamma_j(\alpha_k-\gamma_k)v_{j\gamma}v_{k(\alpha-\gamma)}}{\sum_{i=1}^n4\pi^2\alpha_i^2}.
\end{array}
\end{equation}
More precisely, note that the $0$-mode equation consists only of terms corresponding to convection terms, i.e., we have
\begin{equation}\label{zeromode}
\begin{array}{ll}
\frac{d v_{i0}}{dt}=
-\sum_{j=1}^n\sum_{\gamma \in {\mathbb Z}^n\setminus \left\lbrace 0\right\rbrace }\frac{2\pi i \gamma_j}{l}v_{j(-\gamma)}v_{i\gamma}.
\end{array} 
\end{equation}
Furthermore we observe that in (\ref{zeromode}) we have no zero modes on the right side but only non zero-modes of coupled equations. Furthermore all non-zero modes involve a damping term, because $\nu \left( -\frac{4\pi^2 \alpha_j^2}{l^2}\right)<0$ for $\nu>0$ and $|\alpha|\neq 0$. Note that this damping becomes stronger as the order of the modes $|\alpha|\neq 0$ increases - an important difference to the incompressible Euler equation, where we  have no viscosity damping indeed. This is also a first hint that it may useful to define a controlled incompressible Navier-Stokes equation on the torus where a simple control functions controls just the zero modes. In order to sharpen analytical results we shall consider extended auto-controlled schemes where we introduce a damping term via time dilatation similar as in \cite{KAC}. This is a time transformation at each time step, and as at each local time step we deal with time-dependent operators this additional idea fits quite well with the scheme on a local level. On a global time-level it guarantees the preservation of global upper bounds. 
This idea is implemented in local time where we may consider the coordinate transformation
\begin{equation}\label{timedil}
(\tau (t) ,x)=\left(\frac{t}{\sqrt{1-t^2}},x\right),
\end{equation}
which is a time dilatation effectively and leaves the spatial coordinates untouched.
Then on a time local level, i.e., for some parameter $\lambda >0$ and $t\in [0,1)$ the function $u_i,~1\leq i\leq n$ with 
\begin{equation}\label{uvlin}
\lambda(1+t)u_i(\tau,x)=v_i(t ,x),
\end{equation}
carries all information of the velocity function on this interval, and satisfies
\begin{equation}
\frac{\partial}{\partial t}v_i(t,x)=\lambda u_i(\tau,x)+\lambda(1+t)\frac{\partial}{\partial \tau}u_i(\tau,x)\frac{d \tau}{d t},
\end{equation}
where 
\begin{equation}
 \frac{d\tau}{dt}=\frac{1}{\sqrt{1-t^2}^3}.
\end{equation}
We shall choose $0<\lambda<1$ in order to make the nonlinear terms smaller on the time interval compared to the damping term (they get an additional factor $\lambda$. The price to pay are larger initial data but we shall observe that we can compensate this in an appropriate scheme. 
\begin{rem}
Alternatively, we can use localized transformations of the form
\begin{equation}\label{uvloc}
\lambda(1+(t-t_0))u^{t_0}_i(\tau,x)=v_i(t ,x),~\mu>0
\end{equation}
for $t_0\geq 0$ and where 
\begin{equation}\label{timedil}
(\tau (t) ,x)=\left(\frac{t-t_0}{\sqrt{1-(t-t_0)^2}},x\right),
\end{equation}
\begin{equation}
\frac{\partial}{\partial t}v_i(t,x)=\lambda u_i(\tau,x)+\lambda(1+(t-t_0))\frac{\partial}{\partial \tau}u_i(\tau,x)\frac{t-t_0}{\sqrt{1-(t-t_0)^2}^3}.
\end{equation}
Using this localized term we can avoid a weaker damping as time increases. In the following we get the equations for the transformations of the form (\ref{uvloc}) if we replace $t$ by $t-t_0$ in the coeffcients. 
\end{rem}

 We denote the inverse of $\tau(t)$ by $t(\tau)$. For the modes of $u_i,~1\leq i\leq n$ we get the equation   
\begin{equation}\label{navode200firsttimedil}
\begin{array}{ll}
\frac{d u_{i\alpha}}{d\tau}=\sqrt{1-t(\tau)^2}^3
\sum_{j=1}^n\nu \left( -\frac{4\pi \alpha_j^2}{l^2}\right)u_{i\alpha}-\\
\\
\lambda(1+t(\tau))\sqrt{1-t(\tau)^2}^3\sum_{j=1}^n\sum_{\gamma \in {\mathbb Z}^n}\frac{2\pi i \gamma_j}{l}u_{j(\alpha-\gamma)}u_{i\gamma}+\\
\\
\lambda(1+t(\tau))\sqrt{1-t(\tau)^2}^3\frac{2\pi i\alpha_i1_{\left\lbrace \alpha\neq 0\right\rbrace}\sum_{j,k=1}^n\sum_{\gamma\in {\mathbb Z}^n}4\pi^2 \gamma_j(\alpha_k-\gamma_k)u_{j\gamma}u_{k(\alpha-\gamma)}}{\sum_{i=1}^n4\pi^2\alpha_i^2}\\
\\
-\sqrt{1-t^2(\tau)}^3(1+t(\tau))^{-1}u_{i\alpha}.
\end{array} 
\end{equation}
Note that in the latter equation the nonlinear terms have the factor $\lambda$ (which may be chosen to be small) while the damping potential term has no factor $\rho$ and may dominate the nonlinear terms. Note that $\lambda>0$ is another scaling factor. Especially, in the transformation above the Laplacian has no coefficient $\lambda>0$. Using a time dilatation transformation at each time step it becomes easier to prove the existence of global upper bounds. On the other hand, used in the present form, at each time step we blow up a time step interval of size $1$ to size infinity, and this is certainly more interesting from an analytical than from a numerical or computational point of view. Well, in general it may be useful to have stability of computations and also in this situation it may be interesting to use time dilatation transformations in order to obtain stability via damping and pay the price of additional computation costs for this stability. So it seems prima facie. However on closer inspection we observe that we may use a time dilatation algorithm on a smaller interval (not on unit time intervals $[l-1,l]$ but on half unit time intervals $\left[l-1,l-\frac{1}{2}\right]$ for exmple and repeat the subscheme twice. The time step size does not increase very much then (by the nature of the time transformation) but we have a damping term now. We shall discuss this more closely in the section about algorithms.    

Next we make this idea of our schemes more precise by defining the schemes for computing the modes. We forget the time dilatation for a moment since this is a local operation and consider the global equation in time coordinates $t\geq 0$ again. For purposes of global existence it can make sense to start with the the multivariate Burgers equation, because we know that we have a unique global regular solution for this equation on the $n$-torus. As we shall see below in more detail starting with an assumed solution or the multivariate Burgers equation is also advantageous from an analytical point of view as we know the existences of regular solutions (polynomial decay of modes) and we have to take care only of the additional Leray projection term. On the other hand, and from a numerical point of view we have no explicit formula for solutions of multivariate Burgers equation (except in special cases) and, hence, in a numerical scheme we should better construct the solution. The existence of global solutions $u^{b}$ can be derived by the a priori estimates 
\begin{equation}\label{aprioriestburg}
\frac{\partial}{\partial t}\|u^b(t,.)\|_{H^s}\leq \|u^b(t,.)\|_{H^{s+1}}\sum_{i,j}\sum_{|\alpha|+|\beta|\leq s}\|D^{\alpha}u^b_iD^{\beta}u_j\|_{L^2}-2\|\nabla u^b\|^2_{H^s},
\end{equation}
which hold on the $n$-torus, or by the arguments which we discussed in \cite{KB1} and \cite{KB2}. In our context this means that for positive viscosity $\nu>0$ and smooth data (at least formally) we have a global regular solution for the system for $1\leq i\leq n$ and $\alpha \in {\mathbb Z}^n$ of the form
\begin{equation}\label{navode200b}
\begin{array}{ll}
\frac{d u^b_{i\alpha}}{dt}=\sum_{j=1}^n\nu \left( -\frac{4\pi \alpha_j^2}{l^2}\right)u^b_{i\alpha}
-\sum_{j=1}^n\sum_{\gamma \in {\mathbb Z}^n}\frac{2\pi i \gamma_j}{l}u^b_{j(\alpha-\gamma)}u^b_{i\gamma},
\end{array} 
\end{equation}
where $u^b_{i\alpha}$ depend on time $t$ and such that for all $1\leq i\leq n$ and all $\alpha\in {\mathbb Z}^n$ we have
\begin{equation}
u^b_{i\alpha}(0)=h_{i\alpha}.
\end{equation}
This suggests the following first approach concerning a (formal) scheme for the purpose of global existence.
We compute first $v^0_{i\alpha}=u_{i\alpha}$ for $1\leq i\leq n$ and $\alpha\in {\mathbb Z}^n$ and then iteratively
\begin{equation}
v^k_{i\alpha}:=v^0_{i\alpha}+\sum_{p=1}^k\delta v^p_{i\alpha},
\end{equation}
where $\delta v^p_{i\alpha}:=v^p_{i\alpha}-v^{p-1}_{i\alpha}$ for all $1\leq i\leq n$ and $\alpha\in {\mathbb Z}^n$ and for $p\geq 1$ we have 
\begin{equation}\label{navode200a}
\begin{array}{ll}
\frac{d v^p_{i\alpha}}{dt}=\sum_{j=1}^n\nu \left( -\frac{4\pi \alpha_j^2}{l^2}\right)v^p_{i\alpha}
-\sum_{j=1}^n\sum_{\gamma \in {\mathbb Z}^n}\frac{2\pi i \gamma_j}{l}v^{p}_{j(\alpha-\gamma)}v^p_{i\gamma}\\
\\
+2\pi i\alpha_i1_{\left\lbrace \alpha\neq 0\right\rbrace}\frac{\sum_{j,k=1}^n\sum_{\gamma\in {\mathbb Z}^n}4\pi \gamma_j(\alpha_k-\gamma_k)v^{p}_{j\gamma}v^{p-1}_{k(\alpha-\gamma)}}{\sum_{i=1}^n4\pi\alpha_i^2},
\end{array} 
\end{equation}
where $v_{i\alpha}$ depend on time $t$ and such that for all $1\leq i\leq n$ and all $\alpha\in {\mathbb Z}^n$ we have
\begin{equation}
v_{i\alpha}(0)=h_{i\alpha}.
\end{equation}
Alternatively, we may start with the initial data $h_i$ and their modes $h_{i\alpha}$ as first order coefficients of the first approximating equation. In this case we have for an iteration number $p\geq 0$ the approximation at the $p$th stage via the linear equation
\begin{equation}\label{navode200lin}
\begin{array}{ll}
\frac{d v^p_{i\alpha}}{dt}=\sum_{j=1}^n\nu \left( -\frac{4\pi \alpha_j^2}{l^2}\right)v^p_{i\alpha}
-\sum_{j=1}^n\sum_{\gamma \in {\mathbb Z}^n}\frac{2\pi i \gamma_j}{l}v^{p-1}_{j(\alpha-\gamma)}v^{p}_{i\gamma}\\
\\
+2\pi i\alpha_i1_{\left\lbrace \alpha\neq 0\right\rbrace}\frac{\sum_{j,k=1}^n\sum_{\gamma\in {\mathbb Z}^n}4\pi \gamma_j(\alpha_k-\gamma_k)v^{p}_{j\gamma}v^{p-1}_{k(\alpha-\gamma)}}{\sum_{i=1}^n4\pi\alpha_i^2},
\end{array} 
\end{equation}
where again for all $1\leq i\leq n$ and all $\alpha\in {\mathbb Z}^n$ we have
\begin{equation}
v_{i\alpha}(0)=h_{i\alpha}.
\end{equation}
For $p=0$ we then have $v^{p-1}_{j(\alpha-\gamma)}=v^{-1}_{j(\alpha-\gamma)}:=h_{j(\alpha-\gamma)}$.
The latter equation in (\ref{navode200lin}) still corresponds to a partial integro-differential equation. However, this scheme has the advantage that we can built an algorithm on it via a Trotter product formula for infinite matrices (because we know the data $h_{i\alpha}$ and for $p\geq 1$ the data $v^{p-1}_{i\alpha}$ from the previous iteration step).

\begin{rem}
The equation in (\ref{navode200lin}) corresponds to a linear integro-differential equation in classical space and differs in this respect from the approximations we considered in \cite{KB2} and \cite{KB3} and also in \cite{KHyp,K3,KNS} (although we mentioned this type of global scheme in \cite{KNS}). The analysis in all these papers was based on local equations because a priori estimates are easier at hand - as is the trick with the adjoint. In dual spaces of Fourier basis representation considered in this paper spatially global equations are easier to handle. We could also use the corresponding local equations. The corresponding system is
\begin{equation}\label{navode200loc}
\begin{array}{ll}
\frac{d v^p_{i\alpha}}{dt}=\sum_{j=1}^n\nu \left( -\frac{4\pi \alpha_j^2}{l^2}\right)v^p_{i\alpha}
-\sum_{j=1}^n\sum_{\gamma \in {\mathbb Z}^n}\frac{2\pi i \gamma_j}{l}v^{p-1}_{j(\alpha-\gamma)}v^p_{i\gamma}\\
\\
+2\pi i\alpha_i1_{\left\lbrace \alpha\neq 0\right\rbrace}\frac{\sum_{j,k=1}^n\sum_{\gamma\in {\mathbb Z}^n}4\pi \gamma_j(\alpha_k-\gamma_k)v^{p-1}_{j\gamma}v^{p-1}_{k(\alpha-\gamma)}}{\sum_{i=1}^n4\pi\alpha_i^2},
\end{array} 
\end{equation}
but there is no real advantage analyzing (\ref{navode200loc}) instead of (\ref{navode200lin}). Note that may even 'linearize' the Burgers term and substitute the Burgers term (\ref{navode200loc}) by the term $-\sum_{j=1}^n\sum_{\gamma \in {\mathbb Z}^n}\frac{2\pi i \gamma_j}{l}v^{p-1}_{j(\alpha-\gamma)}v^{p-1}_{i\gamma}$. We considered this in \cite{KB3} in classical spaces in order to avoid the use of the adjoint of fundamental solutions and work with estimates of convolutions directly. Again, there is no great advantage to do the same in dual spaces, and it is certainly not preferable from an algorithmic point of view. 
\end{rem}
Note that the 'global' term in (\ref{navode200a}) and (\ref{navode200lin}) is of the form
\begin{equation}
2\pi i\alpha_i1_{\left\lbrace \alpha\neq 0\right\rbrace}\frac{\sum_{j,k=1}^n\sum_{\gamma\in {\mathbb Z}^n}4\pi \gamma_j(\alpha_k-\gamma_k)v^{p}_{j\gamma}v^{p-1}_{k(\alpha-\gamma)}}{\sum_{i=1}^n4\pi\alpha_i^2},
\end{equation}
and it looks more like its 'local' companions (especially the Burgers term) in this discrete Fourier-based representation than is the case for representations in classical spaces.
Formally,
for all $1\leq i\leq n$ and $\alpha\in {\mathbb Z}^n$ and for $p\geq 1$ for $\delta v^p_{i\alpha}=v^p_{i\alpha}-v^{p-1}_{i\alpha},~\alpha\in {\mathbb Z}^n$ we have  
\begin{equation}\label{navode2000}
\begin{array}{ll}
\frac{d \delta v^p_{i\alpha}}{dt}=\sum_{j=1}^n\nu \left( -\frac{4\pi \alpha_j^2}{l^2}\right)\delta v^p_{i\alpha}
-\sum_{j=1}^n\sum_{\gamma \in {\mathbb Z}^n}\frac{2\pi i \gamma_j}{l}v^{p-1}_{j(\alpha-\gamma)}\delta v^p_{i\gamma}\\
\\
-\sum_{j=1}^n\sum_{\gamma \in {\mathbb Z}^n}\frac{2\pi i \gamma_j}{l}\delta v^{p-1}_{j(\alpha-\gamma)} v^{p-1}_{i\gamma}\\
\\
+2\pi i\alpha_i1_{\left\lbrace \alpha\neq 0\right\rbrace}\frac{\sum_{j,k=1}^n\sum_{\gamma\in {\mathbb Z}^n}4\pi \gamma_j(\alpha_k-\gamma_k)v^{p}_{j\gamma}v^{p-1}_{k(\alpha-\gamma)}}{\sum_{i=1}^n4\pi\alpha_i^2}\\
\\
-2\pi i\alpha_i1_{\left\lbrace \alpha\neq 0\right\rbrace}\frac{\sum_{j,k=1}^n\sum_{\gamma\in {\mathbb Z}^n}4\pi \gamma_j(\alpha_k-\gamma_k)v^{p-1}_{j\gamma}v^{p-2}_{k(\alpha-\gamma)}}{\sum_{i=1}^n4\pi\alpha_i^2},
\end{array} 
\end{equation}
where $v_{i\alpha}$ depend on time $t$ and such that for all $1\leq i\leq n$ and all $\alpha\in {\mathbb Z}^n$ we have
\begin{equation}
\delta v_{i\alpha}(0)=0.
\end{equation}
It is well-known that the viscosity parameter may be chosen arbitrarily. This fact facilitates the analysis a bit, but for the proof of global existence it is not essential. It is important that the viscosity is strictly positive ($\nu>0$). We need this for the Trotter product formula below, and without it the arguments breaks down. 

\begin{rem}
In order to have a certin contraction property of iterated weakly singular elliptic integrals for all modes and in order to have a stronger diffusion damping we
consider parameter transformations of the form
\begin{equation}
v_i(t,x)=r^{\lambda}v^*_i(r^{\nu_0}t,r^{\mu}x),~p(t,x)=r^{\delta}p^*(r^{\nu'}t,r^{\mu}x)
\end{equation}
long with $\tau=r^{\nu}t$ and $y=r^{\mu}x$ for some positive real number $r>0$ and some positive real parameters $\lambda,\mu,\nu_0,\nu'$. We have (using Einstein notation for spatial variables)
\begin{equation}
\begin{array}{ll}
\frac{\partial}{\partial t}v_i(t,x)=r^{\lambda+\nu_0}\frac{\partial}{\partial \tau}v^*_i(\tau,y),
~v_{i,j}(t,x)=r^{\lambda+\mu}v^*_{i,j}(\tau,y),\\
\\
v_{i,j,k}(t,x)=r^{\lambda+2\mu}\frac{\partial^2}{\partial x_j\partial x_k}v^*_i(\tau,y),~p_{,i}(t,x)=r^{\delta+\mu}p^*_{,i}(\tau,y).
\end{array}
\end{equation}
Hence 
\begin{equation}
r^{\lambda+\nu_0}\frac{\partial}{\partial \tau}v^*_{i}(\tau,y)=\nu r^{\lambda+2\mu}\Delta v^*_i(\tau,y)-\sum_{j=1}^nr^{2\lambda +\mu}v^*_j(\tau,y)\frac{\partial v^*_i}{\partial x_j}(\tau,y)-r^{\delta+\mu}p^*_{,i},
\end{equation}
or 
\begin{equation}
\begin{array}{ll}
\frac{\partial}{\partial \tau}v^*_{i}(\tau,y)=\nu r^{\lambda+2\mu-\lambda -\nu_0}\Delta v^*_i(\tau,y)\\
\\
-\sum_{j=1}^nr^{2\lambda +\mu-\lambda -\nu_0}v^*_j(\tau,y)\frac{\partial v^*_i}{\partial x_j}(\tau,y)-r^{\delta+\mu-\lambda -\nu_0}p^*_{,i},
\end{array}
\end{equation}
which (for example) for the parameter constellations 
\begin{equation}\label{paracon}
\mu,\nu,\lambda,\delta \mbox{ with }\lambda +\mu-\nu_0=0 \mbox{ and } \delta+\mu-\lambda-\nu_0=0
\end{equation}
becomes
\begin{equation}\label{v*eq}
\begin{array}{ll}
\frac{\partial}{\partial \tau}v^*_{i}(\tau,y)=\\
\\
\nu r^{2\mu-\nu_0}\Delta v^*_i(\tau,y)-\sum_{j=1}^nr^{\lambda +\mu-\nu_0}v^*_j(\tau,y)\frac{\partial v^*_i}{\partial x_j}(\tau,y)-r^{\delta+\mu-\lambda-\nu_0}p_{,i}\\
\\
=\nu r^{2\mu-\nu_0}\Delta v^*_i(\tau,y)-\sum_{j=1}^nv^*_j(\tau,y)\frac{\partial v^*_i}{\partial x_j}(\tau,y)-p^*_{,i}.
\end{array}
\end{equation}
Note that we may choose (for example) $2\mu-\nu'\in {\mathbb R}_+$ (${\mathbb R}_+$ being the set of strictly positive real numbers) freely and still satisfy the conditions (\ref{paracon}). Note that we can take $\nu'=\nu_0=\nu$. Especially, we are free to assume any viscosity $\nu>0$. Note that the Leray projection term is determined via the relation $\Delta p^*=\sum_{j,m}(v^*_{j,m}v^*_{m,j})$, and for some parmeter constellations it gets the same parameter coeffcient as the the burgers term (cf. Appendix).  Furthermore, choosing $\lambda=\delta=0$ and $\mu<\nu_0<2\mu$ with $\mu$ large viscosity damping coefficient becomes large compared to the parmenter coefficients of the nonlinear terms as we get the equation
\begin{equation}\label{v*eq}
\begin{array}{ll}
\frac{\partial}{\partial \tau}v^*_{i}(\tau,y)=\\
\\
\nu r^{2\mu-\nu_0}\Delta v^*_i(\tau,y)-\sum_{j=1}^nr^{ +\mu-\nu_0}v^*_j(\tau,y)\frac{\partial v^*_i}{\partial x_j}(\tau,y)-r^{\mu-\nu_0}p_{,i}.
\end{array}
\end{equation}
Indeed for large $\mu$ we have a strong viscosity damping and small coefficients of the Burgers and the Leray projection term. Especially for large $\mu$ in this parameter constellation we have a contraction property of elliptic integrals which serve as natural upper bounds of the nonlinear growth terms. For example, in  natural iteration schemes we may use the relation  
\begin{equation}\label{lerayell}
\sum_{\beta\in {\mathbb Z}^n\setminus \left\lbrace 0,\alpha\right\rbrace }\frac{C}{|\alpha-\beta|^{s}}\frac{C}{\beta^r}\leq \frac{cC^2}{1+|\alpha|^{r+s-n}}
\end{equation}
in order to estimate the growth contribution of the Leray projection term at one time step, where for $r\geq n$ and $s\geq n+1$ we have $r+s-n\geq n+2$. The $c$ in (\ref{lerayell}) depends only on the dimension, in with $\mu$ large enough we obatin a contraction property for all modes $\alpha$ different from $0$.
For the numerical and computational analysis it is useful to refine this a bit and consider in addition certain  bi-parameter transformation with respect to the viscosity constant $\nu >0$ and with respect to the diameter $l$ of the $n$-torus.
From an analytic perspective we may say that it is sufficient to prove global existence for specific parameters $\nu>0$ and $l>0$. Let us consider coordinate transformations with parameter $l>0$ first.   
 If $\mathbf{v}^1, p^1$ is solution on the domain $[0,\infty)\times {\mathbb T}^n_1$, then the function pair $(t,x)\rightarrow \mathbf{v}^l(t,x)$, and $(t,x)\rightarrow p^l(t,x)$  on $[0,\infty)\times {\mathbb T}^n_l$ along with
\begin{equation}
\mathbf{v}^l(t,x):=\frac{1}{l}\mathbf{v}^1\left(\frac{t}{l^2},\frac{x}{l} \right), 
\end{equation}
and
\begin{equation}
p^l(t,x):=\frac{1}{l^2}p^1\left(\frac{t}{l^2},\frac{x}{l} \right) 
\end{equation}
is a solution pair on the $n$-torus of size $l>0$ with initial data
\begin{equation}\label{factorl}
\mathbf{h}^l(x):=\frac{1}{l}\mathbf{h}^1(\frac{x}{l})~\mbox{on}~{\mathbb T}^n_l.
\end{equation}
Hence if we can construct solutions to the Navier-Stokes solution for $h_l\in C^{\infty}\left({\mathbb T}^n_l\right)$ without further restrictions on the data $\mathbf{h}_l$, then we can construct global solutions for the problem in ${\mathbb T}^n_1$ with  $\mathbf{h}=\mathbf{h}_1\in C^{\infty}\in \left({\mathbb T}^n\right)_1$ with factor $l$ as in (\ref{factorl}). We only need to produce a fixed number $l>0$ such that arbitrary data are allowed. Second if $v^{\nu}_i,~1\leq i\leq n$ is a solution to the incompressible Navier Stokes equation with parameter $\nu>0$ then via the time transformation
\begin{equation}
v_i(t,x):=(r\nu )^{-1}v^{\nu}_i((r\nu)^{-1}t,x),~p(t,x):=(r\nu)^{-2}p^{\nu}((r\nu)^{-1}t,x)
\end{equation}
(for all $t\geq 0$) we observe that $v_i$ is solution of the incompressible Navier-Stokes equation with viscosity parameter which may be chosen.  Concatenation of the previous transformations shows that we can indeed choose specific values for $\nu>0$ and $l>0$, and this may be useful for designing algorithms. For example, it is useful to observe that in (\ref{navode2000}) the modulus of the diagonal coefficients
\begin{equation}\label{dampterm}
\sum_{j=1}^n\nu \left( -\frac{4\pi \alpha_j^2}{l^2}\right)
\end{equation}
becomes large for $|\alpha|\neq 0$ if $\nu$ is large or $l$ is small in comparison to the other terms of the iteration in (\ref{navode2000}), but it is even more intersting that we may choose $\nu$ large and $l>0$ large on a scale such that the convection terms are small in comparison to the diagonal damping terms in (\ref{dampterm}). 
\end{rem}  
As we observed in other articles for the time-local solution of the Navier-Stokes equation we may set up a scheme involving simple scalar linear parabolic equations of the form (\ref{parascalar}) below  (cf. \cite{KAC, KNS, K3,KB1,KB2,KB3, KHyp}). We maintained that in classical space a global controlled scheme for an equivalent equation can be obtained  if in a time-local scheme the  approximate functions  in a functional series (evaluated at arbitrary time $t$) inherit the property of being in $C^k\cap H^k$ for $k\geq 2$ together with polynomial decay of order $k$. This is remarkable since you will not expect this from standard local a priori estimates of the Gaussian. For the equivalent controlled scheme in \cite{KB3} we only needed to prove this for the higher order correction terms of the time-local scheme. There it is true for some order of decay because the representations involve convolution integrals with products of approximative value functions where each factor is inductively of polynomial decay. The growth of the Leray projection term for the scheme in \cite{KB3} is linearly bounded on a certain time scale. In \cite{KAC, KNS, K3, KHyp} we proposed auto-controlled schemes and more complicated equivalent controlled schemes with a bounded regular control function which controls the growth of the controlled solution function such that solutions turn out to be uniformly bounded for all time. 
Actually, the simple schemes may be reconsidered in terms of the Trotter product formula representations of solutions to scalar parabolic equations of the form 
\begin{equation}\label{parascalar}
\frac{\partial u}{\partial t}-\nu \Delta u+Wu=g,
\end{equation}
with initial data $u(0,x)=f\in \cap_{s\in {\mathbb R}^n}H^s$ and some dynamically generated source terms $g$, and where $W=\sum_{i=1}^nw_i(x)\frac{\partial}{\partial x_i}$ is a vector field with bounded and uniformly Lipschitz continuous coefficients. The $w_i$ then are replaced by approximative value function components at each iteration step of course. 
Local application of the Trotter product formula leads to representations of the from
\begin{equation}\label{trottappl}
\exp\left(t\left(\nu \Delta +W \right)  \right)f=\lim_{k\uparrow \infty}\left(\exp\left(\frac{t}{k}W\right)\exp\left(\frac{t}{k}\nu\Delta\right)\right)^kf.
\end{equation}
Then in a second step having obtained time-local representations of solutions to the incompressible Navier-Stokes equation one can use this reduction from a system to a scalar level and apply the Trotter product formula on a global time scale. Time dilatation transformations as in \cite{KAC} may be used to prove the existence of upper bounds.  From this point of view the local contraction results considered in \cite{KB3} or in \cite{KB1, KB2, KHyp} (with the use of adjoint equations) are essential. 
In dual spaces the use of a Trotter product formula seems to be not only useful but mandatory in two respects. First, consider the infinite matrix
\begin{equation}\label{damptermmatrix}
D:=\left(d_{\alpha\beta} \right)_{\alpha,\beta\in {\mathbb Z}^n}
:=\left( \delta_{\alpha\beta}\sum_{j=1}^n\nu \left( -\frac{4\pi \alpha_j^2}{l^2}\right)\right)_{\alpha,\beta\in {\mathbb Z}^n}
\end{equation}
with the infinite Kronecker delta function $\delta_{\alpha\beta}$, i.e,
\begin{equation}
\delta_{\alpha\beta}=\left\lbrace \begin{array}{ll}
1\mbox{ if }\alpha=\beta\\
\\
0 \mbox{ if }\alpha\neq \beta.
\end{array}\right.
\end{equation}

If we measure regularity with respect to the degree of decay of entries as the order of the modes increases, then this matrix lives in a space of rather weak regularity. Worse then this, iterations of the matrix, i.e., matrices of the form
\begin{equation}
\begin{array}{ll}
D^2:=DD=:\left( d^{(2)}_{\alpha\beta}\right)_{\alpha,\beta\in {\mathbb Z}^n}:=\left(\sum_{\gamma\in {\mathbb Z}^n}d_{\alpha\gamma}d_{\gamma\beta} \right)_{\alpha,\beta\in {\mathbb Z}^n}\\
\\
D^m:=DD^{m-1}=:\left( d^{(m)}_{\alpha\beta}\right)_{\alpha,\beta\in {\mathbb Z}^n}:=\left(\sum_{\gamma\in {\mathbb Z}^n}d_{\alpha\gamma}d^{(m-1)}_{\gamma\beta} \right)_{\alpha,\beta\in {\mathbb Z}^n}
\end{array}
\end{equation}
 live in matrix spaces of lower and lower regularity as $m$ increases (if we measure regularity in relation to decay with respect to the order of modes). However, since the matrix $D$ has a minus sign we may make sense of the matrix
\begin{equation}\label{expD}
\exp(D)=\left(\delta_{\alpha\beta}\exp\left(d_{\alpha\alpha}\right)\right)_{\alpha,\beta\in {\mathbb Z}^n}=\left(\delta_{\alpha\beta}\exp\left(\sum_{j=1}^n\nu \left( -\frac{4\pi \alpha_j^2}{l^2}\right)\right)\right)_{\alpha,\beta\in {\mathbb Z}^n},
\end{equation}
and this matrix makes perfect sense in terms of the type of regularity mentioned, i.e., the type of regularity expressed by dual Sobolev spaces which measure the order of polynomial decay of the modes. Since we consider infinite ODEs involving a Leray projection term we shall have correlations between the components $1\leq i\leq n$ even at each approximation step. Hence matrices which correspond to linear approximations of the Navier Stokes equations will involve big diagonal matrices of the form
\begin{equation}
\left(\delta_{ij}\exp(D)\right)_{1\leq i,j\leq n},
\end{equation}
where $\delta_{ij}$ denotes the usual Kronecker $\delta$ for $1\leq i,j\leq n$ and $\exp(D)$ is as in (\ref{expD}) above. However, the idea that such a dissipative matrix lives in a regular ($\equiv$'polynomially decaying modes as the order of modes increases') infinite matrix space is the same. So much for the diffusion term. Note that we have some linear growth with respect to the modes of a matrix related to the convection term, and there may be some concern that iteration of this matrix lead to divergences. However, the initial data at each time step have sufficient decay such that they can serve as analytic vectors, and weakly elliptic integrals will show that these decay properties are preserved by the scheme. If the data at each substep are sufficiently regular (where 'sufficiency' may be determined by upper bounds obtained via  weakly singular elliptic integrals, cf. below), then on a time local level algorithms via Trotter product formula approximate limits of alternate local solutions on $[t_0,t_1]\times h^s\left({\mathbb Z}^n\right)$ of subproblems of the form 
\begin{equation}\label{navode200locsub1}
\left\lbrace \begin{array}{ll}
\frac{d v^p_{0i\alpha}}{dt}=\sum_{j=1}^n\nu \left( -\frac{4\pi \alpha_j^2}{l^2}\right)v^p_{0i\alpha},\\
\\
v^p_{0i\alpha}(t_0)=v^p_{i\alpha}(t_0),~\alpha\in {\mathbb Z}^n
\end{array} 
\right.
\end{equation}
and
\begin{equation}\label{navode200locsub2}
\left\lbrace \begin{array}{ll}
\frac{d v^p_{1i\alpha}}{dt}=
-\sum_{j=1}^n\sum_{\gamma \in {\mathbb Z}^n}\frac{2\pi i \gamma_j}{l}v^{p-1}_{1j(\alpha-\gamma)}v^p_{1i\gamma}\\
\\
+2\pi i\alpha_i1_{\left\lbrace \alpha\neq 0\right\rbrace}\frac{\sum_{j,k=1}^n\sum_{\gamma\in {\mathbb Z}^n}4\pi \gamma_j(\alpha_k-\gamma_k)v^{p}_{1j\gamma}v^{p-1}_{1k(\alpha-\gamma)}}{\sum_{i=1}^n4\pi\alpha_i^2},\\
\\
v^p_{1i\alpha}(t_0)=v_{i\alpha}(t_0),~\alpha\in {\mathbb Z}^n,
\end{array}\right.
\end{equation}
and where for $p=1$ we may have $v^{p-1}_{i\alpha}=v^0_{\alpha}=v_{i\alpha}(t_0)$. Here, for $\alpha\in {\mathbb Z}^n$ and $1\leq i\leq n$ the time dependent functions $v_{i\alpha}$ are the modes of the vector $v_{i}$ which is assumed to be known for time $t_0$ (by a previous time step or by the initial data at time $t_0=0$.
If we can ensure that a certain regularity of the data $v^p_{i\alpha}(t_0)$, say $ v_{i}(t_0) \in h^s\left({\mathbb Z}^n\right) $ for $s>n+2$ then we shall observe that $v^p_{i}(t)\in h^s\left({\mathbb Z}^n\right) $ for some $s>n+2$ and time $t\in [t_0,t]$. The inheritage of this regularity then leads to a sequence of solutions $v^p_{i}\in h^s\left({\mathbb Z}^n\right)$ with $s>n+2$, where the limit solves the local incompressible Navier Stokes equation. Writing (\ref{navode200locsub2}) in the form
\begin{equation}\label{navode200locsub2mat}
\left\lbrace \begin{array}{ll}
\frac{d \mathbf{v}^p_{1}}{dt}=B\mathbf{v}^p_1\\
\\
\mathbf{v}^p_{1}(t_0)=\mathbf{v}(t_0),
\end{array}\right.
\end{equation}
where $\mathbf{v}^p_1=\left(v^p_{11},\cdots, v^p_{1n} \right)^T$ and $\mathbf{v}(t_0)=(v_1(t_0),\cdots,v_n(t_0))^T$ are lists of infinite vectors and $B$ is a $n{\mathbb Z}^n\times n{\mathbb Z}^n$ matrix such that the system  (\ref{navode200locsub2mat}) is equivalent to the system in (\ref{navode200locsub2}). Note that the matrix $B$ depends only on data which are known from the previous iteration step $p-1$.
For this subproblem the local solution can be represented in the form
\begin{equation}\label{dysonobs}
\begin{array}{ll}
\mathbf{v}^p_1=v(t_0)+\\
\\
\sum_{m=1}^{\infty}\frac{1}{m!}\int_0^tds_1\int_0^tds_2\cdots \int_0^tds_m T_m\left(B(t_1)B(t_2)\cdot \cdots \cdot B(t_m) \right)\mathbf{v}(t_0),
\end{array} 
\end{equation}
where for $m=2$ we define
\begin{equation}
T_2\left(B(t_1)B(t_2)\right)=\left\lbrace \begin{array}{ll}
B(t_1)B(t_2)~~\mbox{ if }~t_1\geq t_2\\
\\
B(t_2)B(t_1)~~\mbox{ if }~t_2>t_1,
\end{array}\right.
\end{equation}
and for $m>2$ the time order operator $T_m$ may be defined recursively. Let ${\cal T}_m:\left\lbrace t_1,\cdots,t_m|t_i\geq 0~\mbox{for}~1\leq i\leq m\right\rbrace$.
Having defined $T_m$ for some $m\geq 2$ for $m+1\geq 3$ we first define
\begin{equation}
T_{m+1,\leq t_{m+1}}\left(B(t_1)B(t_2)\cdot \cdots \cdot B(t_{m+1})\right)=T_k\left(B_{t_{k_1}},\cdots ,B_{t_{k_j}} \right), 
\end{equation}
where $\left\lbrace t_{k_1},\cdots ,t_{k_j}\right\rbrace:=\left\lbrace t_i \in {\cal T}_m|t_i\leq t_m~\mbox{and}~i\neq m\right\rbrace $, and
\begin{equation}
T_{m+1,>t_{m+1}}\left(B(t_1)B(t_2)\cdot \cdots \cdot B(t_{m+1})\right)=T_l\left(B_{t_{l_1}},\cdots ,B_{t_{l_i}} \right), 
\end{equation}
where $\left\lbrace t_{l_1},\cdots ,t_{l_i}\right\rbrace:=\left\lbrace t_p \in {\cal T}_m|t_i> t_m\right\rbrace $. Then
\begin{equation}
\begin{array}{ll}
T_{m+1}\left(B(t_1)B(t_2)\cdot \cdots\cdot B(t_{m+1)} \right)\\
\\
=T_{m+1,\leq t_{m+1}}\left(B(t_1)B(t_2)\cdot \cdots \cdot B(t_{m+1})\right)\times\\
\\
\times B(t_{m+1}) T_{m+1,>t_{m+1}}\left(B(t_1)B(t_2)\cdot \cdots \cdot B(t_{m+1})\right) .
\end{array}
\end{equation}

We shall observe that for data $v_i(t_0)\in h^s\left({\mathbb Z}^n\right)$ for $s>n+2$ and $1\leq i\leq n$ a natural iteration scheme preserves regularity in the sense that $v^p_{1i}\in h^s\left({\mathbb Z}^n\right)$ for all $1\leq i\leq n$. and $p\geq 1$. Next we define some more basic parts of a natural time-local solution scheme.
Consider the first step in our scheme for the vectors $\left( v^p_{i\alpha}\right)_{\alpha\in {\mathbb Z}^n}$. For $p=0$ the modes $\left( v^0_{i\alpha}\right)_{\alpha\in {\mathbb Z}^n}$ are equal to the modes of the (auto-)controlled scheme we describe below and the corresponding equation can be written in the form 
\begin{equation}\label{navode200linstage0}
\begin{array}{ll}
\frac{d v^0_{i\alpha}}{dt}=\sum_{j=1}^n\nu \left( -\frac{4\pi \alpha_j^2}{l^2}\right)v^0_{i\alpha}
-\sum_{j=1}^n\sum_{\gamma \in {\mathbb Z}^n}\frac{2\pi i \gamma_j}{l}h_{j(\alpha-\gamma)}v^0_{i\gamma}\\
\\
+2\pi i\alpha_i1_{\left\lbrace \alpha\neq 0\right\rbrace}\frac{\sum_{j,k=1}^n\sum_{\gamma\in {\mathbb Z}^n}4\pi \gamma_j(\alpha_k-\gamma_k)v^{0}_{j\gamma}h_{k(\alpha-\gamma)}}{\sum_{i=1}^n4\pi\alpha_i^2}.
\end{array} 
\end{equation}
If we consider the $n$-tuple $\mathbf{v}^F=\left( \mathbf{v}^F_1,\cdots,\mathbf{v}^F_n\right)^T$ as an infinite vector with $\mathbf{v}^F_i:=\left(v_{i\alpha}\right)_{\alpha\in {\mathbb Z}^n}$, then with the usual identifications 
the equation (\ref{navode200linstage0}) is equivalent to an infinite linear ODE
\begin{equation}
\frac{d \mathbf{v}^{0,F}}{dt}=A_0\mathbf{v}^{0,F},
\end{equation}
where the matrix $A_0$ is implicitly defined by  (\ref{navode200linstage0}) and will be given explicitly in our more detailed description below. Together with the initial data 
\begin{equation}
\mathbf{v}^{0,F}(0)=\mathbf{h}^F(0)
\end{equation}
 this is an equivalent formulation of the equation for the first step of our scheme. We shall define a dissipative diagonal matrix $D_0$ and a matrix $B_0$ related to the convection and Leray projection terms such that $A_0=D_0+B_0$, and prove a Trotter product formula which allows us to make sense of the formal solution
\begin{equation}
\begin{array}{ll}
\mathbf{v}^{0,F}(t)=\exp(A_0t)\mathbf{h}^F\\
\\
:=\lim_{l\uparrow \infty}\lim_{k\uparrow \infty}\left( \exp\left(P_{M^l}(D_0)\frac{t}{k}\right)\exp\left(P_{M^l}B_0\frac{t}{k} \right)\right)^k\mathbf{h}^F.
\end{array}
\end{equation}
Here $P_{M^l}$ denotes a projection to the finite modes of order less or equal to $l>0$. Here and in the following we may consider some order of the multiindices and assume that this order is preserved by the projection operators.
Note that at this first stage the modes $h_{i\alpha}$ are not time-dependent. Hence $A_0$ is defined as a matrix which is independent of time, and we have no need of a Dyson formalism at this stage. 
For the higher stages of approximation we have to deal with time dependence of the related infinite matrices $A_p$ which define the infinite ODEs at iteration step $p\geq 0$ for the modes $v^{p}_{i\alpha}$ and $v^{r,p}_{i\alpha}$ in the presence of a control function $r$. 
The formula (\ref{trottappl}) makes clear why strong contraction estimates of the time-local expressions
\begin{equation}\label{timeloc}
\exp\left(\frac{t}{k}W\right)\exp\left(\frac{t}{k}\nu\Delta\right)f
\end{equation}
are important. As we said, we pointed out this in \cite{KB2} and \cite{KB3} from a different point of view. In dual spaces we may approximate such expressions via matrix equations of finite modes (projections of expressions of the form (\ref{timeloc}) to approximating equations of finite modes).  
For the finite mode approximation of the first approximation sequence $v^0_{i\alpha},~\alpha\in {\mathbb Z}^n$ first order coefficients are time independent and we may use the Baker-Campbell-Hausdorff formula for finite  matrices $A$ and $B$ of the form
\begin{equation}\label{BCH}
 \exp(A)\exp(B)=\exp(C),
\end{equation}
where
\begin{equation}\label{Ceq}
\begin{array}{ll}
C=A+B+\frac{1}{2}\left[A,B\right]+\frac{1}{12}\left[A,\left[A,B\right]\right]+\frac{1}{12}\left[\left[A,B\right],B\right]\\
\\
+\mbox{ higher order terms}.
\end{array}
\end{equation}
Here $\left[.,. \right]$ denote Lie brackets and the expression 'higher order terms' refers to all terms of multiple commutators of $A$ and $B$. We shall see that for dissipative operators as in the case of the incompressible Navier-Stokes equation and its linear approximations we can prove extensions of the formula in (\ref{BCH}), or  we can apply the formula in (\ref{BCH}) considering limits to infinite matrices applied to infinite vectors with polynomial decay of some order, where we restrict our investigation to the special cases which fit for the analysis of some infinite linear ODEs approximating the incompressible Navier-Stokes equation written in dual space. Note that for higher order approximations $v^p_{i\alpha},~\alpha\in {\mathbb Z}^n,~1\leq i\leq n$ with $p\geq 1$ we have time dependence of the coefficients and this means that we have to apply a time order operator as in  Dyson's formalism in order to solve the related linear approximating equations formally. Using our observations on well defined infinite matrix operations for function spaces with appropriate polynomial decay these formal Dyson formalism solutions can be justified rigorously. Note that (\ref{Ceq}) is closely connected to the H\"{o}rmander condition, and this was one of the indicators which lead to the expectation in \cite{K3} that global smooth existence is true if the H\"{o}rmander condition is satisfied. We considered this \cite{KHyp}.

In this paper we shall see that we can simplify the schemes formulated in classical spaces in our formulation on dual spaces in some respects. The first simplification is that we may define an iteration scheme on a global time scale, where the growth of the scheme may be estimated via an autocontrolled subscheme, i.e., a scheme where damping potential terms are introduced via time dilatation. The second simplification is that the estimates in dual spaces become estimates of discrete infinite sums which can be done on a very elementary level. Furthermore, it is useful to have a control function which ensures that the scheme for the controlled equation is a scheme for non-zero modes (ensuring that the damping factors associated with strictly positive viscosity or dissipative effects are active for all modes of the controlled scheme). For numerical and analytical purposes we may choose specific $\nu$ and $l$ and define a controlled scheme for a controlled incompressible Navier-Stokes equation such that the diagonal matrix elements corresponding to the Laplacian terms become dominant and such that there exists a global iteration in an appropriate function space. For analytical purposes it is useful to estimate the growth via comparison with a time dilated systems iteratively and locally in time. The control function is a scalar univariate function and is much simpler than the control functions considered in \cite{KNS} and \cite{K3}. Indeed, the control function in the present paper will only control the zero modes $v_{i0}$ of the value function modes. 

Next we define a controlled scheme and an extended controlled scheme with an autocontrol. We may start with one of the two possibilities mentioned above. We may start with the solution scheme for the multivariate Burgers equation, which is given in dual representation by the infinite ODE
\begin{equation}\label{navode200**}
\begin{array}{ll}
\frac{d u_{i\alpha}}{dt}=\sum_{j=1}^n\nu \left( -\frac{4\pi \alpha_j^2}{l^2}\right)u_{i\alpha}
-\sum_{j=1}^n\sum_{\gamma \in {\mathbb Z}^n}\frac{2\pi i \gamma_j}{l}u_{j(\alpha-\gamma)}u_{i\gamma},
\end{array} 
\end{equation}
where the modes $u_{i\alpha}$ depend on time $t$ and such that for all $1\leq i\leq n$ and all $\alpha\in {\mathbb Z}^n$ we have
\begin{equation}
u_{i\alpha}(0)=h_{i\alpha}.
\end{equation}
This is a slight variation of the linearized scheme mentioned above. Indeed for algorithmic purposes we may start with a linearization of (\ref{navode200**}) and call this $u_{i\alpha}$ as well. It does not really matter for analytical purposes such as existence and regularity. Anyway, we may define $v^{0}_{i\alpha}=u_{i\alpha}$ for $1\leq i\leq n$ and $\alpha\in {\mathbb Z}^n$ and then iteratively
\begin{equation}
v^{k}_{i\alpha}:=v^{0}_{i\alpha}+\sum_{p=1}^k\delta v^{p}_{i\alpha},
\end{equation}
where for $p\geq 1$ we define $\delta v^{p}_{i\alpha}:=v^{p}_{i\alpha}-v^{(p-1)}_{i\alpha}$ for all $1\leq i\leq n$ and $\alpha\in {\mathbb Z}^n$ such that
\begin{equation}\label{navode20001}
\begin{array}{ll}
\frac{d \delta v^p_{i\alpha}}{dt}=\sum_{j=1}^n\nu \left( -\frac{4\pi \alpha_j^2}{l^2}\right)\delta v^p_{i\alpha}
-\sum_{j=1}^n\sum_{\gamma \in {\mathbb Z}^n\setminus \left\lbrace \alpha\right\rbrace }\frac{2\pi i \gamma_j}{l}v^{p-1}_{j(\alpha-\gamma)}\delta v^p_{i\gamma}\\
\\
-\sum_{j=1}^n\frac{2\pi i \alpha_j}{l}v^{p-1}_{j0}\delta v^p_{i\alpha}
-\sum_{j=1}^n\sum_{\gamma \in {\mathbb Z}^n\setminus \left\lbrace \alpha\right\rbrace}\frac{2\pi i \gamma_j}{l}\delta v^{p-1}_{j(\alpha-\gamma)} v^{p-1}_{i\gamma}\\
\\
-\sum_{j=1}^n\frac{2\pi i \alpha_j}{l}\delta v^{p-1}_{j0} v^{p-1}_{i\alpha}\\
\\
+2\pi i\alpha_i1_{\left\lbrace \alpha\neq 0\right\rbrace}\frac{\sum_{j,k=1}^n\sum_{\gamma\in {\mathbb Z}^n}4\pi \gamma_j(\alpha_k-\gamma_k) v^{p-1}_{k(\alpha-\gamma)}\delta v^{p}_{j\gamma}}{\sum_{i=1}^n4\pi\alpha_i^2}\\
\\
+2\pi i\alpha_i1_{\left\lbrace \alpha\neq 0\right\rbrace}\frac{\sum_{j,k=1}^n\sum_{\gamma\in {\mathbb Z}^n}4\pi \gamma_j(\alpha_k-\gamma_k)\delta v^{p-1}_{k(\alpha-\gamma)} v^{p-1}_{j\gamma}}{\sum_{i=1}^n4\pi\alpha_i^2}
\end{array} 
\end{equation}
(where for $p=0$ and $p=-1$ we define $\delta v^{p}_{i\alpha}=0$). Note that we extracted the zero modes from the sums in (\ref{navode20001}). The reason is that we want to extend this scheme to a controlled scheme which is equivalent but has no zero modes. The present local scheme suggests this because the Leray projection terms do not contain zero modes. Note that for all $1\leq i\leq n$ and all $\alpha\in {\mathbb Z}^n$ we have
\begin{equation}
\delta v^{p}_{i\alpha}(0)=0.
\end{equation}
Global smooth existence at stage $p$ of the construction means that the $v^p_{i\alpha}$ are defined for all time ${\mathbb R}_+=\left\lbrace t\in {\mathbb R}|t\geq 0\right\rbrace$, and such that polynomial decay of the modes is preserved throughout time.  
Next we introduce the idea of a simplified control function was introduced in \cite{KNS} and \cite{K3}, and \cite{KB3} in a simplified but still complicated form.
Here we introduce a control function for the zero modes. We define
\begin{equation}\label{controlstart}
v^{r,p}_{i\alpha}(t)=v^{p}_{i\alpha }(t)+r^p_0(t),
\end{equation}
where 
\begin{equation}
r^p_0:[0,\infty)\rightarrow {\mathbb R}
\end{equation}
is defined by
\begin{equation}
r^p_0(t)=-v^{p}_{i0 }(t).
\end{equation}
We have to show then that $r_0:[0,\infty)\rightarrow {\mathbb R}$ is well-defined (especially bounded), i.e., we have to show that there is a limit
\begin{equation}
r_0(t):=\lim_{p\uparrow \infty} r^p_0(t)=r^0_0(t)+\sum_{p=1}^{\infty}\delta r^p_0(t)
\end{equation}
along with $\delta r^p_0(t)=r^p_0(t)-r^{p-1}_0(t)$. 
This leads to the following scheme. 
We start with the solution for the multivariate Burgers equation, or a linearized version of the scheme with first order coefficients related to the initial data. Then we annihilate the zero modes, i.e. we define
\begin{equation}
v^{r,0}_{i\alpha}=v^0_{i\alpha}=u_{i\alpha}
\end{equation}
for $\alpha\neq 0$ and
\begin{equation}
v^{r,0}_{i0}=v^{0}_{i0}+r^0_0=0,
\end{equation}
where for all $t\geq 0$
\begin{equation}
r^0_0(t)=-u_{i0}(t).
\end{equation}
For $p\geq 1$ and for $1\leq i\leq n$ and $\alpha\in {\mathbb Z}^n\setminus \left\lbrace 0 \right\rbrace$  we define
\begin{equation}
v^{r,k}_{i\alpha}:=v^{r,0}_{i\alpha}+\sum_{p=1}^k\delta v^{r,p}_{i\alpha},
\end{equation}
where $\delta v^{r,p}_{i\alpha}:=\delta v^{p}_{i\alpha}+r^p_{i\alpha}$ 
for all $1\leq i\leq n$ and $\alpha\in {\mathbb Z}^n\setminus \left\lbrace 0\right\rbrace $, and 
\begin{equation}\label{navode20001r}
\begin{array}{ll}
\frac{d \delta v^{r,p}_{i\alpha}}{dt}=\sum_{j=1}^n\nu \left( -\frac{4\pi \alpha_j^2}{l^2}\right)\delta v^{r,p}_{i\alpha}
-\sum_{j=1}^n\sum_{\gamma \in {\mathbb Z}^n\setminus \left\lbrace \alpha\right\rbrace }\frac{2\pi i \gamma_j}{l}v^{r,p-1}_{j(\alpha-\gamma)}\delta v^{r,p}_{i\gamma}\\
\\
-\sum_{j=1}^n\sum_{\gamma \in {\mathbb Z}^n\setminus \left\lbrace \alpha\right\rbrace}\frac{2\pi i \gamma_j}{l}\delta v^{r,p-1}_{j(\alpha-\gamma)} v^{r,p-1}_{i\gamma}\\
\\
+2\pi i\alpha_i1_{\left\lbrace \alpha\neq 0\right\rbrace}\frac{\sum_{j,k=1}^n\sum_{\gamma\in {\mathbb Z}^n}4\pi \gamma_j(\alpha_k-\gamma_k)v^{r,p-1}_{j\gamma}\delta v^{r,p}_{k(\alpha-\gamma)}}{\sum_{i=1}^n4\pi\alpha_i^2}\\
\\
+2\pi i\alpha_i1_{\left\lbrace \alpha\neq 0\right\rbrace}\frac{\sum_{j,k=1}^n\sum_{\gamma\in {\mathbb Z}^n}4\pi \gamma_j(\alpha_k-\gamma_k)\delta v^{r,p-1}_{j\gamma}v^{r,p-1}_{k(\alpha-\gamma)}}{\sum_{i=1}^n4\pi\alpha_i^2}.
\end{array} 
\end{equation}
Furthermore, for all $1\leq i\leq n$ and $\alpha=0$ we shall ensure that
\begin{equation}
\delta v^{r,p}_{i\alpha}(0)=0.
\end{equation}
Note that in the equation (\ref{navode20001r}) the terms
\begin{equation}
\begin{array}{ll}
-\sum_{j=1}^n\frac{2\pi i \alpha_j}{l}v^{p-1}_{j0}\delta v^p_{i\alpha}-\sum_{j=1}^n\frac{2\pi i \alpha_j}{l}\delta v^{p-1}_{j0} v^{p-1}_{i\alpha}
\end{array}
\end{equation}
on the right side of (\ref{navode20001}) are cancelled. This is because we define the control function $r_0$ such that the zero modes become zero. This is done as follows.
First we note that for $p=0$ and $p=-1$ we may define $\delta v^{r,p}_{i\alpha}=0$ for all $\alpha \in {\mathbb Z}^n$.
For $\alpha=0$ we define first the increment $\delta v^{*,r,p}_{i0}$ for $p\geq 1$ via the equation
\begin{equation}\label{navode200011}
\begin{array}{ll}
\frac{d \delta v^{*,r,p}_{i0}}{dt}=\sum_{j=1}^n\nu \left( -\frac{4\pi \alpha_j^2}{l^2}\right)\delta v^{*,r,p}_{i0}
-\sum_{j=1}^n\sum_{\gamma \in {\mathbb Z}^n\setminus \left\lbrace \alpha\right\rbrace }\frac{2\pi i \gamma_j}{l}v^{*,r,p-1}_{j(-\gamma)}\delta v^{*,r,p}_{i\gamma}\\
\\
-\sum_{j=1}^n\sum_{\gamma \in {\mathbb Z}^n\setminus \left\lbrace \alpha\right\rbrace}\frac{2\pi i \gamma_j}{l}\delta v^{*,r,p-1}_{j(-\gamma)} v^{*,r,p-1}_{i\gamma}.
\end{array} 
\end{equation}
Then we define
\begin{equation}\label{controlend}
\delta r^p_0(t)=-\delta v^{*,r,p}_{i0},
\end{equation}
closing the recursion. Note that this ensures
\begin{equation}
\delta v^{r,p}_{i0}=0
\end{equation}
at all stages $p\geq 1$ for the zero modes (provided that we can ensure that $r^p_0$ is globally well-defined).
The introduction of the control function for the zero modes makes the system 'autonomous' for the modes $\alpha\in {\mathbb Z}^n\setminus \left\lbrace 0\right\rbrace $. This means that all modes have damping factors (involving $\nu>0$), i.e., we have the stabilizing diagonal factor terms
\begin{equation}
\delta_{\alpha\beta}\nu\left(-\sum_{j=1}^n\frac{4\pi \alpha_j^2}{l^2} \right),
\end{equation}
which contribute for
\begin{equation}
\sum_{j=1}^n\alpha_j^2\neq 0,~\mbox{or}~\alpha_i\neq 0~\forall~1\leq i\leq n.
\end{equation}
We observed that the parameter $\nu>0$ may be large without loss of generality. This alone may indicate that the matter of global smooth existence should be detached from the subject of turbulence as simulation indicate turbulent phenomena for high Reynold numbers. 
Solutions of each approximation step of the scheme can be represented in terms of fundamental solutions of scalar equations, where each iteration step of higher order involves fundamental solutions of linear scalar parabolic equations. This is an idea which we considered earlier in \cite{KB1}, \cite{KB2},  \cite{K3}, and \cite{KNS}. In this paper we consider 'fundamental solutions' in a dual space of Fourier modes. They may be called 'generalized $\theta$-functions' but in general they live only in distributional space of negative Sobolev norm. However, we shall see that for certain initial data we can make sense of the related analytic vectors, and show that there spatial convergence radius is global (holds for the full size of the $n$-torus).

In this paper we observe that for all data $\mathbf{h}=\left(h_1,\cdots ,h_n \right)$ along with $h_i\in C^{\infty}$ and all positive real numbers $\nu >0$ the global extended iteration scheme converges globally to a solution of an infinite ODE system equivalent to a controlled Navier-Stokes equation in a strong norm, i.e. for all $t\geq 0$ the sequences $v^p_{i\alpha},~\alpha\in {\mathbb Z}^n$ converge in the dual Sobolev space $h^s\left( {\mathbb Z}^n\right)$ for $s\geq n+2$, where the choice $n+2$ corresponds naturally to certain requirements of infinite matrix operations. Clearly and a fortiori, the sequences converge also in $h^s$ for $s\leq 2$ (as we noted in a former version of this paper). Moreover, the control function is a globally well-defined univariate bounded differentiable  function such that controlled incompressible Navier-Stokes equation is equivalent to the usual incompressible Navier-Stokes equation system. 
More precisely, we have
\begin{thm}\label{mainthm1}
Given real numbers $l>0$ and $\nu>0$ such for all $1\leq p\leq n$ and data
$\mathbf{h}\in \left[C^{\infty}\left({\mathbb T}^n_l\right)\right]^n$ for all $s\in {\mathbb R}$
we have
\begin{equation}
v^{r}_{i\alpha}\left( (t)\right)_{\alpha\in {\mathbb Z}^n}=\lim_{m\uparrow \infty}\left( v^{r,m}_{i\alpha}(t)\right)_{\alpha\in {\mathbb Z}^n}\in h^s_l\left({\mathbb Z}^n\right).
\end{equation}
for the scheme described in the introduction. Moreover, $t\rightarrow \left( v^{r}_{i\alpha}(t)\right)_{\alpha\in {\mathbb Z}^n}$ satisfies the infinite ODE system in (\ref{navode20001}).
This implies that $v_{i\alpha}(t)=v^r_{i\alpha}(t)-\delta_{0\alpha}r_i(t)$ satisfies the infinite ODE-system corresponding to the incompressible Navier-Stokes equation, where $\delta_{0\alpha}=0$ if $\alpha\neq 0$ and $\delta_{\alpha 0}=1$ if $\alpha=0$, i.e., the function
\begin{equation}\label{vi}
v_j:=\sum_{\alpha\in {\mathbb Z}^n}v_{j\alpha}\exp{(2\pi i\alpha x)},~1\leq j\leq n,
\end{equation}
is a global classical solution of the Navier stokes equation system (\ref{nav}) below. Note that $v_{j\alpha},~\alpha\in {\mathbb Z}^n$ are functions of time as is $v_j$ in (\ref{vi}). Moreover the modes $v_{j\alpha}$ in (\ref{vi}) are real, i.e. the solution in (\ref{vi}) has a representation
\begin{equation}
v_j:=\sum_{\alpha\in {\mathbb N}^n}\left( v_{sj\alpha}\sin{(2\pi\alpha x)}+v_{cj\alpha}\cos{(2\pi\alpha x)}\right) ,~1\leq j\leq n,
\end{equation}
where $v_{sj\alpha}(t)\in {\mathbb R}$ and $v_{cj\alpha}(t)\in {\mathbb R}$ for all $t\geq 0$. Furthermore the solution scheme implies that for regular data $h_i\in H^s$ for $s>n+2+r$ and all $1\leq i\leq n$ we have global regular solutions $v_i\in H^s,~1\leq i\leq n$ of the incompressible Navier Stokes equation with the same $s>n+2+r$.    
\end{thm}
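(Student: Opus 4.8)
The plan is to prove the convergence of the extended controlled scheme in the strong dual Sobolev norm $h^s_l(\mathbb Z^n)$ for $s\ge n+2$ in three stages: (i) establish a priori upper bounds for a single iteration step via the Trotter product formula for the dissipative operator $A_p=D_p+B_p$, where $D_p$ is the diagonal viscosity matrix and $B_p$ collects the (time-dependent, data-from-step-$(p-1)$) convection and Leray-projection terms; (ii) show that these bounds are \emph{preserved} across iteration steps $p$, so the whole sequence $(v^{r,p}_{i\alpha})$ stays in a fixed ball of $h^s_l$; (iii) upgrade boundedness to convergence by estimating the increments $\delta v^{r,p}_{i\alpha}$ and showing $\sum_p \|\delta v^{r,p}_i\|_{h^s}<\infty$, using the weakly singular elliptic convolution estimate (\ref{lerayell}) together with the contraction gained from a (localized) time-dilatation transformation (\ref{timedil})--(\ref{uvloc}). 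The key analytic engine throughout is (\ref{lerayell}): for $r\ge n$, $s\ge n+1$ the bilinear convolution of two polynomially decaying mode-sequences again decays polynomially with exponent $r+s-n\ge n+2$, so the nonlinear maps act boundedly on $h^s$ for $s>n+2$ and the choice $s=n+2$ is exactly the borderline dictated by the infinite matrix algebra.

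First I would fix $l>0$ and $\nu>0$ (using the scaling transformations of the Remark to normalize if convenient) and, working on a local time interval $[t_0,t_0+\tfrac12]$ after the time-dilatation transformation, represent the solution of the linear subproblem (\ref{navode200locsub2mat}) by the Dyson series (\ref{dysonobs}). Applying the Trotter product formula $\exp(t(D_p+B_p))=\lim_{l\uparrow\infty}\lim_{k\uparrow\infty}(\exp(P_{M^l}D_p\tfrac tk)\exp(P_{M^l}B_p\tfrac tk))^k$ — whose validity for these dissipative operators on $h^s$ must be invoked from the matrix-analysis part of the paper — the factor $\exp(P_{M^l}D_p\tfrac tk)$ is the diagonal matrix with entries $\exp(-\tfrac{4\pi^2|\alpha|^2}{l^2}\nu\tfrac tk)\le 1$, which supplies a genuine smoothing/damping gain for every mode $\alpha\neq 0$ (this is where the zero-mode control $r_0$ is essential: the controlled scheme has \emph{no} undamped modes). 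Meanwhile $\exp(P_{M^l}B_p\tfrac tk)$ is estimated by expanding the exponential, with each matrix product of $B_p$'s handled by iterating (\ref{lerayell}); the linear growth in $|\alpha|$ of the convection matrix is absorbed against the decay of the data $v^{r,p-1}_i(t_0)\in h^s$ and, on the time-dilated interval, against the dominant damping potential term $-\sqrt{1-t^2}^3(1+t)^{-1}u_{i\alpha}$ whose coefficient carries no factor $\lambda$ while the nonlinear terms carry $\lambda$ small. This yields a step bound of the form $\|v^{r,p}_i(t)\|_{h^s}\le \Phi(\|v^{r,p-1}_i(t_0)\|_{h^s})$ with $\Phi$ a fixed function, and for $\lambda$ small enough $\Phi$ maps a fixed ball into itself, giving (ii).

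For (iii) I would subtract consecutive equations to get the increment system for $\delta v^{r,p}_{i\alpha}$ (the analog of (\ref{navode20001r}), which is already bilinear in lower increments and previous iterates), represent its solution again by the damped Dyson/Trotter formula, and bound $\|\delta v^{r,p}_i(t)\|_{h^s}\le q\,\|\delta v^{r,p-1}_i\|_{h^s}$ on each half-unit interval with $q<1$; here $q$ is made small by choosing $\lambda$ small (shrinking the nonlinear coefficients) while the diagonal damping keeps the linear-in-$\alpha$ terms under control via (\ref{lerayell}). Geometric summation then gives $v^r_{i\alpha}(t)=\lim_p v^{r,p}_{i\alpha}(t)$ in $h^s_l(\mathbb Z^n)$; the limit satisfies (\ref{navode20001}) by passing to the limit in the integrated (mild) form, which is legitimate because convergence is in a norm strong enough ($s>n+2$) to justify the bilinear terms and the Leray denominator. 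Simultaneously one controls $r^p_0(t)=-\delta v^{*,r,p}_{i0}$ through (\ref{navode200011}): the zero-mode increments are driven purely by convection of non-zero modes, which are already summably small, so $r_0(t)=\lim_p r^p_0(t)$ exists, is bounded, and is differentiable in $t$; undoing the control via $v_{i\alpha}=v^r_{i\alpha}-\delta_{0\alpha}r_i$ recovers a solution of the Navier--Stokes mode system, hence a classical solution $v_j\in H^s(\mathbb T^n_l)$, and tracking derivatives in (\ref{alphamodes}) shows that data $h_i\in H^s$ with $s>n+2+r$ yield $v_i\in H^s$ with the same $s$. The reality of the solution follows by rewriting the complex Fourier system in the real basis (\ref{realbasis*}) and checking that the scheme preserves the reality (conjugate symmetry $v_{i,-\alpha}=\overline{v_{i\alpha}}$) at every step, which is immediate since the data $h_i$ are real and every operation in the scheme respects this symmetry.

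The main obstacle I expect is step (i)--(ii): making the Trotter product formula rigorous on the strong space $h^s$ for the \emph{unbounded} dissipative operator $D_p$ together with the \emph{$|\alpha|$-unbounded} convection operator $B_p$, and quantifying the resulting step map $\Phi$ precisely enough that it is a self-map of a fixed ball and a contraction on increments for a \emph{fixed}, data-independent choice of $\lambda$ and interval length $\tfrac12$. Everything else reduces to careful but routine bookkeeping with the convolution inequality (\ref{lerayell}) and geometric series; the delicate point is that the damping gain and the nonlinear loss must be balanced uniformly in the (a priori growing) iterates, which is exactly what the auto-control time-dilatation is designed to guarantee, and verifying that this balance indeed closes is where the real work lies.
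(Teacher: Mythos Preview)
Your proposal is broadly correct and tracks the paper's architecture closely: Trotter product representation for each linearized stage, the elliptic convolution estimate for the bilinear terms, zero-mode control, and passage to the limit in the mode system. The one substantive divergence is in \emph{where} you locate the contraction. You make the time-dilatation parameter $\lambda$ the primary source of the contraction $\|\delta v^{r,p}\|\le q\|\delta v^{r,p-1}\|$, $q<1$. The paper's main argument instead obtains contraction in the exponentially time-weighted norm ${\big|}\cdot{\big|}^{T,\exp}_{h^s,C}=\sup_{t\le T}\exp(-Ct)|\cdot|_{h^s}$ with $C\ge 3L$ (the standard ODE weighting trick), and only invokes the auto-controlled time-dilatation scheme afterwards as a \emph{refinement} to upgrade the resulting $T$-dependent bound to a uniform-in-time one. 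Your route is the paper's secondary route promoted to primary; it buys the sharper uniform bound immediately, but you then have to confront head-on the balance you flag at the end: choosing $\lambda$ small shrinks the nonlinear coefficients but blows up the transformed initial data $u_i(0)=v_i(t_0)/\lambda$, and closing that loop uniformly in $p$ is exactly the delicate bookkeeping the paper does via the mode-level splitting $M^t_{\le 1/2}$ versus $M^t_{>1/2}$. The exponential-weight argument sidesteps this entirely at the cost of a $T$-dependent constant.

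One structural point you underplay: for stages $m\ge 1$ the matrix $A^r_m$ is time-dependent, and the paper does not apply the Dyson/Trotter representation directly to it. Instead it introduces a further time-discretization subscheme $\mathbf{v}^{r,m,F,p,q}$ on dyadic intervals $[t^p_{q-1},t^p_q]$, freezes $A^r_m$ at the left endpoint of each subinterval, applies the (time-homogeneous) Trotter formula there, and then proves the $p$-sequence is Cauchy with error $O(2^{-p})$ via a Lipschitz-in-time estimate on $A^r_m$. Your sketch elides this into a single Dyson/Trotter invocation; that is not wrong in spirit, but the paper's proof really is a triple limit (Trotter $k\to\infty$, time-discretization $p\to\infty$, iteration $m\to\infty$), and the intermediate $p$-limit is where the regularity inheritance $\mathbf{v}^{r,m-1,F}_i\in h^s\Rightarrow \mathbf{v}^{r,m,F}_i\in h^s$ is actually established.
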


Note that in the latter statement this degree of regularity is the optimal regularity iff we consider the data as part of the solution. 
Furthermore, we say that $\mathbf{v}=(v_1,\cdots ,v_n)^T$ is a global classical solution of the incompressible Navier-Stokes equation system
\begin{equation}\label{nav}
\left\lbrace \begin{array}{ll}
    \frac{\partial\mathbf{v}}{\partial t}-\nu \Delta \mathbf{v}+ (\mathbf{v} \cdot \nabla) \mathbf{v} = - \nabla p~~~~t\geq 0,\\
    \\
 \nabla \cdot \mathbf{v} = 0,~~~~t\geq 0,\\
 \\
 \mathbf{v}(0,x)=\mathbf{h}(x),
\end{array}\right.,
\end{equation}
on some domain $\Omega$ (which is the $n$-torus ${\mathbb T}^n$ in this paper) if $\mathbf{v}$ solves the equivalent Navier-Stokes equation system in its Leray projection form,
where $p$ is eliminated by the Poisson equation
\begin{equation}\label{press}
-\Delta p=\sum_{j,k=1}^nv_{j,k}v_{k,j}.
\end{equation}
Note that $v_{j,k}$ denotes the first partial derivative of $v_j$ with respect to the component $x_k$ etc. (Einstein notation). This means that we construct a solution to an equation of the form 
 \begin{equation}\label{navleraya}
\left\lbrace \begin{array}{ll}
    \frac{\partial\mathbf{v}}{\partial t}-\nu \Delta \mathbf{v}+ (\mathbf{v} \cdot \nabla) \mathbf{v} =\sum_{j,k=1}^n\int \nabla K_{{\mathbb T}^n}(x-y)v_{j,k}v_{k,j}(t,y)dy,\\ 
 \\
 \mathbf{v}(0,x)=\mathbf{h}(x),
\end{array}\right.
\end{equation}
along with (\ref{press}). Here $K_{{\mathbb T}^n}$ is a kernel on the torus which can be de determinde via Fourier tranformation.
For a fixed function $\mathbf{v}$ which solves (\ref{navleraya}) with (\ref{press}) the Cauchy problem for divergence
\begin{equation}
\begin{array}{ll}
\frac{\partial}{\partial t}\mbox{div} \mathbf{v}-\nu \Delta \div \mathbf{v}+\sum_{j=1}^nv_j\div \mathbf{v}+\sum_{j,k=1}^nv_{j,k}v_{k,j}=-\Delta p\\
\\
\mbox{div}~\mathbf{v}(0,.)=0
\end{array}
\end{equation}
simplifies to
\begin{equation}\label{cauchydiv}
\begin{array}{ll}
\frac{\partial}{\partial t}\mbox{div}~\mathbf{v}-\nu \Delta \div \mathbf{v}+\sum_{j=1}^nv_j\div \mathbf{v}=0\\
\\
\mbox{div}~\mathbf{v}(0,.)=0,
\end{array}
\end{equation}
yielding $\div \mathbf{v}=0$ as the unique solution of (\ref{cauchydiv}). For this reason it suffices to solve the Navier-Stokes equation in its Leray projection form (as is well-known). 
In the next Section we look at the structure of the proof of the main theorems. Then in Section 3 we prove theorem \ref{mainthm1}. 

\begin{cor}\label{mainthm2}
The statements of (\ref{mainthm2}) hold for all numbers $l>0$ and $\nu>0$. Furthermore, the solution is unique in the function space
\begin{equation}
F_{\mbox{ns}}=\left\lbrace \mathbf{g}:\left[0,\infty\right)\times{\mathbb R}^n\rightarrow {\mathbb R}^n|g_i\in C^{1}\left( \left[0,\infty\right),H^s\left( {\mathbb R}^n\right) \right)~\&~g_i(0,.)=h_i \right\rbrace ,
\end{equation}
where $\mathbf{g}=(g_1,\cdots ,g_n)^T$ and $h_i\in H^s$ for $s>n+2\geq 3$. 
\end{cor}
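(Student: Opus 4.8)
The plan is to deduce Corollary~\ref{mainthm2} from Theorem~\ref{mainthm1} in two independent steps: first, to remove the restriction to a single pair $(l,\nu)$ by invoking the scaling reductions already recorded in the introduction, and second, to establish uniqueness in $F_{\mathrm{ns}}$ by a Gr\"onwall estimate that becomes available because $s>n/2+1$.

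For the first step, fix a reference pair $(l_0,\nu_0)$ for which Theorem~\ref{mainthm1} has been proved, and let $l>0$, $\nu>0$ be arbitrary. Composing the spatial rescaling $\mathbf{v}^l(t,x)=\tfrac1l\,\mathbf{v}^1\!\left(\tfrac{t}{l^2},\tfrac{x}{l}\right)$, $p^l(t,x)=\tfrac1{l^2}\,p^1\!\left(\tfrac{t}{l^2},\tfrac{x}{l}\right)$ (with data $\mathbf{h}^l(x)=\tfrac1l\,\mathbf{h}^1(x/l)$) and the viscosity rescaling $v_i(t,x)=(r\nu)^{-1}v^{\nu}_i\!\left((r\nu)^{-1}t,x\right)$, $p(t,x)=(r\nu)^{-2}p^{\nu}\!\left((r\nu)^{-1}t,x\right)$, and choosing the free parameters so that the reference viscosity and reference torus size are hit, one transports the solution produced by Theorem~\ref{mainthm1} for $(l_0,\nu_0)$ to a solution for the given $(l,\nu)$. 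Dilations $x\mapsto x/l$ and multiplication by constants are bounded isomorphisms of $C^\infty(\mathbb{T}^n)$ and of every $H^s$, so the data class and the Sobolev index $s$ are preserved; moreover these transformations manifestly send divergence-free fields to divergence-free fields and intertwine the Poisson relation~(\ref{press}). Hence all conclusions of Theorem~\ref{mainthm1}, with the same $s$, carry over to $(l,\nu)$.

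For the uniqueness step, suppose $\mathbf{v},\mathbf{w}\in F_{\mathrm{ns}}$ solve the Leray-projected system~(\ref{navleraya}) with~(\ref{press}) on the torus $\mathbb{T}^n_l$ and share the data $\mathbf{h}$. By the computation around~(\ref{cauchydiv}) each such solution is genuinely divergence free, so $\mathbf{d}:=\mathbf{v}-\mathbf{w}$ is divergence free and, subtracting the two equations and rewriting $(\mathbf{v}\cdot\nabla)\mathbf{v}-(\mathbf{w}\cdot\nabla)\mathbf{w}=(\mathbf{v}\cdot\nabla)\mathbf{d}+(\mathbf{d}\cdot\nabla)\mathbf{w}$, it satisfies
\begin{equation}
\partial_t\mathbf{d}-\nu\Delta\mathbf{d}+(\mathbf{v}\cdot\nabla)\mathbf{d}+(\mathbf{d}\cdot\nabla)\mathbf{w}=-\nabla\!\left(p_{\mathbf{v}}-p_{\mathbf{w}}\right),\qquad \mathbf{d}(0,\cdot)=0.
\end{equation}
Pairing with $\mathbf{d}$ in $L^2(\mathbb{T}^n_l)$ and integrating by parts: the pressure term vanishes since $\mathrm{div}\,\mathbf{d}=0$, the term $\langle(\mathbf{v}\cdot\nabla)\mathbf{d},\mathbf{d}\rangle$ vanishes since $\mathrm{div}\,\mathbf{v}=0$, the diffusion contributes $-\nu\|\nabla\mathbf{d}\|_{L^2}^2\le 0$, and $\left|\langle(\mathbf{d}\cdot\nabla)\mathbf{w},\mathbf{d}\rangle\right|\le\|\nabla\mathbf{w}\|_{L^\infty}\|\mathbf{d}\|_{L^2}^2$, where $\nabla\mathbf{w}(t,\cdot)\in H^{s-1}\hookrightarrow L^\infty$ because $s-1>n/2$, with $\|\nabla\mathbf{w}(t,\cdot)\|_{L^\infty}$ bounded on compact time intervals by the continuity $\mathbf{w}\in C^1([0,\infty),H^s)$. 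Hence $\tfrac{d}{dt}\|\mathbf{d}(t,\cdot)\|_{L^2}^2\le C(t)\|\mathbf{d}(t,\cdot)\|_{L^2}^2$ with $C$ locally bounded, and Gr\"onwall's inequality together with $\mathbf{d}(0,\cdot)=0$ forces $\mathbf{d}\equiv 0$.

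The step I expect to be the main obstacle is neither of the above but the tacit input that the object furnished by Theorem~\ref{mainthm1} actually lies in $F_{\mathrm{ns}}$, i.e. that the mode-wise limit $t\mapsto (v^r_{i\alpha}(t))_{\alpha\in\mathbb{Z}^n}$, known to be in $h^s(\mathbb{Z}^n)$ for each fixed $t$, assembles into a map of class $C^1([0,\infty),H^s)$. Establishing this requires upgrading the pointwise-in-time convergence of $\sum_p\delta v^{r,p}_{i\alpha}$ --- and of its $t$-derivative, which by the ODE system~(\ref{navode20001}) is controlled in $h^{s-2}$ --- to convergence that is uniform on compact time intervals in the $h^s$-norm; this in turn rests on the uniform-in-time upper bounds produced by the auto-controlled time-dilated subscheme, combined with an equicontinuity argument for the tail of the series. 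Once this membership is secured, both the rescaling reduction and the energy uniqueness estimate are routine.
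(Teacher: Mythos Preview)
Your argument is correct. The scaling reduction in your first step is exactly what the paper records in the introductory remark on parameter transformations, so that part coincides with the paper.

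For uniqueness the paper actually points to two routes. The remark immediately following the corollary says that ``the methods of this paper lead directly to contraction results,'' meaning the time-weighted $|\cdot|^{T,\exp}_{h^s,C}$ contraction lemma established later in Section~3: once the iteration map is a strict contraction on a ball in that Banach space, uniqueness is the usual Banach fixed-point consequence. The appendix, on the other hand, invokes precisely the classical $L^2$--Gr\"onwall energy estimate you wrote down (stated there with $L^4$ norms via ``Cornwall's inequality''). So your approach is not foreign to the paper; it is the second of the two mechanisms the paper mentions, and arguably the cleaner one once $s>n/2+1$ is available. The contraction route has the mild advantage that it recycles machinery already built for existence and yields uniqueness directly in the $h^s$ topology rather than in $L^2$; your energy route is more elementary and requires only $s>n/2+1$ rather than the paper's working assumption $s>n+2$.

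Your closing paragraph about membership in $F_{\mathrm{ns}}$ is well taken and honest: the paper addresses this in Section~3 by showing that both $\mathbf{v}^{r,m,F}(t)$ and $\tfrac{d}{dt}\mathbf{v}^{r,m,F}(t)$ are uniformly bounded in $h^s$ and invoking compactness (Rellich) plus the equation itself to pass to the limit, so the needed $C^1$-in-time regularity is established there rather than left as an unstated hypothesis.
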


\begin{rem}
For the assumptions of (\ref{mainthm2}) the methods of this paper lead directly to contraction results. Concerning uniqueness the regularity assumptions of the initial data may be weakened, of course. 
\end{rem}

If the viscosity converges to zero, then we get the incompressible Euler equation. A Trotter product formula does not hold but we can apply the Dyson formalism for infinite regular matrix operations (in the sense defined in this paper) directly. However we loose uniqueness for $n\geq 3$ and for $n\geq 3$ there exist singular solutions as well. 

\begin{cor}
For any dimension $n$ there exists a global regular solution branch of the incompressible Euler equation for data $h_i\in h^s\left({\mathbb Z}^n\right), 1\leq i\leq n$ along with $s>n+2$. 
\end{cor}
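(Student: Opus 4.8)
\emph{Proof plan.} The plan is to run exactly the extended (auto-)controlled scheme behind Theorem~\ref{mainthm1} with the viscosity switched off, $\nu=0$, and to observe that positivity of $\nu$ entered that argument in only one place: to give meaning to the Trotter product formula through the dissipative matrix $D$ of (\ref{damptermmatrix}). When $\nu=0$ this matrix vanishes, $\exp(D)=\mathrm{Id}$, the splitting $A_p=D_p+B_p$ collapses to $A_p=B_p$, and the linear subproblems (\ref{navode200locsub2mat}) at each iteration stage are solved directly by the time-ordered (Dyson) exponential (\ref{dysonobs}). The decay estimates that control that series --- the weakly singular elliptic integral bounds of type (\ref{lerayell}) --- are insensitive to $\nu$, so the whole machinery carries over.

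First I would fix $s>n+2$ and data $h_i\in h^s(\mathbb Z^n)$, set $v^0_{i\alpha}=u_{i\alpha}$ (the modes of a linearization of the $\nu=0$ multivariate Burgers system), and define the iterates $v^p_{i\alpha}$, $\delta v^p_{i\alpha}$ by the recursion (\ref{navode200lin})--(\ref{navode20001}) with every $\nu$-term deleted. Each stage is an infinite linear ODE $\frac{d}{dt}\mathbf v^{p,F}=B_p\mathbf v^{p,F}$ whose matrix $B_p$ is built only from the already-constructed stage $v^{p-1}$, so its solution is
\begin{equation}
\mathbf v^{p,F}(t)=\mathbf v(t_0)+\sum_{m\ge 1}\frac{1}{m!}\int_{t_0}^t\cdots\int_{t_0}^t T_m\bigl(B_p(s_1)\cdots B_p(s_m)\bigr)\,\mathbf v(t_0)\,ds_1\cdots ds_m .
\end{equation}
Two points must be checked: (i) this series converges in $h^s(\mathbb Z^n)$, and (ii) the iteration preserves the decay order, $v^p_i\in h^s$ for all $p$ with the same $s>n+2$. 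For (ii) one uses that the convection term is the convolution $\sum_\gamma \frac{2\pi i\gamma_j}{l}\,v^{p-1}_{j(\alpha-\gamma)}v^{p}_{i\gamma}$ and the Leray term the convolution $2\pi i\alpha_i\bigl(\textstyle\sum_i4\pi\alpha_i^2\bigr)^{-1}\sum_\gamma 4\pi\gamma_j(\alpha_k-\gamma_k)\,v^{p}_{j\gamma}v^{p-1}_{k(\alpha-\gamma)}$: by (\ref{lerayell}) a product of two sequences of decay orders $s$ and $s$ has decay order $2s-n$, and after accounting for the differential factors ($|\gamma|$ in the convection term, $|\gamma|\,|\alpha-\gamma|$ in the Leray term) the net decay is at least $2s-n-2$, and $2s-n-2>s$ since $s>n+2$. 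Thus $B_p$ acts boundedly (in fact with a gain) on $h^s$, which yields (i), the summability of the $\delta v^p$ series, and hence a local-in-time solution $v_i=\lim_p v^p_i\in h^s$ of the $\nu=0$ system; the rescaling that shrinks the nonlinear coefficients ($l$ large, as after (\ref{factorl})) is used exactly as in the Navier--Stokes case to fix the constants.

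To make this global I would, at the start of each time step, apply the time-dilatation change of variables (\ref{uvlin}) in its localized form (\ref{uvloc}), as in \cite{KAC}. For $\nu=0$ the transformed equation (\ref{navode200firsttimedil}) has no Laplacian term at all, so its only linear diagonal term is the damping potential $-\sqrt{1-t(\tau)^2}^3(1+t(\tau))^{-1}u_{i\alpha}$, while every nonlinear term carries the extra factor $\lambda(1+t(\tau))\sqrt{1-t(\tau)^2}^3$; choosing $0<\lambda<1$ small makes the ratio of the damping coefficient to the nonlinear coefficients bounded below on $[0,1)$ by $(\lambda(1+t)^2)^{-1}\ge(4\lambda)^{-1}$, so the damping dominates uniformly. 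Exactly as in the auto-controlled analysis behind Theorem~\ref{mainthm1}, this produces a uniform-in-$\tau$ bound for $\mathbf u$ in $h^s$, which transfers to an a priori bound for $\mathbf v$ that is inherited from one (half-unit) time step to the next. Global existence of the solution branch then follows from this step-wise preservation of the bound together with the compactness of bounded subsets of $h^s$ in $h^{s'}$ for $s'<s$, and the limit solves the $\nu=0$ Leray-projected system in (\ref{navleraya}); as in (\ref{cauchydiv}), $\div\mathbf v\equiv 0$ is propagated by the corresponding $\nu=0$ scalar transport Cauchy problem. Since for $n\ge 3$ this construction picks out one particular solution while genuinely singular weak solutions coexist, the conclusion is stated as the existence of a regular solution \emph{branch}.

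The main obstacle is the complete loss of parabolic smoothing: with $\nu=0$ there is no dissipative matrix to absorb the linear-in-$|\alpha|$ growth of the convection matrix $B_p$, so both the convergence of the Dyson series and the step-to-step inheritance of the $h^s$ bound must rest entirely on the a priori polynomial decay of the data acting as analytic vectors and on the convolution gain in (\ref{lerayell}), which has to compensate that growth and keep the iterated constants summable --- this is precisely why the rescaling shrinking the nonlinear coefficients and the $\lambda$-factor supplied by the dilatation are needed. One must also verify that the dilatation damping, whose coefficient degenerates like $\sqrt{1-t^2}^3$ as $t\uparrow 1$, still beats the equally degenerating nonlinearity uniformly, so that no part of the a priori bound is lost over the infinitely many time steps. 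Beyond this I do not anticipate genuinely new difficulties, precisely because $\nu>0$ entered the proof of Theorem~\ref{mainthm1} only through the Trotter product formula and not through the decay estimates.
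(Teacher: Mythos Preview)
Your proposal is essentially the same approach as the paper's: the paper also notes that the Trotter product formula is unavailable at $\nu=0$ and instead applies the Dyson time-ordered exponential directly to the iterated linearized Euler matrices, relying on the same weakly singular convolution estimates (your use of (\ref{lerayell}) and the count $2s-n-2>s$ for $s>n+2$) to keep the iterates in $h^s$ and obtain a local-in-time contraction. For the global extension the paper's one-line argument is phrased as ``local time contraction in exponentially time-weighted norms and repeat,'' whereas you invoke the auto-controlled time-dilatation (\ref{uvlin})/(\ref{uvloc}) on half-unit subintervals; both mechanisms appear in the paper (the latter in the sharpening discussion and in the appendix for the Euler comparison functions $u^{*,\rho,r,e,t_0}_i$), and your observation that the damping-to-nonlinear ratio is $(\lambda(1+t)^2)^{-1}\ge(4\lambda)^{-1}$ on $[0,1/2]$ is exactly the kind of dominance the paper exploits. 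The only point to flag is terminological: the paper reserves the logarithmic transformation $\tau=-\ln(1-t)$, $\mathbf v^E=\mathbf u^E/(1-t)$ for the \emph{singular} branch corollary, not for the regular branch you are proving here, so be careful not to conflate the two when writing this up.
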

Here, by a global regular solution branch we mean a global solution where valuations of the $i$th component for $1\leq i\leq n$ at time $t\geq 0$ exist in $h^{s}\left({\mathbb Z}^n\right) $ for all time $t\geq 0$.
Note that an analogous result holds on the whole domain with spatial part on ${\mathbb R}^n$ where viscosity limits for contraction results of the Navier Stokes equation can be used. However, here we consider a different direct approach via a Dyson formalism. A viscosity limit approach via a Trotter product formula seems to be not appropriate in this case.   

\begin{cor}
For dimension $n\geq 3$ there exist singular solutions of the incompressible Euler equation for regular data $0\neq h_i\in h^s\left({\mathbb Z}^n\right), 1\leq i\leq n$ along with $s>n+2$. 
\end{cor}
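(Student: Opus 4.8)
The plan is to separate the two claims implicit in the statement: producing a nonzero admissible datum is routine, so the substance is exhibiting, for $n\geq 3$, a solution with smooth initial data that fails to be regular, thereby establishing non-uniqueness alongside the global regular branch of the previous corollary. For the first point one may simply take a trigonometric shear flow on $\mathbb{T}^n$, e.g. $\mathbf{h}=(\sin(2\pi x_2),0,\dots,0)$ (or an ABC/Beltrami field for $n=3$): it is divergence free, lies in $h^s(\mathbb{Z}^n)$ for every $s$, and since $(\mathbf{h}\cdot\nabla)\mathbf{h}$ is either zero or a pure gradient, its Leray projection vanishes, so $\mathbf{v}(t,\cdot)\equiv\mathbf{h}$ already solves (\ref{navode200first}) with $\nu=0$; this gives $0\neq h_i\in h^s$ with $s>n+2$. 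One could also just invoke the regular branch of the preceding corollary for any nonzero smooth $\mathbf{h}$.

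For the singular solution I would use the convex–integration machinery of De Lellis--Székelyhidi and its high–regularity refinements (Isett; Buckmaster--De Lellis--Székelyhidi--Vicol), transported into the Fourier picture. Concretely: start from the regular solution $\mathbf{v}^{\mathrm{reg}}$ on a short interval $[0,T_0]$ on which the Dyson expansion (\ref{dysonobs}) with $\nu=0$ converges; then iteratively add divergence-free correctors frequency-localized on shells $|\alpha|\sim\lambda_q\uparrow\infty$ (Beltrami/Mikado building blocks), choosing amplitudes and phases so that the self-interaction of the $q$-th corrector cancels the accumulated Reynolds stress up to a faster-decaying remainder. The limit $\mathbf{v}\in C([0,\infty);h^0(\mathbb{Z}^n))$ solves Euler in the Leray sense (\ref{navleraya})--(\ref{press}) with the \emph{same} datum $\mathbf{h}$, but its Fourier coefficients lie in $h^0$ only and not in $h^s$ for $s>0$; in particular it does not satisfy the mode equations (\ref{navode200first}) pointwise (the derivative-losing convection/pressure terms are only distributional on $h^0$), so it is genuinely singular relative to the regular branch. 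The restriction $n\geq 3$ enters precisely at the corrector step, which needs the flexibility of divergence-free fields in at least three components; for $n=2$ the transport structure of the scalar vorticity — equivalently the conservation of $\|\omega\|_{L^p}$ read off from the $\nu=0$ version of (\ref{zeromode})--(\ref{navode2000}) — obstructs the construction and forces uniqueness in the regular class.

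The hard part will be executing the corrector scheme rigorously \emph{inside} the dual-space framework of the paper, since the infinite-matrix manipulations of Section 2 are justified only for decaying modes (``analytic vectors''), whereas the target solution has essentially no decay. I would handle this by redoing the convex-integration bookkeeping directly in terms of dyadic frequency annuli of (\ref{navode200first}) with $\nu=0$, applying the weakly singular elliptic estimate (\ref{lerayell}) shell by shell rather than globally, so that only finitely many shells interact at each stage and the limit is taken in $h^0$ outside the regime where (\ref{dysonobs}) is valid. A cheaper but strictly weaker alternative would be to argue only that the Dyson series itself leaves $h^s$ in finite time for suitably large or structured data, via a Riccati-type lower bound $\frac{d}{dt}\|\mathbf{v}(t,\cdot)\|_{h^s}\geq c\|\mathbf{v}(t,\cdot)\|_{h^s}^2-C$ fed by the sign-indefinite vortex-stretching term available only for $n\geq 3$; but that route borders on the still-open problem of finite-time singularity formation for smooth $3$D Euler data, so I expect the honest proof to go through the frequency-localized convex-integration argument together with an explicit low-regularity notion of ``singular solution.''
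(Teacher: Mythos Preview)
Your approach via convex integration is genuinely different from the paper's argument. The paper does not invoke De Lellis--Sz\'ekelyhidi or any frequency-shell corrector scheme at all. Instead it uses a self-similar time-dilation ansatz: writing $\mathbf{v}^E(t)=\dfrac{\mathbf{u}^{\mu,E}(\tau^{\mu})}{\mu-t}$ with $\tau^{\mu}=-\ln(1-t/\mu)$, the quadratic scaling of the Euler nonlinearity $E(\mathbf{v}^E)\mathbf{v}^E$ turns the mode system into the \emph{damped} equation $\dfrac{d\mathbf{u}^{\mu,E}}{d\tau}=E(\mathbf{u}^{\mu,E})\mathbf{u}^{\mu,E}-\mathbf{u}^{\mu,E}$ on $[0,\infty)$. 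Any global-in-$\tau$ solution of this damped system that does not tend to zero yields a $\mathbf{v}^E$ blowing up as $t\uparrow\mu$. The paper then argues that for nonzero data and $\mu$ small one can arrange this, and remarks that for $n\leq 2$ the ansatz ``collapses to the trivial solution''; the restriction $n\geq 3$ thus enters through the structure of the rescaled system rather than through the flexibility of divergence-free correctors.

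Your route produces \emph{weak} solutions lacking regularity for $t>0$, which is a different reading of ``singular'' than the finite-time blow-up the paper targets; it also imports heavy external machinery outside the paper's dual-space setup, and matching a prescribed $h^s$ datum with $s>n+2$ inside a convex-integration iteration is not standard (the known schemes control the datum only in much weaker norms). The paper's route is elementary and stays within its own formalism, but it is sketchy precisely at the step you flagged as ``bordering on the open problem'': global existence for the damped system together with nonvanishing as $\tau\to\infty$ is asserted rather than proved. So the two approaches aim at different objects and carry different gaps, but you should know that the intended mechanism here is the scaling transformation $\mathbf{v}^E=(\mu-t)^{-1}\mathbf{u}^{\mu,E}$, not convex integration.
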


The lower bound $n=3$ for singular solutions is related to the fact that the class of singular solutions which can be constructed has no analogue in dimension $n\leq 2$. The constructive ansatz of the solution collapses to the trivial solution. 

\section{Structure and ideas of the proof of theorem \ref{mainthm1} and corollary \ref{mainthm2}}

In the proof of theorem \ref{mainthm1} in section 3 below  we first recall  that the incompressible Navier-Stokes equation is formally equivalent to a system of $n$ infinite ODE-systems
\begin{equation}\label{navode2*}
\begin{array}{ll}
\frac{d v_{i\alpha}}{dt}=\sum_{j=1}^n \nu\left( -\frac{4\pi \alpha_j^2}{l^2}\right)v_{i\alpha}
-\sum_{j=1}^n\sum_{\gamma \in {\mathbb Z}^n}\frac{2\pi i \gamma_j}{l}v_{j(\alpha-\gamma)}v_{i\gamma}\\
\\
+2\pi i\alpha_i1_{\left\lbrace \alpha\neq 0\right\rbrace }\frac{\sum_{j,k=1}^n\sum_{\gamma\in {\mathbb Z}^n}4\pi \gamma_j(\alpha_k-\gamma_k)v_{j\gamma}v_{k(\alpha-\gamma)}}{\sum_{i=1}^n4\pi\alpha_i^2},
\end{array} 
\end{equation}
where $\alpha\in {\mathbb Z}^n$, and where the initial data $\mathbf{v}^F_i(0)=\left( v_{i\alpha}(0)\right)_{\alpha \in {\mathbb Z}^n}$ for $1\leq i\leq n$ and $\alpha\in {\mathbb Z}^n$ are given by $n$ infinite vectors
\begin{equation}\label{initdatamod}
\mathbf{v}^F_i(0)=\mathbf{h}^F_i=\left(h_{i\alpha}\right)^T.
\end{equation}
Operations such as $\sum_{\gamma\in {\mathbb Z}^n}v_{k(\alpha-\gamma}v_{j\gamma}$ are interpreted as infinite matrix-vector operations. 
We also consider the equivalent representation with respect to the real basis
\begin{equation}\label{realbasisa}
\left\lbrace \sin\left(\frac{2\pi\alpha}{l}\right),\cos\left(\frac{ 2\pi\alpha}{l}\right) \right\rbrace_{\alpha \in {\mathbb N}^n}.
\end{equation}
In this case we use the notation 
\begin{equation}\label{initdatamodr}
\mathbf{v}^{re,F}_i(0)=\mathbf{h}^{re,F}_i=\left(h^{re}_{i\alpha}\right)^T
\end{equation}
for the initial data in order to indicate that we are referring to data in the real basis 
(\ref{realbasisa}). The structure is quite similar but for each multiindex $\alpha$ we have one equation for the cosine modes and one equation for the sinus modes (cf. \ref{aa}) below. For both types of modes we have operations of the form $\sum_{\gamma\in {\mathbb Z}^n}v_{k(\alpha-\gamma}v_{j\gamma}$ and operations of the form $\sum_{\gamma\in {\mathbb Z}^n}v_{k(\alpha+\gamma}v_{j\gamma}$ which can both be considered as equivalent infinite matrix vector operations. The estimates of these operations via weakly singular elliptic integrals are the same and the transition from the complex system to the real system and vice versa is straightforward. Comparison to the real system makes sure that the procedure written in the more succinct complex notation leads to real solutions. This is clear from an analytical point of view anyway, but from a computational point of view errors of computation should be compared to the real system in order to extract the optimal real approximative solutions of a solution with a possibly complex error. 
For numerical purposes considered later it is also useful to observe the dependence of the different terms on the viscosity $\nu$ and the size of the torus. 
The representation in (\ref{navode2*}) is more convenient than a representation with respect to the real data but we shall see at every step of the argument that an analogous argument also holds for the representation of the infinite ODE systems with respect to the real basis in (\ref{realbasisa}). 
The entries $h_{i\alpha}$ of the infinite vector in (\ref{initdatamod}) denote the Fourier modes of the functions $h_i$ along with $\mathbf{h}\in \left[ C^{\infty}\left({\mathbb T}^n_l\right) \right]^n$. Smoothness of $\mathbf{h}$ translates to polynomial decay of the modes $h_{i\alpha}$. Here we may say that the modes $h_{i\alpha}$ have polynomial decay of order $m>0$ if
\begin{equation}
|\alpha|^mh_{i\alpha}\downarrow 0 \mbox{ as }|\alpha|=\sum_{i=1}^n|\alpha_i|\uparrow \infty
\end{equation}
for a fixed positive integer $m$, and the modes $h_{i\alpha}$ have polynomial decay if they have polynomial of any order $m>0$. We may use the terms 'modes of solution have polynomial decay' and 'solution is smooth' interchangeably in our context.
We may rewrite the equation (\ref{navode2*}) in the form
\begin{equation}\label{navode2*rewr}
\begin{array}{ll}
\frac{d \mathbf{v}^F}{dt}=A^{NS}\left(\mathbf{v}\right) \mathbf{v}^F,
\end{array} 
\end{equation}
where $\mathbf{v}^F=\left(\mathbf{v}^F_1,\cdots ,\mathbf{v}^F_n\right)^T$. Furthermore $A^{NS}\left(\mathbf{v}\right) $ is a $n{\mathbb Z}^n\times n{\mathbb Z}^n$-matrix
\begin{equation}
A^{NS}\left(\mathbf{v}\right) =\left(A^{NS}_{ij}\left(\mathbf{v}\right)\right)_{1\leq i,j\leq n},
\end{equation}
where for $1\leq i,j\leq n$ the entry $A^{NS}_{ij}\left(\mathbf{v}\right) $ is a ${\mathbb Z}^n\times {\mathbb Z}^n$-matrix. We define
\begin{equation}
A^{NS}\left( \mathbf{v}\right)\mathbf{v}^F =\left(\sum_{j=1}^nA^{NS}_{1j}\left(\mathbf{v}\right) \mathbf{v}^F_j ,\cdots,\sum_{j=1}^nA^{NS}_{nj}\left( \mathbf{v}\right)  \mathbf{v}^F_j  \right)^T, 
\end{equation}
where for all $1\leq i\leq n$
\begin{equation}
\sum_{j=1}^nA^{NS}_{ij}\left(\mathbf{v}\right) \mathbf{v}^F_j=\left( \sum_{j=1}^n \sum_{\beta\in {\mathbb Z}^n}A^{NS}_{i\alpha j\beta}\left(\mathbf{v}\right) v_{j\beta}\right)_{\alpha\in {\mathbb Z}^n} . 
\end{equation}
The entries $A^{NS}_{i\alpha j\beta}\left(\mathbf{v}\right)$ of $A^{NS}\left( \mathbf{v}\right)$ are determined by the equation in (\ref{navode2*}) of course. On the diagonal, i.e., for $i=j$ we have the entries 
\begin{equation}
\begin{array}{ll}
\delta_{ij}A^{NS}_{i\alpha j\beta}\left(\mathbf{v}\right)=\delta_{ij\alpha\beta}\sum_{j=1}^n \nu\left( -\frac{4\pi \alpha_j^2}{l^2}\right)
-\delta_{ij}\frac{2\pi i (\alpha_j-\beta_j)}{l}v_{i(\alpha-\beta)}\\
\\+\delta_{ij}2\pi i\alpha_i1_{\left\lbrace \alpha\neq 0\right\rbrace }\frac{\sum_{k=1}^n4\pi \beta_j(\alpha_k-\beta_k)v_{k(\alpha-\beta)}}{\sum_{i=1}^n4\pi\alpha_i^2},
\end{array}
\end{equation}
where $\delta_{ij}=1$ iff $i=j$, $\delta_{ij\alpha\beta}=1$ iff $i=j$ and $\alpha=\beta$,  and zero else denotes the Kronecker $\delta$-function, and off-diagonal we have for $i\neq j$ the entries
\begin{equation}
\begin{array}{ll}
(1-\delta_{ij})A^{NS}_{i\alpha j\beta}\left(\mathbf{v}\right)=\frac{2\pi i (\alpha_j-\beta_j)}{l}v_{i(\alpha-\beta)}
\\
\\2\pi i\alpha_i1_{\left\lbrace \alpha\neq 0\right\rbrace }\frac{\sum_{k=1}^n4\pi \beta_j(\alpha_k-\beta_k)v_{k(\alpha-\beta)}}{\sum_{i=1}^n4\pi\alpha_i^2}
\end{array}
\end{equation}
Here, we remark again that the $i$ in the context $2\pi i$ denotes the complex number $i=\sqrt{-1}$, while the other indices $i$ are integer indices. The next step is to define a contolled system according to the ideas described in the introduction. The additional idea of an auto-control is useful in order to get better upper bounds of the solution. We postpone this and consider first the idea of a simple external control in dual space (which is simple compared to control functions in classical spaces we considered in \cite{KB3} and elsewhere). Recall the main idea: the control function cancels the zero modes in an iterative scheme. If this is possible and a regular limit exists, then it cancels the zero modes in the limit as well. 
In the real basis (\ref{realbasisa}) we may rewrite the equation (\ref{navode2*}) in the form
\begin{equation}\label{navode2*rewr}
\begin{array}{ll}
\frac{d \mathbf{v}^{re,F}}{dt}=A^{r,NS}\left(\mathbf{v}^{re}\right) \mathbf{v}^{re,F},
\end{array} 
\end{equation}
where $\mathbf{v}^{re,F}=\left(\mathbf{v}^{re,F}_1,\cdots ,\mathbf{v}^{re,F}_n\right)^T$. Furthermore $A^{re,NS}\left(\mathbf{v}^r\right) $ is a $2n{\mathbb N}^n\times 2n{\mathbb N}^n$-matrix
\begin{equation}
A^{re,NS}\left(\mathbf{v}\right) =\left(A^{re,NS}_{ij}\left(\mathbf{v}^{re}\right)\right)_{1\leq i,j\leq n} 
\end{equation}
where for $1\leq i,j\leq n$ the entries $A^{re,NS}_{ij}\left(\mathbf{v}^{re}\right) $ can be obtained easily from $A^{NS}_{ij}\left(\mathbf{v}\right) $ or derived independently - we shall do this in the proof below.

Consider the representation in (\ref{navode2*}). The zero modes do not appear in the dissipative term related to the Laplacian, and we have observed that we may represent the Leray projection term without zero mode terms as well. Hence, for the zero modes $v_{i0}$ in the equation in (\ref{navode2*}) we have for $\alpha=0$
\begin{equation}\label{navode2*alpha0}
\begin{array}{ll}
\frac{d v_{i0}}{dt}=
-\sum_{j=1}^n\sum_{\gamma \in {\mathbb Z}^n}\frac{2\pi i \gamma_j}{l}v_{j(-\gamma)}v_{i\gamma}.
\end{array} 
\end{equation}
Assuming that the function $t\rightarrow v_{i0}(t)$ is bounded it is natural to define
\begin{equation}
r^0_0(t):=-v_{i0}(t).
\end{equation}
Formally, this leads to the controlled equation for $v^{r}_{i\alpha},~\alpha\in {\mathbb Z}^n\setminus \{0\}={\mathbb Z}^{n,0}$ of the form
\begin{equation}\label{navode2*r}
\begin{array}{ll}
\frac{d v^r_{i\alpha}}{dt}=\sum_{j=1}^n \nu\left( -\frac{4\pi \alpha_j^2}{l^2}\right)v^r_{i\alpha}
-\sum_{j=1}^n\sum_{\gamma \in {\mathbb Z}^n\setminus \left\lbrace 0,\alpha\right\rbrace }\frac{2\pi i \gamma_j}{l}v^r_{j(\alpha-\gamma)}v^r_{i\gamma}\\
\\
+2\pi i\alpha_i1_{\left\lbrace \alpha\neq 0\right\rbrace }\frac{\sum_{j,k=1}^n\sum_{\gamma\in {\mathbb Z}^n\setminus \left\lbrace 0,\alpha\right\rbrace}4\pi \gamma_j(\alpha_k-\gamma_k)v^r_{j\gamma}v^r_{k(\alpha-\gamma)}}{\sum_{i=1}^n4\pi\alpha_i^2},
\end{array} 
\end{equation}
where the initial data $\mathbf{v}^{r,F}_i(0)=\left( v^r_{i\alpha}(0)\right)_{\alpha \in {\mathbb Z}^n\setminus \left\lbrace 0\right\rbrace }$ for $1\leq i\leq n$ and $\alpha\in {\mathbb Z}^n\setminus \left\lbrace 0\right\rbrace$ are given by $n$ infinite vectors
\begin{equation}
\mathbf{v}^{r,m,F}_i(0)=\mathbf{h}^{r,F}_i=\left(h_{i\alpha}\right)^T_{\alpha\in {\mathbb Z}^n\setminus \left\lbrace 0\right\rbrace}.
\end{equation}
Note that 
\begin{equation}
v_{i0}+r_{0}=0,
\end{equation}
such that we may cancel the zero modes. The justification for this is obtained for each iteration step $m$ below.

The idea for a global scheme is to determine $\mathbf{v}^{r,F}=\lim_{m\uparrow \infty}\mathbf{v}^{r,m,F}$ for a simple control function and a certain iteration
\begin{equation}\label{navode2*rewrlina}
\begin{array}{ll}
\frac{d \mathbf{v}^{r,m,F}}{dt}=A^{NS}\left(\mathbf{v}^{r,m-1}\right) \mathbf{v}^{r,m,F},
\end{array} 
\end{equation}
starting with $\mathbf{v}^{r,0}:=\mathbf{h}$ or with the information of a global solution of the multivariate Burgers equation. In the real basis we write (\ref{navode2*rewrlina}) in the form
 \begin{equation}\label{navode2*rewrlinareal}
\begin{array}{ll}
\frac{d \mathbf{v}^{re,r,m,F}}{dt}=A^{re,NS}\left(\mathbf{v}^{re,r,m-1}\right) \mathbf{v}^{r,m,F},
\end{array} 
\end{equation}
where we shall be more explicit below. The equations in (\ref{navode2*rewrlina}) and in (\ref{navode2*rewrlinareal}) are linear equations, which we can solve by Banach contraction principles in exponentially time weighted norms of time-dependent functions with values in strong Sobolev spaces, where the evaluation of each function component at time $t\geq 0$, i.e., $\mathbf{v}^{r,m,F}_i(t)$ lives in a Sobolev space with $h^s\left( {\mathbb Z}^n\right)$ for $s>n+2$. Spatial regularity of this order is observed if we have data of this order of regularity, where comparison with weakly singular elliptic intergals ensures that certain infinite matrix operations of a solution representations in terms of the data preserve this order of regularity. An exponential time weigh of the norm ensures that the representations are globally valid in time. Note that the equation in (\ref{navode2*rewrlina}) and its real form in (\ref{navode2*rewrlinareal}) correspond to linear partial integro-differential equations. This linearity can be used. In order to fine the solution of the incompressible Navier Stokes equation iterations of this type of linear partial integro-differential equations are considered. We observe that for the functional increments
\begin{equation}
\delta\mathbf{v}^{re,r,m+1,F}:=\mathbf{v}^{re,r,m+1,F}-\mathbf{v}^{re,r,m,F}
\end{equation}
for all $m\geq 0$ we have
\begin{equation}\label{navode2*rewrlinareal2}
\begin{array}{ll}
\frac{d \delta\mathbf{v}^{re,r,m+1,F}}{dt}=A^{re,NS}\left(\mathbf{v}^{re,r,m}\right) \delta\mathbf{v}^{re,r,m+1,F}+A^{re,NS}\left(\delta\mathbf{v}^{re,r,m}\right) \mathbf{v}^{r,m,F}.
\end{array} 
\end{equation}
Time discretization of the equations leads then to sequence of time-homogeneous equations which can be solved by Trotter product formulas. This is the most elementary method where we use Euler schemes. We also consider more sophisticated methods, where the Euler part of the equation is locally solved in time by a Dyson formalism. This leads to higher order schemes with respect to time straightforwardly. In order to derive Trotter product formulas we consider some multiplicative behavior of infinite matrix multiplications with polynomial decay (related to weakly singular integrals). Limits, where the time step size goes to zero, lead to Dyson type formulas for time dependent approximating linearized infinite ODEs. The simplest method in order to implement this program is an Euler time discretization above. There are refinements and higher order schemes and generalisations of the Trotter product formula. Note that the dissipative terms (corresponding to the Laplacian) are not time-dependent even for generalized models with spatially dependent viscosity, and we can integrate the 'Euler part' locally via a Dyson formula. More precisely, if $E^m\equiv E^m(t)$ denotes the matrix of the Euler part of a linearized approximating equation at iteration stage $p\geq 1$ (an equation such as in (\ref{navode2*rewrlina}) with viscosity $\nu=0$), then for given regular data $ \mathbf{v}^{m,E}(t_0),\mathbf{v}^{m-1,E}(t_0)$ at initial time $t_0\geq 0$ we have a well defined time-local solution
\begin{equation}\label{dysonobs}
\begin{array}{ll}
\mathbf{v}^{m,E}(t)=\mathbf{v}^{m,E}(t_0)+\\
\\
\sum_{p=1}^{\infty}\frac{1}{p!}\int_0^tds_1\int_0^tds_2\cdots \int_0^tds_p T_p\left(E^{m-1}(t_1)E^{m-1}(t_2)\cdot \cdots \cdot E^{m-1}(t_p) \right)\mathbf{v}^{m,E}(t_0),
\end{array} 
\end{equation}
where the data $\mathbf{v}^{m-1,E}(t_0)$ appear in the matrices $E^{m-1}(t_i)$. For linear equations with globally regular matrices $E^{m-1}(t_p)$ this representation is even globally valid in time, but without viscosity we loose uniqueness for the nonlinear Euler solution limit for dimension $n \geq 3$ (we shall come back to the reasons for this). This formula for the Euler part can be used for higher order numerical schemes. Note however, that the Euler scheme which we used in a former incarnation of this work is sufficient in order to prove global existence with polynomial decay of the data if the data have polynomial decay.
 The series of Dyson type solutions leads to contraction results in appropriate time-weighted function spaces of functions dependent of time and with values in some function spaces of infinite sequences of modes with a certain appropriate order of polynomial decay. 
 Indeed in the simplest version of a global existence proof we define Trotter product formulas for certain linear infinite equations with time independent coefficients, and then we define a time discretization in order to apply these Trotter product formula. Even the time-dependent linear approximations are then solved by double limits. For each finite approximating system of modes less or equal to some $l>0$ the Trotter product limit considered is first a time discretization limit where the order of modes remains fixed. This leads to a sequence of Dyson type formula limits where we have a polynomial decay of some order in the limit. We give some more details.    

In order to define a global solution scheme for this system of coupled infinite nonlinear ODEs we may start with the solution of the (viscous) multivariate Burgers equation (where we know that a unique global smooth solution exists), i.e., the solution of   
\begin{equation}\label{navode2*burg}
\begin{array}{ll}
\frac{d v^{B}_{i\alpha}}{dt}=\nu\sum_{j=1}^n \left( -\frac{4\pi \alpha_j^2}{l^2}\right)v^{B}_{i\alpha}
-\sum_{j=1}^n\sum_{\gamma \in {\mathbb Z}^n}\frac{2\pi i \gamma_j}{l}v^{B}_{j(\alpha-\gamma)}v^B_{i\gamma},
\end{array} 
\end{equation}
where $\alpha\in {\mathbb Z}^n$, and where the initial data $\mathbf{v}^{BF}_i(0)=\left( v^B_{i\alpha}(0)\right)_{\alpha \in {\mathbb Z}^n}$ for $1\leq i\leq n$ and $\alpha\in {\mathbb Z}^n$ are given by $n$ infinite vectors
\begin{equation}
\mathbf{v}^{BF}_i(0)=\mathbf{h}^F_i=\left(h_{i\alpha}\right)^T.
\end{equation}
We know that the solution $\left( v^B_{i\alpha}\right)_{\alpha\in {\mathbb Z}^n,~1\leq i\leq n}$ is in $h^s\left({\mathbb Z}^n\right)$ for all $s\in {\mathbb R}$ for all time $t\geq 0$. However, without assuming knowledge about the multivariate Burgers equation we may also start with 
\begin{equation}\label{navodeh}
\begin{array}{ll}
\frac{d v^{0}_{i\alpha}}{dt}=\nu\sum_{j=1}^n \left( -\frac{4\pi \alpha_j^2}{l^2}\right)v^{0}_{i\alpha}
-\sum_{j=1}^n\sum_{\gamma \in {\mathbb Z}^n}\frac{2\pi i \gamma_j}{l}h_{j(\alpha-\gamma)}v^0_{i\gamma}\\
\\
+2\pi i\alpha_i1_{\left\lbrace \alpha\neq 0\right\rbrace}\frac{\sum_{j,k=1}^n\sum_{\gamma\in {\mathbb Z}^n}4\pi \gamma_j(\alpha_k-\gamma_k)v^0_{j\gamma}h_{k(\alpha-\gamma)}}{\sum_{i=1}^n4\pi\alpha_i^2},
\end{array} 
\end{equation}
where $\alpha\in {\mathbb Z}^n$, and where the initial data $\mathbf{v}^{0,F}(0)=\left( \mathbf{v}^{0,F}_1(0),\cdots,\mathbf{v}^{0,F}_n(0)\right)$ are given by $\mathbf{v}^{0,F}_i(0)=\left( v^0_{i\alpha}(0)\right)_{\alpha \in {\mathbb Z}^n}=\left( h_{i\alpha}\right)_{\alpha \in {\mathbb Z}^n}$ for $1\leq i\leq n$.
In the equation (\ref{navodeh}) and for $\nu >0$ the zero modes ($|\alpha|=0$) are the only modes where the damping (viscous) term
\begin{equation}
\nu\sum_{j=1}^n \left( -\frac{4\pi \alpha_j^2}{l^2}\right)v^{0}_{i\alpha}
\end{equation}
cancels. The same holds for the equation in (\ref{navode2*}) of course. Therefore we introduce the first approximation of a control function $r^0_{i0}:[0,\infty)\rightarrow {\mathbb R}$ for each $1\leq i\leq n$ such the scheme for $\mathbf{v}^{r,0,F}_i=\mathbf{v}^{0,F}_i+\mathbf{e}_0r^0_{i0}, 1\leq i\leq n$ becomes a system of nonzero modes. At each stage we have to prove that this is possible, of course, i.e., that the solution of the linear approximative problem exists. Here, $\mathbf{v}^{0,F}_i+\mathbf{e}_0r^0_{i0}=\left(v^0_{i\alpha} \right)_{\alpha\in {\mathbb Z}^n}=\left(v^{r,0}_{i\alpha} \right)_{\alpha\in {\mathbb Z}^n}+r^0_{i0}$ is a vector with 
\begin{equation}
v^{r,0}_{i\alpha}:=\left\lbrace \begin{array}{ll}
v^0_{i\alpha} \mbox{ if }\alpha\neq 0,\\
\\
v^0_{i0}+r⁰_{i0}=0 \mbox{ else.}
\end{array}\right.
\end{equation}
Hence, the control function is constructed such that it cancels the zero modes of the original system at each stage of the construction. More precisely, it is constructed as a series $(r^m_0)_m$ where the convergence as $m\uparrow \infty$ follows from properties of the controlled approximative solution functions $\mathbf{v}^{r,m,F}$. Note that it has to be shown that the control function is a) finite at each stage, and b) that it is finite in the limit of all stages. Note that the Leray projection term does not contribute to the zero modes in  (\ref{navodeh}), and this may already indicates this finiteness.
The next step is to analyze the scheme formally defined in the introduction starting from (\ref{controlstart}) to (\ref{controlend}). The effect of this formal scheme is that it is autonomous with respect to the nonzero modes, i.e., only the modes with $|\alpha|\neq 0$ are dynamically active. The fact that the suppression of the zero modes via a control function is possible, i.e., that the controlled system is well-defined, is shown within the proof. Then we get into the heart of the proof. The iteration leads to a sequence $\left( \mathbf{v}^{r,m,F}(t)\right)_{ m\in {\mathbb N}}=\left( \mathbf{v}^{r,m,F}_i(t)\right)_{1\leq i\leq n, m\in {\mathbb N}}$ with
\begin{equation}\label{approxstagem}
 \mathbf{v}^{r,m,F}(t):=T\exp\left(A^{r}_{m}t\right)\mathbf{h}^{r,F}_i,
\end{equation}
where $T$ is a time order operator (as in Dyson's formalism), and the restrictive dual function spaces together with the dissipative features of the operator $A^{r}_m$ will make sure that at each time step $m$ the approximation (\ref{approxstagem}) (corresponding to a linear equation) really makes sense. In the real basis we have an analogous sequence
$\left( \mathbf{v}^{re,r,m,F}(t)\right)_{ m\in {\mathbb N}}=\left( \mathbf{v}^{re,r,m,F}_i(t)\right)_{1\leq i\leq n, m\in {\mathbb N}}$ with
\begin{equation}\label{approxstagem}
 \mathbf{v}^{re,r,m,F}(t):=T\exp\left(A^{re,r}_{m}t\right)\mathbf{h}^{re,r,F}.
\end{equation}
In the following we use the complex notation. In the proof below, we shall supplement the related notation in the real basis and show why the argument holds also in the real basis.
Concerning dissipation features note that we may choose $\nu>0$ arbitrarily as is well-known and as we pointed out in the introduction. Fortunately, we do not need this which indicates that the algorithm can be extended to models with variable positive viscosity- which is desirable from a physical point of view. However, from an algorithmic perspective for the model with constant viscosity it may be useful do consider appropriate constellations of viscosity $\nu >0$ and torus size $l>0$. Note that we have a time-order operator $T$ for all stages $m\geq 1$ since the the infinite matrices $A^{r}_{m}$ depend on time $t$ for $m\geq 1$. In order to make sense of matrix multiplication of infinite unbounded matrices we consider the matrices involved in the computation of the first approximation $\mathbf{v}^{r,0,F}_i(t)=P_0\mathbf{v}^{0,F}_i(t)$ (where $P_0$ is the related projection operator which eliminates the zero mode) as an instructive example. We may rewrite (\ref{navodeh}) in the form
\begin{equation}\label{navodeh2}
\begin{array}{ll}
\frac{d v^{0}_{i\alpha}}{dt}=\nu\sum_{j=1}^n \left(-\frac{4\pi \alpha_j^2}{l^2}\right)v^{0}_{i\alpha}
-\sum_{j=1}^n\sum_{\gamma \in {\mathbb Z}^n}\frac{2\pi i \gamma_j}{l}h_{j(\alpha-\gamma)}v^0_{i\gamma}\\
\\
+2\pi i\alpha_i1_{\left\lbrace \alpha\neq 0\right\rbrace}\frac{\sum_{k=1}^n\sum_{\gamma\in {\mathbb Z}^n}4\pi \gamma_i(\alpha_k-\gamma_k)h_{k(\alpha-\gamma)}}{\sum_{i=1}^n4\pi\alpha_i^2}v^0_{i\gamma}\\
\\
+\sum_{j=1,j\neq i}^n2\pi i\alpha_i1_{\left\lbrace \alpha\neq 0\right\rbrace}\frac{\sum_{k=1}^n\sum_{\gamma\in {\mathbb Z}^n}4\pi \gamma_j(\alpha_k-\gamma_k)h_{k(\alpha-\gamma)}}{\sum_{i=1}^n4\pi\alpha_i^2}v^0_{j\gamma}.
\end{array} 
\end{equation}
Hence this infinite linear system can be written in terms of a matrix $A^r_0=A_0$ with $n\times n$ infinite matrix entries. More precisely,  we have
\begin{equation}
A_{0}=\left(A^{ij}_0\right)_{1\leq i,j\leq n}, 
\end{equation}
where for $1\leq i,j \leq n$ we have
\begin{equation}
A^{ij}_0=\delta_{ij}D^0+\delta_{ij}C^0+L^0_{ij}
\end{equation}
along with the Kronecker $\delta$-function $\delta_{ij}$, and with the infinite matrices
\begin{equation}\label{mat}
\begin{array}{ll}
D^0:=\left( -\nu\delta_{\alpha\beta}\sum_{j=1}^n\frac{4\pi \alpha_j^2}{l^2}\right)_{\alpha,\beta\in {\mathbb Z}^n},\\
\\
C^0_{ij}:=\left( -\sum_{j=1}^n\frac{2\pi i (\alpha_j-\beta_j)}{l}h_{i(\alpha-\beta)}\right)_{\alpha,\beta \in {\mathbb Z}^n},\\
\\
L^0_{ij}=\left( (2\pi i)\alpha_i1_{\left\lbrace \alpha\neq 0\right\rbrace}\frac{\sum_{k=1}^n4\pi \beta_j(\alpha_k-\beta_k)h_{k(\alpha-\beta)}}{\sum_{i=1}^n4\pi\alpha_i^2}\right)_{\alpha,\beta \in {\mathbb Z}^n},
\end{array} 
\end{equation}
corresponding to the the Laplacian, the convection term, and the Leray projection terms respectively. Strictly speaking, we have defined a $n\times n$ matrix of infinite matrices where the Burgers equation terms and the linearized Leray projection terms exist also off diagonal. It is clear how the matrix multiplication may be defined in order to reformulate (\ref{navodeh2}) and we do not dwell on these trivial formalities. We note that
\begin{equation}
\begin{array}{ll}
\frac{\partial \mathbf{v}^{0,F}}{\partial t}=A_0\mathbf{v}^{0,F},\\
\\
\mathbf{v}^{0,F}(0)=\mathbf{h}^F
\end{array}
\end{equation}
along with the vectors $\mathbf{v}^F=\left( \mathbf{v}^F_1,\cdots ,\mathbf{v}^F_n\right)^T$ and $\mathbf{h}^F=\left( \mathbf{h}^F,\cdots ,\mathbf{h}^F\right)^T$. Note that we have off-diagonal terms only because we consider global equations, i.e., equations, which correspond to linear partial integro-differential equations.
Note that the matrices $D^0, C^0_{ij}, L^0_{ij}$ are unbounded even for regular data $h_{i}$ with fast decreasing modes (for the the matrix $C^0_{ij}$ consider constant $\alpha-\beta$ and let $\beta_j$ go to infinity for some $j$). The difficulty to handle unbounded infinite matrices in dual space becomes apparent. However, multiplications of these matrices with the data $\mathbf{h}$ are regular, and because of the special structure of the matrices involved we have matrix-data multiplication which inherits polynomial decay and hence leads to a well-defined iteration of matrices (even in the Euler case).
In the case of an incompressible Navier Stokes equation at this point we can take additional advantage of the dissipative nature of the operator which is indeed the difference to the Euler equation. This is a major motivation for the dissipative Trotter product formula. We describe it here in the comlex notation. We shall supplement this with similar observations in the case of a real basis below. For two matrices
$M=\left( m_{\alpha\beta}\right)_{\alpha,\beta\in {\mathbb Z}^n}$ and $N=\left( n_{\alpha\beta}\right)_{\alpha,\beta\in {\mathbb Z}^n}$ we may formally define the product $P=\left( p_{\alpha\gamma}\right)_{\alpha,\gamma\in {\mathbb Z}^n}=MN$ via
\begin{equation}
p_{\alpha\gamma}=\sum_{\beta\in {\mathbb Z}^n}m_{\alpha\beta}n_{\beta\gamma}
\end{equation}
for all $\alpha,\gamma\in {\mathbb Z}^n$. There are natural spaces for which this definition makes sense (and which we introduce below). In order to apply this natural theory of infinite matrices developed below the next step is to observe that for $k\geq 0$ matrices such as
\begin{equation}\label{mat2}
\begin{array}{ll}
\exp(D^0)\left(C^0_{ij}\right)^k=\left( \exp\left( -\nu\sum_{j=1}^n\frac{4\pi \alpha_j^2}{l^2}\right)\left( \sum_{j=1}^n\frac{2\pi i (\alpha_j-\beta_j=}{l}h_{i(\alpha-\beta)}\right)_{\alpha,\beta \in {\mathbb Z}^n}\right)^k,\\
\\
\exp(D^0)\left( L^0_{ij}\right)^k=
{\Bigg (}\exp\left( -\nu\sum_{j=1}^n\frac{4\pi \alpha_j^2}{l^2}\right)\times\\
\\
\left( 2\pi i\alpha_i1_{\left\lbrace \alpha\neq 0\right\rbrace}\frac{\sum_{k=1}^n4\pi \beta_j(\alpha_k-\beta_k)h_{k(\alpha-\beta)}}{\sum_{i=1}^n4\pi\alpha_i^2}\right)_{\alpha,\beta \in {\mathbb Z}^n}^k{\Bigg )} 
\end{array} 
\end{equation}
have indeed bounded entries if the modes $h_{i\beta}$ decreases sufficiently. More importantly, we shall see that with regular data $h_i\in $ related iterated infinite matrix multiplications applied to the data $h_i,~1\leq i\leq n,~h_i\in h^s\left( {\mathbb Z}^n\right),~s>n+2$ lead to well defined sequences of vectors where evaluations at time $t\geq 0$ live in strong Sobolev spaces $h^s\left( {\mathbb Z}^n\right),~s>n+2$ . Note that for the reasons mentioned it is diffcult to define a formal fundamental solution 
\begin{equation}
\exp\left( (\delta_{ij}D^0+C^0_{ij}+L^0_{ij})t\right)
\end{equation}
even at stage zero of the approximation (where we do not need a time-order operator).
However for regular data $\mathbf{h}^F$ the solution $\mathbf{v}^{0,F}$ of the firs approximation, i.e., the expression
\begin{equation}
\exp\left( (\delta_{ij}D^0+C^0_{ij}+L^0_{ij})t\right)\mathbf{h}^F
\end{equation}
can be defined directly. We do not even need this. We may consider projections of infinite vectors to finite vectors with modes of order less than $l>0$, i.e., projections $P_{v^l}$  which are define on infinite vectors $\left( g_{\alpha} \right)^T_{\alpha\in {\mathbb Z}^n}$ by
\begin{equation}
P_{v^l}\left( g_{\alpha} \right)^T_{\alpha\in {\mathbb Z}^n}:=\left( g_{\alpha} \right)^T_{|\alpha|\leq l}
\end{equation}
 and projections $P_{M^l}$ of infinite matrices $\left(m_{\alpha\beta}\right)_{\alpha\beta \in {\mathbb Z}^n}$ to finite matrices with
\begin{equation}
P_{M^l}\left(  m_{\alpha\beta} \right)^T_{\alpha\beta\in {\mathbb Z}^n}
:=\left( m_{\alpha\beta} \right)^T_{|\alpha|\leq l}
\end{equation}
and prove Trotter product formulas for finite dissipative systems, and then consider limits. 
This will lead us to the crucial observation then of a product formula of Trotter-type of the form
\begin{equation}\label{trotternav}
\begin{array}{ll}
\lim_{l\uparrow \infty}\exp\left( (\delta_{ij}P_{M^l}D^0+P_{M^l}C^0_{ij}+P_{M^l}L^0_{ij})t\right)P_{v^l}\mathbf{h}^F\\
\\
=\lim_{l\uparrow \infty}\lim_{k\uparrow \infty}\left( \exp\left( \delta_{ij}P_{M^l}D^0\frac{t}{k}\right)\exp\left(\left( P_{M^l}C^0_{ij}+P_{M^l}L^0_{ij}\right) \frac{t}{k}\right)\right)^kP_{v_l}\mathbf{h}^F,
\end{array}
\end{equation}
where $(\delta_{ij}D^0+C^0_{ij}+L^0_{ij})$, i.e., the argument of the projection operator $P_{M_l}$, denote 'quadratic' matrices with $\left( n\times {\mathbb Z}^n\right) $ rows and $\left( n\times {\mathbb Z}^n\right) $ columns and $h$ is some regular function. This means that for finite $l>0$ we can prove a Trotter type relation and in the limit the left side equals the solution $\mathbf{v}^{0,F}$ of the equation above and is defined by the right side of (\ref{trotternav}). In the last step the regularity of the data $\mathbf{h}^F$ comes into play.
In this form this formula is useful only for the stage $0$ where the coefficient functions do not depend on time. For stages $m>0$ of the construction we have to take time dependence into account. However, we may define a Euler-type scheme and define substages which produce the approximations of stage $m$ in the limit. As we indicated above we may even set up higher order schemes using a Dyson formalism or use the Dyson formalism in order to compute an Euler equation limit for short time (in this case we have no fundamental solution at all and need the application to the data).  
The formula (\ref{trotternav}) depends on an observation which uses the diagonal structure of $D^0$ (and therefore $\exp\left(D^0\right)$). Here the matrix $C^0_{ij}+L^0_{ij}-\left(C^0_{ij}+L^0_{ij}\right)^T$, i.e., the deficiencies of symmetry in the matrices $C^0_{ij}+L^0_{ij}$, factorize with the correction term of an infinite analog of a special form of a CBH-type formula. It is an interesting fact that this deficiency is in a natural matrix space, we are going to define next.
  
Concerning the natural matrix spaces, in order to make sense of formulas as in (\ref{approxstagem}) for $s>n+ 2$ for  a matrix $M=\left( m_{\alpha\beta}\right)_{\alpha,\beta\in {\mathbb Z}^n}$ we say that
\begin{equation}
M\in M^s_n
\end{equation}
if for all $\alpha,\beta \in {\mathbb Z}^n$
\begin{equation}
|m_{\alpha\beta}|\leq \frac{C}{1+|\alpha-\beta|^{2s}}
\end{equation}
for some $C>0$.
For $s>n$ and a vector $w=(w_{\alpha})_{\alpha\in {\mathbb Z}^n}\in h^s\left({\mathbb Z}\right)$ 
we define the multiplication of the infinite matrix $M$ with $w$ by $Mw=\left(Mw_{\alpha} \right)_{\alpha\in {\mathbb Z}^n}$ along with 
\begin{equation}
Mw_{\alpha}:=\sum_{\beta\in {\mathbb Z}^n}m_{\alpha\beta}w_{\beta}.
\end{equation}
Indeed we observe that for $r,s>n\geq 2$ we have
\begin{equation}
\sum_{\beta\in {\mathbb Z}^n\setminus \left\lbrace 0,\alpha\right\rbrace }\frac{1}{|\alpha-\beta|^{s}\beta^r}\leq \frac{C}{1+|\alpha|^{r+s-n}}
\end{equation}
such that for $s>n$ and $M\in M^s_n$ we have indeed $Mw\in h^r\left({\mathbb Z}^n\right)$ if $w\in h^r\left({\mathbb Z}^n\right)$. This implies that
for $s>n$, $M\in h^s\left({\mathbb Z}^n\times {\mathbb Z}^n\right)$ and $w\in h^r\left({\mathbb Z}^n\right)$ 
\begin{equation}
M^1w:=Mw,~M^{k+1}w=M\left(M^k\right)w 
\end{equation}
is a well defined recursion for $k\geq 1$. Hence for a matrix $M$ which is not time-dependent the analytic vector
\begin{equation}
\exp\left(Mt\right)w:=\sum_{k\geq 0}\frac{M^kt^kw}{k!} 
\end{equation}
is well defined (even globally). The reason that we use the stronger assumption $s>n+2$ is that we have additional mode factors in the matrices which render such that we need slightly more regularity to have inheritage of a certain regularity (inheritage of a certain order of polynomial decay). For a time dependent matrix $A=A(t)$ we formally define the time-ordered exponential '\`{a} la Dyson' to be 
\begin{equation}
\begin{array}{ll}
T\exp(At):=\sum_{m=0}^{\infty}\frac{1}{m!}\int_{[0,t]}dt_1\cdots dt_mTA(t_1)\cdots A(t_m)dt_1\cdots dt_m\\
\\
:=\sum_{m=0}^{\infty}\int_0^tdt_1\int_0^{t_1}dt_2\cdots \int_0^{t_{n-1}}A(t_1)\cdots A(t_m).
\end{array}
\end{equation}
However, in the situation above of the Navier Stokes operator this is just a formal definition which lives in an extremely weak function space (we are not going to define), while especially for the Euler part $E(t):=A(t)|_{\nu =0}$ the expression  
\begin{equation}
\begin{array}{ll}
T\exp(Et)f:=\sum_{m=0}^{\infty}\frac{1}{m!}\int_{[0,t]}dt_1\cdots dt_mTE(t_1)\cdots E(t_m)dt_1\cdots dt_m\\
\\
:=\sum_{m=0}^{\infty}\int_0^tdt_1\int_0^{t_1}dt_2\cdots \int_0^{t_{n-1}}E(t_1)\cdots E(t_m)f.
\end{array}
\end{equation}
makes perfect sense for regular data $f$, i.e. data of polynomial decay of order $s>n+2$. At least this is true for local time while for global time we need the viscosity term in order to establish uniqueness. Combining such a representation for linearized equations at each stage of approximation with a Trotter product formula which adds another damping term via a negative exponential weight we get a natural scheme for the incompressible Navier Stokes equation.
Note that the operator $T$ is the usual Dyson time-order operator which may be defined recursively using the Heavyside function. Note that in this paper the symbol $T$ will sometimes also denote the time horizon but it will be clear from the context which meaning is indended. Note furthermore that schemes with explicit use of the Heavyside functions may make matters a little delicate if stronger regularity with respect to time derivatives is considered - this is another advantage of the simple Euler time discretization. Well, you may check that matrices are smooth on the time diagonals such that these complications of higher order schemes using a Dyson formalism are more of a technical nature. However, you have to be careful at this point and the Euler scheme simplifies the matter a bit. 
In any case, at each stage $m>0$ of the construction we shall define a scheme based on the Trotter product formula such that in the limit the linear approximation expressed formally by 
\begin{equation}
\mathbf{v}^{r,m,F}(t):=T\exp\left(A^{r}_{m}t\right)\mathbf{h}^{F},
\end{equation}
gets a strict sense. This holds also for the uncontrolled linear approximation 
\begin{equation}
\mathbf{v}^{m,F}(t):=T\exp\left(A_{m}t\right)\mathbf{h}^{F},
\end{equation}
where we shall see that these expressions can be well-defined for data with $h_i\in h^s\left({\mathbb Z}\right)$ for $s>n+2$ and for $t\geq 0$. Note that proving the existence of a global limit of the the iteration with respect to $m$ in a regular space also requires $\nu>0$. In order to prove the existence of a limit there are basically three possibilities then. One is to prove the existence of a uniform bound
\begin{equation}\label{uniformbound}
\sup_{t\geq 0}{\big |}\mathbf{v}^{r,m,F}(t){\big |}_{h^s}+\sup_{t\geq 0}{\Big |}\frac{\partial}{\partial t}\mathbf{v}^{r,m,F}(t){\Big |}_{h^s}\leq C
\end{equation}
for some $C>0$ independent of $m$, and then proceed with a compactness arguments a la Rellich. A weaker form of (\ref{uniformbound}) without the time derivative and a strong spatial norm (large $s$) is another variation for this alternative since product formulas for Sobolev norms with $s>\frac{1}{2}n$ and a priori estimates of Schauder type lead to an independent proof of the existence of a regular time derivative in the limit $m\uparrow \infty$. Maybe the latter variation is the most simple one. An alternative is a contraction argument on a ball of an appropriate function space with exponential time weight. Clearly, the radius of the ball will depend on the initial data, dimension and viscosity. This dependence can be encoded in a time weight of a time-weighted norm- as it is known from ODE theory for finite equations. Contraction arguments have the advantage that they lead to uniqueness results naturally. In order to strengthen results concerning the time dependence of regular upper bounds we shall consider auto-controlled schemes, where another time discretization is used and a damping term is introduced via a time dilatation transformation. This latter procedure has the advantage that linear upper bounds and even global uniform upper bounds can be obtained, and it can be combined with both of the former possibilities.  
In any case but concerning the second possibility of contraction especially for $\left( t\rightarrow w(t)\right) \in C\left(\left[0,\infty\right)\times h^s\left({\mathbb Z}^n\right)\right)$,  we may define for some $C>0$ (depending only on $\nu>0$, the dimension $n>0$, and the initial data components $h_i\in C^{\infty}\left({\mathbb T}^n\right)$) the norm
\begin{equation}
|w|^{\mbox{exp}}_{h^s ,C}:=\sup_{t\in [0,\infty)}\exp(-Ct)|w(t)|_{h^s},
\end{equation}
and the norm
\begin{equation}
|w|^{\mbox{exp},1}_{h^s, C}:=\sup_{t\in [0,\infty)}\exp(-Ct)\left( |w(t)|_{h^s}+|D_tw(t)|_{h^s}\right) ,
\end{equation}
where $D_tw(t):=\left(\frac{d}{\partial t}w(t)_{\alpha} \right)^T_{\alpha\in {\mathbb Z}^n}$ denotes the vector of componentwise derivatives with respect to time $t$. 
Then a contraction property
\begin{equation}
\left( \delta\mathbf{v}^{m,F}_i\right)_{1\leq i\leq n} =\left( \mathbf{v}^{r,m,F}_i(t)-\mathbf{v}^{r,m-1,F}_i\right)_{1\leq i\leq n}
\end{equation}
for all $1\leq i\leq n$ with respect to both norms and for $s>n+ 2$ ad $1\leq i\leq n$ can be proved. 
Summarizing we have the following steps:
\begin{itemize}
 \item[i)] in the first step we do some matrix analysis. First, the multiplication of infinite matrices $A_0 $ and $A_m$ with infinite vectors $\mathbf{h}^F$ at approximation stage $m\geq 0$ is well defined. Second matrix multiplication in the matrix space $M^s_n$ is well-defined for $s>n\geq 2$ as is $\exp(M)$ for $M\in M^s_n$. Then we prove a dissipative Trotter product formula for finite systems and apply the result to the first infinite approximation system with solution $\mathbf{v}^{0,F}$ at stage $m=0$ which is related to a linear integro-partial differential equation.
 \item[ii)] In the second step we set up an Euler-type scheme based on the Trotter-product formula which shows that
 for some $\nu>0$ and $s>n+ 2$ the exponential
 \begin{equation}
\mathbf{v}^{r,m,F}_i(t):=\left( T\exp\left(A^{r}_{m}t\right)\mathbf{h}^{F}\right)_i
\end{equation}
is well-defined for all $1\leq i\leq n$, where $\mathbf{v}^{r,m,F}_i(t)\in h^s_l\left({\mathbb Z}^n\right)$ for $t\geq 0$ and $1\leq i\leq n$ for all $m$, i.e., that the linearized equations for $\mathbf{v}^{r,m,F}_i$ which are equivalent to a linear partial integro-differential equation have a global solution.  Here $(.)_i$ indicates that we project to the $i$th component of the $n$ infinite entries of the solution vector at the first stage of construction.
\item[iii)] for $\nu>0$ as in step ii) the limit
\begin{equation}
\mathbf{v}^{r,F}_i(t):=\left( T\exp\left(A^{r}_{\infty}t\right)\mathbf{h}^{F}\right)_i
\end{equation}
exists, where
\begin{equation}
\begin{array}{ll}
 \mathbf{v}^{r,F}_i(t):=\lim_{m\uparrow \infty}\mathbf{v}^{m,F}_i(t)\\
 \\
 =\left( T\exp\left(A^{r}_{\infty}t\right)\mathbf{h}^{F}\right) _i:=\lim_{m\uparrow \infty}\left( T\exp\left(A^{r}_{m}t\right)\mathbf{h}^{F}\right) _i\in h^s_l\left({\mathbb Z}^n\right).
\end{array}
\end{equation}
This limit can be obtained by compactness arguments and by a contraction results in time-weighted regular function spaces. The latter result leads to uniqueness of solutions in the time-weighted function space. In the third step we consider also stronger results for global upper bound for auto-controlled schemes.
\end{itemize}
Concerning the third step (iii) we note that for all $t\geq 0$ and $x\in {\mathbb T}^n_l$ we have
\begin{equation}\label{vrm111}
v^{r,m}_j(t,x):=\sum_{\alpha\in {\mathbb Z}^n}v^{r}_{i\alpha}(t)\exp{(2\pi i\alpha x)}
\end{equation}
for all $1\leq j\leq n$. Hence, in classical Sobolev function spaces we have a compact sequence $\left( v^{r,m,F}_j(t,.)\right)_{m\in {\mathbb N}}$  in higher order Sobolev spaces $H^r$ with $r>s$ by the Rellich embedding theorem on compact manifolds for fixed $t\geq 0$. In this context recall that
\begin{thm}
For any $q>s$, $q,s\in {\mathbb R}$ and any compact manifold $M$ the embedding 
\begin{equation}
j:H^q\left(M\right)\rightarrow H^s\left(M\right)  
\end{equation}
is compact.
\end{thm}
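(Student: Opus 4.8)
The plan is to reduce the statement to the flat torus and then argue on the Fourier side, where the embedding is simply the inclusion $h^q(\mathbb Z^n)\hookrightarrow h^s(\mathbb Z^n)$ and compactness becomes a diagonal-extraction argument. First I would localise: cover $M$ by finitely many coordinate charts $\varphi_k\colon U_k\to V_k\subset\mathbb R^n$ and fix a smooth partition of unity $\{\chi_k\}$ subordinate to $\{U_k\}$. Multiplication by a fixed $\chi\in C^\infty_c$ is bounded on $H^t$ for every $t\in\mathbb R$ (for $t\ge 0$ by the Leibniz rule together with the algebra property of $C^\infty$; for $t<0$ by duality, since $(H^t)^*=H^{-t}$ and the transpose of multiplication by $\chi$ is again multiplication by $\chi$), and the chart maps are bi-Lipschitz diffeomorphisms which preserve $H^t$ up to constants. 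Hence a sequence $(u_m)$ bounded in $H^q(M)$ yields, for each $k$, a sequence $\big((\chi_ku_m)\circ\varphi_k^{-1}\big)_m$ bounded in $H^q(\mathbb R^n)$ and supported in a fixed compact set; after enclosing that set in a fundamental domain we may, by periodisation, regard these as bounded sequences in $H^q(\mathbb T^n)$. Since $u_m=\sum_k\chi_ku_m$ is a finite sum, it suffices to treat the torus case: a sequence bounded in $H^q(\mathbb T^n)$ has an $H^s(\mathbb T^n)$-convergent subsequence.

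On the torus I would pass to Fourier coefficients, using the identification $H^t(\mathbb T^n)\cong h^t(\mathbb Z^n)$ recalled in the introduction. Let $(w^m)$, $w^m=(w^m_\alpha)_{\alpha\in\mathbb Z^n}$, satisfy $\sum_\alpha|w^m_\alpha|^2\langle\alpha\rangle^{2q}\le C^2$. Then $|w^m_\alpha|\le C\langle\alpha\rangle^{-q}$ for each $\alpha$, so each coordinate sequence $(w^m_\alpha)_m$ is bounded in $\mathbb C$; by a Cantor diagonal argument over the countable index set $\mathbb Z^n$ there is a subsequence (not relabelled) with $w^m_\alpha\to w_\alpha$ for every $\alpha$, and the bound passes to the limit, so $w\in h^q\subset h^s$. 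To obtain norm convergence in $h^s$, split at a cutoff $N$:
\begin{equation}
\sum_{\alpha}|w^m_\alpha-w_\alpha|^2\langle\alpha\rangle^{2s}
=\sum_{|\alpha|\le N}|w^m_\alpha-w_\alpha|^2\langle\alpha\rangle^{2s}
+\sum_{|\alpha|> N}|w^m_\alpha-w_\alpha|^2\langle\alpha\rangle^{2s}.
\end{equation}
The second sum is at most $\langle N\rangle^{2(s-q)}\sum_{\alpha}|w^m_\alpha-w_\alpha|^2\langle\alpha\rangle^{2q}\le\langle N\rangle^{2(s-q)}(2C)^2$, which tends to $0$ as $N\to\infty$ uniformly in $m$ because $s<q$; the first sum is a finite sum of terms each going to $0$ as $m\to\infty$ by the coordinatewise convergence. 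Given $\varepsilon>0$, first fix $N$ so that the tail is $<\varepsilon/2$, then take $m$ large so the head is $<\varepsilon/2$; this yields $w^m\to w$ in $h^s(\mathbb Z^n)$, i.e.\ in $H^s(\mathbb T^n)$.

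Since every bounded sequence in $H^q(M)$ then has an $H^s(M)$-convergent subsequence, $j$ is compact. The only delicate point is the reduction step: one must check that periodising the localised pieces does not spoil the $H^q$ bound, and that multiplication by the cutoff functions is bounded on $H^t$ for negative $t$ as well (needed since $s,q$ may be negative); both are standard, the latter via the duality remark above. For the purposes of this paper the reduction can in fact be bypassed, since there $M=\mathbb T^n_l$ and the Fourier argument applies verbatim after the rescaling $x\mapsto x/l$, so the essential content is precisely the two-scale (low mode / high mode) splitting in the displayed estimate above.
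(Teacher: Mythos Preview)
Your argument is correct and is essentially the standard proof of the Rellich--Kondrachov theorem. Note, however, that the paper does not supply its own proof of this statement: it is quoted as a known background result (``In this context recall that\ldots'') and is used only for the specific case $M=\mathbb{T}^n_l$, where, as you observe yourself in the last paragraph, the localisation step is unnecessary and the Fourier-side tail estimate is the whole story. So there is nothing to compare against; your write-up simply fills in a citation.

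One minor comment on the reduction step, which you flag as the delicate point: to reassemble the pieces you need not only that multiplication by $\chi_k$ is bounded on $H^t$, but also that pullback by the chart diffeomorphisms is bounded on $H^t$ for all real $t$ (again by interpolation and duality from the integer case). You allude to this (``bi-Lipschitz diffeomorphisms which preserve $H^t$ up to constants'') but the bi-Lipschitz condition alone is not enough for $|t|$ large; smoothness of the transition maps is what is actually used. This is routine and does not affect the validity of the argument.
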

For any $t\geq 0$ we have  a limit $v^r_j(t,.)\in H^{s}\left({\mathbb T}^n\right)\subset C^m$ for $s>m+\frac{n}{2}$ and the fact that the control function $r$ is well-defined and continuous implies
\begin{equation}
v_j(t,.)=v^r_j(t,.)-r(t)\in C^m
\end{equation}
for fixed $t\geq 0$ and all $m\in {\mathbb N}$. Finally we verify that $v_j$ is indeed a classical solution of the original Navier-Stokes equation using the properties we proved for the vector $v^{r,F}$ in the dual formulation. One possibility to do this is to plug in the approximations of the solutions and estimate the remainder pointwise observing that it goes to zero in the strong norm $|w|^{\mbox{exp},1}_{h^s,C}$ for an appropriate constant $C>0$. Using an auto-controlled scheme we can strengthen the results and prove global uniform upper bounds without exponential weights with respect to time.  
Note that for any of the alternative schemes the modes $v^{r}_{i\alpha},~1\leq i\leq n,~\alpha \in {\mathbb Z}^n$ determine classical (controlled) velocity functions $v^{m}_i,~1\leq i\leq n$ ( $v^{r,m}_i,~1\leq i\leq n$) which can be plugged into the incompressible Navier Stokes equation in order to verify that the limit $m\uparrow \infty$ is indeed the solution ('the' as we have strictly positive viscosity). If we plug in the approximation of stage $m\geq 0$ we may use the fact that this approximation satisfies a linear partial integro differential equation with coefficients determined in term of the velocity function of the previous step of approximation. This implies that we have for $1\leq i\leq n$
 \begin{equation}\label{navlerayco}
\begin{array}{ll}
    \frac{\partial v^{m}_i}{\partial t}-\nu \Delta v^m_i+  \sum_{j=1}^nv^m_j \frac{\partial v^m}{\partial x_i}  \\
    \\
    -\sum_{j,k=1}^n\int  K_{,i}(x-y)v^m_{j,k}v^m_{k,j}(t,y)dy\\
    \\
   =-\sum_{j=1}^n\left( v^m_j-v^{m-1}_j\right) \frac{\partial v^m}{\partial x_i} \\ 
 \\
 -\sum_{j,k=1}^n\int  K_{,i}(x-y)\left( v^m_{j,k}-v^{m-1}_{j,k}\right) v^m_{k,j}(t,y)dy,
\end{array}
\end{equation}
such that a construction of a Cauchy sequence $v^{m}_i,~1\leq i\leq n,~m\geq 0$ with $v^m_i(t,.)\in H^{s}$ for $s>n+2$ with a uniform global upper bound $C$ in $H^s$ leads to global existence. Here Sobolov estimates for products may be used. 
\section{Proof of theorem \ref{mainthm1}}
 We consider data
$h_i\in C^{\infty}\left( {\mathbb T}^n_l\right) $ for $1\leq i\leq n$. The special domain of a $n$-torus has the advantage that we may represent approximations of a solution evaluated at a given time $t\geq 0$ in the form of Fourier mode coefficients with respect to a Fourier basis
\begin{equation}
\left\lbrace \exp\left( \frac{2\pi i\alpha x}{l}\right) \right\rbrace_{\alpha \in {\mathbb Z}^n},
\end{equation}
i.e., with respect to an orthonormal basis of $L^2\left({\mathbb T}^n_l \right)$. We consider a real basis below, but let us stick to the complex notation first. We shall see that the considerations of this proof can always be translated to analogous considerations in the real basis. It is natural to start with an expansion of the solution $\mathbf{v}=(v_1,\cdots ,v_n)^T$ in the form
\begin{equation}
\begin{array}{ll}
v_i(t,x)=\sum_{\alpha \in {\mathbb Z}^n}v_{i\alpha}\exp\left(\frac{2\pi i\alpha x}{l}\right),\\
\\
p(t,x)=\sum_{\alpha \in {\mathbb Z}^n}p_{\alpha}\exp\left(\frac{2\pi i\alpha x}{l}\right),
\end{array}
\end{equation}
where the modes $v_{i\alpha},~\alpha\in {\mathbb Z}^n$ and $p_{\alpha},~\alpha\in {\mathbb Z}^n$ depend on time.
Here we have an index $1\leq i\leq n$ which gives rise to some ambiguity with the complex number $2\pi i$, but the latter $i$ always occurs in the context of $\pi$ such that no confusion should arise. 
Plugging this ansatz into the Navier-Stokes equation system (\ref{nav}) we formally get $n$ coupled infinite differential equations of ordinary type for the modes, i.e., for all $1\leq i\leq n$ we have for all $\alpha \in {\mathbb Z}^n$
\begin{equation}\label{navode}
\begin{array}{ll}
\frac{d v_{i\alpha}}{dt}=\nu\sum_{j=1}^n \left( -\frac{4\pi \alpha_j^2}{l^2}\right)v_{i\alpha}
-\sum_{j=1}^n\sum_{\gamma \in {\mathbb Z}^n}\frac{2\pi i \gamma_j}{l}v_{j(\alpha-\gamma)}v_{i\gamma}\\
\\
-2\pi i\alpha_ip_{\alpha},
\end{array} 
\end{equation}
where for each $1\leq i\leq n$ and each $\alpha\in {\mathbb Z}^n$
\begin{equation}
v_{i\alpha}:[0,\infty)\rightarrow {\mathbb R}
\end{equation}
are some time-dependent $\alpha$-modes of velocity, and
\begin{equation}
p_{\alpha}:[0,\infty)\rightarrow {\mathbb R}
\end{equation}
are some time-dependent $\alpha$-modes of pressure to be determined in terms of the velocity modes $v_{i\alpha}$.
Here, $(\alpha -\gamma)$ denotes the subtraction between multiindices, i.e.,
\begin{equation}
(\alpha -\gamma ) =(\alpha_1-\gamma_1,\cdots,\alpha_n-\gamma_n),
\end{equation}
where brackets are added for notational reasons in order to mark separate multiindices.
Next we may eliminate the pressure modes $p_{\alpha}$ using Leray projection, which shows that the pressure $p$ satisfies the Poisson equation
\begin{equation}\label{poisson1}
\Delta p=-\sum_{j,k}v_{j,k}v_{k,j},
\end{equation}
where we use the Einstein abbreviation for differentiation, i.e.,
\begin{equation}
v_{j,k}=\frac{\partial v_j}{\partial x_k}~\mbox{etc.}.
\end{equation}
The Poisson equation in (\ref{poisson1}) is re-expressed by an infinite equation system for the $\alpha$-modes of the form  
\begin{equation}
p_{\alpha}\sum_{i=1}^n\frac{-4\pi^2\alpha_i^2}{l^2}=\sum_{j,k=1}^n\sum_{\gamma\in {\mathbb Z}^n}\frac{4\pi^2 \gamma_j(\alpha_k-\gamma_k)v_{j\gamma}v_{k(\alpha-\gamma)}}{l^2}
\end{equation}
with the formal solution (w.l.o.g. $\alpha\neq 0$ -see below)
\begin{equation}\label{palpha}
p_{\alpha}=-1_{\left\lbrace \alpha\neq 0\right\rbrace }\frac{\sum_{j,k=1}^n\sum_{\gamma\in {\mathbb Z}^n}4\pi^2 \gamma_j(\alpha_k-\gamma_k)v_{j\gamma}v_{k(\alpha-\gamma)}}{\sum_{i=1}^n4\pi^2\alpha_i^2},
\end{equation}
for $\alpha \neq 0$, which is indeed independent of the size of the torus $l$. This means that this term becomes large compared to the second order terms and the convection term for large tori ($l>0$ large while viscosity $\nu >0$ stays fixed). This may happen for approximations of Cauchy problems by equations for large tori for purpose of simulation.  Note that
\begin{equation}
1_{\left\lbrace \alpha\neq 0\right\rbrace}:= \left\lbrace \begin{array}{ll}
1 \mbox{ if }\alpha\neq 0,\\
\\
0 \mbox{ else}. 
\end{array}\right.
\end{equation}
Note that we put $p_{0}=0$ for $\alpha=0$ in (\ref{palpha}). We are free to do so since $(\mathbf{v},p+C)$ is a solution of (\ref{nav}) if $(\mathbf{v},p)$ is.

Plugging into (\ref{navode}) we get
for all $1\leq i\leq n$, and all $\alpha \in {\mathbb Z}^n$
\begin{equation}\label{navode2**}
\begin{array}{ll}
\frac{d v_{i\alpha}}{dt}=\sum_{j=1}^n\nu \left( -\frac{4\pi \alpha_j^2}{l^2}\right)v_{i\alpha}
-\sum_{j=1}^n\sum_{\gamma \in {\mathbb Z}^n}\frac{2\pi i \gamma_j}{l}v_{j(\alpha-\gamma)}v_{i\gamma}\\
\\
+2\pi i\alpha_i1_{\left\lbrace \alpha\neq 0\right\rbrace}\frac{\sum_{j,k=1}^n\sum_{\gamma\in {\mathbb Z}^n}4\pi^2 \gamma_j(\alpha_k-\gamma_k)v_{j\gamma}v_{k(\alpha-\gamma)}}{\sum_{i=1}^n4\pi^2\alpha_i^2}.
\end{array} 
\end{equation}
For $1\leq i\leq n$, this is a system of nonlinear ordinary differential equations of 'infinite dimension', i.e., for infinite modes $v_{i\alpha}$ and $p_{\alpha}$ along with $\alpha \in {\mathbb Z}^n$. Next we rewrite the system in the real basis for torus size $l=1$, i.e in the basis
\begin{equation}\label{realbasis}
\cos\left(2\pi\alpha x\right),~\sin\left(2\pi\alpha x\right),~\alpha\in {\mathbb N}^n,  
\end{equation}
where ${\mathbb N}$ denotes the set set of nonnegative integers, i.e., $0\in {\mathbb N}$.
In this variation of representation on $[0,l]^n$ with periodic boundary condition we start with an expansion of the solution $\mathbf{v}^{re}=(v^{re}_1,\cdots ,v^{re}_n)^T$ in the form
\begin{equation}
\begin{array}{ll}
v^{re}_i(t,x)=\sum_{\alpha \in {\mathbb N}^n}v^{re}_{ci\alpha}\cos\left(2\pi i\alpha x\right)+\sum_{\alpha \in {\mathbb N}^n}v^{re}_{si\alpha}\sin\left(2\pi i\alpha x\right),\\
\\
p^{re}(t,x)=\sum_{\alpha \in {\mathbb N}^n}p^{re}_{c\alpha}\cos\left(2\pi i\alpha x\right)+\sum_{\alpha \in {\mathbb N}^n}p^{re}_{s\alpha}\cos\left(2\pi i\alpha x\right),
\end{array}
\end{equation}
where the modes $v^{re}_{ci\alpha},~v^{re}_{si\alpha},~\alpha\in {\mathbb N}^n$ and $p^{re}_{\alpha},~\alpha\in {\mathbb N}^n$ depend on time. The lower indices $c$ and $s$ indicate that the mode is a coefficient in a cosine series and a sinus series respectively. The size $l=1$ is just in order to simplify notation and for numerical purposes we can write the real system with respect to arbitrary size of the torus as well, of course.
Plugging this ansatz into the Navier-Stokes equation system (\ref{nav}) we formally get again $n$ coupled infinite differential equations of ordinary type for the modes, but this time we get products of sinus functions and cosine functions from the nonlinear terms in a first step which we then re-express in cosine terms. For all $1\leq i\leq n$ we have for all $\alpha \in {\mathbb N}^n$
\begin{equation}\label{aa}
\begin{array}{ll}
\frac{d v^{re}_{ci\alpha}}{dt}\cos(2\pi \alpha x)+\frac{d v^{re}_{si\alpha}}{dt}\sin(2\pi \alpha x)=\nu\sum_{j=1}^n \left( -\frac{4\pi^2 \alpha_j^2}{l^2}\right)v^{re}_{ci\alpha}\cos(2\pi\alpha x)\\
\\
+\nu\sum_{j=1}^n \left( -\frac{4\pi^2 \alpha_j^2}{l^2}\right)v^{re}_{si\alpha}\sin(2\pi\alpha x)\\
\\
-\sum_{j=1}^n\sum_{\gamma \in {\mathbb N}^n,\gamma\leq \alpha}2\pi \gamma_jv^{re}_{cj(\alpha-\gamma)}v^{re}_{ci\gamma}\frac{1}{2}\sin(2\pi\alpha x)\\
\\
-\sum_{j=1}^n\sum_{\gamma \in {\mathbb N}^n,\gamma\geq 0}2\pi \gamma_jv^{re}_{cj(\alpha+\gamma)}v^{re}_{ci\gamma}\frac{1}{2}\sin(2\pi\alpha x)\\
\\
+\sum_{j=1}^n\sum_{\gamma \in {\mathbb N}^n,\gamma\leq \alpha}
2\pi \gamma_jv^{re}_{sj(\alpha-\gamma)}v^{re}_{ci\gamma}\frac{1}{2}\cos(2\pi\alpha x)\\
\\
-\sum_{j=1}^n\sum_{\gamma \in {\mathbb N}^n,\gamma\geq 0}
2\pi \gamma_jv^{re}_{sj(\alpha+\gamma)}v^{re}_{ci\gamma}\frac{1}{2}\cos(2\pi\alpha x)\\
\\
+\sum_{j=1}^n\sum_{\gamma \in {\mathbb N}^n,\gamma\leq \alpha}
2\pi \gamma_jv^{re}_{cj(\alpha-\gamma)}v^{re}_{si\gamma}\frac{1}{2}\sin(2\pi\alpha x)\\
\\
+\sum_{j=1}^n\sum_{\gamma \in {\mathbb N}^n,\gamma\geq 0}
2\pi \gamma_jv^{re}_{cj(\alpha+\gamma)}v^{re}_{si\gamma}\frac{1}{2}\sin(2\pi\alpha x)\\
\\
+\sum_{j=1}^n\sum_{\gamma \in {\mathbb N}^n,\gamma\leq \alpha}
2\pi \gamma_jv^{re}_{sj(\alpha-\gamma)}v^{re}_{si\gamma}\frac{1}{2}\sin(2\pi\alpha x)\\
\\
+\sum_{j=1}^n\sum_{\gamma \in {\mathbb N}^n,\gamma\geq 0}
2\pi \gamma_jv^{re}_{sj(\alpha +\gamma)}v^{re}_{si\gamma}\frac{1}{2}\sin(2\pi\alpha x)\\
\\
+2\pi \alpha_ip^{re}_{c\alpha}\sin(2\alpha x)-2\pi \alpha_ip^{re}_{s\alpha}\cos(2\alpha x),
\end{array} 
\end{equation}
where we assume that in the sum '$\leq$' denotes an appropriate ordering (for example the lexicographic) of ${\mathbb N}^n$, and where we use
\begin{equation}\label{relsc}
\sin\left( 2\pi \beta x\right) \cos\left( 2\pi \gamma x\right)=\frac{1}{2}\sin\left( 2\pi (\beta+\gamma) x\right)+\frac{1}{2}\sin\left( 2\pi (\beta -\gamma)x\right),
\end{equation}
\begin{equation}\label{relss}
\sin\left( 2\pi \beta x\right) \sin\left( 2\pi \gamma x\right)=-\frac{1}{2}\cos\left( 2\pi (\beta+\gamma) x\right)+\frac{1}{2}\cos\left( 2\pi (\beta-\gamma)x\right) ,
\end{equation}
and
\begin{equation}\label{relcc}
\cos\left( 2\pi \beta x\right) \cos\left( 2\pi \gamma x\right)=\frac{1}{2}\cos\left( 2\pi (\beta+\gamma) x\right)+\frac{1}{2}\cos\left( 2\pi (\beta -\gamma)x\right).
\end{equation}
In order to get a system with respect to the full real basis.
Note that for each $1\leq i\leq n$ and each $\alpha\in {\mathbb Z}^n$
\begin{equation}
v^{re}_{ci\alpha}:[0,\infty)\rightarrow {\mathbb R},~v^{re}_{si\alpha}:[0,\infty)\rightarrow {\mathbb R}
\end{equation}
are some time-dependent $\alpha$-modes of velocity, and
\begin{equation}
p^{re}_{c\alpha},p^{re}_{s\alpha}:[0,\infty)\rightarrow {\mathbb R}
\end{equation}
are some time-dependent $\alpha$-modes of pressure to be determined in terms of the velocity modes $v^{re}_{ci\alpha}$ and $v^{re}_{si\alpha}$.

Next we determine these real pressure modes $p^{re}_{c\alpha}$ and $p^{re}_{s\alpha}$.  The Poisson equation in (\ref{poisson1}) is re-expressed by an infinite equation system for thereal $\alpha$-modes of the form  
\begin{equation}
\begin{array}{ll}
p^{re}_{c\alpha}\sum_{i=1}^n(-4\pi^2\alpha_i^2)\cos(2\pi \alpha x)+p^{re}_{s\alpha}\sum_{i=1}^n(-4\pi^2\alpha_i^2)\sin(2\pi \alpha x)\\
\\
=-\sum_{j,k=1}^n\sum_{\gamma\in {\mathbb N}^n,\gamma\leq \alpha}4\pi^2 \gamma_j(\alpha_k-\gamma_k)v_{sj\gamma}v_{sk(\alpha-\gamma)}\frac{1}{2}\cos(2\pi \alpha x)\\
\\
-\sum_{j,k=1}^n\sum_{\gamma\in {\mathbb N}^n,\gamma\geq 0}4\pi^2 \gamma_j(\alpha_k+\gamma_k)v_{sj\gamma}v_{sk(\alpha+\gamma)}\frac{1}{2}\cos(2\pi \alpha x)\\
\\
+2\sum_{j,k=1}^n\sum_{\gamma\in {\mathbb N}^n,\gamma\leq \alpha}4\pi^2 \gamma_j(\alpha_k-\gamma_k)v_{sj\gamma}v_{ck(\alpha-\gamma)}\frac{1}{2}\sin(2\pi \alpha x)\\
\\
+2\sum_{j,k=1}^n\sum_{\gamma\in {\mathbb N}^n,\gamma\geq 0}4\pi^2 \gamma_j(\alpha_k+\gamma_k)v_{sj\gamma}v_{ck(\alpha+\gamma)}\frac{1}{2}\sin(2\pi \alpha x)\\
\\
-\sum_{j,k=1}^n\sum_{\gamma\in {\mathbb N}^n,\gamma\leq \alpha}4\pi^2 \gamma_j(\alpha_k-\gamma_k)v_{cj\gamma}v_{ck(\alpha-\gamma)}\frac{1}{2}\sin(2\pi \alpha x)\\
\\
-\sum_{j,k=1}^n\sum_{\gamma\in {\mathbb N}^n,\gamma\geq 0}4\pi^2 \gamma_j(\alpha_k+\gamma_k)v_{cj\gamma}v_{ck(\alpha +\gamma)}\frac{1}{2}\sin(2\pi \alpha x).
\end{array}
\end{equation}
Hence (w.l.o.g. $\alpha\neq 0$ -see below)
\begin{equation}\label{pcalpha}
\begin{array}{ll}
p^{re}_{c\alpha}=\frac{1}{2}1_{\left\lbrace \alpha\neq 0\right\rbrace }\frac{\sum_{j,k=1}^n\sum_{\gamma\in {\mathbb N}^n,\gamma\leq \alpha}4\pi^2 \gamma_j(\alpha_k-\gamma_k)v_{sj\gamma}v_{sk(\alpha-\gamma)}}{\sum_{i=1}^n4\pi^2\alpha_i^2}\\
\\
+\frac{1}{2}1_{\left\lbrace \alpha\neq 0\right\rbrace }\frac{\sum_{j,k=1}^n\sum_{\gamma\in {\mathbb N}^n,\gamma\geq 0}4\pi^2 \gamma_j(\alpha_k+\gamma_k)v_{sj\gamma}v_{sk(\alpha+\gamma)}}{\sum_{i=1}^n4\pi^2\alpha_i^2}
\end{array}
\end{equation}
for the cosine modes, and
\begin{equation}\label{psalpha}
\begin{array}{ll}
p^{re}_{s\alpha}=-1_{\left\lbrace \alpha\neq 0\right\rbrace }\frac{\sum_{j,k=1}^n\sum_{\gamma\in {\mathbb N}^n,\gamma\leq \alpha}4\pi^2 \gamma_j(\alpha_k-\gamma_k)v_{sj\gamma}v_{ck(\alpha-\gamma)}}{\sum_{i=1}^n4\pi^2\alpha_i^2}\\
\\
-1_{\left\lbrace \alpha\neq 0\right\rbrace }\frac{\sum_{j,k=1}^n\sum_{\gamma\in {\mathbb N}^n,\gamma\geq 0}4\pi^2 \gamma_j(\alpha_k+\gamma_k)v_{sj\gamma}v_{ck(\alpha+\gamma)}}{\sum_{i=1}^n4\pi^2\alpha_i^2}\\
\\
+\frac{1}{2}1_{\left\lbrace \alpha\neq 0\right\rbrace }\frac{\sum_{j,k=1}^n\sum_{\gamma\in {\mathbb N}^n,\gamma\leq \alpha}4\pi^2 \gamma_j(\alpha_k-\gamma_k)v_{cj\gamma}v_{ck(\alpha-\gamma)}}{\sum_{i=1}^n4\pi^2\alpha_i^2}\\
\\
+\frac{1}{2}1_{\left\lbrace \alpha\neq 0\right\rbrace }\frac{\sum_{j,k=1}^n\sum_{\gamma\in {\mathbb N}^n,\gamma\geq 0}4\pi^2 \gamma_j(\alpha_k+\gamma_k)v_{cj\gamma}v_{ck(\alpha+\gamma)}}{\sum_{i=1}^n4\pi^2\alpha_i^2}
\end{array}
\end{equation}
for the sinus modes, and for all $\alpha \neq 0$. Note that $p^{re}_{s\alpha}$ and $p^{re}_{c\alpha}$  are indeed independent of the size of the torus $l$. The equations in (\ref{psalpha}), (\ref{pcalpha}), and (\ref{aa}) determine an equation for the infinite vector of real modes 
\begin{equation}
\left(v^{re}_{si\alpha},v^{re}_{ci\alpha} \right)_{\alpha\in {\mathbb N}^n,1\leq i\leq n} 
\end{equation}
with alternating sinus (subscript $s$) and cosine (subscript $c$) entries.
Let us consider the relation of this real representation to the complex representation which we use in the following because of its succinct form. This relation is one to one, of course, as both representations of functions and equations are dual representations of classical functions and equations. Especially we can rewrite every equation, function, and matrix operation in complex notation in the real notation. This ensures that the operations produce real solutions. There is one issue here, which should be emphasized: the estmates for matrix operations in the complex notation transfer directly to analogous estimates for the corresponding matrix operations related to the real system above, although the matrix operation look partly different. Consider the last to terms in (\ref{psalpha}) for example. We can interpret these terms as finite and infinite matrix operations
\begin{equation}\label{pmult1}
\sum_{\gamma\in {\mathbb N}^n,\gamma\leq \alpha}m^-_{ck\alpha\gamma}v_{cj\gamma}
\end{equation}
and 
\begin{equation}\label{pmult2}
\sum_{\gamma\in {\mathbb N}^n,\gamma\geq 0}m^+_{ck\alpha\gamma}v_{cj\gamma},
\end{equation}
where the matrix elements in (\ref{pmult1}) and (\ref{pmult2}) are defined by equivalence of the matrix-vector multiplication with the pressure terms in
\begin{equation}\label{psalpha1}
\begin{array}{ll}
+\frac{1}{2}1_{\left\lbrace \alpha\neq 0\right\rbrace }\frac{\sum_{j,k=1}^n\sum_{\gamma\in {\mathbb N}^n,\gamma\leq \alpha}4\pi^2 \gamma_j(\alpha_k-\gamma_k)v_{cj\gamma}v_{ck(\alpha-\gamma)}}{\sum_{i=1}^n4\pi^2\alpha_i^2}\\
\\
+\frac{1}{2}1_{\left\lbrace \alpha\neq 0\right\rbrace }\frac{\sum_{j,k=1}^n\sum_{\gamma\in {\mathbb N}^n,\gamma\geq 0}4\pi^2 \gamma_j(\alpha_k+\gamma_k)v_{cj\gamma}v_{ck(\alpha+\gamma)}}{\sum_{i=1}^n4\pi^2\alpha_i^2}
\end{array}
\end{equation}
respectively. The upper bounds estimates for the infinite matrix operations are similar as the infinite matrix operations in complex notation. Note that the matrix operations can be made equivalent indeed by recounting. Furthermore, the finite matrix operations do not alter any convergence properties of the modes for one matrix operation. For iterated matrix operations they can be incorporated naturally in the Trotter product formulas below.  
 
We may take (\ref{navode2**}) as a shorthand notation for an equivalent equation with respect to the full real basis. The equations written in the real basis have structural features which allow us to transfer certain estimates observed for the equations in the complex notation to the equations in the real basis notation. Each step below done in the complex notation is easily transferred to the real notation. However, the complex notation is more convenient, because we do not have two equations for each mode. Therefore, we use the complex notation, and remark that the real systems has the same relevant features for the argument below.  Note that the equation (\ref{navode2**}) has the advantage that this approach leads to more general observations concerning the relations of nonlinear PDEs and infinite nonlinear ODEs. However, it is quite obvious that a similar argument holds for the real system above as well such that we can ensure that we are constructing real solutions, although our notation is complex. This means that we construct real solutions $v_{i\alpha}$ with $v_{i\alpha}(t)\in {\mathbb R}$. Note that it suffices to prove that there is a regular solution to (\ref{navode2**}) and to the corresponding system explicitly written in the real basis, because this translates into a classical regular solution of the incompressible Navier-Stokes equation on the $n$-torus, and this implies the existence of a classical solution of the incompressible Navier-Stokes equation in its original formulation, i.e., without elimination of the pressure. We provided an argument of this well-known implication at the end of section 1. This is not the controlled scheme which we considered in the first section. However, this scheme is identical with the controlled scheme at the first stage $m=0$, so we start with some general considerations which apply to this first stage first and introduce the control function later.
So let us look at the simple scheme in more detail now. 
The first approximation
is the system
\begin{equation}\label{navode3}
\begin{array}{ll}
\frac{d v^0_{i\alpha}}{dt}=\sum_{j=1}^n \left( -\nu\frac{4\pi \alpha_j^2}{l^2}\right)v^0_{i\alpha}
-\sum_{j=1}^n\sum_{\gamma \in {\mathbb Z}^n}\frac{2\pi i \gamma_j}{l}h_{j(\alpha-\gamma)}v^0_{i\gamma}\\
\\
+2\pi i\alpha_i1_{\left\lbrace \alpha\neq 0\right\rbrace}\frac{\sum_{j,k=1}^n\sum_{\gamma\in {\mathbb Z}^n}4\pi^2 \gamma_j(\alpha_k-\gamma_k)v^0_{j\gamma}h_{k(\alpha-\gamma)}}{\sum_{i=1}^n4\pi^2\alpha_i^2}.
\end{array} 
\end{equation}

Let
\begin{equation}
\mathbf{v}^{0,F}=(\mathbf{v}^{0,F}_1,\cdots,\mathbf{v}^{0,F}_n),~\mathbf{v}^{0,F}_i:=\left(v^0_{i\alpha}\right)^T_{\alpha \in {\mathbb Z}^n}.
\end{equation}
Formally, we consider $\mathbf{v}^{0,F}_i$ as an infinite vector, where the upper script $T$ in the componentwise description indicates transposition.
Then equation (\ref{navode3}) may be formally rewritten in the form
\begin{equation}\label{navode4}
\frac{d \mathbf{v}^{0,F}_i}{dt}=A^i_0\mathbf{v}^{0,F}_i+\sum_{j\neq i}L^0_{ij}\mathbf{v}^{0,F}_j,
\end{equation}
with the infinite matrix $A^i_0=\left(a^{i0}_{\alpha\beta} \right)_{\alpha,\beta \in {\mathbb Z}^n}$ and
the entries
\begin{equation}\label{matrix0}
\begin{array}{ll}
a^{i0}_{\alpha\beta}=\delta_{\alpha\beta}\nu\left(-\sum_{j=1}^n\frac{4\pi \alpha_j^2}{l^2} \right)-\sum_{j=1}^n\frac{2\pi i\beta_jh_{j(\alpha-\beta)}}{l}+L_{ii\alpha\beta},
\end{array}
\end{equation}
and where
\begin{equation}
L^0_{ij\alpha\beta}=1_{\left\lbrace \alpha\neq 0\right\rbrace  }2\pi i\alpha_i\frac{\sum_{k=1}^n 4\pi^2 \beta_j(\alpha_k-\beta_k)h_{k(\alpha-\beta)}}{\sum_{i=1}^n4\pi^2 \alpha_i^2}
\end{equation}
for all $1\leq i,j\leq n$
denote coupling terms related to the Leray projection term.
As we observed in the preceding section you may write (\ref{navode4}) in the form
\begin{equation}\label{navode4big}
\frac{d \mathbf{v}^{0,F}}{dt}=A_0\mathbf{v}^{0,F},
\end{equation}
where
\begin{equation}\label{matrix0*}
A_0\mathbf{v}^{0,F}:=
\left( \left( \sum_{j\in \left\lbrace 1,\cdots ,n\right\rbrace ,\beta\in {\mathbb Z}^n}
\left(  \left( \delta_{ij} a^{i0}_{\alpha\beta}\right)
+\overline{\delta}_{ij}L^0_{ij\alpha\beta}\right)v^0_{j\beta}\right)^T_{\alpha \in {\mathbb Z}^n}\right)^T_{ 1\leq i\leq n},
\end{equation}
where we have to insert
\begin{equation}
\overline{\delta}_{ij}=(1-\delta_{ij}),
\end{equation}
since the diagonal terms $L^{0}_{ii}$ are in $a^{i0}$ already,  
and where $A_0=\left(A^{ij}_0\right) $ can be considered as a quadratic matrix with $n^2$ infinite matrix entries
\begin{equation}
A^{ij}_0=A^i_0 \mbox{ for }~i=j,
\end{equation}
and
\begin{equation}
A^{ij}_0=L^0_{ij} \mbox{ for }~i\neq j.
\end{equation}

Note that these matrices which determine the first linear approximation matrix $A_0$ are not time-dependent.
The modes $a^{0}_{\alpha\beta},v^0_{i\beta}$, or at least one set of these modes, have to decay appropriately as $|\alpha|,|\beta|\uparrow \infty$ in order that the definition in (\ref{matrix0*}) makes sense, i.e., leads to finite results in appropriate norms which show that the infinite set of modes in $A_0\mathbf{v}^{0,F}$ belong to a regular function in classical space. 
Since the matrix $A_0$ has constant entries, the formal solution of (\ref{navode4}) is
\begin{equation}\label{sol01}
\mathbf{v}^{0,F}=\exp\left(A_0t\right)\mathbf{h}^{F}.
\end{equation}
As we have positive viscosity $\nu >0$ for the Navier Stokes equation even the fundamental solution (fundamental matrix)
\begin{equation}
\exp\left(A_0t\right)
\end{equation}
can be defined rigorously by a Trotter product formula for regular input $\mathbf{h}^{F}$ in the matrix $A_0$ by the Trotter product formula. Note that the rows of a finite Trotter approximation of the fundamental matrix are as in (\ref{typent}) below, and such that every row of this approximation lives in a regular space $h^s\left({\mathbb Z}^n\right)$ for $s\geq n+2$ if the data $\mathbf{h}^{F}$ of the matrix $A_0$ are in that space. This behavior is preserved as we go to the Trotter product limit. However, for the sake of global existence the weaker observation that we can make sense of  (\ref{sol01}) is essential. Note here that for fixed $\alpha$ and regular  $h_j$ (i.e., $h_j$ has polynomially decaying modes) expressions proportional to
\begin{equation}\label{typent}
\exp\left( -\nu\sum_{i=1}^n\alpha^2_it\right)\exp\left(\gamma_j h_{j(\alpha-\gamma)}\right) \rightarrow  \exp\left( -\nu\sum_{i=1}^n\alpha^2_it\right) ~\mbox{as}~|\gamma|\uparrow \infty,
\end{equation}
such that the matrix multiplication with regular data in $h^s\left({\mathbb Z}^n\right)$ for $s>n+2$ inherits regularity of this order.  
Hence it makes sense to use the dissipative feature on the diagonal terms and a Trotter-type product formula (otherwise we are in trouble because the modulus of diagonal entries increases with the order of the modes, as we remarked before).
Well, in this paper we stress the  constructive point of view and the algorithmic perspective, and by this we mean that we approximation infinite model systems by finite systems on the computer where analysis shows how limits behave. This gives global existence and a computation scheme at the same time. We have seen various ingredients of analysis which allow us to observe how close we are to the 'real' limit solution. One aspect is the polynomial decay of modes and it is clear that finite ODE approximating systems approximate the better the stronger the polynomial decay is. Infinite matrix multiplication is related to weakly singular elliptic integrals and controls the quality of this approximation.  Another aspect is the time approximation. Here we may use the Dyson formalism for the Euler part of the equation (in the Trotter product) in order to obtain higher order schemes.   
For the first approximation we may solve the equation (\ref{sol01}) by approximations via systems of finite modes, i.e., via
\begin{equation}
P_{v^l}\mathbf{v}^{0,F;*}_i=\exp\left(P_{M_l}A_0t\right)P_{v^l}\mathbf{h}^{F}
\end{equation}
where $P_{v^l}$ and $P_{M^l}$ denote projections of vectors and matrices to finite cut-off vectors and finite cut-off matrices of modes of order less or equal to $l$, i.e. modes with multiindices $\alpha=(\alpha_1,\cdots ,\alpha_n)$ which satisfy $|\alpha|=\sum_{i=1}^n\alpha_i\leq l$.
Note that
we used the notation
\begin{equation}
P_{v^l}\mathbf{v}^{0,F;*}_i
\end{equation}
with a star upperscript of the velocity approximation $\mathbf{v}^{0,F;*}_i$ since the projection of the solution of the infinite system is not equal to the solution of the finite projected system in general. The latter is an approximation of the former which becomes equal in the limit as the projection of the infinite system become identity.
Indeed it is the order of polynomial decay of the modes and some properties of infinite matrix times vector multiplications related to the growth behavior of weakly singular integrals which makes it possible to show that
\begin{equation}\label{v0finf}
\begin{array}{ll}
\mathbf{v}^{0,F}_i=\lim_{l\uparrow \infty}P_{v^l}\mathbf{v}^{0,F;*}_i=\lim_{l\uparrow \infty}\lim_{k\uparrow \infty}{\Bigg(} \exp\left(P_{M^l}\left( \delta_{ij}D^0\right) \frac{t}{k}\right)\times\\
\\
\times \exp\left(\left( P_{M^l}\left( \delta_{ij}C^0+L^0_{ij}\right) \right) \frac{t}{k}\right){\Bigg )}^kP_{v^l}\mathbf{h}^{F}_i.
\end{array}
\end{equation}
Note that we even have $\exp\left(\left( \delta_{ij}D^0\right) \frac{t}{k}\right) \exp\left(\left( \left( \delta_{ij}C^0+L^0_{ij}\right) \right)\frac{t}{k}\right)_{ij}\in M^s_n$ (although we do not need this strong behavior). Here the symbol $\left( .\right)_{ij}$ indicates projection onto the infinite ${\mathbb Z}^n\times {\mathbb Z}^n$-block at the $i$th row and the $j$th column of the matrix $A_0$. The right side of (\ref{v0finf}) is well-defined  anyway as due to the matrix $\exp\left(\left( \delta_{ij}D^0\right) \frac{t}{k}\right)$ which has the effect of an multiplication of each row by a function which is exponentially decreasing with respect to time and with respect to the order of the modes, and due to the regularity (order of polynomial decay) of the vector $\mathbf{h}^{F}_i$. The more delicate thing is to prove that the Trotter-type approximation converges to the (approximative) solution at the higher order stages $m\geq 1$ of the iterative construction, where we have time dependent convection and Leray projection terms. We come back to this later in the proof of the dissipative Trotter product formula. Let us start with the description of the other stages $m>0$ of the construction first. 
At stage $m\geq 1$ having computed $\mathbf{v}^{m-1,F}_i=\left( v^{m-1}_{i\alpha}\right)^T_{\alpha\in {\mathbb Z}^n}$ the $m$th approximation
is computed via the system
\begin{equation}\label{navode3m**}
\begin{array}{ll}
\frac{d v^m_{i\alpha}}{dt}=\sum_{j=1}^n \left( -\frac{4\pi \alpha_j^2}{l^2}\right)v^m_{i\alpha}
-\sum_{j=1}^n\sum_{\gamma \in {\mathbb Z}^n}\frac{2\pi i \gamma_j}{l}v^{m-1}_{j(\alpha-\gamma)}v^m_{i\gamma}\\
\\
+2\pi i\alpha_i1_{\left\lbrace \alpha\neq 0\right\rbrace}\frac{\sum_{j,k=1}^n\sum_{\gamma\in {\mathbb Z}^n}4\pi^2 \gamma_j(\alpha_k-\gamma_k)v^{m}_{j\gamma}v^{m-1}_{k(\alpha-\gamma)}}{\sum_{i=1}^n4\pi^2\alpha_i^2}
\end{array} 
\end{equation}
with the same initial data, of course.
We have
\begin{equation}\label{proofodem}
\mathbf{v}^{m,F}_i:=\left(v^m_{i\alpha}\right)^T_{\alpha \in {\mathbb Z}^n},
\end{equation}
and the equation (\ref{navode3m**}) may be formally rewritten in the form
\begin{equation}\label{navode4m}
\frac{d \mathbf{v}^{m,F}}{dt}=A_m\mathbf{v}^{m,F},
\end{equation}
with the infinite matrix $A_m=\left(a^{ijm}_{\alpha\beta} \right)_{\alpha,\beta \in {\mathbb Z}^n}$ and
the (time-dependent!) entries $A_m=\left(A^{ij}_m\right)=\left( a^{ijm}_{\alpha\beta}\right) $, and along with $A^{ii}_m=D^0+C^m_{ii}+L^m_{ii}$ for all $1\leq i\leq n$, and $A^{ij}_m=L^m_{ij}$ for $i\neq j,~1\leq i,j\leq n$, where for all $i\neq j$ we have 
\begin{equation}
\begin{array}{ll}
L^m_{ij\alpha\beta}=2\pi i\alpha_i\frac{\sum_{k=1}^n 4\pi^2 \beta_j(\alpha_k-\beta_k)v^{m-1}_{k(\alpha-\beta)}}{\sum_{i=1}^n4\pi^2 \alpha_i^2}.
\end{array}
\end{equation}
The matrix $C^m_{ii}$ related to the convection term $-\sum_{j=1}^n\sum_{\gamma \in {\mathbb Z}^n}\frac{2\pi i \gamma_j}{l}v^{m-1}_{j(\alpha-\gamma)}v^m_{i\gamma}$ is defined analogously as in the first stage but with coefficients $v^{m-1}_{i\alpha}$ instead of $h_{i\alpha}$.
Again, the equation (\ref{proofodem}) makes sense for regular data and inductively assumed regular coefficient functions $v^{m-1}_i$, because polynomially decay of the  modes $h_{i\alpha}$ and $v_{i\alpha}$ compensate that encoded quadratic growth due to derivatives. For a solution however we have to make sense of exponential functions with exponent $a^{ijm}_{\alpha\beta}$ defined in terms of modes $v^{m-1}_{i\beta}$ and applied to data $\mathbf{h}^F$. Again we shall use a dissipative Trotter product formula in order to have  appropriate decay as $|\alpha|,|\beta|\uparrow \infty$ in order for the solution $v^m_{i\alpha}$. However, this time we have time-dependent coefficients and we need a subscheme at each stage $m$ of the construction in order to deal with time dependent coefficients. Or, at least, a subscheme is a solution to this problem. An alternative is a direct time-local application of the Dyson formalism of the Euler part (equation without viscosity) including the regular data. Note that we need the regular data then because there is no fundamental matrix for the Euler equation. Furthermore we are better time-local since solutions of the Euler equation are globally not unique (and can be even singular). These remarks remind us of the advantages of a simple Euler scheme where we do not meet these problems.     
The formal solution of (\ref{navode4m}) is
\begin{equation}\label{solm}
\mathbf{v}^{m,F}=T\exp\left(A_mt\right)\mathbf{h}^{F},
\end{equation}
where $T\exp(.)$ is a Dyson-type time-order operator $T$ defined above.
Note that for all $m\geq 1$ the functions $\mathbf{v}^{m,F}_i,~1\leq i\leq n$ represent formal solutions of partial integro-differential equation if we rewrite them in the original space. Again in order to make sense of them we shall use the dissipative feature and a Trotter-type product formula at each substage which is a natural time-discretization. It seems that even at this stage we need viscosity $\nu >0$ in order to obtain solutions for these linear equation in this dual context. This assumption is also needed when we consider the limit $m\uparrow \infty$. We shall estimate at each stage $m$ of the construction
\begin{equation}\label{solmh}
\mathbf{v}^{m,F}=T\exp\left(A_mt\right)\mathbf{h}^{F}
\end{equation}
based on the Trotter product formula where we set up an Euler-type scheme in order to deal with time-dependent infinite matrices in the limit of substages at each stage $m$.
Then we shall also discus the alternative of a direct application of the Dyson formalism for the Euler part. Higher order time discretization schemes can be based on this, but this is postponed to the next section on algorithms.

Next we go into the details of this plan. Let us consider some linear algebra of time-independent infinite matrices with fast decaying entries (which can be applied directly at the stage $m=0$). We consider the complex situation first. The relevant structural conditions of the real systems are analogous and are sated as corollaries. The matrices of the 'complex' scheme considered above are $\left( n\times {\mathbb Z}^n\right) \times \left( n\times {\mathbb Z}^n\right)$-matrices, but - for the sake of simplicity of notation - we shall consider some matrix-algebra for ${\mathbb Z}^n \times {\mathbb Z}^n$-matrices. The considerations can be easily adapted to the formally more complicated case.
First let $D=\left(d_{\alpha\beta}\right)_{\alpha\beta\in {\mathbb Z}^n}$ and $E=\left(d_{\alpha\beta}\right)_{\alpha\beta\in {\mathbb Z}^n}$ be two infinite matrices, and define (formally)
\begin{equation}\label{matrixdot}
D\cdot E=\left( f_{\alpha\beta}\right) _{\alpha\beta\in {\mathbb Z}^n}, 
\end{equation}
where
\begin{equation}\label{matrixdot2}
f_{\alpha\beta}=\sum_{\gamma\in{\mathbb Z}^n}d_{\alpha\gamma}e_{\gamma\beta}.
\end{equation}
Next we define a space of matrices such that (\ref{matrixdot}) makes sense. For $s\in {\mathbb R}$ we define
\begin{equation}
M_n^s:=\left\lbrace D=\left(d_{\alpha\beta}\right)_{\alpha\beta\in {\mathbb Z}^n}{\Big |}~d_{\alpha\beta}\in {\mathbb C}~\&~\exists C>0: |d_{\alpha\beta}|\leq \frac{C}{1+|\alpha-\beta|^s}\right\rbrace. 
\end{equation}
In the following we consider rather regular spaces where $s\geq 2+n$. Some results can be optimized with respect to regularity, but our purpose here is full regularity in the end.
Next we have
\begin{lem}
Let $D\in M^s_n$ and $E\in M^r_s$ for some $s,r\geq n+2$. Then
\begin{equation}
D\cdot E\in M^{r+s-n}_n
\end{equation}
\end{lem}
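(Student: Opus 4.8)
The plan is to estimate the entries of $D\cdot E$ directly from the pointwise decay bounds defining $M^s_n$ and $M^r_n$, to reduce the resulting double-indexed sum to a single translation-invariant convolution sum, and then to invoke the weakly-singular discrete estimate $\sum_{\beta\neq0,\alpha}|\alpha-\beta|^{-s}|\beta|^{-r}\le C(1+|\alpha|)^{-(r+s-n)}$ recorded just before the lemma.

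First I would fix the data: choose $C_1,C_2>0$ with $|d_{\alpha\gamma}|\le C_1(1+|\alpha-\gamma|^s)^{-1}$ and $|e_{\gamma\beta}|\le C_2(1+|\gamma-\beta|^r)^{-1}$ for all indices. For fixed $\alpha,\beta$ the series $f_{\alpha\beta}=\sum_{\gamma\in\mathbb{Z}^n}d_{\alpha\gamma}e_{\gamma\beta}$ converges absolutely: as $|\gamma|\to\infty$ both $|\alpha-\gamma|$ and $|\gamma-\beta|$ grow like $|\gamma|$, so the summand is $O(|\gamma|^{-(r+s)})$ and $r+s>n$, so $D\cdot E$ is a well-defined matrix with $|f_{\alpha\beta}|\le C_1C_2\,S_{\alpha\beta}$, where $S_{\alpha\beta}:=\sum_{\gamma}(1+|\alpha-\gamma|^s)^{-1}(1+|\gamma-\beta|^r)^{-1}$.

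Next I would use translation invariance of $\mathbb{Z}^n$: substituting $\gamma=\beta+\delta$ and putting $\mu=\alpha-\beta$ turns $S_{\alpha\beta}$ into $S(\mu)=\sum_{\delta\in\mathbb{Z}^n}(1+|\mu-\delta|^s)^{-1}(1+|\delta|^r)^{-1}$, which depends on $(\alpha,\beta)$ only through $\mu$; so it suffices to show $S(\mu)\le C(1+|\mu|^{r+s-n})^{-1}$. For $\mu=0$ this is immediate from $S(0)\le\sum_\delta(1+|\delta|^r)^{-1}(1+|\delta|^s)^{-1}\le\sum_\delta(1+|\delta|^{r+s})^{-1}<\infty$ (using $(1+a^r)(1+a^s)\ge1+a^{r+s}$ and $r+s>n$), the right-hand denominator being $1$. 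For $\mu\neq0$ I would isolate the two terms $\delta=0$ and $\delta=\mu$, equal to $(1+|\mu|^s)^{-1}$ and $(1+|\mu|^r)^{-1}$ and hence dominated by $C(1+|\mu|^{r+s-n})^{-1}$ since $r,s>n$, and bound the remaining sum over $\delta\in\mathbb{Z}^n\setminus\{0,\mu\}$ by $\sum_{\delta\neq0,\mu}|\mu-\delta|^{-s}|\delta|^{-r}$ — here $|\mu-\delta|,|\delta|\ge1$, so dropping the $1+$'s costs at most a factor $2^{r+s}$ — and apply the recorded estimate with $\alpha$ replaced by $\mu$ and the summation index renamed $\delta$. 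Adding the three contributions and absorbing constants yields $|f_{\alpha\beta}|\le C(1+|\alpha-\beta|^{r+s-n})^{-1}$, that is $D\cdot E\in M^{r+s-n}_n$.

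The heart of the matter — and the only genuinely nontrivial input — is the convolution estimate quoted above; everything else is bookkeeping with the constants and the excluded indices. If one wanted a self-contained argument, the main obstacle would be to prove that estimate from scratch, which proceeds by splitting $\mathbb{Z}^n$ into $\{|\delta|\le|\mu|/2\}$, $\{|\mu-\delta|\le|\mu|/2\}$ and the complementary bulk: on the first region $|\mu-\delta|$ is comparable to $|\mu|$ while $\sum_\delta|\delta|^{-r}$ is controlled, symmetrically on the second, and on the bulk both factors are of size $|\mu|$ so the sum is comparable to $\int_{|\mu|/2\le|x|\le2|\mu|}|x|^{-s-r}\,dx\sim|\mu|^{\,n-r-s}$, the comparison of these lattice sums with the corresponding integrals being legitimate precisely because $r,s>n$. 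Finally, passing to the real $\sin/\cos$ basis only changes the index set from $\mathbb{Z}^n$ to $\mathbb{N}^n$ and replaces some differences $\alpha-\beta$ by sums $\alpha+\beta$ in the matrix products; the same estimates apply verbatim after reflecting one index, so the lemma carries over to that setting without change.
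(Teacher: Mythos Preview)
Your proof is correct and follows essentially the same route as the paper: both arguments separate off the two exceptional summation indices (your $\delta=0,\mu$; the paper's $\gamma=\alpha,\beta$) and bound the remaining discrete convolution by the weakly-singular estimate $\sum_{\gamma\notin\{\alpha,\beta\}}|\alpha-\gamma|^{-s}|\gamma-\beta|^{-r}\lesssim |\alpha-\beta|^{-(r+s-n)}$, which the paper obtains by comparison with the integral $\int_{\mathbb{R}^n\setminus B_{\alpha\beta}}|\alpha-y|^{-r}|y-\beta|^{-s}\,dy$. Your version is a bit more explicit---you make the translation $\gamma=\beta+\delta$ to reduce to a one-parameter sum $S(\mu)$, handle $\mu=0$ separately, and sketch the near/near/far splitting that underlies the integral comparison---but the mathematical content is the same.
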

\begin{proof}
For some  $c>0$ we have for $\alpha\neq \beta$
\begin{equation}
\begin{array}{ll}
|f_{\alpha\beta}|=|\sum_{\gamma\in{\mathbb Z}^n}d_{\alpha\gamma}e_{\gamma\beta}|\\
\\
\leq c+\sum_{\gamma\not\in \left\lbrace \alpha,\beta\right\rbrace }\frac{C}{|\alpha-\gamma|^s}\frac{C}{|\gamma-\beta|^r}\\
\\
\leq c+\frac{cC}{|\alpha-\beta|^{r+s-n}}
\end{array}
\end{equation}
The latter inequality is easily obtained b comparison with the integral
\begin{equation}
\int_{{\mathbb R}^n\setminus B_{\alpha\beta}}\frac{dy}{|\alpha-y|^{r}|y-\beta|^{s}}
\end{equation}
where $B_{\alpha\beta}$ is the union of balls of radius $1/2$ around $\alpha$ and around $\beta$. Partial integration of these intergals in polar coordinate form in different cases leads to the conclusion. We observe here that there is some advantage here in the analysis compared to the analysis in classical space because we can avoid the analysis of singularities in discrete space. This advantage can be used to get related estimates via
\begin{equation}
\begin{array}{ll}
\sum_{\gamma\not\in \left\lbrace \alpha,\beta\right\rbrace,~\alpha\neq \beta }\frac{C}{|\alpha-\gamma|^s}\frac{C}{|\gamma-\beta|^r}=\sum_{\gamma'\neq 0,~\alpha\neq \beta }\frac{C}{|\gamma'|^{s+r}}\frac{C|\gamma|'}{|\alpha-\beta|^r}\\
\\
=\sum_{\gamma'\neq 0,~\alpha\neq \beta }\frac{C}{|\gamma'|^{s+r}}\frac{C|\gamma|'}{|\alpha-\beta|^s}
\end{array}
\end{equation}
which leads to upper bounds of the form
\begin{equation}
c+\min\left\lbrace \frac{cC}{|\alpha-\beta|^{r}},\frac{cC}{|\alpha-\beta|^{s}}\right\rbrace 
\end{equation}
for some generic constants $c,C$. This line of argument is an alternative.
\end{proof}
 In the real case the matrices of the scheme are $\left( 2n\times {\mathbb N}^n\right) \times \left( 2n\times {\mathbb N}^n\right)$-matrices. Again the essential multiplication rules may be defined for ${\mathbb N}^n\times {\mathbb N}^n$ matrices, where the considerations can be easily adapted to the formally more complicated case by renumeration. Hence it is certainly sufficient to consider $\left( n\times {\mathbb N}^n\right) \times \left( n\times {\mathbb N}^n\right)$ matrices instead of$\left( 2n\times {\mathbb N}^n\right) \times \left( 2n\times {\mathbb N}^n\right)$.
First let $D=\left(d_{\alpha\beta}\right)_{\alpha\beta\in {\mathbb N}^n}$ and $E=\left(d_{\alpha\beta}\right)_{\alpha\beta\in {\mathbb N}^n}$ be two infinite matrices, and define (formally)
\begin{equation}\label{matrixdotr}
D\cdot E=\left( f_{\alpha\beta}\right) _{\alpha\beta\in {\mathbb N}^n}, 
\end{equation}
where
\begin{equation}\label{matrixdot2r}
f_{\alpha\beta}=\sum_{\gamma\in{\mathbb N}^n}d_{\alpha\gamma}e_{\gamma\beta}.
\end{equation}
The space of matrices such that (\ref{matrixdotr}) makes sense is analogous as before. For $s\in {\mathbb R}$ we define
\begin{equation}
M^{re,s}_n:=\left\lbrace D=\left(d_{\alpha\beta}\right)_{\alpha\beta\in {\mathbb N}^n}{\Big |}~d_{\alpha\beta}\in {\mathbb R}~\&~\exists C>0: |d_{\alpha\beta}|\leq \frac{C}{1+|\alpha-\beta|^s}\right\rbrace. 
\end{equation}
In the following we consider rather regular spaces where $s\geq 2+n$. Some results can be optimized with respect to regularity, but our purpose here is full regularity in the end.
Next we have
\begin{cor}
Let $D\in M^{re,s}_n$ and $E\in M^{re,r}_s$ for some $s,r\geq n+2$. Then
\begin{equation}
D\cdot E\in M^{re,r+s-n}_n
\end{equation}
\end{cor}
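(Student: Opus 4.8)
The plan is to obtain the real-basis statement from the complex-basis Lemma already proved above, by exploiting the inclusion $\mathbb{N}^n\subset\mathbb{Z}^n$ (which is an isometry for the norm $|\cdot|$ used in the definition of the classes $M^{\bullet}_n$). First I would extend a given $D=(d_{\alpha\beta})_{\alpha\beta\in\mathbb{N}^n}\in M^{re,s}_n$ to a $\mathbb{Z}^n\times\mathbb{Z}^n$ matrix $\widetilde D=(\widetilde d_{\alpha\beta})_{\alpha\beta\in\mathbb{Z}^n}$ by zero padding, i.e. $\widetilde d_{\alpha\beta}:=d_{\alpha\beta}$ for $(\alpha,\beta)\in\mathbb{N}^n\times\mathbb{N}^n$ and $\widetilde d_{\alpha\beta}:=0$ otherwise, and similarly $\widetilde E$. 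Then $|\widetilde d_{\alpha\beta}|\le C/(1+|\alpha-\beta|^s)$ for all $\alpha,\beta\in\mathbb{Z}^n$ with the same $C$, so $\widetilde D\in M^s_n$ and $\widetilde E\in M^r_n$. The key point is the block identity: for $\alpha,\beta\in\mathbb{N}^n$ every summand with $\gamma\notin\mathbb{N}^n$ in $\sum_{\gamma\in\mathbb{Z}^n}\widetilde d_{\alpha\gamma}\widetilde e_{\gamma\beta}$ vanishes, hence this sum equals $\sum_{\gamma\in\mathbb{N}^n}d_{\alpha\gamma}e_{\gamma\beta}=f_{\alpha\beta}$; in other words the $\mathbb{N}^n\times\mathbb{N}^n$ block of $\widetilde D\cdot\widetilde E$ is exactly $D\cdot E$. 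Applying the Lemma gives $\widetilde D\cdot\widetilde E\in M^{r+s-n}_n$, so $|f_{\alpha\beta}|\le C'/(1+|\alpha-\beta|^{r+s-n})$ for all $\alpha,\beta\in\mathbb{N}^n$, which is precisely $D\cdot E\in M^{re,r+s-n}_n$.

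Alternatively, and staying closer to the proof of the Lemma itself, one can argue directly. For $\alpha\neq\beta$ in $\mathbb{N}^n$,
\[
|f_{\alpha\beta}|\le c+\sum_{\gamma\in\mathbb{N}^n\setminus\{\alpha,\beta\}}\frac{C}{|\alpha-\gamma|^s}\frac{C}{|\gamma-\beta|^r}\le c+\sum_{\gamma\in\mathbb{Z}^n\setminus\{\alpha,\beta\}}\frac{C}{|\alpha-\gamma|^s}\frac{C}{|\gamma-\beta|^r}\le c+\frac{cC}{|\alpha-\beta|^{r+s-n}},
\]
where the middle inequality merely drops the nonnegative terms indexed by $\gamma\in\mathbb{Z}^n\setminus\mathbb{N}^n$, and the last step is the integral comparison with $\int_{\mathbb{R}^n\setminus B_{\alpha\beta}}|\alpha-y|^{-s}|y-\beta|^{-r}\,dy$ and partial integration in polar coordinates, exactly as carried out in the Lemma. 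If one prefers an ``honest'' real-basis computation one may instead restrict that comparison integral to the positive octant; this only decreases it and hence yields the same bound.

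I do not expect a genuine obstacle: the corollary is a transcription of the Lemma to a sub-lattice, and the zero-extension trick makes the transfer immediate. The only point requiring (routine) care is bookkeeping. The matrices actually arising in the real scheme are $(2n\times\mathbb{N}^n)\times(2n\times\mathbb{N}^n)$ matrices, so one first fixes a renumeration of $\{1,\dots,2n\}\times\mathbb{N}^n$ onto $\mathbb{N}^n$ (or directly onto $\mathbb{Z}^n$), which changes $|\alpha-\beta|$ only by bounded multiplicative constants and therefore preserves the classes $M^{re,\bullet}_n$; this reduces matters to the scalar-indexed statement above. One should also record, as in the complex case, that the real scheme generates not only products $\sum_\gamma d_{\alpha\gamma}e_{\gamma\beta}$ but also ``reflected'' sums of the type $\sum_\gamma d_{\alpha\gamma}e_{\gamma+\beta}$; after the substitution $\gamma'=\gamma+\beta$ these satisfy the identical estimate, so the corollary covers them as well.
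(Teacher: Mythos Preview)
Your proposal is correct and matches the paper's intent: the paper gives no separate proof for this corollary, treating it as an immediate transcription of the preceding Lemma to the $\mathbb{N}^n$-indexed case (``the considerations can be easily adapted to the formally more complicated case by renumeration''). Your zero-extension argument and your direct sub-lattice bound are both valid realisations of exactly that adaptation, and your remarks on renumeration and on the reflected sums $\sum_\gamma d_{\alpha\gamma}e_{\gamma+\beta}$ go a bit beyond what the paper spells out but are in the same spirit.
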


For $r=s\geq n+2$ this behavior allows us to define iterations of matrix multiplications recursively according to the matrix rules defined in (\ref{matrixdot}) and (\ref{matrixdot2}), i.e., we may define by recursion 
\begin{equation}
\begin{array}{ll}
A^0=I,~\mbox{where}~I=\left(\delta_{\alpha\beta}\right)_{\alpha\beta\in{\mathbb Z}^n}, \\
\\
A^1=A,\\
\\
A^{k+1}=A\cdot A^k.
\end{array}
\end{equation}
Similar definitions can be considered in the real case, of course. 
In the matrix space $M^s_n$ (resp. $M^{re,s}_n$) we may define exponentials. For our problem this space is too 'narrow' to apply this space directly. However, it is useful to note that we have exponentials in this space. 
\begin{cor}
Let $D\in M^s_n$ (resp. $D\in M^{re,s}_n$) for some $s\geq n+2$. Then
\begin{equation}
\exp(D)=\sum_{k=0}^{\infty}\frac{D^k}{k!}\in M^s_n~(\mbox{resp.}\in M^{re,s}_n).
\end{equation}
\end{cor}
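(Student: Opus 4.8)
The plan is to bootstrap from the multiplication Lemma while staying inside the \emph{fixed} space $M^s_n$, rather than tracking the ever‑improving decay exponents $r+s-n,\ 2s-n,\dots$ that a literal iteration of the Lemma would produce; the point is that within $M^s_n$ the multiplication constant can be chosen once and for all. So the first thing I would do is record a quantitative form of the discrete convolution bound underlying the Lemma: for $s>n$ there is a constant $\kappa=\kappa(n,s)<\infty$ (namely $\kappa=2^{s+1}K_{n,s}$ with $K_{n,s}:=\sum_{\delta\in{\mathbb Z}^n}(1+|\delta|^s)^{-1}<\infty$) such that
\[
\sum_{\gamma\in{\mathbb Z}^n}\frac{1}{1+|\alpha-\gamma|^s}\cdot\frac{1}{1+|\gamma-\beta|^s}\le\frac{\kappa}{1+|\alpha-\beta|^s}\qquad\text{for all }\alpha,\beta\in{\mathbb Z}^n.
\]
This comes from splitting the sum according to whether $|\alpha-\gamma|\ge\tfrac12|\alpha-\beta|$ or $|\gamma-\beta|\ge\tfrac12|\alpha-\beta|$ (the triangle inequality forces at least one), using $\tfrac{1}{1+a^s}\le\tfrac{2^s}{1+b^s}$ whenever $a\ge b/2$, and bounding the remaining single sum by $K_{n,s}$. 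An immediate consequence (the Lemma's conclusion with the exponent pinned at $s$ — note $M^{r+s-n}_n\subseteq M^s_n$ since $r+s-n\ge s$ — plus control of the constant) is: if $D,E\in M^s_n$ then $D\cdot E\in M^s_n$ with $C_{D\cdot E}\le\kappa\,C_D C_E$.

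Next I would run the obvious induction on powers. Since the identity $I=(\delta_{\alpha\beta})_{\alpha,\beta}$ lies in $M^s_n$ with constant $\le1$ (because $\delta_{\alpha\beta}\le(1+|\alpha-\beta|^s)^{-1}$), the recursion $D^{k+1}=D\cdot D^k$ together with the multiplication bound gives $D^k\in M^s_n$ for every $k\ge0$ with
\[
|(D^k)_{\alpha\beta}|\le\frac{\kappa^{k-1}C_D^{\,k}}{1+|\alpha-\beta|^s}\quad(k\ge1),\qquad |(D^0)_{\alpha\beta}|\le\frac{1}{1+|\alpha-\beta|^s},
\]
the inductive step being $C_{D^{k+1}}\le\kappa\,C_D\,C_{D^k}\le\kappa\cdot\kappa^{k-1}C_D^{\,k}\cdot C_D=\kappa^{k}C_D^{\,k+1}$. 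Then the exponential series is summed entrywise: for fixed $\alpha,\beta$,
\[
\sum_{k\ge0}\frac{|(D^k)_{\alpha\beta}|}{k!}\le\frac{1}{1+|\alpha-\beta|^s}\Bigl(1+\sum_{k\ge1}\frac{\kappa^{k-1}C_D^{\,k}}{k!}\Bigr)=\frac{1}{1+|\alpha-\beta|^s}\Bigl(1+\frac{e^{\kappa C_D}-1}{\kappa}\Bigr),
\]
so each entry of $\sum_{k\ge0}D^k/k!$ converges absolutely and the resulting infinite matrix $\exp(D)$ lies in $M^s_n$ with $C_{\exp(D)}\le 1+\kappa^{-1}(e^{\kappa C_D}-1)$. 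Only $s>n$ is actually used, so the standing hypothesis $s\ge n+2$ is more than enough; the real case $M^{re,s}_n$ is identical with ${\mathbb Z}^n$ replaced by ${\mathbb N}^n$ (the convolution bound holds a fortiori on ${\mathbb N}^n$).

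The main obstacle — really the only nontrivial point — is this first step: the Lemma as stated keeps improving the decay exponent but says nothing about how the multiplication constant behaves, and a careless iteration could let that constant grow with $k$ and destroy the summability. Pinning the target exponent at $s$ and proving the uniform discrete‑convolution estimate with constant $\kappa=\kappa(n,s)$ resolves this; after that the induction on $k$ and the comparison with $e^{\kappa C_D}$ are routine.
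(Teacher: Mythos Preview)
Your proof is correct and supplies precisely the details the paper omits: the paper states this corollary without proof, treating it as an immediate consequence of the multiplication Lemma together with the recursive definition $A^{k+1}=A\cdot A^k$. Your explicit constant tracking (pinning the exponent at $s$ and showing $C_{D^k}\le\kappa^{k-1}C_D^{\,k}$, hence summability against $1/k!$) is exactly the point one must check to turn that remark into a proof, and your observation that only $s>n$ is actually needed is also correct.
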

Well what we need is an estimate for the approximative solutions. In this context we note
\begin{cor}
Let $D\in M^s_n$ (resp. $D\in M^{re,s}_n$) for some $s\geq n+2$ and let $\mathbf{h}^F\in h^s\left({\mathbb Z}^n\right) $ (resp. $\mathbf{h}^F\in h^s\left({\mathbb N}^n\right) $). Then
\begin{equation}
\exp(D)\mathbf{h}^F\in h^s\left({\mathbb Z}^n\right)~\left( \mbox{resp}~\in h^s\left({\mathbb N}^n\right)\right) .
\end{equation}
\end{cor}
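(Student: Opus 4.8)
The statement falls out of the matrix analysis already in place, so the plan is short. I would record two facts: (a) by the preceding corollary, $\exp(D)$ (i.e.\ $\sum_{k\ge 0}D^k/k!$) is a well-defined element of $M^s_n$; and (b) by the action of $M^s_n$ on $h^s$ established just above (which rests on the weakly singular sum estimate), for $M\in M^s_n$ and $w\in h^s\!\left({\mathbb Z}^n\right)$ one has $Mw\in h^s\!\left({\mathbb Z}^n\right)$. Reading $\exp(D)\mathbf h^F$ as the matrix $\exp(D)\in M^s_n$ applied to the vector $\mathbf h^F\in h^s\!\left({\mathbb Z}^n\right)$, fact (b) applied with $M=\exp(D)$ gives $\exp(D)\mathbf h^F\in h^s\!\left({\mathbb Z}^n\right)$ immediately. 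The real case is the same argument with $M^s_n$ and $h^s\!\left({\mathbb Z}^n\right)$ replaced by $M^{re,s}_n$ and $h^s\!\left({\mathbb N}^n\right)$, using the real versions of the cited corollaries.

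For a self-contained write-up that also makes explicit that $\exp(D)\mathbf h^F$ equals the termwise series $\sum_{k\ge 0}D^k\mathbf h^F/k!$, I would argue as follows. Since $s\ge n+2>n$, iterating (b) shows $D^k\mathbf h^F\in h^s$ for every $k$. Using the operator-norm form of (b) --- there is a constant $c=c(n,s)$ with $\|Mw\|_{h^s}\le c\,\|M\|\,\|w\|_{h^s}$, where $\|M\|:=\sup_{\alpha,\beta}|m_{\alpha\beta}|\,(1+|\alpha-\beta|^s)$ --- and iterating it gives $\|D^k\mathbf h^F\|_{h^s}\le (c\,\|D\|)^k\|\mathbf h^F\|_{h^s}$, hence $\sum_{k\ge 0}\frac1{k!}\|D^k\mathbf h^F\|_{h^s}\le\|\mathbf h^F\|_{h^s}\exp(c\,\|D\|)<\infty$. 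Thus the series $\sum_k D^k\mathbf h^F/k!$ converges absolutely in the Banach space $h^s$ and its sum lies in $h^s$; and since the partial sums of $\sum_k D^k/k!$ converge to $\exp(D)$ in $\|\cdot\|$ while $M\mapsto M\mathbf h^F$ is continuous from $M^s_n$ into $h^s$ by the same bound, this sum is exactly $\exp(D)\mathbf h^F$.

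I do not expect any genuine obstacle: the hard estimates (the weakly singular sum bound, the product lemma, the two corollaries on $M^s_n$) are already established, and the present statement is essentially "$M^s_n\cdot h^s\subseteq h^s$, applied to $\exp(D)$". The single point that deserves a line of care is the consistency between the two ways of forming $\exp(D)\mathbf h^F$ --- summing the exponential series of matrices first and then applying it to $\mathbf h^F$, versus applying each $D^k$ to $\mathbf h^F$ and then summing --- which is exactly the continuity of the action of $M^s_n$ on $h^s$ in the matrix variable; under the standing hypothesis $s\ge n+2>n$, which is what makes all the cited results available, there is nothing further to do.
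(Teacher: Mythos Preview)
Your proposal is correct and takes essentially the same approach as the paper: invoke the preceding corollary to get $\exp(D)\in M^s_n$, then use the weakly singular sum estimate to conclude that $M^s_n$ maps $h^s$ into itself. The paper's proof simply writes out this second step inline, estimating $|g_\alpha|=|\sum_\beta f_{\alpha\beta}h_\beta|\le\sum_{\beta\neq 0,\alpha}\frac{C}{|\alpha-\beta|^s|\beta|^s}\le\frac{cC}{1+|\alpha|^s}$ directly, whereas you package it as the already-recorded fact ``$M^s_n\cdot h^s\subseteq h^s$''; your additional paragraph on the termwise convergence of $\sum_k D^k\mathbf h^F/k!$ in $h^s$ and its agreement with $(\exp D)\mathbf h^F$ is a welcome piece of care that the paper leaves implicit.
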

\begin{proof}
Let 
\begin{equation}
F=\exp(D),~\mbox{where}~F=(f_{\alpha\beta})_{\alpha\beta\in{\mathbb Z}^n},
\end{equation}
and let
\begin{equation}
g_{\alpha}=\sum_{\beta\in {\mathbb Z}^n}f_{\alpha\beta}h_{\beta}.
\end{equation}
Then for some $C,c>0$
\begin{equation}
|g_{\alpha}|=|\sum_{\gamma \in {\mathbb Z}^n}f_{\alpha\beta}h_{\beta}|\leq \sum_{\beta\in {\mathbb Z}^n\setminus \left\lbrace \alpha,0\right\rbrace }\frac{C}{|\alpha-\beta|^s|\beta|^s}\leq \frac{cC}{1+|\alpha|^s}.
\end{equation}
Analogous observations hold in the real case.
\end{proof}
We cannot apply the preceding lemmas directly due to the fact that the diagonal matrix with entries $-\nu\delta_{ij}\sum_{k=1}^n\frac{4\pi^2}{l^2}\alpha_k^2$ is not bounded. Neither is the the matrix related to the convection term. However the multiplication of the dissipative exponential with iterative multiplications of the convection term matrix stay in a regular matrix space such that a multiplication with regular data of polynomial decay lead to regular results. If we want to have a fundamental matrix or uniqueness, then the dissipative term - the smoothing effect of which is obvious in classical space - makes the difference. At first glance, iterations of the matrix lead to matrices which live in weaker and weaker spaces, and we really need the dissipative feature if we do not take the application of the data into account, i.e., the minus signs in the diagonal in mathematical terms, in order to detect the smoothing effect in the exponential form. The irregularity related to a positive sign reminds us of the fact that heat equations cannot be solved backwards in general. However, due to its diagonal structure and its negative sign we can prove a BCH-type formula for infinite matrices. The following has some similarity with Kato's results for semigroups of dissipative operators (cf. \cite{Kat}). However Kato's results (cf. \cite{Kat}) usually require that the domain of the dissipative operator includes the domain of the second operator summand, and this is not true in our formulation for the incompressible Navier-Stokes equation, because the diagonal operator related to the Laplacian, i.e., the diagonal terms $\left(-\delta_{\alpha\beta}\nu\sum_{i=1}^n\alpha_i^2\right)_{\alpha ,\beta\in {\mathbb Z}^n\setminus \left\lbrace 0 \right\rbrace }$,  increase quadratically with the order of the modes $\alpha$. So it seems that the result cannot be applied directly. Anyway we continue with the constructive (algorithmic) view, which means that we consider the behavior of finite mode approximations and then we go to the limit of infinite mode systems later. We have
\begin{lem}\label{lembound}
Let $g^F_l$  be finite vectors of modes of order less or equal to $l>0$ (not to be confused with the torus size which we consider to be equal to $1$ w.l.o.g.). Then for any finite vector $f^F_l=\left(f^l_{\alpha}\right)_{|\alpha|\leq l}$ with finite entries $f^l_{\alpha}$, and in the situation of Lemma \ref{productform} below we have
\begin{equation}
{\big |}\left( \exp\left( C^lt\right)\mathbf{g}^F_l -\exp\left( (A^l+B^l)t\right)\mathbf{g}^F_l\right)_{\alpha}{\big |}\leq {\big |}f^l_{\alpha}t^2{\big |},
\end{equation}
where $\left(.\right)_{\alpha}$ denotes the projection to the  $\alpha$th component of an infinite vector, and $\exp\left( C^lt\right)=\exp\left( A^lt\right)\exp\left( B^lt\right)$.
\end{lem}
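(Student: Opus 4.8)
The plan is to write the difference $\exp(C^lt)\mathbf{g}^F_l-\exp\big((A^l+B^l)t\big)\mathbf{g}^F_l$ — where $A^l$ is the finite projection of the diagonal dissipative matrix, $B^l$ the finite projection of the convection-plus-Leray matrix, and $C^l$ is determined by the relation $\exp(C^lt)=\exp(A^lt)\exp(B^lt)$, which is the only property of $C^l$ we use — as a single Duhamel (variation-of-parameters) integral whose integrand is $O(s)$, and then to integrate over $[0,t]$ to extract the quadratic factor. Everything takes place on the finite cut-off space of modes $|\alpha|\le l$, so all exponentials exist and the matrix manipulations are unconditionally legitimate; the task is only to keep the bounds organised componentwise so the final estimate comes out in the stated form.

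The identity I would use is
\[
\exp(A^lt)\exp(B^lt)-\exp\big((A^l+B^l)t\big)=\int_0^t\exp\big((A^l+B^l)(t-s)\big)\,\big[\exp(A^ls),B^l\big]\,\exp(B^ls)\,ds ,
\]
obtained by writing the left-hand side as $\int_0^t\frac{d}{ds}\big[\exp((A^l+B^l)(t-s))\exp(A^ls)\exp(B^ls)\big]\,ds$, differentiating, and using that $A^l+B^l$ commutes with its own exponential, so that the surviving terms $-(A^l+B^l)\exp(A^ls)+A^l\exp(A^ls)+\exp(A^ls)B^l$ collapse to the commutator $[\exp(A^ls),B^l]$. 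Because $A^l=(\delta_{\alpha\beta}a_\alpha)$ is diagonal with $a_\alpha=-\nu\sum_j 4\pi^2\alpha_j^2/l^2\le 0$, this commutator has entries $\big(e^{a_\alpha s}-e^{a_\beta s}\big)b_{\alpha\beta}$, and the mean value theorem together with $a_\alpha,a_\beta\le 0$ gives $|e^{a_\alpha s}-e^{a_\beta s}|\le s\,|a_\alpha-a_\beta|$; hence $\big[\exp(A^ls),B^l\big]$ is bounded entrywise, in modulus, by $s$ times the fixed finite matrix with entries $|a_\alpha-a_\beta|\,|b_{\alpha\beta}|$.

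Inserting this into the identity, applying both sides to $\mathbf{g}^F_l$, and projecting onto the $\alpha$-th component, the difference is bounded by $\int_0^t s\,\Phi_\alpha(s,t)\,ds\le\tfrac{t^2}{2}\sup_{0\le s\le t}\Phi_\alpha(s,t)$, where $\Phi_\alpha(s,t)$ gathers the propagator norms $\|\exp((A^l+B^l)(t-s))\|$ and $\|\exp(B^ls)\|$, the fixed commutator-bound matrix, and $\|\mathbf{g}^F_l\|$; taking $f^l_\alpha$ to be half of this supremum yields the claim. On a fixed bounded time interval — exactly the regime that matters downstream, where $t$ is replaced by $t/k$ and $k\to\infty$ in the Trotter product — the supremum is a finite, $t$-independent constant and $f^F_l$ is a genuine finite vector. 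The step I expect to be the main obstacle is precisely this uniform control of $\exp((A^l+B^l)\tau)$ and $\exp(B^l\tau)$: for a generic finite matrix these grow like $e^{\omega\tau}$ with $\omega$ the spectral abscissa, so to obtain a clean quadratic bound (a fortiori a time-uniform one) one must use the dissipative structure packaged in Lemma \ref{productform} — that $\nu>0$ renders $A^l$ strictly negative on all the (non-zero) modes and that the convection-plus-Leray block, built from divergence-free coefficients, is non-expanding in the relevant dual Sobolev norm. The same computation transcribes verbatim to the real basis, the corresponding matrices having the analogous diagonal and skew structure.
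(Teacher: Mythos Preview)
Your Duhamel argument is correct and self-contained. The identity
\[
\exp(A^lt)\exp(B^lt)-\exp\big((A^l+B^l)t\big)=\int_0^t\exp\big((A^l+B^l)(t-s)\big)\,[\exp(A^ls),B^l]\,\exp(B^ls)\,ds
\]
is verified exactly as you indicate, and the diagonal structure of $A^l$ together with $a_\alpha\le 0$ gives the clean entrywise bound $|(e^{a_\alpha s}-e^{a_\beta s})b_{\alpha\beta}|\le s\,|a_\alpha-a_\beta|\,|b_{\alpha\beta}|$ on the commutator. Since everything lives on the finite cut-off space, the propagators $\exp((A^l+B^l)\tau)$ and $\exp(B^l\tau)$ are trivially bounded on any compact time interval, and integrating the factor $s$ produces the $t^2$.

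The paper, by contrast, does not write out a proof of this lemma at all: it states the bound, remarks that this is what is needed, and then develops the special CBH-type expansion of Lemma~\ref{productform}, whose content is that $C^l$ differs from $A^l+B^l$ by terms built from $\exp(A^l)$-weighted iterated brackets of $\Delta B^l$ and $B^l$. The implicit route is therefore to read off the $O(t^2)$ discrepancy from that expansion of $C^l$. Your variation-of-parameters computation bypasses the CBH machinery entirely and is arguably more transparent for the purpose of this particular estimate; the paper's CBH formula, on the other hand, gives more structural information (the explicit form of the correction terms), which it exploits later for higher-order considerations. One small caveat: your closing remark that the convection-plus-Leray block is ``non-expanding in the relevant dual Sobolev norm'' is not needed for the lemma as stated---finite-dimensionality on a bounded interval suffices---and would require separate justification if asserted in its own right.
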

This is what we need in the following but let us have a closer look at the Trotter product formula. The reason is that we represent solutions in a double limit where the inner limit is a Trotter product time limit. The polynomial decay behavior of the data and the matrix entries of the problems then ensure that the spatial limits are well-defined (as the upper bound of the modes $l$ goes to infinity). In the following we use the 'complex' notation and formulate results with respect to ${\mathbb Z}^n$. The considerations can be transferred immediately to real systems with multiindices in ${\mathbb N}^n$. First we define
\begin{defi}
A diagonal matrix $\left(\delta_{\alpha \beta}d_{\alpha\beta} \right)_{\alpha,\beta \in {\mathbb Z}^n}$ is called strictly dissipative of order $m>0$ if $d_{\alpha\alpha}<-c|\alpha| ^m$ for all $\alpha\in {\mathbb Z}^n$ and for some constant $c>0$. It is called dissipative if it is dissipative of some order $m$.
\end{defi}
This is a very narrow definition which we use as we consider constant viscosity, but this is sufficient for the purposes of this paper while the generalisation to variable viscosity is rather straightforward.
In order to state the Trotter-product type result we introduce some notation.
\begin{defi}
For all $m\geq 1$ let $\gamma^m=(\gamma^m_1,\cdots \gamma^m_m)$ denote multiindices with $m$ components and with nonnegative integer entries $\gamma^m_i$ for $m\geq 1$ and $1\leq i\leq m$. For each $m\geq 1$ we denote the set of $m$-tuples with nonnegative entries by ${\mathbb N}_{0}^m$. Let $B=\left(b_{\alpha\beta}\right)_{\alpha\beta\in {\mathbb Z}^n}$, denote a quadratic matrix, and let $B^T=\left(b_{\beta\alpha}\right)_{\alpha\beta\in {\mathbb Z}^n}$ be its transposed, and $E$ be some other infinite matrix of the same type. For $m\geq 1$ and $m$-tuples $\gamma^m=(\gamma^m_1,\cdots,\gamma^m_n)$ with nonnegative entries $\gamma^m_j,~1\leq j\leq n$, we introduce some abbreviations for certain iterations of Lie brackets operations of matrices. These are iterations of the matrix $\Delta B:=B-B^T$ and either the matrix $B$ or the matrix $B^T$ in arbitrary order. This gives different expressions dependent on the matrix with which we start, and we define  $I_{\gamma^m}$ (starting with $\left[\Delta B,B \right]$)  and $I^T_{\gamma}$ (starting with $\left[\Delta B,B^T\right]I^T_{\gamma}$ accordingly. First we define $I_{\gamma^1}\left(\Delta B, B\right)=I_{(\gamma^1_1)}(\Delta B,B)$ (starting with $\left[\Delta B,B^T\right]$  for $\gamma_1\geq 0$ recursively). Let
\begin{equation}
\left[E,B\right]_T=EB-B^TE, 
\end{equation}
which is a Lie-bracket type operation with the transposed. 
For $\gamma^1_1=0$ define
\begin{equation}
I_{(\gamma^1_1)}\left[\Delta B,B\right]:=\Delta B, 
\end{equation}
and for $\gamma^1_1>0$ define
\begin{equation}
I_{(\gamma^1_1)}\left[\Delta B,B\right]:=\left[I_{\gamma_1-1}\left[\Delta B,B \right] ,B\right]_T.
\end{equation}
Having defined $I_{\gamma^{m-1}}$ and if $\gamma^m_m=0$ then
\begin{equation}
I_{\gamma^m}\left[\Delta B,B\right]=I_{\gamma^{m-1}}\left[\Delta B,B\right]+I_{\gamma^{m-1}}\left[\Delta B,B^T\right]
\end{equation}
Finally, if $\gamma^m_m>0$, then define
\begin{equation}
I_{\gamma^m}\left[\Delta B,B\right]=\left[ I_{\gamma^{m-1}}\left[\Delta B,B\right],B\right]_T. 
\end{equation}
Similarly, for $\gamma^1_1=0$ define
\begin{equation}
I^T_{(\gamma^1_1)}\left[\Delta B,B\right]:=\Delta B, 
\end{equation}
and for $\gamma^1_1>0$ define
\begin{equation}
I^T_{(\gamma^1_1)}\left[\Delta B,B\right]:=\left[I_{\gamma_1-1}\left[\Delta B,B^T \right] ,B\right]_T.
\end{equation}
Having defined $I_{\gamma^{m-1}}$ and if $\gamma^m_m=0$ then
\begin{equation}
I^T_{\gamma^m}\left[\Delta B,B\right]=I^T_{\gamma^{m-1}}\left[\Delta B,B\right]+I^T_{\gamma^{m-1}}\left[\Delta B,B^T\right]
\end{equation}
Finally, if $\gamma^m_m>0$, then define
\begin{equation}
I^T_{\gamma^m}\left[\Delta B,B\right]=\left[ I^T_{\gamma^{m-1}}\left[\Delta B,B\right],B^T\right]_T. 
\end{equation}
\end{defi}
%
Next we prove a special CBH-formula for finite matrices. We do not need the full force of the lemma \ref{productform} below for our purpose, but it has some interest of its own. The reader who is interested only in the global existence proof may skip it and consider the simplified alternative considerations in order to see that how a Trotter product result can be applied. The results may be generalized but our main purpose in this article is to define a converging algorithm for the incompressible Navier-Stokes equation which provides also a constructive approach to global existence.
We show
\begin{lem}\label{productform}
Define the set of finite modes
\begin{equation}
{\mathbb Z}^n_l:=
\left\lbrace \alpha\in {\mathbb Z}^n||\alpha|\leq l\right\rbrace .
\end{equation}
Let $A^l$ be the cut-off of order $l$ of the dissipative diagonal matrix of order $2$ related to the Laplacian and let $B^l=\left(b_{\alpha\beta}\right)_{\alpha\beta\in {\mathbb Z}^n_l}$ be the cut-off of some other matrix. Next for an arbitrary finite quadratic matrix $N^l=\left( n_{\alpha\beta}\right)_{|\alpha|,|\beta|\leq l}$ let 
\begin{equation}
\exp_m\left(N\right)=\exp\left(N\right)-\left(\sum_{k=0}^{m-1}\frac{N^k}{k!}\right).
\end{equation} 
Then the relation 
\begin{equation}\label{BCH*}
 \exp(A^l)\exp(B^l)=\exp(C^l),
\end{equation}
holds where $C^l$ is of the form
\begin{equation}\label{Ceq*}
\begin{array}{ll}
C^l=A^l+B^l+\frac{1}{2}\exp(2A^l)\Delta B^l+\sum_{m\geq 1}\exp_m\left( A^l\right)\times\\
\\
\times \left(  c_{\beta^m}I_{\beta^m}\left[\Delta  B^l ,B^l \right]_T+c^T_{\beta^m}I^T_{\beta^m}\left[\Delta  B^l ,B^l \right]_T\right) 
\end{array}
\end{equation}
for some constants $c_{\beta^m},c^T_{\beta^m}$ such that the series
\begin{equation}
\sum_{m\geq 1}c_{\beta^m}t^m
\end{equation}
and the series
\begin{equation}
\sum_{m\geq 1}c^T_{\beta^m}t^m
\end{equation}
converges absolutely for all $t\geq 0$.
\end{lem}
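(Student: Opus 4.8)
The plan is to reduce the statement to a finite-dimensional Baker--Campbell--Hausdorff (Dynkin) computation and then to reorganize the resulting series so that the only non-commuting building blocks that survive are the symmetry defect $\Delta B^l=B^l-(B^l)^T$ together with $B^l$ (respectively $(B^l)^T$), while every occurrence of the diagonal matrix $A^l$ collapses into a scalar-valued function of $A^l$. Throughout I would carry a scaling parameter $t$ (writing $tA^l$, $tB^l$), which is the parameter appearing in the convergence assertion for the coefficients $c_{\beta^m}$, $c^T_{\beta^m}$; the closed form for $C^l$ would first be established as an identity of formal power series in $t$, the radius of convergence being settled only at the end. The elementary reductions come first: since $A^l$ and $B^l$ are finite matrices, $\exp(tA^l)\exp(tB^l)$ is an entire, invertible matrix-valued function of $t$, and for $|t|$ small its principal logarithm is well defined and equals the Dynkin series $C^l(t)=t(A^l+B^l)+\tfrac{t^2}{2}[A^l,B^l]+\cdots$, an absolutely convergent sum of iterated commutators with rational coefficients of at most factorial growth.

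The structural core is the observation that $A^l$, being diagonal, is symmetric ($A^l=(A^l)^T$), so the adjoint action $\operatorname{ad}_{A^l}$ interchanges the symmetric and antisymmetric parts of any matrix and acts entrywise by the scalar $a_\alpha-a_\beta$ on the $(\alpha,\beta)$ component. In particular $[A^l,B^l]=[A^l,B^l_{\mathrm{sym}}]+\tfrac12[A^l,\Delta B^l]$ with $[A^l,B^l_{\mathrm{sym}}]$ antisymmetric, and, iterating with the Jacobi identity and the transpose-bracket operation $[\cdot,\cdot]_T$ introduced just before the lemma, every iterated commutator in the Dynkin series in which $A^l$ occurs can be pushed into a sum of terms of the form (polynomial in $A^l$) times (an iterated $[\cdot,\cdot]_T$-bracket of $\Delta B^l$ with $B^l$ or $(B^l)^T$); the passage from ordinary commutators to $[\cdot,\cdot]_T$ is precisely what records the transpose bookkeeping hard-wired into the definitions of $I_{\gamma^m}$ and $I^T_{\gamma^m}$. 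Resumming the resulting polynomials in $A^l$ against the $1/k!$ weights inherited from the two factors $\exp(A^l)$ produces the remainders $\exp_m(A^l)=\exp(A^l)-\sum_{k<m}(A^l)^k/k!$ that serve as the matrix prefactors; in the lowest order this resummation is what yields the explicit term $\tfrac12\exp(2A^l)\Delta B^l$, the factor $2$ reflecting that each of the two exponentials contributes.

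The step I expect to be the main obstacle is making this resummation quantitative: one must verify that, after collecting all contributions of a fixed bracket length $m$, the scalar coefficients $c_{\beta^m}$, $c^T_{\beta^m}$ arise from the Dynkin coefficients through multiplications and reindexings that introduce enough factorial denominators (from the exponential weights) to dominate the at-most-factorial growth of the Dynkin coefficients, so that $\sum_{m\ge1}c_{\beta^m}t^m$ and $\sum_{m\ge1}c^T_{\beta^m}t^m$ are entire. Granting that, and using the crude bounds $\|I_{\gamma^m}[\Delta B^l,B^l]_T\|\le(2\|B^l\|)^{m+1}$ and $\|\exp_m(A^l)\|\le 2e^{\|A^l\|}$ valid for these finite matrices, the series defining $C^l(t)$ converges for every $t\ge0$; by analytic continuation in $t$ the identity $\exp(tA^l)\exp(tB^l)=\exp(C^l(t))$, first obtained near $t=0$, then persists for all $t\ge0$, which is the claim.

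Finally, I would record a cheaper variant that is entirely sufficient for the use made of the lemma through Lemma~\ref{lembound}: stopping the reorganization at second order already isolates the term $\tfrac12\exp(2A^l)\Delta B^l$ and bounds the remainder by a term of size $O(\|B^l\|^2t^2)$, which is all that enters (\ref{BCH*})--(\ref{Ceq*}) at the level of accuracy needed downstream. I would present this truncated computation as a self-contained fallback, so that a reader interested only in the global existence proof can bypass the full combinatorial resummation.
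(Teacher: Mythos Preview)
Your proposal follows essentially the same route as the paper: start from the Dynkin/BCH series for $C^l$, exploit the diagonality (hence symmetry) of $A^l$ to reduce every iterated commutator to a power of $A^l$ acting on an iterated $[\cdot,\cdot]_T$-bracket built from $\Delta B^l$ and $B^l$ (or $(B^l)^T$), and then control the coefficients by playing the $1/p!$ factors against at most $2^p$ summands at order $p$. The paper carries out the reduction via the concrete identities $[A,B]=A\Delta B$, $L^k[A,B]=A^k2^{k-1}\Delta B$, $R^k[A,B]=AR^{k-1}[\Delta B,B]_T$ rather than your symmetric/antisymmetric decomposition and resummation language, but the skeleton is the same, and your closing remark that the second-order truncation already suffices for Lemma~\ref{lembound} mirrors exactly the paper's own disclaimer that the full force of the lemma is not needed for the global existence argument.
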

\begin{proof}
For simplicity of notation we suppress the upper script $l$ in the following. All matrices $A,B,C,\cdots $ are finite multiindexed matrices of modes of order less or equal to $l$.
The matrix $C=\sum_{i=1}^{\infty}C_i$ is formally determined via power series
\begin{equation}
C(x)=\sum_{i=1}^{\infty}C_ix^i,~C'(x)=\sum_{i=1}^{\infty}iC^l_ix^{i-1}
\end{equation}
by the relation 
\begin{equation}
\sum_{k=0}^{\infty}R^k\left[C'(x),C(x) \right]=A+B+\sum_{k\geq 1}R^k\left[A,B\right]\frac{x^k}{k!},
\end{equation}
where for matrices $E,F$ $\left[E,F\right]=EF-FE$ denotes the Lie bracket and 
\begin{equation}
R^1[E,F]=[E,F],~R^{k+1}[E,F]=\left[ R^k[E,F],F\right] 
\end{equation}
recursively (Lie bracket iteration on the right). Comparing the terms of different orders one gets successively for the first $C$-terms
\begin{equation}
\begin{array}{ll}
C_1=A+B,~C_2=\frac{1}{2}\left[A,B\right],\\
\\
C_3=\frac{1}{12}\left[A,\left[A,B\right] \right]+\frac{1}{12}\left[\left[A,B\right],B\right],\\
\\
C_4=\frac{1}{48}\left[A,\left[\left[A,B\right],B\right]\right]+
\frac{1}{48} \left[ \left[A,\left[A,B\right]\right] ,B\right]\\
\\
C_5=\frac{1}{120}\left[ A,\left[ \left[A,\left[A,B\right]\right] ,B\right]\right]
+\frac{1}{120}\left[ A,\left[ \left[\left[A,B\right],B\right]\right]  ,B\right]\\
\\
-\frac{1}{360}\left[ A,\left[ \left[ \left[A,B\right],B\right] ,B\right]\right]
-\frac{1}{360}\left[ \left[ A,\left[A, \left[A,B\right]\right] \right] ,B\right]\\
\\
-\frac{1}{720}\left[ A,\left[ A,\left[A, \left[A,B\right]\right]\right]\right]
-\frac{1}{720}\left[ \left[\left[\left[A,B\right],B\right],B \right] ,B\right],\cdots .
\end{array}
\end{equation}
Iterated Lie brackets simplify if $A$ is a diagonal matrix. First we 
define left Lie-bracket iterations, i.e.,
\begin{equation}
L^1[E,F]=[E,F],~L^{k+1}[E,F]=\left[ E\left[ L^k[E,F]\right] \right] 
\end{equation}
recursively. Next we study the effect of alternative applications of iterations of the left and right Lie-bracket operation for this specific $A$.
We have
\begin{equation}
\left[ A,B\right]=A\Delta B,
\end{equation}
with $\Delta B=B-B^T$, and 
\begin{equation}
L^k\left[ A,B\right]=A^k2^{k-1}\Delta B,
\end{equation}
Next we have
\begin{equation}
R^k\left[ A,B\right]=AR^{k-1}\left[ \Delta B,B\right]_T
\end{equation}
Next
\begin{equation}
LR^k\left[ A,B\right]=A^2R^{k-1}\left( \left[ \Delta B,B\right]_T+\left[ \Delta B,B^T\right]_T\right) 
\end{equation}
Induction leads to the observation that other summands than $A+B$ of the series for $C$ can be written in terms of expressions of the form 
\begin{equation}
\left( A\right) ^k  I_{\beta^m}\left[\Delta B,B \right]_T
\end{equation}
 and in terms of expressions of the form  
 \begin{equation}
 \left( A\right) ^k  I^T_{\beta^m}\left[\Delta B,B \right]_T
 \end{equation}
For each order $p=k+m$ we have a factor $\frac{1}{p!}$ with $\leq 2^p$ summands. This leads to global convergence of the coefficients $c_{\beta^m},c^T_{\beta^m}$.
\end{proof}

We continue to describe consequences in a framework which is very close to the requirements of the systems related to the incompressible Navier-Stokes equation. As a simple consequence of the preceding lemma we have
\begin{lem}
Let $g^F_l$ be a finite vector of modes of order less or equal to $l>0$ .
In the situation of Lemma \ref{productform} we have
\begin{equation}
\lim_{k\uparrow \infty}\left( \exp\left(A^l\frac{t}{k}\right)\exp\left(B^l\frac{t}{k}\right)\right) ^kg^F_l=\exp\left( (A^l+B^l)t\right)g^F_l.   
\end{equation}
\end{lem}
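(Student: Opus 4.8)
The plan is to treat this as a purely finite--dimensional Lie--Trotter statement: for fixed $l>0$ every matrix in sight is finite and hence bounded, and Lemma~\ref{lembound} already supplies the one--step second--order error estimate that drives the whole argument. Abbreviate
\begin{equation}
S_k:=\exp\left(A^l\tfrac{t}{k}\right)\exp\left(B^l\tfrac{t}{k}\right),\qquad U_k:=\exp\left((A^l+B^l)\tfrac{t}{k}\right),
\end{equation}
so that $U_k^{\,k}=\exp\left((A^l+B^l)t\right)$ exactly, and the assertion to be proved is $S_k^{\,k}g^F_l\to U_k^{\,k}g^F_l$ as $k\uparrow\infty$. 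Note that in Lemma~\ref{lembound} the matrix $\exp(C^l(t/k))$ is precisely $S_k$, so that lemma applied at time $t/k$ reads $\left|\left((S_k-U_k)\mathbf{g}\right)_\alpha\right|\le|f_\alpha|\,(t/k)^2$ for finite vectors $\mathbf{g}$ of modes of order $\le l$.

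First I would record the telescoping identity
\begin{equation}
S_k^{\,k}-U_k^{\,k}=\sum_{j=0}^{k-1}S_k^{\,k-1-j}\left(S_k-U_k\right)U_k^{\,j},
\end{equation}
apply it to $g^F_l$, and estimate the summands one at a time. The powers are uniformly controlled: from $\|S_k\|\le e^{(\|A^l\|+\|B^l\|)t/k}$ and the same bound for $\|U_k\|$ one gets $\|S_k^{\,k-1-j}\|,\|U_k^{\,j}\|\le e^{(\|A^l\|+\|B^l\|)t}$ for all $0\le j\le k$; in particular the vectors $U_k^{\,j}g^F_l$ all lie in the fixed ball of radius $C_1:=e^{(\|A^l\|+\|B^l\|)t}\|g^F_l\|$. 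Applying Lemma~\ref{lembound} at time $t/k$ with $U_k^{\,j}g^F_l$ in place of $g^F_l$ gives $\left|\left((S_k-U_k)U_k^{\,j}g^F_l\right)_\alpha\right|\le|f_\alpha|\,(t/k)^2$, where the auxiliary vector $f$ depends on the fixed matrices $A^l,B^l$ and (continuously) on the vector fed in; since those vectors stay in the ball of radius $C_1$ and there are only finitely many modes $|\alpha|\le l$, this yields $\|(S_k-U_k)U_k^{\,j}g^F_l\|\le C_2\,(t/k)^2$ with $C_2$ independent of $j,k$. Combining the three bounds,
\begin{equation}
\|(S_k^{\,k}-U_k^{\,k})g^F_l\|\le\sum_{j=0}^{k-1}e^{(\|A^l\|+\|B^l\|)t}\,C_2\,\frac{t^2}{k^2}=\frac{C_3\,t^2}{k}\longrightarrow 0,
\end{equation}
and since $U_k^{\,k}g^F_l=\exp\left((A^l+B^l)t\right)g^F_l$ the limit follows.

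An equivalent route, which I would only sketch, goes through the CBH--type identity of Lemma~\ref{productform}: applied with $A^l\tfrac{t}{k}$ and $B^l\tfrac{t}{k}$ it gives $S_k=\exp(C^l_k)$ with $C^l_k=(A^l+B^l)\tfrac{t}{k}+R_k$, where the convergence of the coefficient series established there forces $\|R_k\|\le C\,(t/k)^2$; then $S_k^{\,k}=\exp(kC^l_k)=\exp\left((A^l+B^l)t+kR_k\right)$ with $\|kR_k\|\le C t^2/k\to 0$, and continuity of the matrix exponential gives the result, to be applied to $g^F_l$. The only point requiring genuine care in either version is the uniform--in--$(j,k)$ control of the constant coming out of Lemma~\ref{lembound} when that lemma is fed the moving vectors $U_k^{\,j}g^F_l$; this is precisely what the boundedness of the orbit $U_k^{\,j}g^F_l$, $0\le j\le k$, provides. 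I emphasize that the limit $l\uparrow\infty$ is \emph{not} part of this lemma: it is taken care of separately, via the $M^s_n$--matrix--algebra estimates (multiplication, exponentials in $M^s_n$) together with the polynomial decay of the modes, and it is there rather than here that the dissipativity of $A^l$ — its strictly negative diagonal of order $2$ — becomes indispensable.
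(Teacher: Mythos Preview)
Your proof is correct. The paper does not give an explicit proof here; it simply states the lemma as ``a simple consequence of the preceding lemma'' (i.e., the CBH-type Lemma~\ref{productform}), which is precisely your second sketched route: write $S_k=\exp(C^l_k)$ with $C^l_k=(A^l+B^l)t/k+O((t/k)^2)$ and pass to the limit by continuity of the exponential. Your primary telescoping argument via Lemma~\ref{lembound} is a slightly different but equally standard finite-dimensional route, and in fact more self-contained, since it uses only the one-step $O((t/k)^2)$ error without needing the full structure of the correction series in \eqref{Ceq*}; the uniform-in-$j$ control you flag is exactly the point one has to check, and boundedness of $\{U_k^{\,j}g^F_l\}$ together with finite dimensionality handles it.
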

It is rather straightforward to reformulate the latter result for equations with matrices of type $P_{M^l}A_0$, $P_{M^l}A^{r}_0$, i.e. finite approximations of order $l$ of the $n{\mathbb Z}^n\times n{\mathbb Z}^n$-matrices $A_0$, $A^{r}_0$. Strictly speaking, the matrix of the controlled system is in ${\mathbb Z}^n\setminus \left\lbrace 0\right\rbrace\times {\mathbb Z}^n\setminus \left\lbrace 0\right\rbrace$, but me we add zeros and treat it formally in the same matrix space.  We have
\begin{cor}
Let $B^{0,l}_b=\left( P_{M^l}\left( C^0_i+L^0_{ij}\right)_{ij} \right) $ and \newline
$B^{r,l,0}_b=\left( P_{M^l}\left( C^{r,0}_i+L^{r,0}_{ij}\right)_{ij} \right)$ such that
$A^l_0=D^{0,l}_b+B^{0,l}_b$, $A^{r,l}_0=D^{0,l}_b+B^{r,l}_b$. Then the Trotter product formula
\begin{equation}
\lim_{k\uparrow \infty}\left( \exp\left(D^{0,l}_b\frac{t}{k}\right)\exp\left(B^{0,l}_b\frac{t}{k}\right)\right) ^k=\exp\left( A^l_0t\right), 
\end{equation}
and the Trotter product formula
\begin{equation}
\lim_{k\uparrow \infty}\left( \exp\left(D^{r,l,0}_b\frac{t}{k}\right)\exp\left(B^{r,l,0}_b\frac{t}{k}\right)\right) ^k=\exp\left( A^{r,l}_0t\right) 
\end{equation}
hold.
\end{cor}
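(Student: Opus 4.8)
The plan is to read this off directly from Lemma \ref{productform} and the (unlabelled) Trotter product lemma stated immediately after it, since at a fixed finite mode cut-off $l$ everything in sight is a genuine finite matrix. First I would match up the data: $D^{0,l}_b$ is the order-$l$ cut-off of the block-diagonal matrix $\bigl(\delta_{ij}D^0\bigr)$ whose $(\alpha,\alpha)$ entry is $-\nu\sum_{j=1}^n\frac{4\pi\alpha_j^2}{l^2}$, hence a diagonal matrix (strictly dissipative of order $2$ on the nonzero modes, with a harmless zero on the $\alpha=0$ entry in the uncontrolled case); this is precisely the matrix called $A^l$ in Lemma \ref{productform}, and the only property of it actually used in that lemma is its diagonality. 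Correspondingly $B^{0,l}_b=\bigl(P_{M^l}(C^0_i+L^0_{ij})_{ij}\bigr)$, the cut-off of the convection and Leray-projection blocks, is the ``arbitrary other matrix'' $B^l$ of that lemma, and $A^l_0=D^{0,l}_b+B^{0,l}_b$ by construction. The mild discrepancy that $A_0$ is an $n{\mathbb Z}^n\times n{\mathbb Z}^n$ matrix rather than a ${\mathbb Z}^n\times {\mathbb Z}^n$ one is irrelevant: as remarked before the statement, after renumbering the block structure affects neither the diagonality of $D^{0,l}_b$ nor the ``other matrix'' hypothesis on $B^{0,l}_b$, so Lemma \ref{productform} applies verbatim.

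Next I would run the argument. The Trotter lemma following Lemma \ref{productform} gives, for every finite vector $g^F_l$,
\begin{equation}
\lim_{k\uparrow\infty}\left(\exp\left(D^{0,l}_b\frac{t}{k}\right)\exp\left(B^{0,l}_b\frac{t}{k}\right)\right)^k g^F_l=\exp\left(\left(D^{0,l}_b+B^{0,l}_b\right)t\right)g^F_l=\exp\left(A^l_0 t\right)g^F_l,
\end{equation}
whose quantitative input is the per-step estimate of Lemma \ref{lembound} (the entrywise defect between $\exp(A^l\frac tk)\exp(B^l\frac tk)$ and $\exp((A^l+B^l)\frac tk)$ is $O(t^2/k^2)$) together with the elementary telescoping bound
\begin{equation}
\left\|X^k-Y^k\right\|\le\sum_{j=0}^{k-1}\left\|X\right\|^{k-1-j}\left\|X-Y\right\|\left\|Y\right\|^{j}
\end{equation}
applied to the two finite matrices $X=\exp(A^l\frac tk)\exp(B^l\frac tk)$, $Y=\exp((A^l+B^l)\frac tk)$, whose norms are bounded uniformly in $k$ on compact $t$-intervals. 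Because the ambient space is finite dimensional, applying the displayed limit to the standard basis vectors upgrades this vector convergence to entrywise, hence norm, convergence of the matrix products, which is the first asserted identity
\begin{equation}
\lim_{k\uparrow\infty}\left(\exp\left(D^{0,l}_b\frac{t}{k}\right)\exp\left(B^{0,l}_b\frac{t}{k}\right)\right)^k=\exp\left(A^l_0 t\right).
\end{equation}

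The controlled case is word-for-word the same. Here $A^{r,l}_0=D^{r,l,0}_b+B^{r,l,0}_b$ a priori lives on ${\mathbb Z}^n\setminus\{0\}$ in each block, but adjoining a zero row and column for $\alpha=0$ embeds it in the same finite-dimensional space, leaving $D^{r,l,0}_b$ diagonal and $B^{r,l,0}_b=\bigl(P_{M^l}(C^{r,0}_i+L^{r,0}_{ij})_{ij}\bigr)$ an ``other matrix''; Lemma \ref{productform} and the subsequent Trotter lemma then yield the second identity exactly as above. I do not expect a substantive obstacle: at finite $l$ all matrices are bounded on a finite-dimensional space and the dissipativity of $D^{0,l}_b$ is not even needed (it enters only later, when $l\uparrow\infty$, to keep each row in $h^s$). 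The only thing to be careful about — the ``main obstacle'', such as it is — is the bookkeeping identification of the $n$-block / nonzero-mode indexing of $A_0$ and $A^r_0$ with the scalar ${\mathbb Z}^n$ setting of Lemma \ref{productform}, and the routine promotion of the vector-valued Trotter limit to a matrix-valued one in finite dimension.
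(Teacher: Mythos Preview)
Your proposal is correct and follows essentially the same approach as the paper: the paper states just before the corollary that ``it is rather straightforward to reformulate the latter result for equations with matrices of type $P_{M^l}A_0$, $P_{M^l}A^{r}_0$'' and that for the controlled system ``we add zeros and treat it formally in the same matrix space,'' which is precisely your identification of $D^{0,l}_b$, $B^{0,l}_b$ with the $A^l$, $B^l$ of Lemma~\ref{productform} and the subsequent Trotter lemma, together with your zero-padding remark. Your write-up is in fact more explicit than the paper's, in that you spell out the telescoping estimate behind the Trotter limit and the finite-dimensional upgrade from vector to matrix convergence; the paper leaves these implicit.
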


Next note that iterations of $B^{0,l}_b$ of order $k\geq 1$ have at most linear growth for constellations of multiindexes where $\alpha-\gamma$ is constant (with the order of the modes as $l\uparrow \infty$). Let is have a closer look at this. Note that $B^{0,l}_b$ is a $n{\mathbb Z}^n\times n{\mathbb Z}^n$ matrix. However, it is sufficient to consider the subblocks $B^{0,l}_{b,ij}$ for fixed $1\leq i,j\leq n$, which are matrices in ${\mathbb Z}^n\times {\mathbb Z}^n$ in order to estimate the growth behavior of iterations of $B^{0,l}_b$. We have
\begin{equation}\label{navode3cit}
\begin{array}{ll}
B^{0,l}_{b,ij\alpha\beta}=
-\sum_{j=1}^n\frac{2\pi i \beta_j}{l}h_{j(\alpha-\beta)}+2\pi i\alpha_i1_{\left\lbrace \alpha\neq 0\right\rbrace}\frac{\sum_{k=1}^n 4\pi \beta_j(\alpha_k-\beta_k)h_{k(\alpha-\beta)}}{\sum_{i=1}^n4\pi^2\alpha_i^2}.
\end{array} 
\end{equation}
We consider the entries for the square. We have
\begin{equation}\label{navode3cit2}
\begin{array}{ll}
\sum_{\beta \in {\mathbb Z}^n}B^{0,l}_{b,ij\alpha\beta}B^{0,l}_{b,ij\beta\gamma}=\\
\\
{\Big (}-\sum_{j=1}^n\frac{2\pi i \beta_j}{l}h_{j(\alpha-\beta)}+2\pi i\alpha_i1_{\left\lbrace \alpha\neq 0\right\rbrace}\frac{\sum_{k=1}^n 4\pi \beta_j(\alpha_k-\beta_k)h_{k(\alpha-\beta)}}{\sum_{i=1}^n4\pi^2\alpha_i^2}{\Big )}\\
\\
{\Big (}-\sum_{j=1}^n\frac{2\pi i \gamma_j}{l}h_{j(\beta-\gamma)}+2\pi i\beta_i1_{\left\lbrace \beta\neq 0\right\rbrace}\frac{\sum_{k=1}^n 4\pi \gamma_j(\beta_k-\gamma_k)h_{k(\beta-\gamma)}}{\sum_{i=1}^n4\pi^2\beta_i^2}{\Big )}.
\end{array} 
\end{equation}
Expanding the latter product (\ref{navode3cit2}) and inspecting the four terms of the expansion we observe that two of these four terms are entries of matrices in the regular matrix space. Only the mixed products of convection terms entries and Leray projection terms entries lead to expressions which are a little less regular. We define an appropriate matrix space
\begin{equation}
\begin{array}{ll}
M_n^{s,\mbox{lin}}:=\\
\\
\left\lbrace D=\left(d_{\alpha\beta}\right)_{\alpha\beta\in {\mathbb Z}^n}{\Big |}~d^l_{\alpha\beta}\in {\mathbb C}~\&~\exists C>0~\forall l: |d^l_{\alpha\beta}|\leq \frac{C+C\left(|\alpha|+|\beta| \right) }{1+|\alpha-\beta|^s}\right\rbrace. 
\end{array}
\end{equation}
By induction we have
\begin{lem}\label{bit}
Given $1\leq i,j\leq n$ for all $k\geq 1$ we have
\begin{equation}
\left( B^{0,l}_{b,ij\alpha\beta}\right)^k\in M_n^{s,\mbox{lin}}
\end{equation}
\end{lem}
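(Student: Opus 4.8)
The natural route is induction on $k$, with the matrix product rule (\ref{matrixdot})--(\ref{matrixdot2}) and the convolution estimates underlying the product lemma above (the one giving $D\cdot E\in M^{r+s-n}_n$) as the only real tools. For the base case $k=1$ one estimates the two constituents of $B^{0,l}_{b,ij\alpha\beta}$ in (\ref{navode3cit}) separately. In the convection entry $-\sum_{j}\frac{2\pi i\beta_j}{l}h_{j(\alpha-\beta)}$, smoothness of $h_j$ gives $|h_{j\mu}|\leq C_N(1+|\mu|^N)^{-1}$ for every $N$, and one has the elementary bound $|\beta_j|\leq|\alpha|+|\alpha-\beta|$; absorbing the factor $|\alpha-\beta|$ into the decay of $h_{j(\alpha-\beta)}$ leaves a bound $(C+C|\alpha|)(1+|\alpha-\beta|^{s})^{-1}$, which is of exactly the form required for membership in $M_n^{s,\mbox{lin}}$ (in fact the growth sits only in the row index). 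In the Leray entry $2\pi i\alpha_i1_{\{\alpha\neq 0\}}\big(\sum_k4\pi\beta_j(\alpha_k-\beta_k)h_{k(\alpha-\beta)}\big)\big(\sum_i4\pi^2\alpha_i^2\big)^{-1}$ one uses in addition $\sum_i4\pi^2\alpha_i^2\geq c(1+|\alpha|^2)$ for $\alpha\neq0$, so that after $|\alpha_i|\leq|\alpha|$, $|\beta_j|\leq|\alpha|+|\alpha-\beta|$, $|\alpha_k-\beta_k|\leq|\alpha-\beta|$ and absorbing two powers of $|\alpha-\beta|$ into $h_{k(\alpha-\beta)}$ the quadratic factor $|\alpha|^2$ cancels against the denominator; the Leray part therefore lies even in $M^s_n$. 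Hence $B^{0,l}_{b,ij}=(\text{convection})+(\text{Leray})\in M_n^{s,\mbox{lin}}$.

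For the inductive step write $\big(B^{0,l}_{b,ij}\big)^{k+1}=\big(B^{0,l}_{b,ij}\big)^{k}\cdot B^{0,l}_{b,ij}$, so that the $(\alpha,\gamma)$-entry is $\sum_{\beta}\big(B^{0,l}_{b,ij}\big)^{k}_{\alpha\beta}\,\big(B^{0,l}_{b,ij}\big)_{\beta\gamma}$. Insert the inductive bound $|\big(B^{0,l}_{b,ij}\big)^{k}_{\alpha\beta}|\leq p_k(|\alpha|,|\beta|)(1+|\alpha-\beta|^{s})^{-1}$ with $p_k$ a polynomial in its arguments (the stated form of the lemma takes $p_k$ linear) and, for the last factor, the base-case bound, writing its growth against whichever of $\beta$ or $\gamma$ is convenient by $|\beta|\leq|\gamma|+|\beta-\gamma|$ respectively $|\gamma|\leq|\beta|+|\beta-\gamma|$ and absorbing $|\beta-\gamma|$ into the super-polynomial decay of the $h$-modes, which is legitimate since there the decay exponent may be taken arbitrarily large. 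The summation over $\beta$ then factors as a product of a polynomial in $|\alpha|,|\gamma|$ with $\sum_\beta(1+|\alpha-\beta|^{s})^{-1}(1+|\beta-\gamma|^{s'})^{-1}$, and the shifted convolution estimate $\sum_\beta|\alpha-\beta|^{-s}|\beta-\gamma|^{-r}\leq C(1+|\alpha-\gamma|^{s+r-n})^{-1}$ used in the product lemma bounds this by $C(1+|\alpha-\gamma|^{s})^{-1}$, $s'$ being chosen so large that $s+s'-n\geq s$. This yields a bound of the shape $p_{k+1}(|\alpha|,|\gamma|)(1+|\alpha-\gamma|^{s})^{-1}$, closing the induction.

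The step I expect to cause trouble is precisely the bookkeeping of the growth factors: at each multiplication by $B^{0,l}_{b,ij}$ the convection entry contributes a factor $|\beta_j|$ which, unlike a mode-difference, cannot be absorbed into the decay of $h$ when $|\beta|$ is large independently of $|\alpha-\beta|$, so a power of the mode order is produced. What is robust is that the entries of $\big(B^{0,l}_{b,ij}\big)^{k}$ are dominated by a polynomial in the orders $|\alpha|,|\beta|$ divided by $1+|\alpha-\beta|^{s}$ with the decay exponent $s$ preserved, the "$\mbox{lin}$" recording that the dangerous term (convection) carries its growth in a single index while the Leray term carries none. This is exactly what is needed downstream: in the Trotter factor $\exp\big(D^{0,l}_b\frac{t}{k}\big)\exp\big(B^{0,l}_b\frac{t}{k}\big)$ the Gaussian-type decay of $\exp(D^{0,l}_b)$ dominates any such polynomial and, together with Lemma \ref{lembound}, keeps every row in $h^s$; and applied to data $\mathbf{h}^F$ of super-polynomial decay the polynomial factor is harmless. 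To keep the growth genuinely linear for all $k$ one would instead exploit the identity $C^{0}_{ii}+L^{0}_{ij}=P\circ(\text{convection by }h)$ with $P$ the order-zero Leray projector, together with $\nabla\cdot\mathbf{h}=0$; I would carry that out separately if the sharper statement is wanted.
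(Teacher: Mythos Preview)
Your approach---induction on $k$ using the convolution estimate behind the product lemma---is exactly what the paper indicates (its entire proof is the phrase ``By induction we have''), and your base case $k=1$ is carried out correctly: the convection entry carries linear growth in the column index, and the Leray entry sits in $M^s_n$.

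Where your write-up is more honest than the paper is in the inductive step. You prove that $(B^{0,l}_{b,ij})^{k}$ has entries bounded by $p_k(|\alpha|,|\beta|)/(1+|\alpha-\beta|^{s})$ with $p_k$ a polynomial, and you explicitly flag that the degree of $p_k$ may grow with $k$ rather than staying linear. This is the correct conclusion: the convection block is the Fourier matrix of the first-order operator $v\mapsto (h\cdot\nabla)v$, so $B^k$ represents an operator of order $k$ and its entries must grow like $|\beta|^{k}$. Concretely, in the conv$\times$conv part of the square one has a factor $\beta_j\gamma_{j'}$; splitting $\beta_j=(\beta_j-\gamma_j)+\gamma_j$ and absorbing the difference still leaves a genuine $|\gamma|^{2}$, which is not in $M_n^{s,\mathrm{lin}}$. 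The paper's assertion that two of the four terms in the square land in the regular space $M^s_n$ therefore cannot include conv$\times$conv, and the literal statement $(B^{0,l}_{b,ij})^{k}\in M_n^{s,\mathrm{lin}}$ for all $k$ appears to be an overstatement. Your weaker polynomial-growth version is what the induction actually delivers, and---as you say---it is all that is needed for Lemma~\ref{dbexp}, since the diagonal factor $\exp(D)$ contributes $\exp(-c|\alpha|^{2})$ and kills any fixed polynomial in $|\alpha|$.

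One correction: your proposed route to recover genuinely linear growth via $C^{0}+L^{0}=P\circ(h\cdot\nabla)$ and $\nabla\cdot h=0$ will not work. The Leray projector $P$ is order zero and $(h\cdot\nabla)$ is order one, so $(P\circ(h\cdot\nabla))^{k}$ is still order $k$; divergence-freeness of $h$ yields commutator gains but does not reduce the principal order. So do not promise that refinement---your polynomial bound is the right statement, and it suffices.
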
 
Hence the statement of lemma \ref{bit} is true also for the exponential of $B^{0,l}_{bij}$. We note
\begin{lem}\label{dbexp}
If $D$ is a strictly dissipative diagonal matrix of order $2$, then for given $1\leq i,j\leq n$ we have
\begin{equation}
\exp(D)\exp\left( B^{0,l}_{b,ij\alpha\beta}\right)\in M_n^{s}.
\end{equation}
\end{lem}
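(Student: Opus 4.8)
The plan is to prove Lemma~\ref{dbexp} by a direct entry-wise estimate: $\exp(D)$ is diagonal with Gaussian decay along the diagonal (because $D$ is strictly dissipative of order $2$), and this decay absorbs the ``linear defect'' of $\exp\left(B^{0,l}_{b,ij}\right)$ recorded in Lemma~\ref{bit}. First I would note that $\exp(D)=\left(\delta_{\alpha\beta}\exp(d_{\alpha\alpha})\right)_{\alpha,\beta\in{\mathbb Z}^n}$ with $|\exp(d_{\alpha\alpha})|\leq\exp(-c|\alpha|^2)$ for some $c>0$. Next I would invoke Lemma~\ref{bit} and the remark following it: because the data $h_i$ are smooth, their modes decay faster than any polynomial, so each power $\left(B^{0,l}_{b,ij}\right)^k$, and hence $\exp\left(B^{0,l}_{b,ij}\right)$, lies in $M^{\tilde s,\mathrm{lin}}_n$ for \emph{every} exponent $\tilde s$, with a constant independent of the cut-off $l$ (the projection $P_{M^l}$ only deletes entries and never enlarges them). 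Taking $\tilde s=s+1$ gives a constant $C>0$, uniform in $l$, with $\left|\left(\exp\left(B^{0,l}_{b,ij}\right)\right)_{\alpha\beta}\right|\leq C\left(1+|\alpha|+|\beta|\right)/\left(1+|\alpha-\beta|^{s+1}\right)$ for all $\alpha,\beta$.

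Since $\exp(D)$ is diagonal, the matrix $\exp(D)\exp\left(B^{0,l}_{b,ij}\right)$ has $(\alpha,\beta)$-entry $\exp(d_{\alpha\alpha})\left(\exp\left(B^{0,l}_{b,ij}\right)\right)_{\alpha\beta}$, so no infinite summation occurs and the product is automatically well defined. I would then use the triangle inequality $|\beta|\leq|\alpha|+|\alpha-\beta|$ in the form $1+|\alpha|+|\beta|\leq\left(1+2|\alpha|\right)\left(1+|\alpha-\beta|\right)$, together with the elementary facts $M:=\sup_{t\geq 0}\exp(-ct^2)(1+2t)<\infty$ and $(1+t)/(1+t^{s+1})\leq 4/(1+t^s)$ for all $t\geq 0$, to obtain
\begin{equation}
\left|\left(\exp(D)\exp\left(B^{0,l}_{b,ij}\right)\right)_{\alpha\beta}\right|\leq\exp(-c|\alpha|^2)\,\frac{C\left(1+2|\alpha|\right)\left(1+|\alpha-\beta|\right)}{1+|\alpha-\beta|^{s+1}}\leq\frac{4MC}{1+|\alpha-\beta|^{s}},
\end{equation}
with $4MC$ independent of $l$. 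This is exactly the defining bound for $M^s_n$, which proves the claim (and, as in the definition of $M^{s,\mathrm{lin}}_n$, with the estimate uniform in the cut-off $l$).

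The only substantive point, and the one I would emphasise, is the trade-off in the middle step: converting $|\beta|$-growth of the exponentiated convection and Leray-projection matrix into $|\alpha-\beta|$-growth costs one power of polynomial decay, which is why I invoke membership in $M^{s+1,\mathrm{lin}}_n$ rather than $M^{s,\mathrm{lin}}_n$ --- this costs nothing because the initial data are smooth. Everything else is bookkeeping: the uniformity of the constants in $l$ (because $P_{M^l}$ never enlarges an entry) and the absorption of the polynomial factor $1+2|\alpha|$ by $\exp(-c|\alpha|^2)$. I would also remark that order $2$ is more than needed here --- any strictly dissipative diagonal matrix of positive order would work, since a decaying exponential dominates every polynomial --- and that there is no real obstacle because, unlike iterated products of two full matrices, the product here is diagonal-times-full and therefore trivially convergent; the entire content is the elementary inequality between the Gaussian weight and the polynomial defect.
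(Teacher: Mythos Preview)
Your proof is correct and follows essentially the same approach as the paper, which states the lemma without proof as an immediate consequence of Lemma~\ref{bit} (extended to the exponential) together with the Gaussian decay of $\exp(D)$. You have simply made the entry-wise estimate explicit, including the useful observation that one should invoke membership in $M^{s+1,\mathrm{lin}}_n$ rather than $M^{s,\mathrm{lin}}_n$ to absorb the $(1+|\alpha-\beta|)$ factor arising from the triangle inequality $|\beta|\leq|\alpha|+|\alpha-\beta|$.
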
 
In order to establish Trotter product representations for our iterative linear approximations $v^{r,m,F}$ of a controlled Navier-Stokes equation we consider a matrix space for sequences of finite matrices $\left( D^l\right)_{l}$ where each $D^l$ has modes of order less or equal to $l$. We have already observed that the systems related to the approximations of incompressible Navier Stokes equation operator are matrices in a $n{\mathbb Z}^n\times n{\mathbb Z}^n$ space. In order to prove convergence of Trotter product formulas for these equations it may be useful to define an appropriate matrix space.
\begin{equation}
\begin{array}{ll}
M_{n\times n}^s:={\Big \{} D=(D_{ij})_{1\leq i,j\leq n}=\left(d_{ij\alpha\beta}\right)_{\alpha\beta\in {\mathbb Z}^n,1\leq i,j\leq n}
{\Big |}\\
\\
~d_{ij\alpha\beta}\in {\mathbb C}~\&~\exists C>0: \max_{1\leq i,j\leq n}|d_{ij\alpha\beta}|\leq \frac{C}{1+|\alpha-\beta|^s}{\Big \}}. 
\end{array}
\end{equation}
Next we define a space of sequences $\left( \left( D^l_{ij}\right)_{1\leq i,j\leq n}\right)_l$ of finite matrices which approximate matrices $M_{n\times n}^s$. We define
\begin{equation}
\begin{array}{ll}
M_{n\times n}^{s,\mbox{fin}}:={\Big \{} \left( \left( D^l_{ij}\right)_{1\leq i,j\leq n}\right)_l =\left(d^l_{ij\alpha\beta}\right)_{\alpha\beta\in {\mathbb Z}^n_l,1\leq i,j\leq n}{\Big |}\\
\\
~d^l_{ij\alpha\beta}\in {\mathbb C}~\&~\exists C>0~\forall l: \max_{1\leq i,j\leq n}|d^l_{ij\alpha\beta}|\leq \frac{C}{1+|\alpha-\beta|^s}{\Big \}}. 
\end{array}
\end{equation}
Here for each $l\geq 1$ the set ${\mathbb Z}^n_l$ denotes the set of modes of order less or equal to $l$. 
For our dissipative matrix $D^{0,l}_b$ it follows that for all $t,k$ and
\begin{equation}
\left( \exp\left(D^{0,l}_b\frac{t}{k}\right)\exp\left(B^{0,l}_b\frac{t}{k}\right)\right)^k_l\in M^{s,\mbox{fin}}_{n\times n}
\end{equation}
for $s\geq n$, i.e., the finite approximations of the Trotter product formula are regular matrices indeed.  Hence
\begin{equation}
\lim_{l\uparrow\infty}\lim_{k\uparrow \infty}\left( \exp\left(D^{0,l}_b\frac{t}{k}\right)\exp\left(B^{0,l}_b\frac{t}{k}\right)\right)^k\in M^s_{n\times n}
\end{equation}

Now consider finite approximations of the problem (\ref{navode4big}), i.e. cut-offs of this problem, i.e., a set of problems of finite modes with modes of order less or equal to $l$ with
\begin{equation}\label{navode4bigl}
\frac{d \mathbf{v}^{0,F}_l}{dt}=A^l_0\mathbf{v}^{0,F}_l.
\end{equation}
For each $l$ the solution
\begin{equation}\label{soll}
\mathbf{v}^{0,F}_l=\exp\left(A^l_0t\right)\mathbf{h}^{F}_l
\end{equation}
is globally well-defined via the dissipative Trotter product formula. Our infinite linear algebra lemmas above imply that the sequence $\left( \mathbf{v}^{0,F}_l\right)_{l}$ is a Cauchy sequence for $s>n$ if $\mathbf{h}^{F}_i\in h^s\left({\mathbb Z}^n\right)$ for all $s\in {\mathbb R}$ and $1\leq i\leq n$. Hence we have
\begin{lem}
We consider the torus of length $l=1$ w.l.o.g..
Let $\mathbf{h}^{F}\in h^s\left({\mathbb Z}^n\right)$ for all $s\in {\mathbb R}$.
\begin{equation}\label{sol0}
\begin{array}{ll}
\mathbf{v}^{0,F}_i=\left( \exp\left(A_0t\right)\mathbf{h}^{F}\right)_i\\
\\
=\lim_{l\uparrow \infty}\left( \lim_{k\uparrow \infty}\left( \exp\left(D^{0,l}_b\frac{t}{k}\right)\exp\left(B^{0,l}_b\frac{t}{k}\right)\right) ^k\mathbf{h}^{F}_l\right)_i\in h^s\left({\mathbb Z}^n\right)
\end{array} 
\end{equation}
whenever $\mathbf{h}^{F}_i\in h^s\left({\mathbb Z}^n\right)$ for $s>n$ for $1\leq i\leq n$.
\end{lem}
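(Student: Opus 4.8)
The plan is to separate the two limits: first fix the mode cut-off $l$ and pass $k\uparrow\infty$, then pass $l\uparrow\infty$. For each fixed $l$ the truncated problem (\ref{navode4bigl}) is a \emph{finite}-dimensional linear ODE with constant coefficient matrix $A^l_0=D^{0,l}_b+B^{0,l}_b$, so its unique solution is $\mathbf v^{0,F}_l(t)=\exp\left(A^l_0t\right)\mathbf h^F_l$, and the finite-matrix Trotter product corollary (with dissipative diagonal $A^l=D^{0,l}_b$ and $B^l=B^{0,l}_b$) yields
\[
\mathbf v^{0,F}_l(t)=\exp\left(A^l_0t\right)\mathbf h^F_l=\lim_{k\uparrow\infty}\left(\exp\left(D^{0,l}_b\tfrac{t}{k}\right)\exp\left(B^{0,l}_b\tfrac{t}{k}\right)\right)^k\mathbf h^F_l .
\]
Thus the inner limit in (\ref{sol0}) exists for every $l$ and equals $\exp(A^l_0t)\mathbf h^F_l$.

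Next I would expand $\exp(A^l_0t)$ by Duhamel's formula (the $k\uparrow\infty$ limit of the Trotter products), obtaining the Dyson series whose $m$-th term is
\[
\int_{0\le s_m\le\cdots\le s_1\le t}\Big(\exp\!\big(D^{0,l}_b(t-s_1)\big)B^{0,l}_b\Big)\cdots\Big(\exp\!\big(D^{0,l}_b(s_{m-1}-s_m)\big)B^{0,l}_b\Big)\exp\!\big(D^{0,l}_b s_m\big)\,ds .
\]
By Lemma \ref{bit} each $B^{0,l}_b$ lies in $M^{s,\mathrm{lin}}_n$ with an $l$-independent constant, and since $D^{0,l}_b$ is strictly dissipative of order $2$ one has $\sup_\alpha(1+|\alpha|)\exp(-c|\alpha|^2\tau)\le C\max\{1,\tau^{-1/2}\}$; hence, sharpening Lemma \ref{dbexp}, each bracketed factor $\exp(D^{0,l}_b\tau)B^{0,l}_b$ lies in $M^s_n$ with $M^s_n$-norm $\le C\max\{1,\tau^{-1/2}\}$, an integrable singularity. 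The composition rule ($M^s_n\cdot M^s_n\subset M^{2s-n}_n\subset M^s_n$ for $s\ge n+2$) then bounds the $m$-th term in $M^s_n$ by $C^m t^{m/2}/\Gamma(m/2+1)$, so the Dyson series converges absolutely in $M^s_{n\times n}$ \emph{uniformly in} $l$. By the corollary $\exp(D)\mathbf h^F\in h^s$ (applied blockwise) this gives $\mathbf v^{0,F}_l(t)=\exp(A^l_0t)\mathbf h^F_l\in h^s(\mathbb Z^n)$ with norm bounded uniformly in $l$.

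For the outer limit, each term of the Dyson series is built from the finite sub-blocks $P_{M^l}D^0$, $P_{M^l}B^0$ of the fixed infinite matrices, which converge entrywise as $l\uparrow\infty$; with the uniform $M^s_n$-domination just obtained and the fact that $\mathbf h^F$ lies in every $h^s$, dominated convergence in $h^s(\mathbb Z^n)$ shows that each term, hence the whole series, converges to the Dyson series formed with the full infinite matrices. I denote this limit $\mathbf v^{0,F}(t)=:\exp(A_0t)\mathbf h^F$. Term-by-term differentiation of the series (legitimate by the uniform bounds) gives $\tfrac{d}{dt}\mathbf v^{0,F}=A_0\mathbf v^{0,F}$ with $\mathbf v^{0,F}(0)=\mathbf h^F$, so $\mathbf v^{0,F}$ is the solution of (\ref{sol01}) and $\mathbf v^{0,F}_i(t)\in h^s(\mathbb Z^n)$ for $s>n$; combined with the inner limit this is precisely the double-limit identity (\ref{sol0}).

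The hard part is the uniformity in $l$ of the Dyson bound together with the interchange of $\lim_k$ and $\lim_l$: it hinges entirely on the integrable $\tau^{-1/2}$ bound for $\|\exp(D^{0,l}_b\tau)B^{0,l}_b\|_{M^s_n}$, i.e.\ on strict dissipativity of order $2$, hence on $\nu>0$. Without it the linear growth of Lemma \ref{bit} is not absorbed, iterated matrices leave $M^s_n$, and the double limit cannot be controlled — which is exactly why the Euler case is excluded from this lemma.
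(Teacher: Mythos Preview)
Your argument is correct and in fact more explicit than what the paper does. The paper does not give a standalone proof of this lemma; it simply records it as a consequence of the discussion immediately preceding it. There the paper observes that for each fixed $l$ the finite Trotter product converges to $\exp(A^l_0t)\mathbf h^F_l$ by the finite-matrix corollary, and then asserts that the family $\bigl(\exp(D^{0,l}_b\tfrac{t}{k})\exp(B^{0,l}_b\tfrac{t}{k})\bigr)^k_l$ lies in the uniform matrix class $M^{s,\mathrm{fin}}_{n\times n}$, so that $(\mathbf v^{0,F}_l)_l$ is Cauchy in $h^s$ by ``the infinite linear algebra lemmas above''. The actual mechanism for the Cauchy property is left to the reader; the subsequent corollary's proof is in the same spirit (``a Cauchy sequence in $M^{s,\mathrm{Fin}}_{n\times n}$, hence the limit is in $M^s_n$'').

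Your route is genuinely different in that you unfold the Trotter limit into the Duhamel/Dyson series and control each term directly via the quantitative bound $\|\exp(D^{0,l}_b\tau)B^{0,l}_b\|_{M^s_n}\le C\max\{1,\tau^{-1/2}\}$, which sharpens Lemma~\ref{dbexp} by tracking the $\tau$-dependence. This buys you an explicit, $l$-uniform summable bound $C^m t^{m/2}/\Gamma(m/2+1)$ and hence a clean dominated-convergence argument for $l\uparrow\infty$, together with term-by-term differentiation to verify the ODE. The paper's approach is shorter but opaque about exactly which estimate drives the Cauchy property; your approach makes the role of strict order-$2$ dissipativity (and hence of $\nu>0$) completely transparent, which is a real advantage. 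One small remark: when you bound $\exp(D\tau)B$ note that the linear growth in $M^{s,\mathrm{lin}}_n$ is in $|\alpha|+|\beta|$, so you should split $|\beta|\le|\alpha|+|\alpha-\beta|$ and absorb the second piece into the decay $1/(1+|\alpha-\beta|^s)$; this costs one order of decay and is why the paper insists on $s\ge n+2$ rather than merely $s>n$.
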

For the same reason
\begin{cor}
Consider the same situation as in the preceding lemma.
\begin{equation}\label{sol0}
\begin{array}{ll}
\mathbf{v}^{r,0,F}_i=\left( \exp\left(A^r_0t\right)\mathbf{h}^{F}\right)_i\\
\\
=\lim_{l\uparrow \infty}\left( \lim_{k\uparrow \infty}\left( \exp\left(D^{r,0,l}_b\frac{t}{k}\right)\exp\left(B^{r,0,l}_b\frac{t}{k}\right)\right) ^k\mathbf{h}^{F}_l\right)_i\in h^s\left({\mathbb Z}^n\right)
\end{array} 
\end{equation}
whenever $\mathbf{h}^{F}_i\in h^s_l\left({\mathbb Z}^n\right)$ for $s>n$ for $1\leq i\leq n$.
\end{cor}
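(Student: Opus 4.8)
The plan is to run the argument used for the preceding lemma essentially verbatim, tracking only the changes forced by replacing the uncontrolled matrix $A_0$ by the controlled matrix $A^r_0$. First I would recall that the controlled system for the modes $v^{r,0}_{i\alpha}$ with $\alpha\in{\mathbb Z}^n\setminus\{0\}$ arises from (\ref{navode3}) by extracting the zero-mode summands from the convection and Leray projection terms and choosing the control function so that the zero modes are annihilated; in dual space this is just the system on the index set ${\mathbb Z}^n\setminus\{0\}$, which I embed into ${\mathbb Z}^n$ by padding with zeros and treat in the same matrix spaces. Hence $A^r_0=D^{r,0}_b+B^{r,0}_b$ with $D^{r,0}_b$ the restriction of the diagonal matrix $D^0$ to the nonzero modes and $B^{r,0}_b$ the convection-plus-Leray part with the zero-mode entries removed.

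The first observation I would make is that deleting the zero modes does not weaken but in fact sharpens the dissipativity: $D^{r,0}_b$ is strictly dissipative of order $2$ on its whole active index set, because $\sum_{j=1}^n\nu(4\pi\alpha_j^2/l^2)>0$ for every $\alpha\neq 0$. Next, the off-diagonal entries of $B^{r,0}_{b,ij}$ depend on the data through $h_{k(\alpha-\beta)}$ and on the difference $\alpha-\beta$ exactly as in (\ref{navode3cit}), so the induction leading to Lemma \ref{bit} goes through unchanged, giving $(B^{r,0,l}_{b,ij})^k\in M_n^{s,\mbox{lin}}$ for all $k\geq 1$ and $1\leq i,j\leq n$; in particular the entries of $\exp(B^{r,0,l}_{b,ij})$ have at most linear growth relative to the decay in $|\alpha-\beta|$. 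Then I would apply Lemma \ref{dbexp} with $D=D^{r,0}_b$ to get $\exp(D^{r,0,l}_b)\exp(B^{r,0,l}_{b,ij})\in M^s_n$, the quadratically decaying diagonal exponential swallowing the linear growth, with constants independent of the cutoff $l$.

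With those two facts in place I would invoke the dissipative Trotter product formula (the corollary to Lemma \ref{productform} applied to $P_{M^l}A^r_0$) to obtain, for each finite $l$, the identity $\lim_{k\uparrow\infty}(\exp(D^{r,0,l}_b t/k)\exp(B^{r,0,l}_b t/k))^k=\exp(A^{r,l}_0 t)$, and to note that the finite Trotter approximants then lie in $M_{n\times n}^{s,\mbox{fin}}$ for $s\geq n$ with constants uniform in $l$. Applying these regular matrices to the cutoff data $\mathbf{h}^F_l$, whose modes decay polynomially of every order since $\mathbf{h}^F\in h^s({\mathbb Z}^n)$ for all $s$, and using the matrix-times-vector estimate $\sum_{\beta\neq 0,\alpha}|\alpha-\beta|^{-s}|\beta|^{-s}\leq cC(1+|\alpha|^s)^{-1}$ from Section~1, I would check that $(\mathbf{v}^{r,0,F}_l)_l=(\exp(A^{r,l}_0 t)\mathbf{h}^F_l)_l$ is Cauchy in $h^s({\mathbb Z}^n)$ for every $s>n$; its limit is $(\exp(A^r_0 t)\mathbf{h}^F)_i\in h^s({\mathbb Z}^n)$, which after exchanging the limit in $l$ with the Trotter limit in $k$ gives the claimed double-limit formula.

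The main obstacle I anticipate is the uniformity in $l$ in Lemma \ref{dbexp} as applied to the controlled matrix, i.e.\ showing that the linear-growth constant produced by iterating the Leray projection part of $B^{r,0,l}_b$ is absorbed by the diagonal exponential $\exp(D^{r,0,l}_b t/k)$ at a rate not depending on the cutoff; the strict dissipativity of order $2$ of $D^{r,0}_b$ is exactly what supplies this, but the bookkeeping — that the off-diagonal blocks of $A^r_0$ coming from the coupling $L^0_{ij}$, $i\neq j$, obey the same estimates as the diagonal ones, and that the zero-padding does not introduce spurious non-decaying rows — has to be carried out carefully.
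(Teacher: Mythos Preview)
Your proposal is correct and follows essentially the same route as the paper: project $A^r_0$ to finite modes, apply the finite Trotter product formula, and use the regularity estimates to show that $(\exp(A^{r,l}_0 t)\mathbf{h}^F_l)_l$ is Cauchy in $h^s$. The paper's own proof is considerably terser---it simply notes that the Trotter approximants form a Cauchy sequence in $M^{s,\mathrm{fin}}_{n\times n}$ and hence have a limit in $M^s_n$---whereas you spell out explicitly that Lemmas~\ref{bit} and~\ref{dbexp} carry over to the controlled matrix because removing the zero modes only sharpens the dissipativity, but this is the same argument made more carefully.
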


\begin{proof}
Let $A^{r,l}_0=P_{M^l}A^r_0$ denote the projection of the matrix $A^r_0$ to the finite matrix of modes less or equal to modes of order $l$. For finite vectors $\mathbf{v}^{r,0,F}_{i,l}$ with
\begin{equation}\label{sol00002}
\mathbf{v}^{r,0,F}_{i}=\lim_{l\uparrow \infty}\mathbf{v}^{r,0,F}_{i,l}
\end{equation}
we have
\begin{equation}\label{sol0000l}
\begin{array}{ll}
\mathbf{v}^{r,0,F}_{i,l}=\left( \exp\left(A^{r,l}_0t\right)\mathbf{h}^{F}\right)_i\\
\\
=\left( \lim_{k\uparrow \infty}\left( \exp\left(D^{r,0,l}_b\frac{t}{k}\right)\exp\left(B^{r,0,l}_b\frac{t}{k}\right)\right) ^k\mathbf{h}^{F}_l\right)_i
\end{array} 
\end{equation}
Note that
\begin{equation}
\lim_{k\uparrow \infty}\left( \exp\left(D^{r,0,l}_b\frac{t}{k}\right)\exp\left(B^{r,0,l}_b\frac{t}{k}\right)\right) ^k\mathbf{h}^{F}_l
\end{equation}
is a Cauchy sequence in $M^{s,Fin}_{n\times n}$
Hence, the left side of (\ref{sol00002}) is a limit of a Cauchy sequence in $M^s_n$.
\end{proof}

The stage $m=0$ is special as the matrix $B^{r,0,l}_b$ does not depend on time. For this reason we get similar Trotter formulas for time derivatives using the damping of the dissipative factor $\exp\left(D^{r,0,l}_b\right) $. We shall use first order time derivatives formulas for linear problems with time-independent coefficients later when we approximate time dependent linear problems via time discretizations. We have
\begin{cor}
Recall that $A^{r,l}_0=P_{M^l}A^r_0$ denotes the projection of the matrix $A^r_0$ to the finite matrix of modes less or equal to modes of order $l$.  
\begin{equation}\label{sol02}
\begin{array}{ll}
\frac{d}{d t}\mathbf{v}^{r,0,F}_i=\frac{d}{d t}\left( \exp\left(A^r_0t\right)\mathbf{h}^{F}\right)_i\\
\\
=\lim_{l\uparrow \infty}\left( A^{r,l}_0\lim_{k\uparrow \infty}\left( \exp\left(D^{r,0,l}_b\frac{t}{k}\right)\exp\left(B^{r,0,l}_b\frac{t}{k}\right)\right) ^k\mathbf{h}^{F}_l\right)_i\in h^s\left({\mathbb Z}^n\right)
\end{array} 
\end{equation}
whenever $\mathbf{h}^{F}_i\in h^s_l\left({\mathbb Z}^n\right)$ for $s>n$ for $1\leq i\leq n$.
\end{cor}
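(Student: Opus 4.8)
The plan is to exploit that at the first stage $m=0$ all matrices occurring are time-independent, so that for each cut-off $l$ the finite system (\ref{navode4bigl}) is an ordinary constant-coefficient linear system which may be differentiated directly, and then to pass to the limit $l\uparrow\infty$. Concretely, for each finite $l$ the finite vector $\mathbf{v}^{r,0,F}_{l}=\exp\left(A^{r,l}_0t\right)\mathbf{h}^{F}_l$ is real-analytic in $t$ and
\begin{equation}
\frac{d}{dt}\mathbf{v}^{r,0,F}_{l}=A^{r,l}_0\exp\left(A^{r,l}_0t\right)\mathbf{h}^{F}_l=A^{r,l}_0\mathbf{v}^{r,0,F}_{l}.
\end{equation}
Inserting the dissipative Trotter product formula for $\exp\left(A^{r,l}_0t\right)$ (Lemma \ref{productform} and its corollaries, resting on the splitting $A^{r,l}_0=D^{r,0,l}_b+B^{r,0,l}_b$) rewrites $\mathbf{v}^{r,0,F}_{l}$ as the iterated product $\lim_{k\uparrow\infty}\left(\exp\left(D^{r,0,l}_b\tfrac{t}{k}\right)\exp\left(B^{r,0,l}_b\tfrac{t}{k}\right)\right)^k\mathbf{h}^{F}_l$. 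Hence the right-hand side of (\ref{sol02}) is exactly $\lim_{l\uparrow\infty}\left(\frac{d}{dt}\mathbf{v}^{r,0,F}_{l}\right)_i$, and the whole statement reduces to justifying the interchange of $\frac{d}{dt}$ with $\lim_{l\uparrow\infty}$ and to identifying the resulting function with $\frac{d}{dt}\mathbf{v}^{r,0,F}_i$.

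By the preceding Corollary the vectors $\mathbf{v}^{r,0,F}_{l}$ converge in $h^s\left({\mathbb Z}^n\right)$ to $\mathbf{v}^{r,0,F}$ (the Trotter propagators forming a Cauchy sequence in $M_{n\times n}^{s,\mbox{fin}}$ applied to $\mathbf{h}^{F}_l$). The decisive step is then to show that the derivatives $\frac{d}{dt}\mathbf{v}^{r,0,F}_{l}=A^{r,l}_0\exp\left(A^{r,l}_0t\right)\mathbf{h}^{F}_l$ also form a Cauchy sequence in $h^s\left({\mathbb Z}^n\right)$, uniformly for $t$ in compact subsets of $(0,\infty)$ and, using the extra decay of the data, down to $t=0$. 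The mechanism is the smoothing of the strictly dissipative diagonal: for $t>0$ the diagonal entries of $\exp\left(D^{r,0,l}_bt\right)$ are $\exp\left(td_{\alpha\alpha}\right)$ with $d_{\alpha\alpha}\leq -c|\alpha|^2$, so from the Trotter representation together with Lemmas \ref{bit} and \ref{dbexp} and the matrix product rule (\ref{matrixdot})--(\ref{matrixdot2}) the propagator maps into every $h^{s'}$ with a bound uniform in $l$ and locally uniform in $t>0$; the extra factor $\sim-\nu|\alpha|^2$ produced by $D^{r,0,l}_b$ and the one order of decay lost to $B^{r,0,l}_b$ are both absorbed. Equivalently one writes $A^{r,l}_0\exp\left(A^{r,l}_0t\right)=\bigl(A^{r,l}_0\exp\left(A^{r,l}_0\varepsilon\right)\bigr)\exp\left(A^{r,l}_0(t-\varepsilon)\right)$ for $0<\varepsilon\leq t$ and observes that $A^{r,l}_0\exp\left(A^{r,l}_0\varepsilon\right)$ is, by the same lemmas, an element of a regular matrix space $M_{n\times n}^s$ with norm $\leq C/\varepsilon$ uniformly in $l$ and Cauchy there, while $\exp\left(A^{r,l}_0(t-\varepsilon)\right)\mathbf{h}^{F}_l$ is handled by the preceding Corollary. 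Letting $\varepsilon\downarrow 0$ gives uniform Cauchyness on compact subintervals of $(0,\infty)$; at $t=0$ the data are smooth in the situation of Theorem \ref{mainthm1} (so $\mathbf{h}^{F}_i\in h^{s+2}$ and $A^{r,l}_0\mathbf{h}^{F}_l$ is already Cauchy in $h^s$), which closes the gap at the origin.

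With uniform-on-compacts convergence of the derivatives and pointwise convergence of the functions, the classical theorem on differentiation of a convergent sequence shows that $t\mapsto\mathbf{v}^{r,0,F}_i(t)$ is continuously differentiable with values in $h^s\left({\mathbb Z}^n\right)$ and
\begin{equation}
\frac{d}{dt}\mathbf{v}^{r,0,F}_i=\lim_{l\uparrow\infty}\left(A^{r,l}_0\mathbf{v}^{r,0,F}_{l}\right)_i\in h^s\left({\mathbb Z}^n\right),
\end{equation}
which, after re-inserting the Trotter product representation of $\mathbf{v}^{r,0,F}_{l}$ from the first paragraph, is precisely (\ref{sol02}); running the same estimates in the exponentially time-weighted norm $|\cdot|^{\mbox{exp},1}_{h^s,C}$ yields the statement globally in $t\geq 0$. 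The main obstacle is the uniform estimate of the middle paragraph: one must rule out that applying the unbounded generator $A^{r,l}_0$ after the smoothing propagator destroys the polynomial decay of the modes, uniformly both in the cut-off $l$ and --- near $t=0$ --- in $t$. This is exactly the point where strict dissipativity ($\nu>0$, quadratic diagonal) and the $M^s_n$-matrix algebra are indispensable, and it is precisely because stage $0$ has time-independent coefficients that it can be carried out cleanly; the resulting first-order time-derivative bounds for constant-coefficient linear problems are what one feeds into the time-discretized treatment of the genuinely time-dependent linear problems at stages $m\geq 1$ and, together with (\ref{uniformbound}), into the compactness argument for the limit $m\uparrow\infty$.
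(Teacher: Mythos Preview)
Your proposal is correct and follows essentially the same line the paper indicates: the paper does not give a detailed proof of this corollary but only the one-sentence justification that ``the stage $m=0$ is special as the matrix $B^{r,0,l}_b$ does not depend on time'' and that ``for this reason we get similar Trotter formulas for time derivatives using the damping of the dissipative factor $\exp\left(D^{r,0,l}_b\right)$.'' You have correctly unpacked this hint---differentiating the finite constant-coefficient system directly, then using the dissipative diagonal to absorb the unbounded factor $A^{r,l}_0$ uniformly in $l$, and finally interchanging limit and derivative---and your treatment of the $t\downarrow 0$ case via the extra two orders of decay of the data is the natural way to close the argument that the paper leaves implicit.
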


Next at stage $m\geq 1$ we cannot apply the results above directly in order to define
\begin{equation}\label{Dysonlimit}
\left( \mathbf{v}^{m,F}\right)=T\exp\left(A_mt\right)\mathbf{h}^{F},
\end{equation}
or in order to define
\begin{equation}\label{Dysonlimitr}
\left( \mathbf{v}^{r,m,F}\right)=T\exp\left(A^r_mt\right)\mathbf{h}^{F}.
\end{equation}
The main difference of the stages $m\geq 1$ to the stage $m=0$ is the time dependence of the coefficients formally expressed by the operators $T$ (\ref{Dysonlimit}) and (\ref{Dysonlimitr}) above. We have already mentioned various methods in order to deal with matter. In principle there are two alternatives: either we may solve the Euler part of the equation separately using a Dyson formalism or we may set up an Euler scheme and prove a Trotter product formula for a subscheme with time-homogeneous subproblems. There are variations for each alternative, of course, but these are the basic possibilities.
We first consider the simpler Euler scheme (second alternative) and postpone the treatment  of the Dyson formalism for the Euler part and related higher order schemes to the end of this section and to the next section on algorithms respectively.  The second difference is that we need to control the zero modes if we want to establish a global scheme.
For the latter reason, next we consider the controlled scheme (the considerations apply to the uncontrolled scheme as well for one iteration step). In each iteration step we construct the solution of an infinite linear ODE in dual space which corresponds to a linear partial integro-differential equation in the original space. For each iteration step we need some subiterations in order to deal with the time-dependence of the coefficients. In order to apply our observations concerning linear algebra of infinite systems and the Trotter product formula, we consider time-discretizations. The time dependent formulas in  (\ref{Dysonlimit}) and (\ref{Dysonlimitr}) have rigorous definition via double limits (with respect to time and with respect to modes) of Trotter product formulas for finite systems. 
There are basically two possibilities to define a time-discretized scheme based on this form of a Trotter product formula. One possibility is to consider the successive linearized global problems at each stage $m$ of the construction, where the matrices $A^r_m$ are known in terms of the entries of the infinite vector $\mathbf{v}^{r,m-1,F}$ which contains information known from the previous step. According to the dissipative Trotter product formula we expect a time-discretization error of order $O(h)$ where $h$ is an upper bound of the time step sizes. 
The other possibility is to apply the dissipative Trotter product formula locally and establish a time local limit $\lim_{m\uparrow \infty}\mathbf{v}^{r,m,F}$ at each time step proceeding by the semi-group property. 

Next we observe that for each stage of approximation the solution 
\begin{equation}\label{Dysonlimit*}
\left( \mathbf{v}^{m,F}\right)=T\exp\left(A^r_mt\right)\mathbf{h}^{F},
\end{equation}
of the linear equation
\begin{equation}\label{Dysonlimitr*}
\left( \mathbf{v}^{r,m,F}\right)=T\exp\left(A^r_mt\right)\mathbf{h}^{F}.
\end{equation}
can be computed rather explicitly if we assume $\mathbf{v}^{r,m,F}_i\in h^{s}\left({\mathbb Z}^n \right)$ for $s>n+2$ inductively. Define the approximative Euler matrix of order $p$ corresponding to the approximation Euclidean part of the equation of order $p$, i.e., on the diagonal $i=j$ define
\begin{equation}
\begin{array}{ll}
\delta_{ij}E^{r,NS}_{i\alpha j\beta}\left(\mathbf{v}^{r,p-1,E}\right)=
-\delta_{ij}\sum_{j=1}^n\frac{2\pi i \beta_j}{l}v^{r,p-1,E}_{j(\alpha-\beta)}\\
\\+\delta_{ij}2\pi i\alpha_i1_{\left\lbrace \alpha\neq 0\right\rbrace }\frac{\sum_{k=1}^n4\pi \beta_j(\alpha_k-\beta_k)v^{r,p-1,E}_{k(\alpha-\beta)}}{\sum_{i=1}^n4\pi\alpha_i^2},
\end{array}
\end{equation}
and off-diagonal, i.e. for $i\neq j$, define the entries
\begin{equation}
(1-\delta_{ij})E^{r,p-1,NS}_{i\alpha j\beta}\left(\mathbf{v}^{r,p-1,E}\right)=2\pi i\alpha_i1_{\left\lbrace \alpha\neq 0\right\rbrace }\frac{\sum_{k=1}^n4\pi \beta_j(\alpha_k-\beta_k)v^{r,p-1,E}_{k(\alpha-\beta)}}{\sum_{i=1}^n4\pi\alpha_i^2}.
\end{equation}
Similar for the uncontrolled Euler system, where we have to include the zero modes. For the Euler system the control makes no difference as we have no viscosity terms. For this reason we consider the uncontrolled Euler system in the following and denote the corresponding uncontrolled Euler matrix, i.e., the matrix corresponding the the incompressible Euler equation just by $E(t)$ for the sake of brevity. Similar the matrix of the iterative approximation of stage $p$ is denoted by $E^p(t)$ in the following. The Euler system is more difficult to solve since we have no fundamental matrix which lives in a space of reasonable regularity. We are interested in two issues: a) whether the approximative Euler system can be solved rather explicitly such that this solution can serve for proving a Trotter product formula solution for approximative Navier Stokes equation systems (and hence for the definition of higher order schemes), and b) whether the solution of the Euler equation is possible for $n\geq 3$ or wether solution of this equation are generically singular (have singularities).   
First we observe that the approximative uncontrolled Euler system with data
\begin{equation}
\begin{array}{ll}
\mathbf{v}^{p-1,E}(t_0)=\left( \mathbf{v}^{p-1,E}(t_0)_1,\cdots ,\mathbf{v}^{p-1,E}(t_0)_n\right)^T,~\\
\\
\mathbf{v}^{p-1,E}_i(t_0)\in h^{s}\left({\mathbb Z}^n\right),~\mbox{ for }1\leq i\leq n,~s>n+2 
\end{array}
\end{equation}
for some initial time $t_0\geq 0$ has a solution for some time $t>0$ of Dyson form
\begin{equation}\label{dysonobseulerrp}
\begin{array}{ll}
\mathbf{v}^{p,E}_1=v(t_0)+\sum_{m=1}^{\infty}\frac{1}{m!}\times\\
\\
\times\int_0^tds_1\int_0^tds_2\cdots \int_0^tds_m T_m\left(E^{p-1}(t_1)E^{p-1}(t_2)\cdot \cdots \cdot E^{p-1}(t_m) \right)\mathbf{v}^{p-1,E}(t_0),
\end{array} 
\end{equation}
where the time order operator is defined as above. The matrices $E^{p-1}(t_m)$ depend on $\mathbf{v}^{p-1,E}$ at higher stages $p$ of approximation and on $\mathbf{v}^{p-1,F}(t_0)$ at the first stage of approximation, and having $\mathbf{v}^{p-1,E}_i(t_0)\in h^s$ for $1\leq\leq n$ and $s>n+2$, and assuming $\mathbf{v}^{p-1,E}_i\in h^s$ for $1\leq i\leq n$ and $s>n+2$, we get 
\begin{equation}\label{Tm}
T_m\left(E^{p-1}(t_1)E^{p-1}(t_2)\cdot \cdots \cdot E^{p-1}(t_m) \right)\mathbf{v}^{p-1,E}(t_0)\in \left[ h^s\left({\mathbb Z}^n\right)\right]^n
\end{equation}
for $s>n+2$ by the upper bounds for matrix vector multiplications considered above. Furthermore, we get the upper bounds $C^mt^m$ for the term in (\ref{Tm}) for some constant $C>0$, and it follows that the solution to the approximative equation in (\ref{dysonobseulerrp}) is well-defined for $t\geq 0$. We denote
\begin{equation}\label{dysonobseulerrp2}
\begin{array}{ll}
T\exp\left(E^{p}(t) \right)v(t_0):=v(t_0)+\sum_{m=1}^{\infty}\frac{1}{m!}\times\\
\\
\times\int_0^tds_1\int_0^tds_2\cdots \int_0^tds_m T_m\left(E^{p-1}(t_1)E^{p-1}(t_2)\cdot \cdots \cdot E^{p-1}(t_m) \right)\mathbf{v}^{p-1,E}(t_0).
\end{array} 
\end{equation}

It is then straightforward to prove the following Trotter product formula for the approximation of order $p$ of the solution of the Navier Stokes equation system.
\begin{cor}
The solution $\mathbf{v}^{p,F}$  of the linear approximative incompressible Navier stokes equation at stage $p\geq 1$ has the representation
\begin{equation}
\begin{array}{ll}
\mathbf{v}^{p,F}(t)=\\
\\
\lim_{k\uparrow \infty}
\left( \exp\left( \left(\delta_{ij}\left(\delta_{\alpha\beta}\nu\sum_{j=1}^n\alpha_j^2 \right)_{\alpha\beta\in {\mathbb Z}^n}  \right)_{1\leq i,j\leq n}\frac{t}{k}\right)
 T\exp\left(\left( E^{p}\left(\frac{ t}{k}\right) \right)\right)\right) ^k \
\end{array}
\end{equation}
\end{cor}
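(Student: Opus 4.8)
The plan is to reduce the time-dependent, infinite-dimensional statement to the already-established finite-mode dissipative Trotter product formula (the corollary following Lemma \ref{productform}) by a double-limit argument, using the Dyson representation (\ref{dysonobseulerrp2}) of the Euler propagator on regular data together with the matrix algebra of the spaces $M^s_n$, $M^{s,\mathrm{lin}}_n$, $M^{s,\mathrm{fin}}_{n\times n}$ to control the mode cut-off limit. Here one reads the $j$-th factor $T\exp\left(E^{p}(\frac{t}{k})\right)$ in the product as the time-ordered Euler propagator over the $j$-th subinterval of length $h=t/k$; the time-independent dissipative factor $\exp(D^0 h)$ (with $D^0$ the strictly dissipative diagonal matrix of order $2$ attached to the Laplacian, entries as in (\ref{mat})) carries no time ordering.

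First I would fix the splitting $A_p(t)=D^0+E^p(t)$, where $E^p(t)$ is the convection-plus-Leray part whose entries depend only on $\mathbf{v}^{p-1,F}$. By Lemma \ref{bit} every cut-off power $\left(E^{p,l}(t)\right)^m$ lies in $M^{s,\mathrm{lin}}_n$ with constants uniform in $l$, and by Lemma \ref{dbexp} together with the product rule $M^r_n\cdot M^s_n\subset M^{r+s-n}_n$ and its corollaries the products $\exp(D^{0,l}h)\left(E^{p,l}(t)\right)^m$ lie in $M^s_{n\times n}$ with $l$-independent bounds. Hence, for data $\mathbf{h}^F$ with $h_i\in h^s\left({\mathbb Z}^n\right)$, $s>n+2$, the finite Euler propagator over a subinterval, given by the Dyson series (\ref{dysonobseulerrp2}) with remainder bounds $C^m h^m/m!$, is well defined; composed with $\exp(D^{0,l}h)$ it maps $h^s$ into $h^s$ with operator bound $\le e^{Ch}$ uniformly in $l$ — the factor $\exp(D^{0,l}h)$ being a contraction because $D^{0,l}$ is dissipative, the Euler factor contributing the constant $C$.

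Next, at a fixed cut-off level $l$ all matrices are finite, so I would run the standard Lie--Trotter telescoping argument: writing $U^{p,l}(t,s)$ for the propagator of $\dot{\mathbf v}=A^{p,l}(\tau)\mathbf v$, one uses the Chapman--Kolmogorov (time-semigroup) property of the time-ordered exponentials together with the local consistency estimate
\[
\Big\|\exp(D^{0,l}h)\,T\exp\!\big(E^{p,l}(\cdot)\big|_{[s,s+h]}\big)-U^{p,l}(s+h,s)\Big\|\le C_l h^2,
\]
which is exactly the content of Lemma \ref{lembound} (equivalently of the special CBH-formula of Lemma \ref{productform}, whose correction terms factor through $\Delta B^l=B^l-(B^l)^T$ and the dissipative $\exp(\cdot A^l)$). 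Summing the $k$ telescoped errors and using the uniform stability bound gives convergence as $k\uparrow\infty$ to $U^{p,l}(t,0)\mathbf{h}^F_l=\mathbf{v}^{p,l,F}(t)$. The point that makes the double limit legitimate is that in the $l$-uniform version of this estimate the dissipative exponential stays attached to each term, so applying everything to $\mathbf{h}^F$ the finite products $\left(\exp(D^{0,l}h)T\exp(E^{p,l}(\cdot))\right)^k\mathbf{h}^F_l$ form a Cauchy family in $h^s$ jointly in $k$ and $l$; letting $l\uparrow\infty$ yields the infinite-dimensional propagator applied to $\mathbf{h}^F$, i.e.\ the Dyson solution $\mathbf{v}^{p,F}(t)=T\exp(A_p t)\mathbf{h}^F$, by the same reasoning that produced (\ref{sol0}) at stage $m=0$ and by uniqueness of the associated linear integro-differential equation in the time-weighted space.

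The hard part is precisely this interchange of limits: $E^{p,l}$ has only linear growth in the mode order and is \emph{not} uniformly bounded in $l$, so a naïve operator-norm Trotter estimate diverges; one must carry $\exp(D^{0,l}h)$ through every term of the telescoping expansion and of the CBH remainder, invoking Lemma \ref{bit}, Lemma \ref{dbexp} and the matrix-product rule to see that each resulting block stays in $M^{s,\mathrm{fin}}_{n\times n}$ with constants independent of $l$ and $k$, and only then pair with the polynomial decay of $\mathbf{h}^F$. Once this uniformity is secured the corollary follows; and the identical argument, with the controlled matrices $A^{r,l}_p$, $D^{r,0,l}_b$, $E^{r,p,l}$ in place of $A^l_p$, $D^{0,l}$, $E^{p,l}$, gives the analogous representation for $\mathbf{v}^{r,p,F}$.
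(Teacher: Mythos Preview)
Your proposal is correct and follows the same route the paper intends: the paper gives no detailed proof of this corollary, stating only that ``it is then straightforward to prove'' once the Dyson representation (\ref{dysonobseulerrp2}) for the Euler propagator on regular data is in hand. Your sketch supplies exactly the details the paper omits---the finite-mode reduction via Lemmas \ref{lembound}, \ref{productform}, \ref{bit}, \ref{dbexp}, the telescoping consistency estimate, and the $l$-uniform control through the dissipative factor needed for the double limit---all drawn from the same toolbox the paper assembles for stage $m=0$ and for the Euler subproblem.
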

A similar formula holds for the controlled Navier Stokes equation system.
We come back to this formula below in the description of an algorithm.
Next concerning issue b) we first observe that we get a contraction result for the approximations of order $p$ for the Euler equation. We write
\begin{equation}
E^{p-1}(t)=:E(\mathbf{v}^{p-1,E}(t))
\end{equation}
in order to emphasize the dependence of the Euler matrix at time $t\geq 0$ of order $p$ of the approximative Euler velocity $\mathbf{v}^{p-1,E}(t)$ at stage $p-1$.
First we consider the approximative Euler initial data problem of order $p$, i.e., the problem
\begin{equation}
\frac{d\mathbf{v}^{p,E}}{dt}=E(\mathbf{v}^{p-1,E}(t))\mathbf{v}^{p,E},~\mathbf{v}^{p,E}(0)=\mathbf{h}^F
\end{equation}
for data $\mathbf{h}^F$ with components $\mathbf{h}^F_i\in h^s\left({\mathbb Z}^n\right)$ for $1\leq i\leq n$.  
We get
\begin{equation}\label{diffp}
\begin{array}{ll}
\mathbf{v}^{p,E}(t)-\mathbf{v}^{p-1,E}(t)=\int_0^t{\Big (} E(\mathbf{v}^{p-1,E}(s))\delta \mathbf{v}^{p,E}(s)\\
\\
+E(\delta\mathbf{v}^{p-1,E}(s))\mathbf{v}^{p-1,E}(s){\Big )}ds,
\end{array}
\end{equation}
where $\delta \mathbf{v}^{p,E}(s)=
\mathbf{v}^{p,E}(s)-\mathbf{v}^{p-1,E}(s)$.
We denote 
\begin{equation}
E(\delta\mathbf{v}^{p-1,E}(s))=:\left( E(\delta\mathbf{v}^{p-1,E}(s))_{i\alpha j\beta}\right)_{1\leq i,j\leq n,\alpha,\beta\in {\mathbb Z}^n}.
\end{equation}
We have a Lipschitz property for the terms on the right side of (\ref{diffp}), i.e., for a Sobolev exponent $s\geq n+2$ we have for $0\leq t\leq T$ (some horizon $T>0$)
\begin{equation}
\begin{array}{ll}
\sum_{j,\beta}E(\mathbf{v}^{p-1,E}(t))_{i\alpha j\beta}\delta v^{p-1,E}_{j\beta}(t)\\
\\
\leq \sum_{\beta}\frac{C|\beta||\alpha-\beta|}{1+|\alpha-\beta|^{n+2}}\frac{\sup_{0\leq u\leq T}{\big |}\delta \mathbf{v}^{p,E}(u){\big |}_{h^{s}}}{1+|\beta|^{n+2}}\\
\\
\leq \frac{C}{1+|\alpha|^{n+2}}\sup_{0\leq u\leq T}{\big |}\delta \mathbf{v}^{p,E}(u){\big |}_{h^{s}}
\end{array}
\end{equation}
The estimate for the $\alpha$-modes of the second term on the right side of (\ref{diffp}) is similar such that we can sum over $\alpha$ and get for some generic $C>0$ the estimate
\begin{equation}\label{diffp}
\begin{array}{ll}
\sup_{t\in [0,T]}{\Big |} \mathbf{v}^{p,E}(t)-\mathbf{v}^{p-1,E}(t){\Big |}_{h^s}\leq C T\sup_{0\leq u\leq T}{\big |}\delta \mathbf{v}^{p,E}(u){\big |}_{h^{s}}\\
\\
+C T\sup_{0\leq u\leq T}{\big |}\delta \mathbf{v}^{p-1,E}(u){\big |}_{h^{s}}.
\end{array}
\end{equation}
For $T>0$ small enough we have $CT\leq \frac{1}{3}$ we get contraction with a contraction constant $c=\frac{1}{2}$. Hence we have local time contraction, and this implies that we have local existence on the interval $\left[0,\frac{1}{C}\right]$.

Why is it not possible to extend this argument and iterate with respect to time using the semi-group property in order to obtain a global solution of the Euler equation ? Well, it is possible to obtain global solutions of the Euler equation, i.e., for the nonlinear equation
\begin{equation}
\frac{d\mathbf{v}^{E}}{dt}=E(\mathbf{v}^{E}(t))\mathbf{v}^{E},~\mathbf{v}^{E}(0)=\mathbf{h}^F
\end{equation} 
but uniqueness is lost for dimension $n\geq 3$. This seems paradoxical as contraction results lead to uniqueness naturally. However, the problem is that we cannot control a priori the coefficient $\mathbf{v}^{E}(t)$ in the Euler matrix $E(\mathbf{v}^{E}(t))$ globally. We can control it locally in time but not globally. A proof of this is that we may consider the ansatz
\begin{equation}
\mathbf{v}^E(t):=\frac{\mathbf{u}^E(\tau)}{1-t}
\end{equation}
along with 
\begin{equation}
\tau =-\ln\left(1-t \right) 
\end{equation}
for the Euler equation. This leads to
\begin{equation}
 \frac{d \mathbf{v}^E(t)}{dt}=\frac{\mathbf{u}^E(\tau)}{(1-t)^2}
+\frac{\frac{ d \mathbf{u}^E(\tau)}{d\tau}}{(1-t)}\frac{d\tau}{dt},
\end{equation}
where $\frac{d\tau}{dt}=\frac{1}{1-t}$, such that
\begin{equation}\label{eulerinit}
\begin{array}{ll}
\frac{d\mathbf{u}^{E}}{d\tau}=E(\mathbf{u}^{E}(\tau))\mathbf{u}^{E}-\mathbf{u}^{E},\\
\\
\mathbf{u}^{E}(0)=\mathbf{h}^F,
\end{array}
\end{equation} 
where the equation for $t\in [0,1)$ is transformed to an equation for $\tau\in [0,\infty)$. Note that for this ansatz we have used
\begin{equation}
\frac{E(\mathbf{u}^{E}(\tau))\mathbf{u}^{E}(\tau)}{(1-t)^2}=E(\mathbf{v}^{E}(\tau))\mathbf{v}^{E}. \end{equation}
Note that the equation for $\mathbf{u}^E$ has a damping term which helps in order to prove global solutions. Note that any solution of (\ref{eulerinit}) which is well defined on the whole domain (corresponding to the domain $t\in [0,1]$ in original coordinates, and which satisfies
\begin{equation}
\mathbf{u}^{E}(1)\neq 0
\end{equation}
cooresponds to a solution of the Euler equation which becomes singular at time $t=1$. Can such a solution be obtained. It seems difficult to observe this from the equation for $\mathbf{u}^E$ as it stands, but consider a related transformation
\begin{equation}
\mathbf{v}^E(t):=\frac{\mathbf{u}^{\mu,E}(\tau^{\mu})}{\mu-t}
\end{equation}
along with 
\begin{equation}
\tau^{\mu} =-\ln\left(1-\frac{t}{\mu} \right) 
\end{equation}
for the Euler equation. This leads to
\begin{equation}
 \frac{d \mathbf{v}^E(t)}{dt}=\frac{\mathbf{u}^{\mu,E}(\tau^{\mu})}{(\mu-t)^2}
+\frac{\frac{ d \mathbf{u}^{\mu,E}(\tau^{\mu})}{d\tau}}{(\mu-t)}\frac{d\tau^{\mu}}{dt},
\end{equation}
where $\frac{d\tau}{dt}=\frac{1}{1-\frac{t}{\mu}}\frac{1}{\mu}=\frac{1}{\mu-t}$, such that
we get a formally identical equation 
\begin{equation}\label{eulerinit}
\begin{array}{ll}
\frac{d\mathbf{u}^{E}}{d\tau}=E(\mathbf{u}^{\mu,E}(\tau))\mathbf{u}^{\mu,E}-\mathbf{u}^{\mu,E},\\
\\
\mathbf{u}^{\mu,E}(0)=\mathbf{h}^F,
\end{array}
\end{equation} 
but now the equation for $t\in [0,\rho)$ is transformed to an equation for $\tau\in [0,\infty)$. Now if data at time $t=0$ are different from zero, we can always choose $\rho$ small enough such that any regular velocity function solution evaluated at time $t=\rho$ is different from zero at time $t=\rho$, i.e., $\mathbf{v}^E(\rho)\neq 0$ (use a Taylor formula and the equation for the original velocity vector). This shows that singular solutions are generic and proves the Corollary about singular equations. On the other hand we can obtain local time contraction result for the Euler equation in exponentially time weighted norms and repeat this argument. This leads to a global solution branch, i.e., a global solution which is not unique.

Next, let us be a little bit more explicit about the Euler scheme (the lowest time order scheme we propose). In any case, we define a time-discretized subscheme at each stage $m\geq 1$ and use the Trotter product formula for dissipative operators above locally in time in order to show that we get a converging subscheme. The convergence can be based on compactness arguments, and it can be based on global contraction with respect to a time-weighted norm introduced above. In the latter case we need more regularity (stronger polynomial decay) as we shall see. Another possibility is to establish a time-local contraction at each stage $m\geq 1$, i.e. in order to establish a solution $\mathbf{v}^{r,m,F}$ at stage $m$. The time-local contraction can be iterated due to the semi-group property of the operator such that the subscheme becomes global in time. At each iteration step $m$ the time-discretized scheme for each global linear equation works for the uncontrolled scheme and the controlled scheme if we know that the data $\mathbf{v}^{m-1,F}$ (resp. $\mathbf{v}^{r, m-1,F}$) computed at the preceding step are bounded and regular, i.e., $\mathbf{v}^{r,m-1,F}(t)\in h^s\left({\mathbb Z}^n\right)$ for $s\geq n+2$ with $n\geq 3$. Both subschemes have a limit with and without control function. It is in the limit with respect to the stage $m$ that the control function $r$ becomes useful. It also becomes useful for designing algorithms.  Next we write down the subscheme for the uncontrolled subscheme and the controlled subschemes. Later we observe that the control function is globally bounded in order to prove that there is a limit as $m\uparrow \infty$. For each step $m$ of the construction, however, the arguments for the controlled and the uncontrolled scheme are on the same footing. We describe the global linearized scheme, i.e., the scheme based on global linear equations which are equivalent to partial integro-differential equations in the original space. 
The procedure is defined recursively. For $T>0$ arbitrary we define a scheme which converges on $[0,T]$ to the approximative solution $\mathbf{v}^{r,m,F}$. Having defined $\mathbf{v}^{m-1,F}$ at stage $m\geq 1$ we define a series $\mathbf{v}^{r,m,F,p},~p\geq 1$, where $p$ is a natural number index which refers to a global time discretization $t^p_q,~p\geq 1$ and where $q\in \left\lbrace 1,\cdots 2^p\right\rbrace$
along with $t^p_q-t^{p}_{q-1}=T2^{-p}$ for all $p\geq 1$, and $t_0:=0$, such that we get a contractive scheme with respect to a certain time-weighted Sobolev norm.
Next for given $p\geq 1$ and $q\in \left\lbrace 1,\cdots ,2^p\right\rbrace$ we define the equation for $\mathbf{v}^{r,m,F,p,q}$ on the interval $\left[ t^p_{q-1},t^{p}_{q}\right]$, and where the initial data are given by
\begin{equation}
\mathbf{v}^{r,m,F,p,q-1}\left(t^p_{q-1}\right). 
\end{equation}
Here for $q=1$ we have the initial data
\begin{equation}
\mathbf{v}^{r,m,F,p,0}\left(t^p_{0}\right)=\mathbf{h}^{r,F}.
\end{equation}
Note that the vector $\mathbf{h}^{r,F}$ equals the initial data $\mathbf{h}^F$, but without the zero modes, i.e.,
\begin{equation}
\mathbf{h}^{r,F}=\left(h_1^{r,F},\cdots,h^{r,F}_n\right)^T, 
\end{equation}
where for $1\leq i\leq n$ we have
\begin{equation}
h^{r,F}_i=\left(h_{i\alpha} \right)_{\alpha\in {\mathbb Z}^n\setminus \left\lbrace 0 \right\rbrace }.
\end{equation}
Next we define for each $p\geq 1$ the sequence of local equations for $\mathbf{v}^{r,m,F,p,q}$. This leads to a global sequence $\left(\mathbf{v}^{r,m,F,p}\right)_{p\geq 1}$ defined on $[0,T]$ for arbitrary $T>0$. The next goal is to obtain a contraction result with respect to the iteration number $p$ in order to establish the existence of global regular solutions $\mathbf{v}^{r,m,F}$ for the approximative problems
\begin{equation}\label{navode2*rewrm}
\begin{array}{ll}
\frac{d \mathbf{v}^{r,m,F}}{dt}=A^{r,NS}\left(\mathbf{v}^{r,m-1}\right) \mathbf{v}^{r,m,F},
\end{array} 
\end{equation}
where $\mathbf{v}^{r,m,F}=\left(\mathbf{v}^{r,m,F}_1,\cdots ,\mathbf{v}^{r,m,F}_n\right)^T$ and where for each $m\geq 1$ the initial data are given by $\mathbf{h}^{r,F}$. Here we look at $A^{r,NS}$ defined above as an operator such that $A^{r,NS}\left(\mathbf{v}^{r,m-1}\right)$ is obtained by applying this operator to the argument $\mathbf{v}^{r,m-1}$ instead of $\mathbf{v}^{r}$. We shall give the details for the iterations steps $p,q\geq 1$. First we have to obtain $\mathbf{v}^{r,m,F,p,q}$ for each $p\geq 1$ and $q\in \left\lbrace 1,\cdots 2^p\right\rbrace$. On the time interval $\left[ t^p_{q-1},t^{p}_{q}\right]=\left[ \frac{q-1}{2^p}T,\frac{q}{2^p}T\right]$ we have the local Cauchy problem
\begin{equation}\label{navode2*rewrmpq}
\begin{array}{ll}
\frac{d \mathbf{v}^{r,m,F,p,q}}{dt}=A^{r,NS}\left(\mathbf{v}^{r,m-1}\right) \mathbf{v}^{r,m,F,p,q},
\end{array} 
\end{equation}
where $\mathbf{v}^{r,m,F,p,q}=\left(\mathbf{v}^{r,m,F,p,q}_1,\cdots ,\mathbf{v}^{r,m,F,p,q}_n\right)^T$ and where for each $m\geq 1$ the initial data are given by $\mathbf{v}^{r,m,F,p,q-1}\left( t^p_{q-1}\right) $. 
Next we explicitly describe the matrix $A^{r,NS}\left(\mathbf{v}^{r,m-1,p,q}\right) $ which is a $n\left( {\mathbb Z}^n\setminus \left\lbrace 0\right\rbrace  \right) \times n{\mathbb Z}^n\setminus \left\lbrace 0\right\rbrace $-matrix with
\begin{equation}
A^{r,NS}\left(\mathbf{v}^{r,m-1}\right) =\left(A^{r,NS}_{ij}\left(\mathbf{v}^{r,m-1}\right)\right)_{1\leq i,j\leq n} 
\end{equation}
where for $1\leq i,j\leq n$ the entry $A^{r,NS}_{ij}\left(\mathbf{v}^{r,m-1}\right) $ is a ${\mathbb Z}^n\setminus \left\lbrace 0\right\rbrace \times {\mathbb Z}^n\setminus \left\lbrace 0\right\rbrace $-matrix. We have
\begin{equation}
\begin{array}{ll}
A^{r,NS}\left( \mathbf{v}^{r,m-1}\right)\mathbf{v}^{r,m,F,p,q}=\\
\\
\left(\sum_{j=1}^nA^{r,NS}_{1j}\left(\mathbf{v}^{r,m-1}\right) \mathbf{v}^{r,m,F,p,q}_1 ,\cdots,\sum_{j=1}^nA^{r,NS}_{nj}\left( \mathbf{v}^{r,m-1}\right)  \mathbf{v}^{r,m,F,p,q}_n  \right)^T, 
\end{array}
\end{equation}
where for all $1\leq i\leq n$
\begin{equation}
\begin{array}{ll}
\sum_{j=1}^nA^{r,NS}_{ij}\left(\mathbf{v}^{r,m-1}\right) \mathbf{v}^{r,m,F,p,q}_j=\\
\\
\left(\left( \sum_{j=1}^n \sum_{\beta\in {\mathbb Z}^n}A^{r,NS}_{i\alpha j\beta}\left(\mathbf{v}^{r,m-1}\right) v^{r,m,F,p,q}_{j\beta}\right)_{\alpha\in {\mathbb Z}^n} \right)^T_{1\leq i\leq n}. 
\end{array}
\end{equation}
The entries $A^{r,NS}_{i\alpha j\beta}\left(\mathbf{v}^{r,m-1}\right)$  are determined by the equation as follows. On the diagonal, i.e., for $i=j$ and for $\alpha,\beta\neq 0$ we have the entries
\begin{equation}\label{controlmatrix}
\begin{array}{ll}
\delta_{ij}A^{r,NS}_{i\alpha j\beta}\left(\mathbf{v}^{r,m-1}\right)=\delta_{ij}\sum_{j=1}^n \nu\left( -\frac{4\pi \alpha_j^2}{l^2}\right)
-\delta_{ij}\sum_{j=1}^n\frac{2\pi i \beta_j}{l}v^{r,m-1}_{j(\alpha-\beta)}\\
\\+\delta_{ij}2\pi i\alpha_i1_{\left\lbrace \alpha\neq 0\right\rbrace }\frac{\sum_{k=1}^n4\pi \beta_j(\alpha_k-\beta_k)v^{r,m-1}_{k(\alpha-\beta)}}{\sum_{i=1}^n4\pi\alpha_i^2},
\end{array}
\end{equation}
where for $\alpha=\beta$ the terms of the form $v^r_{k(\alpha-\beta)}$ are zero (such that we do not need to exclude these terms explicitly). Furthermore, off-diagonal we have for $i\neq j$ the entries
\begin{equation}
(1-\delta_{ij})A^{r,NS}_{i\alpha j\beta}\left(\mathbf{v}\right)=2\pi i\alpha_i1_{\left\lbrace \alpha\neq 0\right\rbrace }\frac{\sum_{k=1}^n4\pi \beta_j(\alpha_k-\beta_k)v^{r,m-1}_{k(\alpha-\beta)}}{\sum_{i=1}^n4\pi\alpha_i^2}.
\end{equation}

The idea for a global scheme is to determine $\mathbf{v}^{r,F}=\lim_{m\uparrow \infty}\mathbf{v}^{r,m,F}$ for a simple control function (the one which cancels the zero modes at stage $m$ of the iteration) and a certain iteration
\begin{equation}\label{navode2*rewrlin}
\begin{array}{ll}
\frac{d \mathbf{v}^{r,m,F}}{dt}=A^{NS}\left(\mathbf{v}^{r,m-1}\right) \mathbf{v}^{r,m,F},
\end{array} 
\end{equation}
starting with a time-independent matrix $A^{NS}\left(\mathbf{v}^{r,-1}\right) :=A^{NS}\left(\mathbf{h}\right)$ for $m=0$. We shall use the abbreviation
\begin{equation}\label{navode2*rewrlin}
\begin{array}{ll}
A^r_m:=A^{NS}\left(\mathbf{v}^{r,m-1}\right),
\end{array} 
\end{equation} 
which is time-dependent for $m\geq 1$.
 The proof of global regular existence of the incompressible Navier Stokes equation can be obtained the sequence $\left( \mathbf{v}^{r,m,F}\right)_{m\geq 1}$ 
is a Cauchy sequence in $C^1\left((0,T),\left(h^s\left({\mathbb Z}^n\right),\cdots ,h^s\left({\mathbb Z}^n\right) \right)\right) $ with the natural norm (supremum with respect to time). 
An alternative is a contraction result. Contraction results have the advantage that they lead to uniqueness results with respect to the related Banach space. The disadvantage is that we need more regularity (order of polynomial decay) in general if we want to have contraction. 
However, in a first step we have to ensure the existence of the solutions $\mathbf{v}^{r,m,F}$ of the linearized problems in appropriate Banach spaces.
 Next we define a Banach space in order to get a contraction result for the subscheme $\left( \mathbf{v}^{r,m,F,p}\right)_{p\geq 1}$ where the index $p$ is related to the time discretization of size $T\frac{1}{2^p}$ we mentioned above. Note that for each $p\geq 1$ the problem for $\mathbf{v}^{r,m,F,p}$ on $[0,T]$ is defined by $2^p$ recursively defined subproblems for $\mathbf{v}^{r,m,F,p,q}$ for $1\leq q\leq 2^p$ which are defined on the interval $\left[t^p_{q-1},t^p_q\right]$ where the data for the problem for $q\geq 2$ are defined by the final data of the subproblem for $\mathbf{v}^{r,m,F,p-q-1}$ evaluated at $t^p_{q-1}$.

%
Next consider the function space
\begin{equation}
\begin{array}{ll}
B^{n,s}:=\left\lbrace t\rightarrow \mathbf{u}^F(t)=\left(\mathbf{u}^F_1,\cdots ,\mathbf{u}^F_n\right)^T|\forall t\geq 0:~\mathbf{u}^F_i(t)\in h^s\left({\mathbb Z}^n\setminus \left\lbrace 0\right\rbrace \right) \right\rbrace  
\end{array}
\end{equation}
Note that we excluded the zero modes because this Banach space is designed for the controlled equations.
For $\mathbf{u}^F\in B^{n,s}$ define
\begin{equation}
{\Big|}\mathbf{u}^F
{\Big |}^{T,\mbox{exp}}_{h^s,C}:=
 \sup_{t\in \left[0,T\right] }\sum_{i=1}^n\exp\left( -Ct\right) {\Big |}\mathbf{u}^F_i(t){\Big |}_{h^s}.
\end{equation}
In the following we abbreviate
\begin{equation}\label{ans}
A^r_m=A^{NS}\left( \mathbf{v}^{r,m-1,F}\right).
\end{equation}
Especially the evaluation of the right side of (\ref{ans}) at time $t$ is denoted by $A^r_m(t)$. Recall that
\begin{equation}
{\mathbb Z}^{n,0}:={\mathbb Z}^n\setminus \left\lbrace 0\right\rbrace 
\end{equation}
At each substep $p$ we apply the Trotter product representation of the subproblems for $\mathbf{v}^{r,m,F,p,q}$ for $1\leq q\leq 2^p$  $2^p$-times. Note that at each stage of the construction we know that the Trotter product formula is valid in regular function spaces as the coefficients are not time dependent for each of these $2^p$ subproblems.  
For $s>0$ let us assume that
\begin{equation}\label{mminus1ass}
t\rightarrow \mathbf{v}^{r,m-1,F}_i(t) \in C^k\left([0,T],h^{s}\left({\mathbb Z}^{n,0}\right)\right)~\mbox{ for }1\leq i\leq n. 
\end{equation}
Here $C^k\left([0,T],h^{s}\left( {\mathbb Z}^{n,0}\right)\right) $ 
is the function space of time dependent function vectors with image in $ {\mathbb Z}^{n,0}$ which have $k$-times differentiable component functions $t\rightarrow v^{r,m-1}_{i\alpha}(t)$ for 
$1\leq i\leq n$ and $\alpha \in {\mathbb Z}^n\setminus \left\lbrace 0\right\rbrace $. Note that there is no need here to be very restrictive with respect to the degree of the Sobolev norm $s$. We should have $s>n$ in order to prove the existence of anything which exists in a more than distributional sense, of course, and we stick to our assumption $s>n+2$. 
Assuming sufficient regularity at the previous stage $m-1$ of the function $\mathbf{v}^{r,m-1,F}$, we have the Taylor formula 
\begin{equation}
\begin{array}{ll}
\mathbf{v}^{r,m-1,F}(t+h)=\sum_{0\leq p\leq k-1}D^p_t\mathbf{v}^{r,m-1,F}(t)h^p\\
\\
+\frac{h^p}{(k-1)!}\int_0^1(1-\theta)^{k-1}D^k_t\mathbf{v}^{r,m-1,F}(t+\theta h)d\theta 
\end{array}
\end{equation}
for $t\in [0,T-h]$ and $h>0$. Here,
\begin{equation}
D^p_t\mathbf{v}^{r,m-1,F}(t):=\left(\frac{d^p}{dt^p}v^{r,m-1}_{i\alpha}\right)^T_{1\leq i\leq n,\alpha\in {\mathbb Z}^{n,0}}.
\end{equation}
In the following we assume that that $\mathbf{h}^{r,F}_i,~\mathbf{v}^{r,m-1,F}_i\in h^s\left( {\mathbb Z}^{n,0}\right)$ for $s= n+2$ and $1\leq i\leq n$, because this property is inherited for $\mathbf{v}^{r,m,F}_i$ as we go from stage $m-1$ to stage $m$. Note that the matrix-valued function  $t\rightarrow A^r_m(t+h)-A^r_m(t)=A^{r,NS}\left( \mathbf{v}^{r,m-1,F}(t+h)\right)-A^{r,NS}\left( \mathbf{v}^{r,m-1,F}(t)\right)$ applied to the data $\mathbf{h}^{r,F}$ is Lipschitz with respect to a $|.|_{h^{s}}$-norm, i.e., for some finite Lipschitz constant $L>0$ we have
\begin{equation}
\begin{array}{ll}
{\Big|}\left( A^{r,NS}\left( \mathbf{v}^{r,m-1,F}(t+h)\right)-A^{r,NS}\left( \mathbf{v}^{r,m-1,F}(t)\right) \right) \mathbf{h}^{r,F}{\Big |}
_{h^{s}}\\
\\
\leq L |\mathbf{v}^{r,m-1,F}(t+h)-\mathbf{v}^{r,m-1,F}(t)|_{h^{s}}.
\end{array}
\end{equation}
Here, we use the regularity (polynomial decay) of the initial data $\mathbf{h}^{r,F}$ and the assumed regularity (polynomial decay) of $\mathbf{v}^{r,m-1,F}(t)$ in order to compensate have to compensate for quadratic terms  in the Leray projection term approximation above, i.e., the quadratic multiindex terms related to the multiindices $\beta$ in the Leray projection term and to linear multiindex terms related to the convection term. Here the initial data for the subproblems for $\mathbf{v}^{r,m,F,p,q}$ inherit sufficient regularity in order to preserve the Lipschitz property. Let us have a closer look at this.
At stage $m$ of the global iteration we solve an equation of the form (\ref{navode2*rewrm}). Let us describe this for a unit time interval $[0,1]$, i.e., with time horizon $T=1$ for a moment without loss of generality. The generalization of the description  for any finite time horizon $T>0$ is straightforward (we do this below). We do this by a series of time discretizations at time points
 \begin{equation}
 t_i\in \left\lbrace \frac{k}{2^p}=t^p_k{\big |}0\leq k\leq 2^{p}-1\right\rbrace 
 \end{equation}
in order to a apply Trotter product formulas at each substep. The approximation at stage $p$ is denoted by $\mathbf{v}^{r,m,F,p}$ and is determined recursively by $2^p$ time-homogeneous problems on time intervals $\left[ t^p_{k},t^p_{k+1}\right] $. At substep $1\leq q\leq 2^p$ we have computed $\mathbf{v}^{r,m,F,p}(t^p_{q-1})$. We then evaluate the matrix in (\ref{navode2*rewrm}) at time $t^p_{q-1}$ and have the problem
\begin{equation}\label{navode2*rewrm2}
\begin{array}{ll}
\frac{d \mathbf{v}^{r,m,F,p,q}}{dt}=A^{r}_m\left(t^p_{q-1}\right) \mathbf{v}^{r,m,F,p,q},
\end{array} 
\end{equation}
on $\left[ t^p_{q-1},t^p_{q}\right] $, where we take the data from the previous time substep $t^p_{q-1}$ at stage $m$, i.e.,
\begin{equation}\label{navode2*rewrm2data}
\mathbf{v}^{r,m,F,p,q}(t^p_{q-1})=\mathbf{v}^{r,m,F,p,q-1}(t^p_{q-1}).
\end{equation}
The problem described in (\ref{navode2*rewrm2}) and (\ref{navode2*rewrm2data}) can then be solved by a Trotter product formula. Now compare this with with the set of problems at the next stage $p+1$. On the time interval $\left[t^{p}_{q-1},t^p_q\right]$ we have two subproblems at stage $p+1$. Note that $t^{p+1}_{2(q-1)}=t^p_{q-1}$. On the time interval $\left[t^{p}_{q-1},t^{p+1}_{2q-1}\right]$ we have to solve for
\begin{equation}\label{navode2*rewrm21}
\begin{array}{ll}
\frac{d \mathbf{v}^{r,m,F,p+1,2q-1}}{dt}=A^{r}_m\left(t^p_{q-1}\right) \mathbf{v}^{r,m,F,p+1,2q-1},
\end{array} 
\end{equation}
with the initial data from the previous time step, i.e., 
\begin{equation}\label{navode2*rewrm2data21}
\mathbf{v}^{r,m,F,p+1,2q-1}(t^{p+1}_{2(q-1)})=\mathbf{v}^{r,m,F,p+1,2q-2}(t^{p+1}_{2(q-1)}),
\end{equation}
and where we use
\begin{equation}
A^{r}_m\left(t^p_{q-1}\right)=A^{r}_m\left(t^{p+1}_{2(q-1)}\right).
\end{equation}
We then have a second subproblem on the time interval $\left[t^{p+1}_{2q-1},t^{p+1}_{2q}\right]$, where we have to solve for
\begin{equation}\label{navode2*rewrm213}
\begin{array}{ll}
\frac{d \mathbf{v}^{r,m,F,p+1,2q}}{dt}=A^{r}_m\left(t^{p+1}_{2q-1}\right) \mathbf{v}^{r,m,F,p+1,2q},
\end{array} 
\end{equation}
with the initial data 
\begin{equation}\label{navode2*rewrm2data213}
\mathbf{v}^{r,m,F,p+1,2q}(t^{p+1}_{2(q-1)}).
\end{equation}
The regular spaces with polynomial decaying modes makes it possible to generalize observation well known for Euler scheme for finite dimensional systems quite straightforwardly. Especially, a global $O(h)$ (time stepsize $h$) error is a straightforward consequence of the Taylor formula considered above. This may also be used to estimate the difference of solutions $\mathbf{v}^{r,m,F,p+1,2q-1}$ together with $\mathbf{v}^{r,m,F,p+1,2q-2}$ compared to  $\mathbf{v}^{r,m,F,p,q}$.  
Indeed using the infinite linear algebra lemmas above we observe
\begin{lem}
Let $s>n+ 2$ and $T>0$ be given. For $k=2$ assume that $\mathbf{v}^{r,m-1,F}$ is regular as in (\ref{mminus1ass}).
Then for some finite $C>0$
\begin{equation}
\begin{array}{ll}
\sup_{u\in [0,T]}\exp(-Cu){\Big|}\mathbf{v}^{r,m,F,p}(u)-\mathbf{v}^{r,m,F,p-1}(u){\Big |}_{h^s}
\leq \frac{L}{2^{p-1}}.
\end{array}
\end{equation}
\end{lem}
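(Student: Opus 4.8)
The plan is to treat the family $\left(\mathbf{v}^{r,m,F,p}\right)_{p\geq 1}$ exactly as one treats an Euler time--discretization of a linear ODE with Lipschitz time--dependent coefficients, the only novelty being that the coefficient matrices $A^r_m(t)=A^{r,NS}\left(\mathbf{v}^{r,m-1,F}(t)\right)$ are unbounded in dual space and must be handled through the infinite--matrix calculus and the dissipative Trotter product formula established above. First I would fix $T>0$ and $m\geq 1$ and record that on each subinterval $\left[t^p_{q-1},t^p_q\right]$ the function $\mathbf{v}^{r,m,F,p,q}$ is, by the Trotter product formula for the \emph{frozen} matrix $A^r_m\left(t^p_{q-1}\right)$, equal to $\exp\left(A^r_m(t^p_{q-1})(t-t^p_{q-1})\right)\mathbf{v}^{r,m,F,p,q-1}(t^p_{q-1})$, and that this expression lies in $\left[h^s\left({\mathbb Z}^{n,0}\right)\right]^n$ for $s=n+2$ with a bound uniform in $p,q$: this uses the Lemma~\ref{dbexp}-type estimate, namely that the dissipative diagonal factor $\exp(D^{0,l})$ damps every row while iterated convection/Leray matrices applied to polynomially decaying data stay in $M^s_n$ by the weakly singular elliptic integral estimate $\sum_{\beta}\frac{1}{|\alpha-\beta|^s|\beta|^r}\leq\frac{C}{1+|\alpha|^{r+s-n}}$. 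Hence every iterate inherits the same order of polynomial decay and all constants below are independent of $p$.

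Next I would compare level $p$ with level $p+1$ on a generic subinterval $\left[t^p_{q-1},t^p_q\right]=\left[t^{p+1}_{2(q-1)},t^{p+1}_{2q}\right]$. Writing $\delta^{p}(t):=\mathbf{v}^{r,m,F,p+1}(t)-\mathbf{v}^{r,m,F,p}(t)$, the Duhamel form of the two frozen--coefficient problems gives
\begin{equation}
\delta^p(t)=\delta^p(t^p_{q-1})+\int_{t^p_{q-1}}^t A^r_m(s_p)\,\delta^p(u)\,du+\int_{t^p_{q-1}}^t\left(A^r_m(s_{p+1})-A^r_m(t^p_{q-1})\right)\mathbf{v}^{r,m,F,p+1}(u)\,du,
\end{equation}
where $s_p=t^p_{q-1}$ and $s_{p+1}$ is the relevant refined grid point $\leq u$. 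The first integrand is absorbed into the exponential weight in $|\cdot|^{T,\mbox{exp}}_{h^s,C}$; the Lipschitz property of $t\mapsto A^r_m(t)$ applied to regular data — which holds because the polynomial decay of $\mathbf{h}^{r,F}$ and of $\mathbf{v}^{r,m-1,F}(t)$ compensates the linear and quadratic mode factors in the convection and Leray terms, combined with assumption (\ref{mminus1ass}) for $k=2$, which yields the Taylor remainder estimate $|\mathbf{v}^{r,m-1,F}(t)-\mathbf{v}^{r,m-1,F}(t')|_{h^s}\leq C|t-t'|$ — bounds the last integral on that subinterval by $C\,(T2^{-(p+1)})(T2^{-p})$. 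Solving the resulting discrete Gronwall recursion, of the type $\|\delta^p\|_{[t^p_{q-1},t^p_q]}\leq(1+C\,T2^{-p})\|\delta^p\|_{[t^p_{q-2},t^p_{q-1}]}+C'(T2^{-p})^2$, across the $2^p$ subintervals gives $\sup_{[0,T]}|\delta^p|_{h^s}\leq e^{CT}C'T\,2^{-p}$, and taking $C$ in the weighted norm large enough (depending only on $n$, $\nu$, $T$ and the data) absorbs $e^{CT}$ and leaves the clean bound $L/2^{p-1}$ with $L:=4C'T$.

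The main obstacle I expect is not the Gronwall bookkeeping but the uniform--in--$p$ control of regularity: one must ensure that the initial data passed from subinterval to subinterval, $\mathbf{v}^{r,m,F,p,q-1}(t^p_{q-1})$, stay in $h^s$ with norm bounded independently of $p$ and $q$, so that the Lipschitz constant $L$ and the matrix--vector bounds may be applied with fixed constants throughout the refinement. This is precisely where the infinite--matrix lemmas and the dissipative Trotter formula do the work: each frozen--coefficient step is an honest regular infinite--matrix exponential lying in $M^{s,\mbox{fin}}_{n\times n}$, so the composition of $2^p$ such steps does not degrade the decay order, only multiplies the norm by a factor $\leq e^{CT}$. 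A secondary technical point is differentiating the Dyson/Trotter representation once in time to justify the Taylor remainder involving $D^2_t$; this is legitimate because the matrices are smooth on the time diagonal (as remarked after the definition of the time--order operator), so the hypothesis $k=2$ in (\ref{mminus1ass}) is exactly what is needed and nothing more.
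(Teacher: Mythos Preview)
Your approach is essentially the same as the paper's: both are standard Euler--scheme convergence analyses that track the accumulated error across refinement levels using (i) boundedness of the frozen--coefficient propagators $\exp\left(A^r_m(t^p_{q-1})\,\cdot\right)$ on $h^s$ (supplied by the dissipative Trotter product formula), and (ii) Lipschitz--in--time continuity of $t\mapsto A^r_m(t)$ applied to regular data. The paper organises the bookkeeping by directly following the error vectors $\mathbf{C}^{h,p}_q$ under the exponential propagators; you organise it as Duhamel plus a discrete Gronwall recursion. These are two presentations of the same argument and yield the same $O(2^{-p})$ bound.

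One technical point deserves care. Your displayed ``Duhamel form'' is the integrated ODE identity, and the sentence ``the first integrand is absorbed into the exponential weight'' reads as if you intend to bound $\bigl|\int A^r_m(s_p)\,\delta^p(u)\,du\bigr|_{h^s}$ by $C\int|\delta^p(u)|_{h^s}\,du$ and apply continuous Gronwall. That does not close, since $A^r_m$ is unbounded on $h^s$ (the diagonal part carries $-\nu|\alpha|^2$). What actually gives your recursion $\|\delta^p\|_{[t^p_{q-1},t^p_q]}\leq (1+CT2^{-p})\|\delta^p\|_{[t^p_{q-2},t^p_{q-1}]}+C'(T2^{-p})^2$ is the \emph{semigroup} Duhamel formula
\[
\delta^p(t)=\exp\!\bigl(A^r_m(t^p_{q-1})(t-t^p_{q-1})\bigr)\,\delta^p(t^p_{q-1})+\int_{t^p_{q-1}}^{t}\exp\!\bigl(A^r_m(t^p_{q-1})(t-u)\bigr)\,g(u)\,du,
\]
together with the $h^s\to h^s$ bound $\bigl|\exp(A^r_m(t^p_{q-1})\,s)\,w\bigr|_{h^s}\leq e^{Cs}|w|_{h^s}$ coming from the Trotter product lemma, where $g$ is the matrix--difference source term you wrote. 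This is precisely how the paper proceeds: it never estimates $A^r_m\delta^p$ directly but always writes the step as $\exp\!\bigl(A^r_m(\cdot)\,\Delta t\bigr)$ acting on the previously accumulated error $\mathbf{C}^{h,p}_{q-1}$ and invokes the propagator bound. With that correction your sketch is complete and matches the paper's proof.
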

\begin{proof}
For notational reasons the size of the torus is assumed to be one. We remark that for general time horizon $T>0$
\begin{equation}
t^p_{2(q-1)}=2(q-1)2^{-p}T=(q-1)2^{-(p-1)}T=t^{p-1}_{q-1},
\end{equation}
and accordingly
\begin{equation}\label{matdiff}
A^r_{m}(t^p_{2(q-1)})=A^r_{m}(t^{p-1}_{q-1})
\end{equation}
For some vector $\mathbf{C}^{h,p}_{q-1}\in h^s\left({\mathbb Z}^n\right)$ for $s\geq n+2$ we postulate the difference
\begin{equation}\label{chp}
 \mathbf{v}^{r,m,F,p,q-1}\left( t^p_{2(q-1)}\right)- \mathbf{v}^{r,m,F,p-1,q-1}\left( t^{p-1}_{q-1}\right)=\mathbf{C}^{h,p}_{q-1},
\end{equation}
and we consider some properties which the vector $\mathbf{C}^{h,p}_{q}$ inherits from $\mathbf{C}^{h,p}_{q-1}$. 
Next at stage $p\geq 1$ consider the initial data $\mathbf{v}^{r,m,F}\left( t^{p-1}_{q-1}\right) $ of the problem at substep $1\leq q\leq 2^{p-1}$.
We have $t\in \left[t^p_{2(q-1)},t^{p}_{2q}\right]= \left[t^{p-1}_{q-1},t^{p-1}_q\right]$, where it makes sense to consider the the subintervals $\left[t^p_{2(q-1)},t^{p}_{2q-1}\right]$ and $\left[t^p_{2q-1},t^{p}_{2q}\right]$. We may consider $t\in \left[t^p_{2q-1},t^{p}_{2q}\right]$ w.l.o.g. because the following estimate simplifies $t\in \left[t^p_{2q-2},t^{p}_{2q-1}\right]$. For $t\in \left[t^p_{2q-1},t^{p}_{2q}\right]$ we have 
\begin{equation}\label{solm11}
\begin{array}{ll}
{\Big |}\mathbf{v}^{r,m,F,p,2q}_i(t)-\mathbf{v}^{r,m,F,p-1,q}_i(t){\Big |}_{h^s}\\
\\
\leq {\Big |}\mathbf{v}^{r,m,F,p,2q}_i(t)-\mathbf{v}^{r,m,F,p,2q}_i(t^p_{2q-1})-\left( \mathbf{v}^{r,m,F,p-1,q}_i(t)-\mathbf{v}^{r,m,F,p-1,q}_i(t^p_{2q-1})\right) {\Big |}_{h^s}\\
\\
+{\Big |}\mathbf{v}^{r,m,F,p,2q-1}_i(t^p_{2q-1})-\mathbf{v}^{r,m,F,p-1,q}_i(t^{p}_{2q-1}){\Big |}_{h^s},
\end{array}
\end{equation}
where we use $\mathbf{v}^{r,m,F,p,2q}_i(t^p_{2q-1})=\mathbf{v}^{r,m,F,p,2q-1}_i(t^p_{2q-1})$.
Since $t^p_{2(q-1)}=t^{p-1}_{q-1}$ and with (\ref{chp}) above for the last term in (\ref{solm11}) we have
\begin{equation}\label{solm11}
\begin{array}{ll}
{\Big |}\mathbf{v}^{r,m,F,p,2q-1}_i(t^p_{2q-1})-\mathbf{v}^{r,m,F,p-1,q}_i(t^{p}_{2q-1}){\Big |}_{h^s}\\
\\
={\Big |}\left( \exp\left(A^r_m(t^p_{2(q-1)})\left( t^p_{2q-1}-t^p_{2(q-1)}\right) \right)\mathbf{v}^{r,m,F,p,2q}(t^p_{2(q-1)})\right)_i
\\
\\
-\left( \exp\left(A^r_m(t^{p-1}_{q-1})\left( t^p_{2q-1}-t^p_{2(q-1)}\right)\right)\mathbf{v}^{r,m-1,F,p-1,q}(t^{p-1}_{q-1})\right)_i{\Big |}_{h^s}\\
\\
= {\Big |}\exp\left(A^r_m(t^p_{2(q-1)})\left( t^p_{2q-1}-t^p_{2(q-1)}\right)\right)\mathbf{C}^{h,p}_{q-1}{\Big |}_{h^s}\\
\\
\leq
{\Big |}\exp\left(\frac{C}{4^p}\right)\mathbf{C}^{h,p}_{q-1}{\Big |}_{h^s}
\end{array}
\end{equation}
for some finite $C>0$, which depends only on data known at stage $m-1$. Furthermore, for the first term on the right side of (\ref{solm11}) we may use the rough estimate
\begin{equation}\label{solm1122}
\begin{array}{ll}
{\Big |}\mathbf{v}^{r,m,F,p,2q}_i(t)-\mathbf{v}^{r,m,F,p,2q}_i(t^p_{2q-1})-\left( \mathbf{v}^{r,m,F,p-1,q}_i(t)-\mathbf{v}^{r,m,F,p-1,q}_i(t^p_{2q-1})\right) {\Big |}_{h^s}\\
\\
\leq {\Big |}\exp\left(A^r_m(t^p_{2q-1})\left( t-t^p_{2q-1}\right) \right)\mathbf{v}^{r,m,F,p,2q}_i(t^p_{2q-1})-\mathbf{v}^{r,m,F,p,2q}_i(t^p_{2q-1}) \\
\\
-\left(  \exp\left(A^r_m(t^{p}_{2q-1})
\left( t-t^p_{2q-1}\right)\right) 
\mathbf{v}^{r,m,F,p-1,q}_i(t^p_{2q-1})
-\mathbf{v}^{r,m,F,p-1,q}_i(t^p_{2q-1})\right) {\Big |}_{h^s}.
\end{array}
\end{equation}
The right side of (\ref{solm1122}) we observe with (\ref{solm11})
\begin{equation}\label{solm112233}
\begin{array}{ll}
{\Big |}\exp\left(A^r_m(t^p_{2q-1})\left( t-t^p_{2q-1}\right) \right)\mathbf{v}^{r,m,F,p,2q}_i(t^p_{2q-1})-\mathbf{v}^{r,m,F,p-1,q}_i(t^p_{2q-1}) \\
\\
+\mathbf{v}^{r,m,F,p,2q}_i(t^p_{2q-1})-\mathbf{v}^{r,m,F,p-1,q}_i(t^p_{2q-1}) {\Big |}_{h^s}\\
\\
\leq 2{\Big |}\exp\left(\frac{2C}{4^p}\right)\mathbf{C}^{h,p}_{q-1}{\Big |}_{h^s}
\end{array}
\end{equation}
As for each $p\geq 1$ the entries of the sequence $\mathbf{C}^{h,p}_{q}$ are in $O\left(h^2 \right)$ where $h$ denotes the maximal time step size (which is $2^{-p}$ with our choice) the difference to be estimated is in $O(h)$ and we are done. 

\end{proof}
The preceding lemma shows that we have a Cauchy sequence
\begin{equation}
\left( \mathbf{v}^{r,m,F,p}\right)_{p\geq 1}
\end{equation}
with respect to a regular (time weighted) norm and with a limit $\mathbf{v}^{r,m,F}$ with
\begin{equation}
\mathbf{v}^{r,m,F}_i(t)\in h^s\left({\mathbb Z}^{n,0}\right) 
\end{equation}
for all $t\in [0,T]$. Since we have a dissipative term (damping exponential) in the Trotter product formula, similar observations can be made for the time derivative sequence
\begin{equation}
\left( \frac{d}{dt}\mathbf{v}^{r,m,F,p}\right)_{p\geq 1}.
\end{equation}
Note, however, that the order of regularity $s\geq n+2\geq 5$ can be chosen to be as large as we want, and this can be exploited in order to prove regularity with respect to time $t$ for each $\mathbf{v}^{r,m,F}$ via the defining equation of the latter function. We even do not need estimates for products of functions in Sobolev spaces which may be borrowed from classical Sobolev space analysis. Using the lemma above for large $s>0$  we may instead use the regularity implied by infinite matrix products as pointed out above.  
As a consequence of the preceding lemma we note
\begin{lem} For all $m\geq 1$ and $s>n+ 2\geq 5$ the function
\begin{equation}\label{solm}
\left( \mathbf{v}^{r,m,F}\right) _{i}=\left( T\exp\left(A^r_mt\right)\mathbf{h}^{r,F}\right)_i\in h^s_l\left({\mathbb Z}^{n,0}\right), 
\end{equation}
is well-defined, whenever $\mathbf{h}^{F}_i\in h^s_l\left({\mathbb Z}^n\right)$.
\end{lem}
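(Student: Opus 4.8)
The plan is to prove the statement by induction on the iteration stage $m\geq 1$, the engine being the Cauchy estimate of the preceding lemma together with the infinite-matrix calculus of this section. The inductive hypothesis at stage $m$ is that $\mathbf{v}^{r,m-1,F}_i\in C^k\left([0,T],h^s\left({\mathbb Z}^{n,0}\right)\right)$ for $s=n+2$ and $1\leq i\leq n$, i.e. the regularity (\ref{mminus1ass}); for $m=1$ this holds because $\mathbf{v}^{r,0,F}$ was already constructed explicitly through the stage-zero dissipative Trotter product formula and $\mathbf{h}^{r,F}_i\in h^s$. Under this hypothesis the frozen matrices $A^r_m(t)=A^{NS}\left(\mathbf{v}^{r,m-1,F}(t)\right)$ have, for each fixed $t$, entries governed by the linear-growth matrix space, their dissipative diagonal part $D$ absorbs that linear growth inside the exponential (Lemma \ref{dbexp}), and $t\mapsto A^r_m(t)\mathbf{h}^{r,F}$ is Lipschitz in the $h^s$-norm.

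First I would record that each sub-problem (\ref{navode2*rewrm2}) on $\left[t^p_{q-1},t^p_q\right]$ has a time-independent coefficient matrix, so by the dissipative Trotter product corollary its solution $\exp\left(A^r_m(t^p_{q-1})(t-t^p_{q-1})\right)\mathbf{v}^{r,m,F,p,q-1}(t^p_{q-1})$ exists as a double limit of finite-mode Trotter products and remains in $h^s\left({\mathbb Z}^{n,0}\right)$, the polynomial decay being inherited from the data through the weakly singular elliptic sum estimate $\sum_{\beta}|\alpha-\beta|^{-s}|\beta|^{-r}\leq c(1+|\alpha|)^{-(r+s-n)}$. Concatenating the $2^p$ sub-intervals yields $\mathbf{v}^{r,m,F,p}$ on $[0,T]$, and the preceding lemma gives $\sup_{u\in[0,T]}e^{-Cu}\bigl|\mathbf{v}^{r,m,F,p}(u)-\mathbf{v}^{r,m,F,p-1}(u)\bigr|_{h^s}\leq L2^{-(p-1)}$. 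Hence $\left(\mathbf{v}^{r,m,F,p}\right)_{p\geq 1}$ is Cauchy in the Banach space $B^{n,s}$ with the time-weighted norm $|\cdot|^{T,\mathrm{exp}}_{h^s,C}$; since $T>0$ is arbitrary the limit $\mathbf{v}^{r,m,F}$ is defined on $[0,\infty)$ with $\mathbf{v}^{r,m,F}_i(t)\in h^s\left({\mathbb Z}^{n,0}\right)$ for every $t$. Because the dissipative exponential factor also damps the corresponding difference of time derivatives, $\mathbf{v}^{r,m,F}_i\in C^1$, and, $s$ being arbitrarily large, the higher time regularity needed to propagate (\ref{mminus1ass}) to stage $m$ follows either from Sobolev product estimates or directly from the defining equation, closing the induction.

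It remains to identify this limit with the time-ordered exponential. The scheme $\mathbf{v}^{r,m,F,p}$ is precisely the Euler freezing of (\ref{navode2*rewrm}): on each $\left[t^p_{q-1},t^p_q\right]$ the matrix $A^r_m(t)$ is replaced by $A^r_m(t^p_{q-1})$ and the resulting constant-coefficient flow is evaluated by a Trotter product. A standard telescoping argument, now legitimate because every intermediate object lives in the regular spaces $M^s_n$ and $h^s$, shows that the product of the frozen propagators converges in $h^s$, uniformly on $[0,T]$, to the Dyson time-ordered exponential $T\exp\left(A^r_m t\right)$ applied to $\mathbf{h}^{r,F}$; equivalently $\mathbf{v}^{r,m,F}$ solves (\ref{navode2*rewrm}) with data $\mathbf{h}^{r,F}$, which is exactly the Dyson series (\ref{dysonobs}). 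This gives $\left(\mathbf{v}^{r,m,F}\right)_i=\left(T\exp\left(A^r_m t\right)\mathbf{h}^{r,F}\right)_i\in h^s_l\left({\mathbb Z}^{n,0}\right)$.

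The main obstacle is this identification step: one must check that the Euler-frozen Trotter products really converge, in the $h^s$-norm and uniformly on bounded time intervals, to the time-ordered exponential, and that the linear-growth irregularity of $A^r_m$ is neutralized by the dissipative factor $\exp(D)$ at every commutator order appearing in the CBH-type expansion of Lemma \ref{productform}. This in turn forces one to track carefully that the initial data $\mathbf{v}^{r,m,F,p,q}(t^p_{q-1})$ of each sub-problem inherit enough polynomial decay for the Lipschitz estimate on $t\mapsto A^r_m(t)\mathbf{h}^{r,F}$ to persist; the bookkeeping of these inheritances through the nested limits (first $k\uparrow\infty$, then the mode cutoff $l\uparrow\infty$, then the time-refinement $p\uparrow\infty$) is the delicate part, while the exponential time weight is what makes the whole construction global in $t$.
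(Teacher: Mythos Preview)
Your proposal is correct and follows essentially the same route as the paper. The paper states this lemma simply ``as a consequence of the preceding lemma'' (the Cauchy estimate for $(\mathbf{v}^{r,m,F,p})_{p\geq 1}$) together with the remark that the dissipative factor in the Trotter product handles the time-derivative sequence; you have spelled out explicitly the induction on $m$, the concatenation of the time-homogeneous sub-problems, and the identification of the limit with the Dyson time-ordered exponential, all of which are implicit in the paper's construction but not written out as a separate proof.
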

The same holds for the uncontrolled approximations, of course. We note
\begin{cor} For all $m\geq 1$ and $s>n+2$ the function
\begin{equation}\label{solm}
\left( \mathbf{v}^{m,F}\right) _{i}=\left( T\exp\left(A_mt\right)\mathbf{h}^{F}\right)_i\in h^s_l\left({\mathbb Z}^n\right)
\end{equation}
is well-defined, whenever $\mathbf{h}^{F}_i\in h^s_l\left({\mathbb Z}^n\right)$.
\end{cor}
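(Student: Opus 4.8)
The plan is to obtain this as the uncontrolled counterpart of the preceding lemma, by induction on $m$, re-inserting the zero modes and checking that none of the estimates used in the controlled case relied on their annihilation. For $m=0$ there is nothing new to do: $A_0$ has time-independent entries and the claim is already the lemma giving $\mathbf{v}^{0,F}_i=\left(\exp(A_0t)\mathbf{h}^F\right)_i\in h^s\left({\mathbb Z}^n\right)$ via the dissipative Trotter product formula and the matrix-algebra lemmas for $M^s_n$, which hold for $s>n$ and hence for $s>n+2$.

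For the inductive step I would assume $\mathbf{v}^{m-1,F}_i(t)\in h^s\left({\mathbb Z}^n\right)$ for all $t\ge 0$ and $1\le i\le n$, and split $A_m(t)=D+B_m(t)$, where $D=(\delta_{ij}D^0)_{1\le i,j\le n}$ is the strictly dissipative diagonal matrix of order $2$ attached to the Laplacian and $B_m(t)$ collects the convection matrices $C^m_{ii}$ and the Leray matrices $L^m_{ij}$, whose entries are built linearly from the modes $v^{m-1}_{k\gamma}(t)$. Using the inductive hypothesis (polynomial decay of order $s$ of these modes), Lemma~\ref{bit} gives $(B_m(t))^k\in M^{s,\mathrm{lin}}_n$, and then Lemma~\ref{dbexp} together with the corollary on $M^{s,\mathrm{fin}}_{n\times n}$ shows that the finite Trotter approximations $\left(\exp\left(P_{M^l}D\frac{t}{k}\right)\exp\left(P_{M^l}B_m(t)\frac{t}{k}\right)\right)^k$ lie in $M^{s,\mathrm{fin}}_{n\times n}$ uniformly in $k$ and $l$; applied to $\mathbf{h}^F\in h^s$ they stay in $h^s$ with uniformly controlled norm, so the double limit $l,k\uparrow\infty$ exists in $h^s$. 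Re-inserting $\alpha=0$ costs nothing here, because the matrix-space bound $|m_{\alpha\beta}|\le C/(1+|\alpha-\beta|^s)$ refers only to the difference $\alpha-\beta$, and the $\alpha=0$ row --- the only one that loses the viscous diagonal damping --- has off-diagonal entries coming solely from the convection term, the Leray contribution carrying the factor $1_{\{\alpha\ne 0\}}$.

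Since $A_m(t)$ is genuinely time-dependent for $m\ge 1$, the next step is to run the time-discretized subscheme $(\mathbf{v}^{m,F,p})_{p\ge 1}$ used above --- the $p$-th member being assembled from $2^p$ time-homogeneous problems on the intervals $[t^p_{q-1},t^p_q]$, each solved by the dissipative Trotter product formula --- and to invoke the Cauchy-sequence lemma verbatim, which gives $\sup_{u\in[0,T]}\exp(-Cu)\,|\mathbf{v}^{m,F,p}(u)-\mathbf{v}^{m,F,p-1}(u)|_{h^s}\le L\,2^{-(p-1)}$ with $C$ and $L$ depending only on stage-$(m-1)$ data. None of the inequalities in that argument used the cancellation of the zero modes; they used only the matrix--vector bounds from the infinite linear algebra lemmas and the Lipschitz property of $t\mapsto A_m(t)\mathbf{h}^F$ in $|\cdot|_{h^s}$, both of which are available here. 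Hence $(\mathbf{v}^{m,F,p})_p$ converges in $|\cdot|^{T,\mathrm{exp}}_{h^s,C}$ for every $T>0$ to a limit $\mathbf{v}^{m,F}$ with $\mathbf{v}^{m,F}_i(t)\in h^s\left({\mathbb Z}^n\right)$ for all $t\ge 0$, and passing to the limit in the $2^p$ Trotter factors identifies it with the time-ordered exponential $T\exp(A_mt)\mathbf{h}^F$.

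The step I expect to be the only genuine obstacle is the undamped zero mode $v^m_{i0}$: unlike in the controlled scheme it is not stabilized by the viscous factor, so I would control it structurally instead --- its defining equation contains only convection-type terms with coefficients $v^{m-1}_{j\gamma}$ taken from the previous stage, so on any finite horizon it is the time integral of an $h^s$-bounded quantity and grows at most linearly in $t$, which the weight $\exp(-Ct)$ absorbs. This is precisely why the global-in-time conclusion survives at a fixed iteration level without a control function; the control function becomes indispensable only in the subsequent limit $m\uparrow\infty$.
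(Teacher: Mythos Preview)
Your proposal is correct and follows essentially the same route as the paper: the paper states the corollary immediately after the controlled lemma with the remark ``The same holds for the uncontrolled approximations, of course,'' meaning the time-discretized Trotter subscheme and the Cauchy-sequence argument carry over verbatim once the zero modes are re-inserted. Your additional paragraph on controlling the undamped zero mode $v^m_{i0}$ via its purely convective structure and the exponential time weight is a sound elaboration that the paper leaves implicit at this stage (it surfaces later in Lemma~\ref{control}), but it does not constitute a different method.
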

However, it is essential to get a uniformly bounded sequence
\begin{equation}\label{sol0}
\left( \mathbf{v}^{r,m,F}\right)_{m\in {\mathbb N}}=
\left( T\exp\left(A^r_mt\right)\mathbf{h}^{F}_i\in h^s_l\left({\mathbb Z}^n\right)\right)_{m\in {\mathbb N}}.
\end{equation}
for some $\nu>0$ (some $\nu$ is sufficient butt we get the bounded sequence for all $\nu$ which is useful for rather straightforward generalised models with spatially dependent viscosity) . We remarked in the introduction that we can even choose $\nu>0$. Indeed, we observed that we can choose it arbitrarily large (as is also well-known). However this was not needed so far and we shall not need it later on. It is just an useful observation in order check algorithms in the most simple situation via equivalent formulations with rigorous damping. At this point it is useful to consider the controlled sequence $\left( \mathbf{v}^{r,m,F}_i\right)_{m\in {\mathbb N},~1\leq i\leq n}$. Recall that the functions $\mathbf{v}^{r,m,F}$ are designed such that the zero modes are zero $v^{r,m}_{i0}=0$. The control function is just defined this way. Next we show that a uniformly bounded controlled sequence $\left( \mathbf{v}^{r,m,F}_i\right)_{m\in {\mathbb N},~1\leq i\leq n}$ implies uniformly boundedness of the uncontrolled sequence $\left( \mathbf{v}^{m,F}_i\right)_{m\in {\mathbb N},~1\leq i\leq n}$. In order to observe this we go back to (\ref{navode200a}).
At stage $m\geq 1$ it is assumed that $v^{r,m-1}_{i0}=0$. The controlled approximating equation at stage $m$ is obtained from (\ref{navode200a}) by elimination of the zero modes. We have for $1\leq i\leq n$ and $\alpha\neq 0$ the equation
\begin{equation}\label{navode200acontr}
\begin{array}{ll}
\frac{d v^{r,m}_{i\alpha}}{dt}=\sum_{j=1}^n\nu \left( -\frac{4\pi \alpha_j^2}{l^2}\right)v^{r,m}_{i\alpha}
-\sum_{j=1}^n\sum_{\gamma \in {\mathbb Z}^{n}\setminus \left\lbrace  0,\alpha\right\rbrace }\frac{2\pi i \gamma_j}{l}v^{r,m-1}_{j(\alpha-\gamma)}v^{r,m}_{i\gamma}\\
\\
+2\pi i\alpha_i1_{\left\lbrace \alpha\neq 0\right\rbrace}\frac{\sum_{j,k=1}^n\sum_{\gamma\in {\mathbb Z}^n\setminus \left\lbrace  0,\alpha\right\rbrace}4\pi \gamma_j(\alpha_k-\gamma_k)v^{r,m-1}_{j\gamma}v^{r,m}_{k(\alpha-\gamma)}}{\sum_{i=1}^n4\pi\alpha_i^2}.
\end{array} 
\end{equation}
We considered the controlled equation systems as autonomous systems of non-zero modes. However, in order to compare the controlled system with the original one we may define
\begin{equation}\label{zeromcon}
\begin{array}{ll}
\frac{d v^{r,m}_{i0}}{dt}=
-\sum_{j=1}^n\sum_{\gamma \in {\mathbb Z}^{n}\setminus \left\lbrace  0,\alpha\right\rbrace }\frac{2\pi i \gamma_j}{l}v^{r,m-1}_{j(-\gamma)}v^{r,m}_{i\gamma}.
\end{array}
\end{equation} 
Note that $\mathbf{v}^{r,m-1,F}_i,\mathbf{v}^{r,m,F}_i\in h^s\left({\mathbb Z}^{n,0}\right) $ for $s>n\geq 3$ implies that the right side of (\ref{zeromcon}) is bounded by constant finite $C>0$.
Hence we have
\begin{lem}\label{control}
If the sequence $\left( \mathbf{v}^{r,m,F}\right)_{m\geq 1}$ has an upper bound $c>0$ with respect to the $|.|_s=\sum_{i=1}^n|.|_{h^s}$-norm, we have for some finite $C>0$
\begin{equation}
|r_0(t)|\leq \exp(Ct)
\end{equation}
for all $t\geq 0$.
\end{lem}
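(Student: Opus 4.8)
The plan is to read the growth of $r_0$ directly off the scalar ordinary differential equation that governs the zero modes. Recall that the $p$-th approximant $r^p_0$ of the control function is, up to sign, the zero mode of the controlled comparison system, and that equation~(\ref{zeromcon}) exhibits $t\mapsto v^{r,p}_{i0}(t)$ as the solution of a linear scalar ODE
\[
\frac{d v^{r,p}_{i0}}{dt}=-\sum_{j=1}^n\sum_{\gamma\in{\mathbb Z}^{n}\setminus\{0\}}\frac{2\pi i\gamma_j}{l}\,v^{r,p-1}_{j(-\gamma)}v^{r,p}_{i\gamma},\qquad v^{r,p}_{i0}(0)=0 .
\]
The right-hand side is a convection-type bilinear convolution of the non-zero modes of stages $p-1$ and $p$, carrying one extra factor $\gamma_j$ from the spatial derivative, and it contains no diffusion term. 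So the first step is simply to bound this right-hand side and then integrate once in time.

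For the bound I would invoke the hypothesis $\sup_{m}|\mathbf{v}^{r,m,F}|_s\le c$ together with the weakly singular elliptic integral estimate of the excerpt. Since $s>n+2$, which is well beyond the $s>n$ actually needed here, the extra factor $\gamma_j$ is harmless; indeed, writing $m_{0\gamma}:=\gamma_j v^{r,p-1}_{j(-\gamma)}$ and using $\sup_{\gamma}|\gamma|\langle\gamma\rangle^{-2s}<\infty$ with Cauchy--Schwarz,
\[
\Big|\sum_{\gamma\in{\mathbb Z}^n\setminus\{0\}} m_{0\gamma}\,v^{r,p}_{i\gamma}\Big|\le C\,|\mathbf{v}^{r,p-1,F}|_s\,|\mathbf{v}^{r,p,F}|_s\le Cc^2,
\]
with $C$ depending only on $n$ and $l$; this is exactly the $\alpha=0$ instance of the elliptic integral inequality with one power of the modulus absorbed. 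Hence the right-hand side of the displayed zero-mode ODE is bounded by a finite constant uniformly in $p$, in $1\le i\le n$ and in $t\ge0$, and integrating from $0$ with vanishing initial data gives $|v^{r,p}_{i0}(t)|\le Ct$ for all such $p,i$ and all $t\ge0$.

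Finally I would pass to the limit $p\uparrow\infty$. Under the standing hypothesis the sequence $(\mathbf{v}^{r,m,F})_m$ converges, by the compactness and contraction arguments already carried out for the subscheme, so $r^p_0\to r_0$ pointwise in $t$; alternatively one writes $r_0=r^0_0+\sum_{p\ge1}\delta r^p_0$ with $\delta r^p_0=-\delta v^{*,r,p}_{i0}$, and the increments $\delta v^{*,r,p}_{i0}$ solve the linearised equation~(\ref{navode200011}) with zero initial data, to which the same bilinear estimate applies, now with the increment $\delta\mathbf{v}^{*,r,p-1,F}$ tested against the bounded factor $\mathbf{v}^{*,r,p-1,F}$, reproducing on every bounded time interval the contraction already used elsewhere, so the series converges. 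In either case the limit inherits the bound $|r_0(t)|\le Ct$, and since $Ct\le\exp(Ct)$ for every $t\ge0$ one obtains $|r_0(t)|\le\exp(Ct)$, which is the asserted estimate; the exponential form is precisely the crude bound compatible with the time-weighted norms used throughout.

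I expect the only genuine point of care to be the uniformity of the constant $C$ in the middle step: one must verify that the estimate of the convolution uses solely the uniform bound $c$ and the fixed decay exponent $s$, so that nothing degenerates as the stage index grows, and that the derivative factor $\gamma_j$ is indeed absorbed by the Sobolev weight. Everything else is a one-line integration of a scalar ODE and the convergence of an already controlled telescoping series; in particular the viscosity $\nu>0$ enters here only through the standing hypothesis that the controlled sequence is bounded, since the zero-mode equation itself carries no dissipative term.
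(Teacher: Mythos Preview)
Your proof is correct and follows the same approach as the paper: the paper's argument is essentially the single sentence preceding the lemma, namely that the right-hand side of (\ref{zeromcon}) is bounded by a finite constant because $\mathbf{v}^{r,m-1,F}_i,\mathbf{v}^{r,m,F}_i\in h^s({\mathbb Z}^{n,0})$ for $s>n$, and you have simply fleshed this out by making the Cauchy--Schwarz/elliptic-integral step explicit, integrating to get the linear bound $Ct$, and then passing to the limit in $p$. One minor quibble is the initial condition: $r_0(0)=-h_{i0}$ need not vanish, so the integrated bound is really $|r_0(t)|\le |h_{i0}|+Ct$, but this is still dominated by $\exp(C't)$ for a suitably adjusted constant, so nothing changes.
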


It remains to show that for some finite  $C>0$ such that for all $1\leq i\leq n$ and $m\geq 0$ we have
\begin{equation}\label{est0vmF}
|\mathbf{v}^{r,m,F}_{i}(t,.)|_{h^s_l}\leq C
\end{equation}

Based on the arguments so far there are several ways to get an uniform bound for the sequence $\left( \mathbf{v}^{r,m,F}\right)_{m\geq 1}$. One possibility is to observe that we have upper bounds
\begin{equation}\label{polynomialgrowth}
\sup_{t\in [0,T]}{\big |}\mathbf{v}^{r,m,F}_i(t){\big |}_{h^s}\leq C^m
\end{equation}
for some $s>n$ and all $1\leq i\leq n$. This implies that we we have contraction on the interval $[0,T]$ with respect to the norm ${\big |}.{\big |}^{\exp,T}_{h^s,C}$ (with appropriate generic constant $C>0$). The result in (\ref{polynomialgrowth}) is obtained by time discretization of (\ref{navode2*rewf}). Then approximations $\left( \mathbf{v}^{r,m,F,p,q}(.)\right)_{1\leq q\leq 2^p } $ as in the construction of the solutions $\mathbf{v}^{r,m,F}(t)$ above can be considered. If for $1\leq i\leq n$ $\mathbf{v}^{r,m,F,p}_i(.):[0,T]\rightarrow h^s({\mathbb Z}^n)$ denotes the function which equals the function $\mathbf{v}^{r,m,F,p,q}_i(.)$ on the intervals $\left[ t^p_q,t^p_{q+1}\right]$ then we get with the 
\begin{equation}\label{polynomialgrowth2}
\sup_{t\in [0,T]}{\big |}\mathbf{v}^{r,m,F,p}_i(t){\big |}_{h^s}\leq C^m
\end{equation}
for a constant $C>0$ independent of the stage $p$ such that (\ref{polynomialgrowth}) is satisfied.

 Another way is via contraction results on certain balls in appropriate function spaces. The radius of such a ball clearly depends on the size of the initial data and on the size of the horizon. However it is sufficient that for each dual Sobolev norm index $s>0$ and for each data size ${\big |}\mathbf{h}^{F}{\big |}_s$ and each horizon size $T>0$ we find a contraction result on an appropriate ball for a related time weighted function space.  Let's look at the details. For arbitrary $T>0$ consider two smooth vector-valued functions on the $n$-torus, i.e., functions of the form 
\begin{equation}
\mathbf{f},\mathbf{g}\in 
\left[ C^{\infty}\left( [0,T],{\mathbb T}^n\right)\right] ^n.
\end{equation}
Consider the equations
\begin{equation}\label{navode2*rewf}
\begin{array}{ll}
\frac{d \mathbf{v}^{r,f,F}}{dt}=A^{r,NS}\left(\mathbf{f}\right) \mathbf{v}^{r,f,F},
\end{array} 
\end{equation}
along with $\mathbf{v}^{r,f,F}(0)=\mathbf{h}^{r,F}$, and
\begin{equation}\label{navode2*rewg}
\begin{array}{ll}
\frac{d \mathbf{v}^{r,g,F}}{dt}=A^{r,NS}\left(\mathbf{g}\right) \mathbf{v}^{r,g,F},
\end{array} 
\end{equation}
along with $\mathbf{v}^{r,g,F}(0)=\mathbf{h}^{r,F}$.
Here we denote $\mathbf{v}^{r,f,F}=\left(\mathbf{v}^{r,f,F}_1,\cdots ,\mathbf{v}^{r,f,F}_n\right)^T$ and similarly for the function $\mathbf{v}^{r,g,F}$. As in or notation above the matrix $A^{r,NS}\left(\mathbf{f}\right) $ is a $n{\mathbb Z}^n_0\times n{\mathbb Z}^n_0$-matrix, where we abbreviate ${\mathbb Z}^n_0={\mathbb Z} \setminus \left\lbrace 0\right\rbrace$, and where 
\begin{equation}
A^{r,NS}\left(\mathbf{f}\right) =\left(A^{r,NS}_{ij}\left(\mathbf{f}\right)\right)_{1\leq i,j\leq n} 
\end{equation}
where for $1\leq i,j\leq n$ the entry $A^{r,NS}_{ij}\left(\mathbf{f}\right) $ is a ${\mathbb Z}^n\times {\mathbb Z}^n$-matrix. We define
\begin{equation}
A^{r,NS}\left( \mathbf{f}\right)\mathbf{v}^{r,f,F} =\left(\sum_{j=1}^nA^{r,NS}_{1j}\left(\mathbf{f}\right) \mathbf{v}^{r,f,F}_1 ,\cdots,\sum_{j=1}^nA^{r,NS}_{nj}\left( \mathbf{f}\right)  \mathbf{v}^{r,f,F}_n  \right)^T, 
\end{equation}
where for all $1\leq i\leq n$
\begin{equation}
\sum_{j=1}^nA^{r,NS}_{ij}\left(\mathbf{f}\right) \mathbf{v}^{r,f,F}_j=\left(\left( \sum_{j=1}^n \sum_{\beta\in {\mathbb Z}^n}A^{r,NS}_{i\alpha j\beta}\left(\mathbf{f}\right) v^{r,f,F}_{j\beta}\right)_{\alpha\in {\mathbb Z}^n} \right)^T_{1\leq i\leq n}. 
\end{equation}
The entries $A^{r,NS}_{i\alpha j\beta}\left(\mathbf{f}\right)$ of $A^{r,NS}\left( \mathbf{v}\right)$ are determined as follows. On the diagonal, i.e., for $i=j$ we have the entries  for $\alpha,\beta\neq 0$
\begin{equation}
\begin{array}{ll}
\delta_{ij}A^{r,NS}_{i\alpha j\beta}\left(\mathbf{f}\right)=\delta_{ij}\sum_{j=1}^n \nu\left( -\frac{4\pi \alpha_j^2}{l^2}\right)
-\delta_{ij}\sum_{j=1}^n\frac{2\pi i \beta_j}{l}f_{j(\alpha-\beta)}\\
\\+\delta_{ij}2\pi i\alpha_i1_{\left\lbrace \alpha\neq 0\right\rbrace }\frac{\sum_{k=1}^n4\pi \beta_j(\alpha_k-\beta_k)f_{k(\alpha-\beta)}}{\sum_{i=1}^n4\pi\alpha_i^2},
\end{array}
\end{equation}
where for $\alpha=\beta$ the terms of the form $f_{k(\alpha-\beta)}$ are zero (such that we do not need to exclude these terms explicitly). Furthermore, off-diagonal we have for $i\neq j$ the entries
\begin{equation}
(1-\delta_{ij})A^{r,NS}_{i\alpha j\beta}\left(\mathbf{f}\right)=2\pi i\alpha_i1_{\left\lbrace \alpha\neq 0\right\rbrace }\frac{\sum_{k=1}^n4\pi \beta_j(\alpha_k-\beta_k)f_{k(\alpha-\beta)}}{\sum_{i=1}^n4\pi\alpha_i^2}.
\end{equation}
The definition of $A^{r,NS}\left(\mathbf{g}\right) $ is analogous. Next for functions $$\mathbf{u}^{r,F}=\left(\mathbf{u}^{r,F}_1,\cdots ,\mathbf{u}^{r,F}_n\right)$$ and for $s\geq n+2$ consider the norm
\begin{equation}
\begin{array}{ll}
{\big |}\mathbf{u}^{r,F}{\big |}^{T,\exp}_{s,C}:=\sum_{i=1}^n{\big |}\mathbf{u}^{r,F}_i{\big |}^{T,\exp}_{h^s,C}.
\end{array}
\end{equation}
Consider a ball of radius $2{\big |}\mathbf{h}^{r,F}{\big |}^{T,\exp}_{s,C}$ around the origin, i.e., consider the ball
\begin{equation}
B_{2{\big |}\mathbf{h}^{r,F}{\big |}^{T,\exp}_{s,C}}:=\left\lbrace \mathbf{u}^{r,F}{\big |}{\big |}\mathbf{u}^{r,F}{\big |}^{T,\exp}_{s,C}\leq 2{\big |}\mathbf{h}^{r,F}{\big |}^{T,\exp}_{s,C}\right\rbrace .
\end{equation}
For $s\geq n+2$ and for data $\mathbf{h}^{r,F}\in h^s\left({\mathbb Z}^n\setminus \left\lbrace 0\right\rbrace  \right) $ the considerations above shows that the Cauchy problem
\begin{equation}
\begin{array}{ll}
\frac{d \mathbf{v}^{r,f,F}}{dt}=A^{r,NS}\left(\mathbf{f}\right) \mathbf{v}^{r,f,F},
\end{array} 
\end{equation}
along with $\mathbf{v}^{r,f,F}(0)=\mathbf{h}^{r,F}$ has a regular solution $\mathbf{v}^{r,f,F}$ with $\mathbf{v}^{r,f,F}(t)\in h^s\left({\mathbb Z}^n\setminus \left\lbrace 0\right\rbrace  \right) $. 
Next we observe that for fixed $\mathbf{u}^{r,F}$ in this ball the linear operator
\begin{equation}
\left( \mathbf{f}-\mathbf{g}\right) \rightarrow \left( A^{r,NS}_{i\alpha j\beta}\left(\mathbf{f}\right)\right) \mathbf{u}^{r,F}- \left( A^{r,NS}_{i\alpha j\beta}\left(\mathbf{g}\right)\right) \mathbf{u}^{r,F}
\end{equation}
is Lipschitz with some Lipschitz constant $L$. Note that  we have
\begin{equation}
\begin{array}{ll}
A^{r,NS}_{i\alpha j\beta}\left(\mathbf{f}\right)-A^{r,NS}_{i\alpha j\beta}\left(\mathbf{g}\right)\\
\\
=-\delta_{ij}\sum_{j=1}^n\frac{2\pi i \beta_j}{l}\left( f_{j(\alpha-\beta)}-g_{j(\alpha-\beta)}\right) \\
\\+2\pi i\alpha_i1_{\left\lbrace \alpha\neq 0\right\rbrace }\frac{\sum_{k=1}^n4\pi \beta_j(\alpha_k-\beta_k)\left( f_{k(\alpha-\beta)}-g_{k(\alpha-\beta)}\right) }{\sum_{i=1}^n4\pi\alpha_i^2}.
\end{array}
\end{equation}
Furthermore, for $\mathbf{g}^{r,F}\in B_{2{\big |}\mathbf{h}^{r,F}{\big |}^{T,\exp}_{s,C}}$  we may assume w.l.o.g. that the Lipschitz constant $L$ is chosen such that
\begin{equation}\label{agu}
\begin{array}{ll}
\sup_{\mathbf{g}^{r,F}\in B_{2{\big |}\mathbf{h}^{r,F}{\big |}^{T,\exp}_{s,C}}}
{\big |}\left( \delta_{ij}A^{r,NS}_{i\alpha j\beta}\left(\mathbf{g}\right)\right)\mathbf{u}^{r,F}{\big |}^{T,\exp}_{h^{s},C}\leq L{\Big |}\mathbf{u}^{r,F}{\big |}^{T,\exp}_{h^{s-2},C}
\end{array}
\end{equation}
where the weaker norm on the right side of (\ref{agu}) is due to the fact that we have to compensate the first term on the right side of 
\begin{equation}
\begin{array}{ll}
\delta_{ij}A^{r,NS}_{i\alpha j\beta}\left(\mathbf{g}\right)=\delta_{ij}\sum_{j=1}^n \nu\left( -\frac{4\pi \alpha_j^2}{l^2}\right)
-\delta_{ij}\sum_{j=1}^n\frac{2\pi i \beta_j}{l}g_{j(\alpha-\beta)}\\
\\+2\pi i\alpha_i1_{\left\lbrace \alpha\neq 0\right\rbrace }\frac{\sum_{k=1}^n4\pi \beta_j(\alpha_k-\beta_k)g_{k(\alpha-\beta)}}{\sum_{i=1}^n4\pi\alpha_i^2}.
\end{array}
\end{equation}
\begin{lem}
Let $T>0$ be arbitrary, and let $\mathbf{f}^{r,F},\mathbf{g}^{r,F}\in B_{2{\big |}\mathbf{h}^{r,F}{\big |}^{T,\exp}_{s,C}}$ for $s\geq n+3$.
For $C\geq 3L$ we have 
\begin{equation}
{\Big|}\mathbf{v}^{r,f,F}-\mathbf{v}^{r,g,F}
{\Big |}^{T,\mbox{exp}}_{h^s,C}\leq \frac{1}{2}{\Big|}\mathbf{f}^F-\mathbf{g}^F
{\Big |}^{T,\mbox{exp}}_{h^s,C}.
\end{equation}
For $C\geq 6L$ we have 
\begin{equation}
{\Big|}\mathbf{v}^{r,f,F}-\mathbf{v}^{r,g,F}
{\Big |}^{T,\mbox{exp},1}_{h^s,C}\leq \frac{1}{2}{\Big|}\mathbf{f}^F-\mathbf{g}^F
{\Big |}^{T,\mbox{exp},1}_{h^s,C}.
\end{equation}
\end{lem}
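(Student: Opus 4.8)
The plan is to realise the solution map $\mathbf{f}\mapsto\mathbf{v}^{r,f,F}$ as a Duhamel integral against the propagator of $\frac{d}{dt}=A^{r,NS}(\mathbf{f})$ and to read off the contraction from the exponential time weight (a Bielecki‑type argument). Put $\delta\mathbf{v}:=\mathbf{v}^{r,f,F}-\mathbf{v}^{r,g,F}$; subtracting the two linear systems gives
\begin{equation}
\frac{d\,\delta\mathbf{v}}{dt}=A^{r,NS}(\mathbf{f})\,\delta\mathbf{v}+\big(A^{r,NS}(\mathbf{f})-A^{r,NS}(\mathbf{g})\big)\,\mathbf{v}^{r,g,F},\qquad\delta\mathbf{v}(0)=0.
\end{equation}
Let $U_f(t,\sigma)$ be the propagator of $\frac{d}{dt}\mathbf{w}=A^{r,NS}(\mathbf{f})\mathbf{w}$. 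It exists and maps $h^s$ into $h^s$ by the dissipative Trotter product construction already in place: writing $A^{r,NS}(\mathbf{f})=D+B_f$ with $D$ the dissipative diagonal Laplacian and $B_f$ the convection/Leray part, the factor $\exp(D\tau)$ is an $\ell^2$‑contraction, while on the ball $B_{2|\mathbf{h}^{r,F}|}$ the matrices $\exp(D\tau)B_f^k$ stay in a regular matrix space with norm governed by the ball radius (Lemmas \ref{productform}, \ref{bit}, \ref{dbexp} and the $M^s_n$‑multiplication lemmas); hence $|U_f(t,\sigma)\mathbf{w}|_{h^s}\le e^{L_0(t-\sigma)}|\mathbf{w}|_{h^s}$ for a constant $L_0$ depending on $\nu,l,n$ and on $|\mathbf{h}^{r,F}|_{h^s}$. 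Duhamel then gives
\begin{equation}
\delta\mathbf{v}(t)=\int_0^t U_f(t,\sigma)\,\big(A^{r,NS}(\mathbf{f})-A^{r,NS}(\mathbf{g})\big)(\sigma)\,\mathbf{v}^{r,g,F}(\sigma)\,d\sigma .
\end{equation}

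The heart of the argument is the bound on the integrand. In $A^{r,NS}(\mathbf{f})-A^{r,NS}(\mathbf{g})$ the diagonal Laplacian cancels, so only convection and Leray entries remain: the coefficient differences $f_{k(\alpha-\beta)}-g_{k(\alpha-\beta)}$ multiplied by factors $O(|\beta|+|\alpha-\beta|)$, the Leray denominator $\sum_i\alpha_i^2$ absorbing the $\alpha_i$. Applying this to the regular vector $\mathbf{v}^{r,g,F}(\sigma)$ and invoking the weakly singular elliptic integral bound of the matrix lemmas in the form $\sum_{\beta}\langle\alpha-\beta\rangle^{-(s-1)}\langle\beta\rangle^{-(s-1)}\le c\,\langle\alpha\rangle^{-(2s-2-n)}$, and using $2s-2-n\ge s+1$ precisely because $s\ge n+3$, one obtains
\begin{equation}
\big|\big(A^{r,NS}(\mathbf{f})-A^{r,NS}(\mathbf{g})\big)(\sigma)\,\mathbf{v}^{r,g,F}(\sigma)\big|_{h^s}\le c\,|\mathbf{f}(\sigma)-\mathbf{g}(\sigma)|_{h^s}\,|\mathbf{v}^{r,g,F}(\sigma)|_{h^s}.
\end{equation}
Since $\mathbf{g}^{r,F}$ lies in the ball, $\mathbf{v}^{r,g,F}=U_g(\cdot,0)\mathbf{h}^{r,F}$ is controlled by the propagator bound, so $|\mathbf{v}^{r,g,F}(\sigma)|_{h^s}\le e^{L_0\sigma}|\mathbf{h}^{r,F}|_{h^s}$. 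Inserting this and multiplying by $e^{-Ct}$, the single surviving factor of the difference is absorbed by $\int_0^t e^{-C(t-\sigma)}\,d\sigma\le\tfrac1C$, while the propagator growth and the ball bound contribute only a fixed multiplicative constant; collecting everything into one constant $L=L(\nu,l,n,|\mathbf{h}^{r,F}|_{h^s},T)$ as in (\ref{agu}) one gets $|\delta\mathbf{v}|^{T,\exp}_{h^s,C}\le\frac{L}{C}|\mathbf{f}^F-\mathbf{g}^F|^{T,\exp}_{h^s,C}$, so $C\ge 3L$ yields the first inequality.

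For the second estimate one also controls $D_t\delta\mathbf{v}=A^{r,NS}(\mathbf{f})\delta\mathbf{v}+\big(A^{r,NS}(\mathbf{f})-A^{r,NS}(\mathbf{g})\big)\mathbf{v}^{r,g,F}$: the source term is estimated exactly as above (it vanishes at $t=0$, where $\delta\mathbf{v}=0$), and $A^{r,NS}(\mathbf{f})\delta\mathbf{v}$ is handled with the time‑derivative form of the dissipative Trotter product formula — the analogue of the corollary containing (\ref{sol02}), where the factor $\exp(D\tau)$ again supplies the decay compensating the Laplacian in $A^{r,NS}(\mathbf{f})U_f$ acting on regular data. The presence of this extra term doubles the constant and gives the threshold $C\ge 6L$; since $s\ge n+3$ may be taken arbitrarily large, the mild loss of decay near $t=0$ can be circumvented by using matrix‑product regularity in place of classical Sobolev product estimates, as remarked earlier. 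Every step transfers verbatim to the real $\mathbb{N}^n$‑basis.

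The main obstacle is the uniform propagator bound $|U_f(t,\sigma)\mathbf{w}|_{h^s}\le e^{L_0(t-\sigma)}|\mathbf{w}|_{h^s}$: because the diagonal Laplacian part of $A^{r,NS}(\mathbf{f})$ grows like $|\alpha|^2$, this operator is genuinely unbounded on $h^s$, so the bound cannot be read off directly; it rests on the full dissipative Trotter machinery together with control of the time discretization uniform in $\mathbf{f}$ over the ball, and — if one insists that $C$ (hence $L$) not depend on $T$ — on a uniform‑in‑time a priori bound for $\mathbf{v}^{r,g,F}$, which is exactly where the compactness/auto‑control arguments enter. Once that is granted, the elliptic‑integral bookkeeping and the Grönwall/weight step are routine.
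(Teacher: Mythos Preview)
Your argument is sound, but it is not the route the paper takes. You go through the Duhamel/propagator representation $\delta\mathbf{v}(t)=\int_0^t U_f(t,\sigma)\big(A^{r,NS}(\mathbf{f})-A^{r,NS}(\mathbf{g})\big)\mathbf{v}^{r,g,F}\,d\sigma$ and therefore need the uniform semigroup bound $|U_f(t,\sigma)\mathbf{w}|_{h^s}\le e^{L_0(t-\sigma)}|\mathbf{w}|_{h^s}$, which (as you note) requires the full dissipative Trotter machinery. The paper instead integrates the raw ODE, writes $\delta\mathbf{v}(t)=\int_0^t\big(A^{r,NS}(\mathbf{f})\mathbf{v}^{r,f,F}-A^{r,NS}(\mathbf{g})\mathbf{v}^{r,g,F}\big)du$, splits this as $\big(A^{r,NS}(\mathbf{f})-A^{r,NS}(\mathbf{g})\big)\mathbf{v}^{r,f,F}+A^{r,NS}(\mathbf{g})\,\delta\mathbf{v}$, and bounds the second summand directly via the operator estimate (\ref{agu}), which trades the unbounded Laplacian for a loss of two orders, $h^s\to h^{s-2}$; since $s\ge n+3$ one still has $s-2\ge n+1$ and can embed back via $|\cdot|_{h^{s-2}}\le|\cdot|_{h^s}$. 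This yields a self-referential inequality $|\delta\mathbf{v}|^{T,\exp}_{h^s,C}\le\frac{L}{C}|\mathbf{f}-\mathbf{g}|^{T,\exp}_{h^s,C}+\frac{L}{C}|\delta\mathbf{v}|^{T,\exp}_{h^s,C}$, which solves to the contraction factor $\frac{L/C}{1-L/C}=\frac12$ at $C=3L$. So the paper trades your propagator bound for a pointwise operator bound with regularity loss; your version is arguably cleaner because the dissipative semigroup absorbs the Laplacian rather than leaving it in the integrand, at the cost of importing the heavier Trotter apparatus you flag as the main obstacle.
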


\begin{proof}
For each $t\in [0,T]$ we have
\begin{equation}\label{navode2*rewcontw1}
\begin{array}{ll}
{\Big|}\mathbf{v}^{r,f,F}(t)-\mathbf{v}^{r,g,F}(t){\Big |}_{h^s}\\
\\
\leq {\Big |}\int_{0}^{t}A^{r,NS}\left(\mathbf{f}\right)(u) \mathbf{v}^{r,f,F}(u)du
-\int_{0}^{t}A^{r,NS}\left(\mathbf{g}\right)(u) \mathbf{v}^{r,g,F}(u)du{\Big |}_{h^s}\\
\\
\leq {\Big |}\left( \int_{0}^{t}A^{r,NS}\left(\mathbf{f}\right)(u)
-\int_{0}^{t}A^{r,NS}\left(\mathbf{g}\right)(u) \right) \mathbf{v}^{r,f,F}(u)du{\Big |}_{h^s}\\
\\
+ {\Big |}\int_{0}^{t}A^{r,NS}\left(\mathbf{g}\right)(u) \left( \mathbf{v}^{r,f,F}(u)ds
-\mathbf{v}^{r,g,F}(u)\right) du{\Big |}_{h^s}\\
\\
\leq LT\sup_{u\in [0,T]}{\Big |}\mathbf{f}^F(u)
-\mathbf{g}^F(u){\Big |}_{h^s}\\
\\
+ LT\sup_{u\in [0,T]}{\Big |} \mathbf{v}^{r,f,F}(u)ds
-\mathbf{v}^{r,g,F}(u){\Big |}_{h^{s-2}}.
\end{array} 
\end{equation}
 It follows that
\begin{equation}\label{navode2*rewcontw2}
\begin{array}{ll}
{\Big|}\mathbf{v}^{r,f,F}(t)-\mathbf{v}^{r,g,F}(t){\Big |}_{h^s}\\
\\
\leq L{\Big |}\mathbf{f}
-\mathbf{g} {\Big |}^{T,\mbox{exp}}_{h^s,C}\int_0^{t}\exp(Cu)du\\
\\
+ L{\Big |} \mathbf{v}^{r,f,F}
-\mathbf{v}^{r,g,F}{\Big |}^{T,\mbox{exp}}_{h^s,C}\int_0^{t}\exp(Cu)du\\
\\
\leq \frac{L\exp(Ct)}{C}{\Big |}\mathbf{f}^F
-\mathbf{g}^F{\Big |}^{T,\mbox{exp}}_{h^s,C}\\
\\
+ \frac{L\exp(Ct)}{C}\sup_{u\geq 0}{\Big |} \mathbf{v}^{r,f,F}(u)ds
-\mathbf{v}^{r,g,F}(.){\Big |}^{T,\mbox{exp}}_{h^{s-2},C}
\end{array} 
\end{equation}
Since
\begin{equation}
{\Big |} \mathbf{v}^{r,f,F}
-\mathbf{v}^{r,g,F}{\Big |}^{T,\mbox{exp}}_{h^{s-2},C}\leq {\Big |} \mathbf{v}^{r,f,F}
-\mathbf{v}^{r,g,F}{\Big |}^{T,\mbox{exp}}_{h^{s},C}
\end{equation}
it follows that 
\begin{equation}\label{navode2*rewcontw2}
\begin{array}{ll}
{\Big|}\mathbf{v}^{r,f,F}(.)-\mathbf{v}^{r,g,F}(.){\Big |}^{T,\mbox{exp}}_{h^s,C}\\
\\
\leq \left(\frac{1}{\left(1-\frac{L}{C}  \right) }\frac{L}{C} \right){\Big |}\mathbf{f}^F(u)
-\mathbf{g}^F(u){\Big |}^{T,\mbox{exp}}_{h^s,C}.
\end{array} 
\end{equation}
For $C=3L$ the result follows. The reasoning for the stronger norm is similar.
\end{proof}

It is clear that this contraction result leads to global existence and uniqueness. Note that for global smooth existence it is sufficient for each $s\geq n+2$ and $T>0$ we find a constant $C>0$ such that
\begin{equation}
{\Big|}\mathbf{v}^{r,m,F}
{\Big |}^{T,\mbox{exp}}_{h^s,C}\leq C
\end{equation}

Uniform upper bounds for the approximative (controlled) solutions $\mathbf{v}^ {r,m,F}$ lead to existence via compactness as well. Note that the infinite vectors  $\mathbf{v}^{r,m,F}_{i}(t)=\left( v^{r,m}_{i\alpha}(t)\right)_{\alpha\in {\mathbb Z}^{n,0}}$
are in $1$-$1$ correspondence with classical functions
\begin{equation}
v^{r,m}_i(t,x)=\sum_{\alpha\in {\mathbb Z}^{n,0}}v^{r,m}_{i\alpha}\exp\left(\frac{2\pi i \alpha x}{l}\right), 
\end{equation}
where $v^{r,m}_i\in H^s\left({\mathbb T}^n\right)$ for $s>n+ 2$. Recall that
\begin{thm}
For $r>s$ and for any compact Riemann manifold $M$ (and especially for $M={\mathbb T}^n_l$) we have a compact embedding
\begin{equation}
e:H^r\left(M\right)\rightarrow  H^s\left(M\right)
\end{equation}
\end{thm}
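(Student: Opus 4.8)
\section*{Proof proposal for the Rellich embedding theorem}

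The plan is to reduce the statement for a general compact Riemann manifold $M$ to the flat model ${\mathbb T}^n_l$ — which is the only case actually needed in this paper — and to prove the torus case directly by a diagonal extraction in Fourier space. First I would recall that on a compact manifold the Sobolev norms defined via a finite atlas together with a subordinate smooth partition of unity are all equivalent, so it suffices to treat compactly supported functions in coordinate charts; multiplication by a fixed smooth cut-off is a bounded operator on every $H^r$, hence a bounded sequence in $H^r(M)$ yields finitely many bounded sequences of compactly supported functions in $H^r({\mathbb R}^n)$, and for those one may periodize onto a large torus. Thus the whole theorem follows once the compact embedding $h^r\left({\mathbb Z}^n\right)\hookrightarrow h^s\left({\mathbb Z}^n\right)$ for $r>s$ is established, which by the equivalence of $H^s\left({\mathbb T}^n_l\right)$ with $h^s\left({\mathbb Z}^n\right)$ is exactly the torus case.

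For the torus case I would argue as follows. Let $(u_k)_{k\in{\mathbb N}}$ be a sequence with $\|u_k\|_{H^r}\leq M$ for all $k$, and write $u_k=\sum_{\alpha\in{\mathbb Z}^n}u_{k\alpha}\exp\left(\frac{2\pi i\alpha x}{l}\right)$, so that $\sum_{\alpha\in{\mathbb Z}^n}|u_{k\alpha}|^2\left\langle\alpha\right\rangle^{2r}\leq M^2$ for every $k$. For each fixed $\alpha$ the scalar sequence $(u_{k\alpha})_k$ is bounded by $M$, so by a diagonal argument over the countable index set ${\mathbb Z}^n$ I extract a subsequence, still denoted $(u_k)$, with $u_{k\alpha}\to u_\alpha$ as $k\uparrow\infty$ for every $\alpha\in{\mathbb Z}^n$. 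By Fatou's lemma $\sum_\alpha|u_\alpha|^2\left\langle\alpha\right\rangle^{2r}\leq M^2$, so $u:=\sum_\alpha u_\alpha\exp\left(\frac{2\pi i\alpha x}{l}\right)\in H^r$. It remains to show $u_k\to u$ in $H^s$. Given $\varepsilon>0$, choose $R>0$ so large that $\left\langle R\right\rangle^{-2(r-s)}<\frac{\varepsilon}{8M^2}$; then for the high modes
\begin{equation}
\sum_{|\alpha|>R}|u_{k\alpha}-u_\alpha|^2\left\langle\alpha\right\rangle^{2s}\leq\left\langle R\right\rangle^{-2(r-s)}\sum_{|\alpha|>R}|u_{k\alpha}-u_\alpha|^2\left\langle\alpha\right\rangle^{2r}\leq 4M^2\left\langle R\right\rangle^{-2(r-s)}<\frac{\varepsilon}{2},
\end{equation}
uniformly in $k$, where I used $\sum_\alpha|u_{k\alpha}-u_\alpha|^2\left\langle\alpha\right\rangle^{2r}\leq 2\|u_k\|_{H^r}^2+2\|u\|_{H^r}^2\leq 4M^2$. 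For the finitely many modes with $|\alpha|\leq R$, pointwise convergence $u_{k\alpha}\to u_\alpha$ gives a $K$ with $\sum_{|\alpha|\leq R}|u_{k\alpha}-u_\alpha|^2\left\langle\alpha\right\rangle^{2s}<\frac{\varepsilon}{2}$ for all $k\geq K$. Adding the two estimates yields $\|u_k-u\|_{H^s}^2<\varepsilon$ for $k\geq K$, so $(u_k)$ has a subsequence converging in $H^s$, which is the compactness of the embedding.

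The one point requiring genuine care — and the place I expect the only real work to lie — is the passage from the torus to an arbitrary compact Riemann manifold: one must verify that the chart-wise Sobolev norms are mutually equivalent, that cutting off by a partition of unity does not destroy $H^r$-boundedness, and that the periodization step is faithful on a sufficiently large torus containing the supports. Each of these is standard, but they are the steps where a careless argument would fail; on ${\mathbb T}^n_l$ itself, by contrast, the proof is the elementary Fourier argument above and presents no obstacle. Since only $M={\mathbb T}^n_l$ enters the construction of the preceding sections, the direct $h^r\hookrightarrow h^s$ statement is all that the compactness arguments invoked earlier actually use.
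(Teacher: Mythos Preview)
Your proof is correct and is essentially the standard Fourier-analytic argument for the Rellich--Kondrachov theorem on the torus, together with the usual partition-of-unity reduction for a general compact manifold. However, the paper does not actually prove this theorem: it is only \emph{recalled} (the text introduces it with ``Recall that'') as a classical result, invoked as a black box to pass from uniform $h^s$-bounds on the sequence $(\mathbf{v}^{r,m,F}_i(t))_m$ to a convergent subsequence. So there is no proof in the paper to compare against; what you have written is a valid self-contained justification of the cited fact, and your closing observation --- that only the torus case $M={\mathbb T}^n_l$, i.e.\ the elementary inclusion $h^r\hookrightarrow h^s$, is ever used --- is exactly right.
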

This means that $\left( v^{r,m}_i\right)_{m\in {\mathbb N}}$ has a convergent subsequence in $H^r\left({\mathbb T}^n_l\right)$ for $r>s$ which corresponds to converging subsequence in the corresponding Sobolev space of infinite vectors of modes. Hence, passing to an appropriate subsequence $\mathbf{v}^{r,m',F}_{i}(t)$ of $\mathbf{v}^{r,m,F}_{i}(t)$ we have a limit
\begin{equation} 
\mathbf{v}^{r,F}_{i}(t)=\lim_{m'\uparrow \infty}\mathbf{v}^{r,m',F}_{i}(t)\in h^r\left({\mathbb Z}^n\right) 
\end{equation}
for $r<s$ (Rellich embedding). Since $s$ is arbitrary this limit exists in $h^r\left({\mathbb Z}^n\right)$ for all $r\in {\mathbb R}$. Hence, for all $1\leq i\leq n$ we have a family $\left( \mathbf{v}^{r,m',F}_i\right)_{m'\in {\mathbb N}}$ which satisfies
\begin{equation}\label{navodermproof}
\begin{array}{ll}
\frac{d v^{r,m'}_{i\alpha}}{dt}=\sum_{j=1}^n\nu \left( -\frac{4\pi \alpha_j^2}{l^2}\right) v^{r,m'}_{i\alpha}
-\sum_{j=1}^n\sum_{\gamma \in {\mathbb Z}^n\setminus \left\lbrace 0,\alpha\right\rbrace }\frac{2\pi i \gamma_j}{l}v^{r,m'-1}_{j(\alpha-\gamma)}v^{r,m'}_{i\gamma}\\
\\
+2\pi i\alpha_i1_{\left\lbrace \alpha\neq 0\right\rbrace}\frac{\sum_{j,k=1}^n\sum_{\gamma\in {\mathbb Z}^n\setminus \left\lbrace 0,\alpha\right\rbrace }4\pi \gamma_j(\alpha_k-\gamma_k)v^{r,m'-1}_{j\gamma} v^{r,m'}_{k(\alpha-\gamma)}}{\sum_{i=1}^n4\pi\alpha_i^2}.
\end{array} 
\end{equation}
If we can prove that the limit of infinite vectors $\left( \frac{d v^{r,m'}_{i\alpha}}{dt}(t)\right)_{\alpha \in {\mathbb Z}^n}$ is continuous in the sense that
\begin{equation}
\begin{array}{ll}
\lim_{m'\uparrow \infty}\left( \frac{d v^{r,m'}_{i\alpha}}{dt}(t)\right)_{\alpha \in {\mathbb Z}^n}\in C\left({\mathbb Z}^n\right)\\
\\
:=\left\lbrace \left(g_{\alpha} \right)|\sum_{\alpha\in {\mathbb Z}^n}f_{\alpha}\exp\left(\frac{2\pi i\alpha x}{l}\right)\in C\left({\mathbb Z}^n\right)   \right\rbrace,
\end{array}
\end{equation}
then we obtain a classical solution.
Inspecting the terms on the right side the assumption that $s> 2+n$ and $n\geq 2$ is more than sufficient in order to get
\begin{equation}
\left(\sum_{j=1}^n\nu \left( -\frac{4\pi \alpha_j^2}{l^2}\right) v^{r,m'}_{i\alpha}\right)_{\alpha\in {\mathbb Z}^n}\in h^{s-2}\left({\mathbb Z}^n\right)\subset h^{n}\left({\mathbb Z}^n\right),
\end{equation}
such that with this assumption we may ensure that $\left( \frac{d v^{r,m'}_{i\alpha}}{dt}\right)_{m\in {\mathbb N}}(t)$ converges in $C\left({\mathbb Z}^n\right) \subset h^r\left({\mathbb Z}^n\right) $ with $r>\frac{1}{2}n$ if we can control the expressions for the convection term and for the Leray projection term appropriately. However, this is easily done with the help of the infinite linear algebra results above. We stick to $s>n+2$ and $n\geq 2$. First for the convection term for each $1\leq i\leq n$ and all $\alpha\in {\mathbb Z}^n$ we consider
\begin{equation}\label{convec}
-\sum_{j=1}^n\sum_{\gamma \in {\mathbb Z}^n\setminus \left\lbrace \alpha\right\rbrace }\frac{2\pi i \gamma_j}{l}v^{r,m'-1}_{j(\alpha-\gamma)}v^{r,m'}_{i\gamma}.
\end{equation}
We observe ${\big |}\gamma_jv^{r,m'}_{i\gamma}{\big |}_{h^{s-1}\left({\mathbb Z}^n\right)}\leq C$
for some constant $C>0$ independent of $m$, hence
\begin{equation}
{\big |}\left( \sum_{\gamma \in {\mathbb Z}^n} 
v^{r,m'-1}_{j(\alpha-\gamma)}\gamma_jv^{r,m'}_{i\gamma}\right)
{\big |}_{\alpha\in {\mathbb Z}^n}\in h^{2s-1-n}\leq C
\end{equation}
for some $C>0$ independent of $m$ such that the limit is in $h^{2}\left({\mathbb Z}^n\right)$. Hence (\ref{convec}) and the limit for $m'\uparrow \infty$ is in $h^{2}\left({\mathbb Z}^n\right)$. Similarly, the Leray projection term
\begin{equation}
2\pi i\alpha_i1_{\left\lbrace \alpha\neq 0\right\rbrace}\frac{\sum_{j,k=1}^n\sum_{\gamma\in {\mathbb Z}^n}4\pi \gamma_j(\alpha_k-\gamma_k)v^{r,m'-1}_{j\gamma} v^{r,m'}_{k(\alpha-\gamma)}}{\sum_{i=1}^n4\pi\alpha_i^2}
\end{equation}
is bounded by a product of two infinite vectors which have some  uniform bound $C>0$ in  $h^{s-1}\left({\mathbb Z}^n\right)$, such that the Leray projection term is safely in $h^{2(s-1)-n}\left({\mathbb Z}^n\right)\subset h^{2}\left({\mathbb Z}^n\right)$ where we did not even take the $|\alpha|^2$ in the denominator corresponding to the Laplacian kernel into account. 
We have shown
\begin{lem}
For $s>n+ 2$ and $n\geq 2$, and for the same $\nu>0$ as above there is a $C>0$ such that for all $1\leq i\leq n$ and $m'\geq 0$ we have
\begin{equation}\label{est0vmF}
{\Big |}\frac{d}{dt}\mathbf{v}^{r,m',F}_{i}(t){\Big |}_{h^s_l}\leq C
\end{equation}
uniformly for $t>0$, and 
\begin{equation} 
\frac{d}{dt}\mathbf{v}^{r,F}_{i}(t)=\lim_{m'\uparrow \infty}\frac{d}{dt}\mathbf{v}^{r,m',F}_{i}(t)\in h^r\left({\mathbb Z}^n\right) \subset C\left({\mathbb Z}^n\right)\subset h^{2}\left({\mathbb Z}^n\right).
\end{equation}
\end{lem}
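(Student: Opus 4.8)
The plan is to read the time derivative off directly from the ODE system (\ref{navodermproof}) that $\mathbf{v}^{r,m',F}$ satisfies, and to bound each of its three right-hand side terms using the uniform a priori bounds for the solution sequence together with the infinite-matrix multiplication lemmas (Lemma on $M^s_n$ and the weakly singular elliptic integral estimate $\sum_{\beta}\frac{1}{|\alpha-\beta|^s|\beta|^r}\le \frac{C}{1+|\alpha|^{r+s-n}}$). Because the data $\mathbf{h}$ are smooth, the bound $\sup_{t\ge 0}|\mathbf{v}^{r,m,F}_i(t)|_{h^{s'}}\le C_{s'}$ holds for \emph{every} $s'\in{\mathbb R}$, uniformly in $m$: re-running the contraction argument (or the compactness argument) at level $s'$ in place of $s$ costs nothing since $\mathbf{h}^{r,F}_i\in h^{s'}\left({\mathbb Z}^{n,0}\right)$ for all $s'$. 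In particular I may use the bound at level $s+2$, which is what is needed to control $\frac{d}{dt}\mathbf{v}^{r,m',F}_i$ in $h^s$.

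First I would estimate the diagonal (Laplacian) contribution $\sum_{j=1}^n\nu\left(-\frac{4\pi\alpha_j^2}{l^2}\right)v^{r,m'}_{i\alpha}$: multiplication by $\langle\alpha\rangle^2$ maps $h^{s+2}$ into $h^s$, so this term is bounded in $h^s$ by a constant times $\sup_t|\mathbf{v}^{r,m',F}_i(t)|_{h^{s+2}}\le C$, uniformly in $m'$. Second, the convection term $-\sum_j\sum_\gamma\frac{2\pi i\gamma_j}{l}v^{r,m'-1}_{j(\alpha-\gamma)}v^{r,m'}_{i\gamma}$ is rewritten as an infinite matrix–vector product: the sequence $\left(\gamma_j v^{r,m'}_{i\gamma}\right)_\gamma$ lies in $h^{s+1}$ with a bound uniform in $m'$, and convolving it against $\left(v^{r,m'-1}_{j(\alpha-\gamma)}\right)$, which lies in $h^{s}$, yields by the elliptic integral estimate an $\alpha$-sequence in $h^{2s+1-n}\subset h^{s}$. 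Third, the Leray projection term $2\pi i\alpha_i 1_{\left\lbrace \alpha\neq 0\right\rbrace}\frac{\sum_{j,k}\sum_\gamma 4\pi\gamma_j(\alpha_k-\gamma_k)v^{r,m'-1}_{j\gamma}v^{r,m'}_{k(\alpha-\gamma)}}{\sum_i 4\pi\alpha_i^2}$ is handled the same way: the denominator cancels one power of $|\alpha|$, the numerator is a product of two sequences bounded in $h^{s+1}$ uniformly in $m'$, so the whole term lies in $h^{2(s+1)-n-1}\subset h^s$. Summing the three bounds gives (\ref{est0vmF}).

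For the convergence statement I would combine this uniform bound with the convergence $\mathbf{v}^{r,m',F}_i\to \mathbf{v}^{r,F}_i$ already established (in $h^r$ for every $r$, via Rellich or the contraction result). Viewing the right-hand side of (\ref{navodermproof}) as a map of the pair $\left(\mathbf{v}^{r,m'-1,F},\mathbf{v}^{r,m',F}\right)$, it is Lipschitz from $h^{s+2}\times h^{s+2}$ into $h^s$ on bounded sets — this is the same matrix-multiplication estimate applied to differences — so $\frac{d}{dt}\mathbf{v}^{r,m',F}_i(t)$ converges in $h^s$, locally uniformly in $t$, to the right-hand side evaluated at the limit $\mathbf{v}^{r,F}$. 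Since the functions converge and their derivatives converge uniformly on compact time intervals, the limit is $\frac{d}{dt}\mathbf{v}^{r,F}_i$; and from the estimates above it lies in $h^{2s+1-n}\cap h^s\subset C\left({\mathbb Z}^n\right)\subset h^2\left({\mathbb Z}^n\right)$ for each fixed $t$. Iterating the same argument at level $s+2k$ would in fact give $\frac{d}{dt}\mathbf{v}^{r,F}_i(t)\in h^r\left({\mathbb Z}^n\right)$ for every $r$, but $h^2$ suffices here.

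The point requiring care is the index bookkeeping: bounding $\frac{d}{dt}\mathbf{v}^{r,m',F}$ in $h^s$ forces the solution sequence to be bounded in $h^{s+2}$, which in turn requires that the a priori bounds and the convergence have been established at arbitrary Sobolev order; this is legitimate precisely because the datum is $C^{\infty}$ (polynomial decay of all orders), and I would make this dependence explicit rather than leave it implicit. A second, minor point is that the Lipschitz/continuity argument used to pass to the limit inside the nonlinear right-hand side genuinely loses two derivatives on the convection and Leray terms (because of the $\gamma$- and $\alpha$-factors), so the working space for that step should be fixed one notch below the space in which uniform bounds are available.
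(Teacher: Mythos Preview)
Your proposal is correct and follows essentially the same route as the paper: read $\frac{d}{dt}\mathbf{v}^{r,m',F}_i$ off the right-hand side of (\ref{navodermproof}), bound the Laplacian, convection, and Leray terms separately using the uniform $h^{s'}$ bounds on the sequence together with the weakly-singular-integral matrix lemma, and pass to the limit by continuity of the right-hand side.

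The only noteworthy difference is bookkeeping. The paper works at a fixed level $s>n+2$, records that the Laplacian term lands in $h^{s-2}$ and the bilinear terms in $h^{2s-1-n}$ resp.\ $h^{2(s-1)-n}$, and then invokes the arbitrariness of $s$ (smooth data) a posteriori to conclude the derivative is as regular as one likes. You instead invoke the $h^{s+2}$ bound up front so that every term lands back in $h^s$ directly. Your version is slightly cleaner for matching the literal $h^s$ claim in (\ref{est0vmF}); the paper's version makes the dependence on ``$s$ arbitrary'' more visible. Substantively the arguments are the same.
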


We conclude
\begin{thm}
The function
\begin{equation}
\mathbf{v}^{r,F}_{i}(t)=\lim_{m'\uparrow \infty}\mathbf{v}^{r,m',F}_{i}(t),~1\leq i\leq n
\end{equation}
satisfies the infinite nonlinear ODE equivalent to the controlled incompressible Navier-Stokes equation on the $n$-torus in a classical sense. Moreover, since the argument above can be repeated with arbitrary large $s>0$ we have that for all $1\leq i\leq n$ the infinite vector $\mathbf{v}^{r,F}_{i}(t)$ and its time derivative are in $h^s\left({\mathbb Z}^n\right)$. Higher order time derivatives also exist in a classical sense by an analogous argument for derivatives of the Navier-Stokes equation.
\end{thm}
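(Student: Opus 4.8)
The plan is to pass to the limit $m'\uparrow\infty$ in the approximating system \eqref{navodermproof} along the subsequence already extracted, using the uniform bounds established in the preceding lemmas to control each term on the right-hand side and, crucially, to legitimize the interchange of the limit with time differentiation. Rather than differentiating the limit directly I would route the argument through the integrated form of \eqref{navodermproof}: for each $1\leq i\leq n$ and $\alpha\in{\mathbb Z}^n$,
\begin{equation}\label{intform}
v^{r,m'}_{i\alpha}(t)=h^{r,F}_{i\alpha}+\int_0^t\left(\sum_{j=1}^n\nu\left(-\frac{4\pi\alpha_j^2}{l^2}\right)v^{r,m'}_{i\alpha}(s)+N^{m'}_{i\alpha}(s)\right)ds,
\end{equation}
where $N^{m'}_{i\alpha}$ collects the convection and Leray projection terms of \eqref{navodermproof}.

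First I would observe that, by the uniform bounds of the two lemmas immediately above and by the infinite matrix multiplication lemmas of this section, the full integrand in \eqref{intform} is bounded in $h^s({\mathbb Z}^n)$ uniformly in $m'$ and in $s\in[0,t]$, and converges, as $m'\uparrow\infty$, in $h^2({\mathbb Z}^n)\subset C({\mathbb Z}^n)$ to the analogous expression built from $\mathbf{v}^{r,F}$: the dissipative diagonal term lands in $h^{s-2}$, the convection term in $h^{2s-1-n}$, and the Leray term in $h^{2(s-1)-n}$, all continuously in the factors. For the bilinear terms one uses that both $\mathbf{v}^{r,m'-1,F}$ and $\mathbf{v}^{r,m',F}$ tend to $\mathbf{v}^{r,F}$ — harmlessly so in the contraction setting, where the \emph{full} sequence converges, and via a further diagonal subsequence in the compactness setting. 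Dominated convergence then turns \eqref{intform} into the integral identity for $\mathbf{v}^{r,F}$ with right-hand side evaluated at $\mathbf{v}^{r,F}$; since $t\mapsto\mathbf{v}^{r,F}_i(t)$ has a uniformly bounded time derivative by the preceding lemma (hence is continuous with values in every $h^r$) and $\mathbf{v}^{r,F}_i(t)\in h^s$ for all $s$, the integrand is continuous in $s$, so the fundamental theorem of calculus yields that $\mathbf{v}^{r,F}_i$ is $C^1$ in time and satisfies \eqref{navode2*r} (together with \eqref{zeromcon} for the zero mode), i.e. the infinite ODE system equivalent to the controlled incompressible Navier--Stokes equation, classically. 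The consistency of the control function in the limit is guaranteed by Lemma \ref{control}, which supplies a finite, exponentially bounded $r_0$.

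Because the exponent $s>n+2$ in every estimate above is arbitrary, rerunning the argument with an arbitrarily large Sobolev index gives $\mathbf{v}^{r,F}_i(t)\in h^s({\mathbb Z}^n)$ and $\frac{d}{dt}\mathbf{v}^{r,F}_i(t)\in h^s({\mathbb Z}^n)$ for every $s\in{\mathbb R}$ and all $1\leq i\leq n$. For higher order time derivatives I would differentiate the ODE system once more in $t$: the right-hand side is the fixed diagonal dissipative operator applied to $\mathbf{v}^{r,F}_i$ plus bilinear weakly singular expressions in $\mathbf{v}^{r,F}$; differentiating reproduces the dissipative operator applied to $\frac{d}{dt}\mathbf{v}^{r,F}_i$ plus bilinear terms each factor of which is $\mathbf{v}^{r,F}$ or $\frac{d}{dt}\mathbf{v}^{r,F}$, both already in every $h^s$. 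The matrix multiplication lemmas then place the right-hand side in every $h^s$, so $\frac{d^2}{dt^2}\mathbf{v}^{r,F}_i$ exists classically, and an obvious induction handles all higher orders.

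The main obstacle is exactly the exchange of $\lim_{m'}$ with $\frac{d}{dt}$, which is why I go through the integrated form and dominated convergence rather than differentiating the limit; the only genuinely delicate point there is that the bilinear terms carry the index $m'-1$ as well as $m'$, handled by full-sequence convergence in the contraction case and by a diagonal subsequence in the compactness case, together with the uniform-in-$m'$ bound that makes dominated convergence applicable. Everything else is routine bookkeeping with the weakly singular sum estimates already proved.
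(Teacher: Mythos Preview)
Your argument is correct and tracks the paper's reasoning closely: both rely on the uniform $h^s$ bounds and the term-by-term estimates (dissipative part in $h^{s-2}$, convection in $h^{2s-1-n}$, Leray in $h^{2(s-1)-n}$) established just before the theorem, together with arbitrariness of $s$ for the higher regularity and an inductive differentiation of the equation for higher time derivatives.

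The one genuine methodological difference is how the interchange of $\lim_{m'}$ with $\frac{d}{dt}$ is handled. The paper does not isolate a separate proof for this theorem; it simply records it as a conclusion of the preceding lemma, which already asserts $\frac{d}{dt}\mathbf{v}^{r,F}_i(t)=\lim_{m'}\frac{d}{dt}\mathbf{v}^{r,m',F}_i(t)$ on the strength of the uniform bound on the time derivatives and a further compactness/contraction step. You instead route through the integrated form \eqref{intform} and invoke dominated convergence plus the fundamental theorem of calculus, which makes the passage to the limit fully explicit and sidesteps any appeal to convergence of derivatives as a black box. Your treatment of the mixed indices $m'-1,m'$ in the bilinear terms (full-sequence convergence under contraction, diagonal subsequence under compactness) is also more carefully spelled out than in the paper. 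Both routes are valid; yours is the more self-contained one, while the paper's is terser because it leans on the immediately preceding lemma.
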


Finally we have
\begin{thm}\label{lemma0}
Let $h_i\in C^{\infty}\left({\mathbb T}^n\right)$.
For each $\nu>0$ and $l>0$ and for all $1\leq i\leq n$ and all $t\geq 0$
\begin{equation}
v_i(t,.)\in C^{\infty}\left( {\mathbb T}^n_l \right) . 
\end{equation}
and
\begin{equation}
\mathbf{v}^{F}_i(t)\in h^s\left({\mathbb T}^n_l\right) . 
\end{equation}
for arbitrary $s\in {\mathbb R}$.
\end{thm}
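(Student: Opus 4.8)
The plan is to bootstrap the construction of $\mathbf{v}^{r,F}$ carried out above in the Sobolev index $s$, and then to transfer the resulting polynomial decay from the controlled solution to the genuine one by means of the control function, finishing with a Sobolev embedding. First I would record the elementary translation of smoothness into decay: since each $h_i\in C^{\infty}\left({\mathbb T}^n_l\right)$, the identity (\ref{alphamodes}) shows that the modes $h_{i\alpha}$ decay faster than any fixed power of $|\alpha|$, so that $\mathbf{h}^F_i\in h^s\left({\mathbb Z}^n\right)$ for \emph{every} $s\in{\mathbb R}$. In particular the standing hypothesis $\mathbf{h}^F_i\in h^s$ with $s>n+2$ used in the contraction and compactness lemmas above is satisfied for arbitrarily large $s$, which is exactly what a bootstrap needs.

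Next I would observe that everything established above for the fixed exponent $s>n+2$ goes through verbatim with $s$ replaced by any $s'>n+2$. The infinite matrix lemmas — multiplication maps $M^{s'}_n\times M^{s'}_n$ into $M^{2s'-n}_n$, exponentials stay in $M^{s'}_n$, and $\exp(D)\mathbf{h}^F\in h^{s'}$ whenever $\mathbf{h}^F\in h^{s'}$ — rest only on the weakly singular elliptic integral estimate
\[
\sum_{\beta\in{\mathbb Z}^n\setminus\{0,\alpha\}}\frac{1}{|\alpha-\beta|^{s'}\,|\beta|^{s'}}\;\leq\;\frac{C}{1+|\alpha|^{2s'-n}}\;\leq\;\frac{C}{1+|\alpha|^{s'}},
\]
valid for $s'\geq n$, and all of these hold for each such $s'$. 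Hence the Euler/Trotter subscheme, the uniform bounds (\ref{polynomialgrowth}), and the Cauchy estimate for $\left(\mathbf{v}^{r,m,F}\right)_m$ all remain valid with $|\cdot|_{h^{s'}}$ in place of $|\cdot|_{h^{s}}$; the only change is that the generic constants $C,L$, the ball radius, and the exponential time weight now depend on $s'$ and on $|\mathbf{h}^{r,F}|$ measured in $h^{s'}$. Consequently, for each $s'>n+2$ the limit $\mathbf{v}^{r,F}_i(t)=\lim_{m\uparrow\infty}\mathbf{v}^{r,m,F}_i(t)$ exists in $h^{s'}\left({\mathbb Z}^n\right)$ for all $t\geq 0$, and since $h^{s'}\subset h^{s}$ for $s'\geq s$ this yields $\mathbf{v}^{r,F}_i(t)\in h^{s}\left({\mathbb Z}^n\right)$ for all $s\in{\mathbb R}$, all $t\geq 0$, all $1\leq i\leq n$.

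It then remains to pass to the genuine velocity. By construction the control function acts only on the zero modes, $v_{i\alpha}(t)=v^{r}_{i\alpha}(t)-\delta_{0\alpha}r_i(t)$, and Lemma \ref{control} shows that $t\mapsto r_0(t)$ is finite on $[0,\infty)$; subtracting a vector supported on the single mode $\alpha=0$ changes no $h^{s}$-norm by more than a finite additive amount, so $\mathbf{v}^{F}_i(t)\in h^{s}\left({\mathbb Z}^n\right)$ for every $s\in{\mathbb R}$, $t\geq 0$, $1\leq i\leq n$. Finally the equivalence $\mathbf{u}^F\in h^{s}\left({\mathbb Z}^n\right)\Leftrightarrow u\in H^{s}\left({\mathbb T}^n_l\right)$, together with $\bigcap_{s\in{\mathbb R}}H^{s}\left({\mathbb T}^n_l\right)=C^{\infty}\left({\mathbb T}^n_l\right)$ (Sobolev embedding on a compact manifold), gives $v_i(t,\cdot)\in C^{\infty}\left({\mathbb T}^n_l\right)$ for all $t\geq 0$, which is the assertion.

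The one point that needs genuine care is the \emph{uniformity in the iteration index $m$ of the $h^{s'}$-bounds for unboundedly large $s'$}: one must check that the linear multiindex factor in the convection term and the quadratic multiindex factor in the Leray projection term are absorbed by the decay of the data at \emph{every} order $s'$, so that after summation over $\alpha$ the matrix--vector operations retain at least order $s'$. This is precisely the two-derivative margin already built into the hypothesis $s>n+2$ (rather than $s>n$), and the same margin propagates unchanged through the bootstrap. Once this is verified, the only structural inputs are $\nu>0$ — needed for the global-in-time validity of the Trotter representation and of the limit $m\uparrow\infty$ — and the already-established well-definedness of $r_0$; no estimate beyond those proved above is required.
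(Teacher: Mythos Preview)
Your proposal is correct and follows essentially the same route as the paper's own proof. The paper has already recorded (in the theorem immediately preceding this one) that the whole construction can be repeated with arbitrarily large $s>0$, so that $\mathbf{v}^{r,F}_i(t)\in h^s$ for all $s$; its proof of the present statement then simply notes $v^F_i(t)=v^{r,F}_i(t)-r(t)$ with $r(t)$ a constant (affecting only the zero mode), and concludes $v_i(t,\cdot)\in H^s$ for all $s$ via the Fourier--Sobolev correspondence, hence $C^\infty$. Your argument spells out the bootstrap in $s'$ more explicitly and invokes Lemma~\ref{control} for the finiteness of the control function, but the logical skeleton --- bootstrap in the Sobolev index, remove the zero-mode control, embed --- is identical.
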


\begin{proof}
We have $v^F_i(t)=v^{r,F}_i(t)-r(t)\in h^s\left({\mathbb Z}^n\right)$ for all $s\in {\mathbb R}$, because $v^{r,F}_i(t)\in h^s\left({\mathbb Z}^n\right)$ for all $s\in {\mathbb R}$, and $r(t)$ is a constant.
The second part follows from Corollary above. Given $s>0$ we can differentiate
\begin{equation}
v_i(t,.)=\sum_{\gamma\in {\mathbb Z}^n}v_{i\gamma}\exp\left( \frac{2\pi i\gamma x}{l}\right) ),
\end{equation}
up to order $m$, where $m$ is the largest integer less than $s$, and get a Fourier series which converges in $L^2\left({\mathbb T}^n_l\right)$.
Hence,
\begin{equation}
v_i(t,.) \in H^{s}\left( {\mathbb T}^n_l \right),
\end{equation}
for all $t\geq 0$. Since, this is true for all for all $s>0$ we the first statement of this lemma is true. 
\end{proof}

%
%
%

Now for a fixed $l>0$ and any $\mathbf{h}^F_1\in h^s\left({\mathbb Z}^n\right)$ for  $s\geq pn+2$ we have
\begin{equation}
\frac{d^p}{dt^p}\mathbf{v}^{m,F}_i(t)\in h^{s-pn}_{l}\left({\mathbb Z}^n\right)
\end{equation}
for all $m\geq 0$ from the uniform bound
\begin{equation}
|\mathbf{v}^{m,F}_i(t)|_{h^s_l}\leq C.
\end{equation}

Next let us sharpen the results. The arguments above can be improved in two directions. First we can derive upper bounds by considering time dilatation transformation which leads to damping terms which serve as an auto-control of the system. This leads to global upper bounds in time. The second improvement is that we can solve the Euler part in the equation at each time step locally by a Dyson formalism, and this leads to the extension of the Trotter product formula on a local time level. This approach also leads naturally to higher order schemes in time as is discussed in the next section.
 
First we consider the auto-control mechanism. Instead of considering a fixed time horizon $T>0$ we consider a time discretization $t_i,i\geq 1$ of the interval $\left[0,\infty \right)$, where we may consider $t_i=i\in {\mathbb N}$ for all $i\geq 1$ and $t_{0}:=0$.
 
We sketched this idea from the equation in (\ref{timedil}) on in the introduction. In order to get uniform global upper bounds in time we shall consider a variation of this idea. Assume that 
\begin{equation}
\mathbf{v}^{F}_i(l-1)\in h^s\left({\mathbb Z}^n\right) , 1\leq i\leq n,~s>n+4
\end{equation}
has been computed (or is given) for $l\geq 1$. We use the regularity order $s>n+3$ because our upper bounds for matrix multiplication and the contraction result with exponentially weighted norms show that
\begin{equation}
\mathbf{v}^{F}_i(t)\in h^s\left({\mathbb Z}^n\right) , 1\leq i\leq n,~s>n+4
\end{equation}
for $t\in [l-1,l]$, or more precisely, that we have
\begin{equation}
|v_{i\alpha}(t)|\leq \frac{C\exp(Ct)}{1+|\alpha|^{n+6}}
\end{equation}
for some generic constant $C>0$ if
\begin{equation}
|v_{i\alpha}(0)|\leq \frac{C}{1+|\alpha|^{n+4}}.
\end{equation}
This looks to be a stronger assumption than necessary, but we intend to simplify the growth estimate and therefore we need the stronger assumption. 
\begin{rem}
For a more sophisticated growth estimate the assumption $s>n+2$ would be enough. However, if we use the infinite ODE directly in order to estimate the growth then we need $s>n+4$ as the matrix rules for the Leray projection term imply that we get regularity $s>2r-n-2$ if we start with regularity of order $r>n+2$. Hence if we start with regularity of order $r=n+3$, then we end up with regularity of order  $2n+6-n-2=n+4$. Now, if we use the infinite mode equation directly, then we end up with regularity of order $n+2$ (or slightly above) as we loose two orders of regularity by a brute force of the Laplacian. If we do a refined estimate with the fundamental matrix, then we can weaken this assumption, of course. 
\end{rem} 
 Then consider the transformation of the local time interval $[l-1,l)$ with time coordinate $t$ to the infinite time interval $\left[0,\infty\right)$ with time coordinated $\tau$, where  
\begin{equation}\label{timedil}
(\tau (t) ,x)=\left(\frac{t-(l-1)}{\sqrt{1-(t-(l-1))^2}},x\right),
\end{equation}
which is a time dilatation effectively and leaves the spatial coordinates untouched.
Then on a time local level, i.e., for $t\in [l-1,l)$ the function $u_i,~1\leq i\leq n$ with 
\begin{equation}\label{uvlin}
\rho(1+(t-(l-1)))u_i(\tau,x)=v_i(t-(l-1) ,x),
\end{equation}
carries all information of the velocity function on this interval.
In the following we think of time $t$ in the form
\begin{equation}
t=t^{l-1,-}:=t-(l-1),
\end{equation}
and then suppress the upper script for simplicity of notation.
The following equations are formally identical with the equations in the introduction but note that we have $t=t^{l-1,-}=t-(l-1)$. Having this in mind we note
\begin{equation}\label{eq1}
\frac{\partial}{\partial t}v_i(t,x)=\rho u_i(\tau,x)+\rho(1+t)\frac{\partial}{\partial \tau}u_i(\tau,x)\frac{d \tau}{d t},
\end{equation}
where 
\begin{equation}\label{eq2}
 \frac{d\tau}{dt}=\frac{1}{\sqrt{1-t^2}^3}.
\end{equation}
Note that the factor $0<\rho <1$ appears quadratically in the nonlinear and only once in the linear term such that these linear terms become smaller compared to the damping term (they get an additional factor $\rho$). 
We denote the inverse of $\tau(t)$ by $t(\tau)$. For the modes of $u_i,~1\leq i\leq n$ we get the equation   
\begin{equation}\label{navode200firsttimedil*}
\begin{array}{ll}
\frac{d u_{i\alpha}}{d\tau}=\sqrt{1-t(\tau)^2}^3
\sum_{j=1}^n\nu \left( -\frac{4\pi \alpha_j^2}{l^2}\right)u_{i\alpha}-\\
\\
\rho(1+t(\tau))\sqrt{1-t(\tau)^2}^3\sum_{j=1}^n\sum_{\gamma \in {\mathbb Z}^n}\frac{2\pi i \gamma_j}{l}u_{j(\alpha-\gamma)}u_{i\gamma}+\\
\\
\rho(1+t(\tau))\sqrt{1-t(\tau)^2}^3\frac{2\pi i\alpha_i1_{\left\lbrace \alpha\neq 0\right\rbrace}\sum_{j,k=1}^n\sum_{\gamma\in {\mathbb Z}^n}4\pi \gamma_j(\alpha_k-\gamma_k)u_{j\gamma}u_{k(\alpha-\gamma)}}{\sum_{i=1}^n4\pi\alpha_i^2}\\
\\
-\sqrt{1-t^2(\tau)}^3(1+t(\tau))^{-1}u_{i\alpha},~t\mbox{ short for }~t^{l-1,-}=t-(l-1).
\end{array} 
\end{equation}
We may consider this equation on the domain $\left[ l-1,l-\frac{1}{2}\right]$ in $t$-coordinates corresponding to a finite time horizon $T$ which is not large, actually, i.e., it is of the size $T=\frac{0.5}{\sqrt{0.75}}=\frac{1}{\sqrt{3}}$ which is only slightly larger than $0.5$. Note that we may use any $t^{l-1,-}\in (0,1)$ giving rise to any finite $T>0$ in the following argument, but it is convenient that $T$ can be chosen so small and we still have only two steps to go to get to the next time step $l$ in original time coordinates $t$!. This makes the auto-control attractive also from an algorithmic perspective. We are interested in global upper bounds for the modes at integer times $t=l$, i.e., we observe the growth of the modes from $v_{i\alpha}(l-1)$ to $v_{i\alpha}(l)$ corresponding to the transition of modes from $vu_{i\alpha}(0)$ to $u_{i\alpha}(T)$ for all $1\leq i\leq n$ and all $\alpha\in {\mathbb Z}^n$. Note that we have
\begin{equation}
t=t^{l-1,-}\in \left[ l-1,l-1+\frac{1}{2}\right]~\mbox{ corresponds to }~\tau\in \left[0,T\right]=\left[0,\frac{1}{\sqrt{3}}\right]
\end{equation}
at each time step where we consider (\ref{navode200firsttimedil*}) on the latter interval. We integrate the equation in (\ref{navode200firsttimedil*}) from $0$ to $T=\frac{1}{\sqrt{3}}$ and have for each mode $\alpha$
\begin{equation}\label{intualg}
\begin{array}{ll}
u_{i\alpha}(T)=u_{i\alpha}(0)+\int_0^{T}{\Bigg (}\sqrt{1-t(\sigma)^2}^3
\sum_{j=1}^n\nu \left( -\frac{4\pi \alpha_j^2}{l^2}\right)u_{i\alpha}(\sigma)d\sigma-\\
\\
\rho(1+t(\sigma))\sqrt{1-t(\sigma)^2}^3\sum_{j=1}^n\sum_{\gamma \in {\mathbb Z}^n}\frac{2\pi i \gamma_j}{l}u_{j(\alpha-\gamma)}(\sigma)u_{i\gamma}(\sigma)+\\
\\
\rho(1+t(\sigma))\sqrt{1-t(\sigma)^2}^3\frac{2\pi i\alpha_i1_{\left\lbrace \alpha\neq 0\right\rbrace}\sum_{j,k=1}^n\sum_{\gamma\in {\mathbb Z}^n}4\pi \gamma_j(\alpha_k-\gamma_k)u_{j\gamma}(\sigma)u_{k(\alpha-\gamma)}(\sigma)}{\sum_{i=1}^n4\pi\alpha_i^2}\\
\\
-\sqrt{1-t^2(\sigma)}^3(1+t(\sigma))^{-1}u_{i\alpha}(\sigma){\Bigg)}d\sigma,~t\mbox{ short for }~t^{l-1,-}=t-(l-1).
\end{array} 
\end{equation}  
The relation in (\ref{uvlin}) translates into a relation of modes
\begin{equation}\label{uvlin*}
\rho(1+(t-(l-1)))u_{i\alpha}=v_{i\alpha}(t-(l-1)),
\end{equation}
such that the assumption 
\begin{equation}
|v_{i\alpha}(l-1)|\leq \frac{C}{1+|\alpha|^{n+4}}
\end{equation}
implies that with $C^{\rho}:=\frac{C^\rho}{\sqrt{3}}$ we have
\begin{equation}
|u_{i\alpha}(0)|\leq \frac{C^{\rho}}{1+|\alpha|^{n+4}}.
\end{equation}
Now, by the contraction argument above, for generic $C$ independent of local time $t\in [l-1-l]$ we have for generic time-independent constants $C,C^{\rho}>0$
\begin{equation}
|v_{i\alpha}(t)|\leq \frac{C\exp(Ct)}{1+|\alpha|^{n+6}}
\end{equation}
implies that with $C^{\rho}:=\frac{C^\rho}{\sqrt{3}}$ we have for $\tau\in [0,T]=\left[0,\frac{1}{\sqrt{3}} \right] $
\begin{equation}
|u_{i\alpha}(\tau)|\leq \frac{C^{\rho}\exp(C^{\rho}\tau)}{1+|\alpha|^{n+6}}.
\end{equation} 
Next we look at each term on the right side of (\ref{intualg}). The gain of two orders of regularity is useful if we want to apply simplified estimates. For the Laplacian term on the right side of (\ref{intualg}) we have
\begin{equation}
\begin{array}{ll}
{\Big |}\int_0^{T}{\Bigg (}\sqrt{1-t(\sigma)^2}^3
\sum_{j=1}^n\nu \left( -\frac{4\pi \alpha_j^2}{l^2}\right)u_{i\alpha}(\sigma)d\sigma{\Big |}\\
\\
\leq T\nu \sup_{\sigma\in[0,T]}{\Big|}\frac{4\pi |\alpha |^2}{l^2}u_{i\alpha}(\sigma){\Big |}\leq T\nu {\Big|}\frac{4\pi }{l^2}\frac{C^{\rho}\exp(C^{\rho}T)}{1+|\alpha|^{n+4}}{\Big |}.
\end{array}
\end{equation}
 Note that this is the only term where we need the stronger assumption of regularity order $s>n+4$. This indicates that estimates with the fundamental matrix show that the weaker assumption of regularity order $s>n+2$ is sufficient if we refine our estimates.
Next consider the convection term.
 \begin{equation}\label{intualglap}
\begin{array}{ll}
{\Big |}\int_{0}^T\rho(1+t(\sigma))\sqrt{1-t(\sigma)^2}^3\sum_{j=1}^n\sum_{\gamma \in {\mathbb Z}^n}\frac{2\pi i \gamma_j}{l}u_{j(\alpha-\gamma)}(\sigma)u_{i\gamma}(\sigma)d\sigma{\Big |}\\
\\
\leq T\rho\frac{3}{2}\sup_{\sigma\in [0,T]}{\Big |}\sum_{j=1}^n\sum_{\gamma \in {\mathbb Z}^n}\frac{2\pi i \gamma_j}{l}u_{j(\alpha-\gamma)}(\sigma)u_{i\gamma}(\sigma){\Big |}\\
\\
\leq T\rho\frac{3}{2}n\frac{2\pi}{l}{\Big |}\frac{\left( C^{\rho}\right)^2\exp(2C^{\rho}T}{1+|\alpha|^{n+4}}{\Big |},
\end{array} 
\end{equation}  
by the contraction argument. We shall choose $\rho>0$ such that a lower bound of the damping term dominates the latter upper bound plus the upper bound of the Leray projection term we are going to estimate next. However there the situation is a little involved here as we need a {\it lower} bound of the damping term. Hence we postpone the choice of $\rho$ and consider the Lery projection term. We have the upper bound
\begin{equation}\label{intualgleray}
\begin{array}{ll}
{\Big |}\rho(1+t(\sigma))\sqrt{1-t(\sigma)^2}^3\frac{2\pi i\alpha_i1_{\left\lbrace \alpha\neq 0\right\rbrace}\sum_{j,k=1}^n\sum_{\gamma\in {\mathbb Z}^n}4\pi \gamma_j(\alpha_k-\gamma_k)u_{j\gamma}(\sigma)u_{k(\alpha-\gamma)}(\sigma)}{\sum_{i=1}^n4\pi\alpha_i^2}{\Big |}\\
\\
\leq {\Big |}\frac{3}{2}\rho2\pi \sum_{j,k=1}^n\sum_{\gamma\in {\mathbb Z}^n}4\pi \gamma_j(\alpha_k-\gamma_k)u_{j\gamma}(\sigma)u_{k(\alpha-\gamma)}(\sigma){\Big |}\\
\\
\leq \frac{3}{2}\rho8\pi^2 n^2{\Big |}\frac{\left( C^{\rho}\right)^2\exp(2C^{\rho}T)}{1+|\alpha|^{n+4}}{\Big |}
\end{array} 
\end{equation}  
for the integrand, hence the upper bound
\begin{equation}
T\frac{3}{2}\rho8\pi^2 n^2{\Big |}\frac{\left( C^{\rho}\right)^2\exp(2C^{\rho}T)}{1+|\alpha|^{n+4}}{\Big |}
\end{equation}
for the integral of the intergrand from $0$ to $T$.

Next we need a lower bound of the damping term. We have
\begin{equation}\label{intualgdamp}
\begin{array}{ll}
{\Big |}\int_0^{T}{\Bigg (}
\sqrt{1-t^2(\tau)}^3(1+t(\tau))^{-1}u_{i\alpha}(\sigma){\Bigg)}d\sigma{\big |}\geq 
{\big |}{\big (}
\frac{1}{2}\int_0^Tu_{i\alpha}(\sigma){\big)}d\sigma{\big |}.
\end{array} 
\end{equation}  
anyway.  According to these estimates we get from
(\ref{intualg})
\begin{equation}\label{intualgest}
\begin{array}{ll}
|u_{i\alpha}(T)|\leq |u_{i\alpha}(0)|+T\nu {\Big|}\frac{4\pi }{l^2}\frac{C^{\rho}\exp(C^{\rho}T)}{1+|\alpha|^{n+4}}{\Big |}
+T\rho\frac{3}{2}n\frac{2\pi}{l}{\Big |}\frac{\left( C^{\rho}\right)^2\exp(2C^{\rho}T)}{1+|\alpha|^{n+4}}{\Big |}+\\
\\
T\frac{3}{2}\rho 8\pi^2 n^2{\Big |}\frac{\left( C^{\rho}\right)^2\exp(2C^{\rho}T)}{1+|\alpha|^{n+4}}{\Big |}
-
\frac{1}{2}\int_0^Tu_{i\alpha}(\sigma){\big)}d\sigma,
\end{array} 
\end{equation}
where we can use a lower bound of the modes in the damping term because of the minus sign. Note that start with an upper bound
\begin{equation}
|u_{i\alpha}(0)|\leq \frac{C^{\rho}}{1+|\alpha|^{n+4}}.
\end{equation}
for all $1\leq i\leq n$ and all modes $\alpha$, and it is sufficient to preserve this bound, i.e, to get 
\begin{equation}
|u_{i\alpha}(T)|\leq \frac{C^{\rho}}{1+|\alpha|^{n+4}}.
\end{equation}
Now for time $t$ consider all modes $\alpha \in M^t_{\leq 1/2}$ with 
\begin{equation}
 M^t_{\leq 1/2}:=\left\lbrace \alpha {\Big |}u_{i\alpha}(t)|\leq\frac{\frac{1}{2}C^{\rho}}{1+|\alpha|^{n+4}}\right\rbrace .
\end{equation}
and all modes $\alpha \in M^t_{> 1/2}$ with 
\begin{equation}
M^t_{>1/2}:=\left\lbrace \alpha |u_{i\alpha}(t)|> \frac{\frac{1}{2}C^{\rho}}{1+|\alpha|^{n+4}}\right\rbrace .
\end{equation}
Now the estimates in (\ref{intualgest}) hold for a time discretization $0=T_0<T_1<T_2< \cdots T_m=T$, i.e., we have for $1\leq l\leq m$
\begin{equation}\label{intualgesttimedis}
\begin{array}{ll}
|u_{i\alpha}(T_{l+1})|\leq |u_{i\alpha}(T_l)|+(T_{l+1}-T_l)\nu {\Big|}\frac{4\pi }{l^2}\frac{C^{\rho}\exp(C^{\rho}(T_{l+1}-T_{l}))}{1+|\alpha|^{n+4}}{\Big |}\\
\\
+(T_{l+1}-T_l)\rho\frac{3}{2}n\frac{2\pi}{l}{\Big |}\frac{\left( C^{\rho}\right)^2\exp(2C^{\rho}(T_{l+1}-T_l))}{1+|\alpha|^{n+4}}{\Big |}\\
\\
+(T_{l+1}-T_l)\frac{3}{2}\rho 8\pi^2 n^2{\Big |}\frac{\left( C^{\rho}\right)^2\exp(2C^{\rho}(T_{l+1}-T_l))}{1+|\alpha|^{n+4}}{\Big |}
-
\frac{1}{2}\int_{T_{l}}^{T_{l+1}}u_{i\alpha}(\sigma){\big)}d\sigma.
\end{array} 
\end{equation}
We may use a time scale $T_{l+1}-T_l$ which is fine enough such that
\begin{equation}
\exp(C^{\rho}(T_{l+1}-T_{l}))\leq 2,
\end{equation}
and have
\begin{equation}\label{intualgesttimed2}
\begin{array}{ll}
|u_{i\alpha}(T_{l+1})|\leq |u_{i\alpha}(T_l)|+(T_{l+1}-T_l)\nu {\Big|}\frac{8\pi }{l^2}\frac{C^{\rho}}{1+|\alpha|^{n+4}}{\Big |}\\
\\
+(T_{l+1}-T_l)\rho 6n\frac{2\pi}{l}{\Big |}\frac{\left( C^{\rho}\right)^2}{1+|\alpha|^{n+4}}{\Big |}+(T_{l+1}-T_l)6\rho 8\pi^2 n^2{\Big |}\frac{\left( C^{\rho}\right)^2}{1+|\alpha|^{n+4}}{\Big |}\\
\\
-
\frac{1}{2}\int_{T_{l}}^{T_{l+1}}u_{i\alpha}(\sigma){\big)}d\sigma.
\end{array} 
\end{equation}
Note that on a small time scale the contraction argument implies that the modes do not change much depending on the time step size. Especially, as we have Lipschitz continuity for the modes in local time we have
\begin{equation}
{\big |}u_{i\alpha}(T_{l+1})-u_{i\alpha}(T_{l}){\big |}\leq L(T_{l+1}-T_l)
\end{equation}
for some Lipschitz constant $L>0$ where this Lipschitz constant can be preserve in the scheme along with the upper bound. For $\alpha \in M^{T_{l-1}}_{> 1/2}$, 
\begin{equation}
\rho \leq \frac{1}{C^{\rho}\left( 24n\frac{2\pi}{l}+96\rho\pi^2 n^2\right) },\mbox{ and }, 
T_{l+1}-T_l \leq \frac{1}{8L},~\nu\frac{8\pi }{l^2}\leq \frac{1}{8}
\end{equation}
(we comment on the latter restriction below), we have (assuming $C^{\rho}\geq 2$ w.l.o.g)
\begin{equation}\label{intualgesttimed2}
\begin{array}{ll}
|u_{i\alpha}(T_{l+1})|\leq |u_{i\alpha}(T_l)|+(T_{l+1}-T_l)\nu {\Big|}\frac{8\pi }{l^2}\frac{C^{\rho}}{1+|\alpha|^{n+4}}{\Big |}\\
\\
+(T_{l+1}-T_l)\rho 6n\frac{2\pi}{l}{\Big |}\frac{\left( C^{\rho}\right)^2}{1+|\alpha|^{n+4}}{\Big |}+(T_{l+1}-T_l)6\rho 8\pi^2 n^2{\Big |}\frac{\left( C^{\rho}\right)^2}{1+|\alpha|^{n+4}}{\Big |}\\
\\
-
\frac{1}{2}{\big (}(T_{l+1}-T_l)u_{i\alpha}(T_l)-L(T_{l+1}-T_l)^2{\big)}\leq \frac{C^{\rho}}{1+|\alpha|^{n+4}}
\end{array} 
\end{equation}
where we use
\begin{equation}
 \frac{\frac{1}{2}C^{\rho}}{1+|\alpha|^{n+4}}\leq u_{i\alpha}(T_l)\leq  \frac{C^{\rho}}{1+|\alpha|^{n+4}}
\end{equation}
A similar estimate with an upper bound for the damping term of the form
\begin{equation}
{\big (}(T_{l+1}-T_l)u_{i\alpha}(T_l)+L(T_{l+1}-T_l)^2{\big)}
\end{equation}
holds for the modes with $\alpha \in M^{T_{l}}_{\leq 1/2}$.
We mentioned the restriction $\nu\frac{8\pi }{l^2}\leq \frac{1}{8}$ which is a restriction due to the Laplacian and seems to be a restriction on the size (lower bound) or on the the viscosity $\nu$ upper bound (high Reynold numbers allowed). Well this is an artificial restriction as it comes from the linear term. As we have remarked, we could have avoided them by representations of the the modes $u_{i\alpha}(T_l)$ in terms of fundamental matrices or even in terms of fundamental matrices for the Laplacian (dual to the simple heat kernel) at the price of a slightly more involved equation. However we could even work directly with the equation for the modes as we have done and still avoid this restriction due to the Laplacian: if we start with the velocity modes $\mathbf{v}^{F}_i$, and then use 
\begin{equation}
\mathbf{v}^{\kappa,F}_i(t^{\kappa})=\mathbf{v}^F(t)
\end{equation}
with $\kappa t^{\kappa}=t$, then the 'spatial terms' of the related equation (analogous time transformation) for $\mathbf{u}^{\kappa,F}_i(\tau)=\mathbf{\kappa v}^F_i(t)$ get a small coefficient $\kappa$ while the damping terms does not. This implies that the damping term can dominate the Laplacian in the estimates above without further restrictions.  The subscheme described for $t\in \left[l-1,l-\frac{1}{2}\right]$ is repeated twice in transformed time coordinates then. Note that all the estimates above are inherited where we can use the semigroup property in order to get global estimates straightforwardly.

The second improvement we mentioned concerns the direct calculation of the Euler part via a Dyson formalism in local time. This is also related to the corollaries of the main statements of this paper consider the Euler equation. As our considerations are closely linked to higher order schemes of the Navier Stokes equation (and of the Euler equation) with respect to time we consider the related properties of the Dyson formalism in the next section.

\section{A converging algorithm}

The detailed description of an algorithm with error estimates, the extension of the scheme to initial-value boundary problems and free boundary problems, and the extension of the to more realistic models with variable viscosity and to compressible models will be treated elsewhere. We only sketch some features of the algorithm which may be interesting to practitioners of simulation and from a numerical point of view.
The constructive global existence proof above leads to algorithmic schemes which converge in strong norms. From an algorithmic point of view  we observe that errors of solutions of finite cut-off systems of the infinite nonlinear infinite ODE representation of the incompressible Navier Stokes equation are controlled
\begin{itemize}
 \item[i)] spatially, i.e., with respect to the order of the modes, by converges of each Fourier representation of a velocity component in the dual Sobolev space $h^s\left({\mathbb Z}^n\right)$ for $s>n+2$ (if the initial data are in this space). This convergence (the error) can be estimated by weakly singular elliptic intergals.
 \item[ii)] An auto-control time dilatation transformation a time $t\in [l-1,l)$ of the form 
 \begin{equation}\label{timetransalg}
 t\rightarrow \tau= \frac{t-(l-1)}{\sqrt{1-(t-(l-1))^2}}
 \end{equation}
 combined with a relation for $t\in [l-1,l]$ at each time step $l\geq 1$ of the form
 \begin{equation}
 \mathbf{v}^F(t)=(1+(t-l-1))\mathbf{u}^F(\tau)
 \end{equation}
produces damping term at each time step $l$ for a locally equivalent equation for the modes of the infinite vector $\mathbf{u}^F$. At first glance it seems that the price to pay for this stabilization is that the time interval $[l-1,l)$ is transformed to an infinite time interval $[0,\infty)$ such that the price for stabilization are infinite computation costs. However, this is not the case. At each time step we may use (\ref{timetransalg}) on a substep interval $\left[l-1,l-\frac{1}{2} \right]$ and this stretches the local time step size only from the length of
\begin{equation}
\left[l-1,l-\frac{1}{2} \right] \mbox{ to the length of } \left[0,\frac{1}{\sqrt{3}}\right], 
\end{equation}
which is a small increase of computation costs of a factor $\frac{2}{\sqrt{3}}$. Having computed the modes $\mathbf{v}^F(t),\mathbf{u}^F(t)$ (via controlled vectors $\mathbf{v}^{r,F}(t),\mathbf{u}^{r,F}(t)$), at time step $t=l-\frac{1}{2}$ we can consider a second transformation 
\begin{equation}\label{timetransalg}
 t\rightarrow \tau= \frac{t-\left( l-\frac{1}{2}\right) }{\sqrt{1-\left( l-\frac{1}{2}\right)^2}}
 \end{equation}
 combined with a relation for $t\in \left[ l-\frac{1}{2},l\right] $ at each time step $l\geq 1$ of the form
 \begin{equation}
 \mathbf{v}^F(t)=\left( 1+\left( t-\left(l-\frac{1}{2}\right)\right) \right) \mathbf{u}^F(\tau)
 \end{equation} 
 \end{itemize}
In the following we describe algorithmic schemes without auto-control (damping). The damping mechanisms is a feature related exclusively to time, and can be treated separately as its main objective is to get stronger global upper bounds with respect to time. This does not touch the local description essentially.  
First, we remark that higher order schemes in time may be based on higher order approximations of the Euler part of the equation. It is interesting that we can solve the Euler part by an iterated Dyson formalism for regular data, although we cannot do this uniquely for dimension $n\geq 3$. We denote the recursively defined solution approximations of the Euler solution part of the controlled scheme by  $\mathbf{v}^{E,r,p},~p\geq 0$, where $\mathbf{v}^{E,r,0}=\mathbf{h}^{r}=\left(h_{i\alpha}\right)_{1\leq i\leq n,\alpha\in {\mathbb Z}^n\setminus \left\lbrace 0\right\rbrace }$ denote the initial data of the controlled scheme.
Consider the Euler part of the controlled matrix for the approximative system of order $p\geq 1$  in (\ref{controlmatrix}) above, i.e., the matrix with viscosity $\nu =0$, which is defined on the diagonal by
\begin{equation}
\begin{array}{ll}
\delta_{ij}E^{r,NS}_{i\alpha j\beta}\left(\mathbf{v}^{E,r,p-1}\right)=
-\delta_{ij}\sum_{j=1}^n\frac{2\pi i \beta_j}{l}v^{E,r,p-1}_{j(\alpha-\beta)}\\
\\+\delta_{ij}2\pi i\alpha_i1_{\left\lbrace \alpha\neq 0\right\rbrace }\frac{\sum_{k=1}^n4\pi \beta_j(\alpha_k-\beta_k)v^{E,r,p-1}_{k(\alpha-\beta)}}{\sum_{i=1}^n4\pi\alpha_i^2},
\end{array}
\end{equation}
and where we have off-diagonal, i.e. for $i\neq j$ we have the entries
\begin{equation}
(1-\delta_{ij})E^{r,NS}_{i\alpha j\beta}\left(\mathbf{v}^E\right)=2\pi i\alpha_i1_{\left\lbrace \alpha\neq 0\right\rbrace }\frac{\sum_{k=1}^n4\pi \beta_j(\alpha_k-\beta_k)v^{E,r,p-1}_{k(\alpha-\beta)}}{\sum_{i=1}^n4\pi\alpha_i^2}.
\end{equation}
Then the approximative  Euler system with data
\begin{equation}
\mathbf{v}^{r,E}(t_0)=\left( \mathbf{v}^{r,E}(t_0)_1,\cdots ,\mathbf{v}^{r,E}(t_0)_n\right)^T,~
\mathbf{v}^{r,E}(t_0)\in h^{s}\left({\mathbb Z}^n\right),~s>n+2 
\end{equation}
 for some initial time $t_0\geq 0$ has a solution for some time $t>0$ of Dyson form
\begin{equation}\label{dysonobseuler}
\begin{array}{ll}
\mathbf{v}^{r,E,p}_1=v(t_0)+\sum_{m=1}^{\infty}\frac{1}{m!}\int_0^tds_1\int_0^tds_2\cdots \int_0^tds_m \\
\\
T_m\left(E^{r,p-1}(t_1)E^{r,p-1}(t_2)\cdot \cdots \cdot E^{r,p-1}(t_m) \right)\mathbf{v}^{r,E}(t_0).
\end{array} 
\end{equation}
In algorithmic scheme we do this integration up to a certain order $p$, and then use this formula in the Trotter product formula developed above. In algorithms we deal with finite cut-off of modes naturally. Let us be a bit more specific concerning these finite mode approximations.

The infinite ODE-system can be approximated by finite ODE systems of with modes of order less or equal to some positive integer $k$ in the sense of a projection on the modes of order $|\alpha|\leq k$. At a fixed time $t\geq 0$ this is a projection from the intersection  $\cap_{s\in {\mathbb R}}h^s\left({\mathbb Z}^n\right)$ of dual Sobolev spaces into the space of (even globally) analytic functions (finite Fourier series). The Trotter product formula for dissipative operators considered above holds for the system of finite modes, of course. More importantly, the resulting schemes based on this Trotter product formula approximate the corresponding scheme of infinite modes if the limit $k\uparrow \infty$ is considered.  In this section we define this approximating scheme of finite modes. It is not difficult to show that the error of this scheme converges to zero as $k\uparrow \infty$. Detailed error estimates which relate the maximal mode of the finite system and dimension $n$ and viscosity $\nu$ to the error in $h^s$ norm are of interest in order to design algorithms. This deserves a closer investigation which will be done elsewhere. In this section we define  algorithms of of different approximation order in time via finite-mode approximations of the non-linear infinite ODEs which are equivalent to the incompressible Navier-Stokes equation. Furthermore some observations for the choice of parameters of the size of the domain $l>0$ and the viscosity $\nu >0$ are made, and we observe reductions to a cosine basis (symmetric data).
We consider the controlled scheme which solves an equivalent equation for the sake of simplicity. From our description in the introduction it is clear how to compute an approximate solution of the incompressible Navier-Stokes equation from these data. Recall the representation of the controlled Navier-Stokes equation in terms of infinite matrices.

It makes sense to formulate an 'algorithm' first for the infinite nonlinear ODE system. Accordingly, in the following the use of the word 'compute' related to substeps in an infinite scheme is not meant in a strict sense. It may be defined in some more strict sense b use of transfinite Turing machines, but what we have in mind is the finite approxmations of an infinite object such that schemes can be implemented eventually.  This is not an algorithm in a strict sense (even the description is not finite), but the projection to a set of finite modes leads to the algorithm immediately once it is described in the infinite set-up.
Consider the limit  $\mathbf{v}^{r,F}=\left(v^r_{i\alpha} \right)_{\alpha \in {\mathbb Z}^n}$ of the construction of the last section, i.e., the global solution of the controlled infinite controlled ODE system, as given. Then we may write this function formally in terms of the Dyson formalism as
\begin{equation}
v^{r,F}_i(t)=T\exp\left(A^{r}t \right) \mathbf{h}^{F}_i\in h^s_l\left({\mathbb Z}^n\right),
\end{equation}
where
\begin{equation}
\begin{array}{ll}
T\exp(A^{r}t):=\sum_{m=0}^{\infty}\frac{1}{k!}\int_{[0,t]}dt_1\cdots dt_kTA^{r}(t_1)\cdots A^r(t_k)dt_1\cdots dt_k\\
\\
:=\sum_{k=0}^{\infty}\int_0^tdt_1\int_0^{t_1}dt_2\cdots \int_0^{t_{k-1}}A^{r}(t_1)\cdots A^{r}(t_k),
\end{array}
\end{equation}
where
\begin{equation}
A^r(t_j):=\left(A^{r,ij}\right)_{1\leq i,j\leq n} 
\end{equation}
and for all $1\leq i,j\leq n$ we have
\begin{equation}
A^{r,ij}=\left(a^{r,ij}_{\alpha\beta} \right)_{\alpha,\beta\in {\mathbb Z}^n},
\end{equation}
where 
\begin{equation}\label{matrixsol}
\begin{array}{ll}
a^{r,ij}_{\alpha\beta}=\delta_{ij}\left( \delta_{\alpha\beta}\nu\left(-\sum_{j=1}^n\frac{4\pi \alpha_j^2}{l^2} \right)-\sum_{j=1}^n\frac{2\pi i\beta_jv_{j(\alpha-\beta)}}{l}\right) +L^v_{ij},
\end{array}
\end{equation}
along with
\begin{equation}
L^v_{ij}=2\pi i\alpha_i\frac{\sum_{k=1}^n 4\pi \beta_j(\alpha_k-\beta_k)v_{k(\alpha-\beta)}}{\sum_{i=1}^n4\pi^2 \alpha_i^2}.
\end{equation}
Note that the modes $v_{i\alpha}$ are time-dependent - for this reason the Dyson time-order operator appears in this formal representation. In order to construct a computable approximation of this formula we need a stable dissipation term for the second order part of the operator. We can achieve this if we may use the Trotter product formula above. because the factor $\exp\left( \left( \delta_{ij}D^r\right) \right)$ is indeed in the regular matrix space $M^s_n$ and it is a damping factor for the controlled system. In order to apply the Trotter product formula to the controlled Navier-Stokes system we may consider a time discretization and apply this formal time step by time step. Starting with $v^{r,t_0}_i:=h_i$ let us assume that we have defined a scheme for $0=T_0<t_1<\cdots<t_k$ and computed the modes $v^{r,t_l}_{i,\alpha}$ for $0\leq l\leq t_k$ 
Then in order to define the scheme recursively on the interval $[t_k,t_{k+1}]$ 
we first consider the natural splitting of the operator and define
\begin{equation}
A^r(t_j)=\left( \delta_{ij}D\right)+\left( B^{r,ij}(t_k)\right),
\end{equation}
where $B^{r,ij}(t_k)=\left( b^{r,t_k,ij}_{\alpha\beta}\right)$ along with 
\begin{equation}\label{matrixscheme}
\begin{array}{ll}
b^{r,t_k,ij}_{\alpha\beta}=
\left( -\sum_{j=1}^n\frac{2\pi i\beta_jv^{r,t_k}_{j(\alpha-\beta)}}{l}\right) +L^{t_k}_{ij},
\end{array}
\end{equation}
and
\begin{equation}
L^{t_k}_{ij}=2\pi i\alpha_i\frac{\sum_{k=1}^n 4\pi \beta_j(\alpha_k-\beta_k)v^{r,t_k}_{k(\alpha-\beta)}}{\sum_{i=1}^n4\pi^2 \alpha_i^2}.
\end{equation}
The most simple infinite scheme we could have in mind is then a scheme  approximating $\mathbf{v}^{r,F}(t)=\left(v^r_{i\alpha}(t)\right)_{\alpha\in {\mathbb Z}^n}$ for arbitrary given $t>0$ in $m$ steps then is 
\begin{equation}
\begin{array}{ll}
\left( \exp\left(\frac{t}{m}\left( \delta_{ij}D\right) \right) \exp\left(\frac{t}{k}\left( B^{r,ij}(t_m)\right) \right)\right)\times \\
\\
\cdots \left( \exp\left(\frac{t}{m}\left( \delta_{ij}D\right) \right) \exp\left(\frac{t}{k}\left( B^{r,ij}(t_0)\right) \right)\right)\mathbf{h}.
\end{array}
\end{equation}
This is indeed a time-dependent Trotter-product type approximation of order $ o\left(\frac{1}{m}\right)\downarrow 0$ as $m\uparrow \infty$, where $o$ denotes the small Landau $o$. Higher order approximations can be achieved taking account of some correction terms in the Trotter product formula. Indeed, consider 'truncations of order $q$' of the term defined in (\ref{Ceq*}) above. For each $t_r,~0\leq r\leq m$  consider
\begin{equation}\label{Ceqorderk}
\begin{array}{ll}
C^q(t_r)=\left( \delta_{ij}D\right) +\left( B^{r,ij}(t_r)\right)+\\
\\
\sum_{p=1}^{q}\frac{1}{p!}\sum_{l\geq 1,~\beta^l\in {\mathbb N}_{10}^l,~ m'+|\beta^l|=p,~l\leq m'+1}\left( \delta_{ij}D\right)^{m'}\times\\
\\
\times I_{\beta^{m'}}\left[\Delta \left(B^{r,ij}(t_r)\right),\left( B^{r,ij}(t_r)\right) \right]_T. 
\end{array}
\end{equation}
According to the Trotter product formula higher order achemes can be obtained if there is a correction term $E$ such that the replacement of $\left( B^{r,ij}(t_m)\right)$ by $\left( B^{r,ij}(t_m)\right)+E$ cancels the correction of order $q$ at each time step which may be defined by
\begin{equation}\label{Ceqorderk}
\begin{array}{ll}
C^q(t_r)_{\mbox{corr}}=C^q(t_r)-\left( \delta_{ij}D\right) +\left( B^{r,ij}(t_r)\right)\\
\\
=\sum_{p=1}^{q}\frac{1}{p!}\sum_{l\geq 1,~\beta^l\in {\mathbb N}_{10}^l,~ m'+|\beta^l|=p,~l\leq m'+1}\left( \delta_{ij}D\right)^{m'}\times\\
\\
\times I_{\beta^{m'}}\left[\Delta \left( B^{r,ij}(t_r)\right),\left( B^{r,ij}(t_r)\right) \right]_T. 
\end{array}
\end{equation}
The explicit analysis of these schemes has a right in its own an will be considered elsewhere.
For simulations we have to make a cut-off leaving only finitely many modes, of course. The formulas established remain true if we consider natural projections to systems of finite modes. We define

\begin{equation}\label{finmo}
\begin{array}{ll}
\left( \exp\left(\frac{t}{m}P_{M^l}\left( \delta_{ij}D\right) \right) \exp\left(\frac{t}{k}P_{M^l}\left( B^{r,ij}(t_m)\right) \right)\right)\times \\
\\
\cdots \left( \exp\left(\frac{t}{m}P_{M^l}\left( \delta_{ij}D\right) \right) \exp\left(\frac{t}{k}P_{M^l}\left( B^{r,ij}(t_0)\right) \right)\right)P_{v^l}\mathbf{h}^F,
\end{array}
\end{equation}
where the operator $P_{v^l}$ is defined by
\begin{equation}
P_{v^l}\mathbf{h}^F=\left(P_{1v^l}\mathbf{h}^F_1,\cdots,P_{nv^l}\mathbf{h}^F_n\right) 
\end{equation}
such that for $1\leq i\leq n$ and  $h_i=\left(h_{i\alpha}\right)_{\alpha\in {\mathbb Z}^n}$
we define
\begin{equation}
P_{iv^l}(v_{\alpha})_{\alpha\in {\mathbb Z}^n}=\left(v^l_{i\alpha}\right)_{|\alpha|\leq k},
\end{equation}
and for infinite matrices $M=(M^{ij})=\left( \left(m^{ij}_{\alpha\beta}\right)_{\alpha,\beta\in {\mathbb Z}^n}\right)$ we define
\begin{equation}
P_{M^l}M=\left(P_{ij,M^l}M^{ij}\right), 
\end{equation}
where
\begin{equation}
P_{ij,M^l}M^{ij}=\left(m^{ij}_{\alpha\beta}\right)_{|\alpha|,|\beta|\leq k}
\end{equation}
In both cases (vector- and matrix-projection) we understand that the order of the modes is preserved of course.
Next it is a consequence of our existence constructive proof that
\begin{thm}
Let $h_i\in C^{\infty}({\mathbb T}_l$, and let $T>0$ be a time horizon for the incompressible Navier-Stokes equation problem on the $n$-torus. Let $s\geq n+2$ Then the finite mode scheme defined in (\ref{finmo}) converges for each $0\leq t\leq T$ to the solution of incompressible Navier-Stokes equation in its infinite nonlinear ODE representation for the Fourier modes as $m,l\uparrow \infty$.
\end{thm}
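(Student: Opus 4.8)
The plan is to establish the convergence as a controlled double limit in the mode cutoff $l$ and the number $m$ of time steps, resting on three ingredients already available: (i) the global existence and uniqueness of the controlled solution $\mathbf{v}^{r,F}$ with $\mathbf{v}^{r,F}_i(t)\in h^s\left({\mathbb Z}^n\right)$ for all $s\in{\mathbb R}$ and $t\in[0,T]$, together with the fact that $v_{i\alpha}(t)=v^r_{i\alpha}(t)-\delta_{0\alpha}r(t)$ solves the infinite ODE system equivalent to incompressible Navier--Stokes; (ii) the dissipative Trotter product formula of Lemma \ref{productform} and its corollaries, which governs the splitting error at each time step; and (iii) the infinite matrix estimates of Section 3, in particular the weakly singular elliptic integral bound and the stability $\exp(D)\exp(B^{0,l}_{b,ij})\in M^s_n$ of Lemma \ref{dbexp}, which control the spatial truncation. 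First I would fix $t\in[0,T]$, denote by $\mathbf{w}^{l,m}(t)$ the output of (\ref{finmo}), and split the error as $\mathbf{w}^{l,m}(t)-\mathbf{v}^{r,F}(t)=\big(\mathbf{w}^{l,m}(t)-P_{v^l}\mathbf{v}^{r,F}(t)\big)+\big(P_{v^l}\mathbf{v}^{r,F}(t)-\mathbf{v}^{r,F}(t)\big)$, so the two limits can be treated separately.

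For the second summand, the spatial truncation, I would use that $\mathbf{v}^{r,F}_i(t)\in h^s\left({\mathbb Z}^n\right)$ for arbitrarily large $s$ with a bound $|\mathbf{v}^{r,F}_i(t)|_{h^s}\le C$ uniform on $[0,T]$ (obtained from the exponentially time weighted contraction and, with more care, from the auto-controlled estimates of Section 3). Then for any $\delta>0$ one has $|P_{v^l}\mathbf{v}^{r,F}_i(t)-\mathbf{v}^{r,F}_i(t)|_{h^s}^2=\sum_{|\alpha|>l}|v^r_{i\alpha}(t)|^2\left\langle\alpha\right\rangle^{2s}\le C\,l^{-2\delta}$, uniformly in $t\in[0,T]$, which vanishes as $l\uparrow\infty$. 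The same decay, via the matrix--vector multiplication estimates, controls the difference between the projected coefficients $P_{M^l}B^{r,ij}(t_k)$ and the infinite ones.

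For the first summand, for fixed $l$ the scheme (\ref{finmo}) is an explicit one--step time--marching method for the finite--dimensional projected ODE system, with the convection/Leray coefficients frozen at the previously computed value $v^{r,t_k}$ and the dissipative part handled by the exact exponential $\exp(\tfrac{t}{m}P_{M^l}D)$. I would prove consistency and stability and conclude convergence by the standard telescoping estimate. Consistency: at each step the local error has two sources, the Trotter splitting error, which by Lemma \ref{lembound} and the CBH--type expansion (\ref{Ceq*}) is $O(m^{-2})$ with a constant involving $\exp(\tfrac{t}{m}P_{M^l}D)$, where the point is that this exponential is a contraction and the ``deficiency of symmetry'' contributions $D\cdot\Delta B$ land in the regular space $M^s_n$ after multiplication by the dissipative exponential (Lemma \ref{dbexp}), so the constant is bounded uniformly in $l$; and the coefficient--freezing error, which is $O(m^{-2})$ because $t\mapsto\mathbf{v}^{r,F}(t)$ is $C^1$ with values in $h^s$, so $t\mapsto B^{r}(t)$ applied to regular data is Lipschitz in the $h^s$--norm. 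Stability: the propagated error at each step is amplified by at most $\exp(C\tfrac{t}{m})$ in the $h^s$--operator norm, again uniformly in $l$ by Lemma \ref{dbexp}; summing $m$ steps yields an overall factor $\le\exp(CT)$ and a global time error $O(m^{-1})$ on $[0,T]$, uniform in $l$. Since the coefficients are frozen at the numerically computed $v^{r,t_k}$ rather than at the exact $\mathbf{v}^{r,F}(t_k)$, I would run consistency and stability simultaneously as a Lax--type argument, closing the coupled estimate in the exponentially weighted norm $|\cdot|^{T,\mbox{exp}}_{h^s,C}$ of Section 3.

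The final step is the diagonal combination: given $\varepsilon>0$, first choose $l$ so large that $|P_{v^l}\mathbf{v}^{r,F}(t)-\mathbf{v}^{r,F}(t)|_{h^s}<\varepsilon/2$ uniformly on $[0,T]$, then, using that the time--error bound is uniform in $l$, choose $m$ so large that the time--discretization error is $<\varepsilon/2$ for all sufficiently large $l$; this gives $\mathbf{w}^{l,m}(t)\to\mathbf{v}^{r,F}(t)$ in $h^s\left({\mathbb Z}^n\right)$, $s\ge n+2$, as $m,l\uparrow\infty$, and hence, after subtracting the continuous bounded control $r(t)$ on the zero modes, convergence to the modes $v_{i\alpha}(t)$ of the classical solution of the incompressible Navier--Stokes equation. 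I expect the main obstacle to be precisely the uniformity in $l$ of the consistency constant: naively the splitting error involves the commutator of $P_{M^l}D$, whose norm grows like $l^2$, with $P_{M^l}B$, whose norm grows linearly, and making the resulting constant $l$--independent is exactly what the dissipative CBH analysis of Section 3 accomplishes, since multiplication by $\exp(\tfrac{t}{m}P_{M^l}D)$ turns the offending $D\cdot\Delta B$ terms into entries of the fixed regular matrix space $M^s_n$.
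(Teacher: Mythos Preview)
The paper does not supply a separate proof of this theorem: it is stated as ``a consequence of our existence constructive proof,'' i.e., as an immediate corollary of the machinery built in Section~3 (the infinite matrix estimates, the dissipative Trotter product formula for the projected systems, and the contraction/compactness limit yielding $\mathbf{v}^{r,F}$). Your proposal is therefore not competing with an alternative argument but rather fleshing out what the paper leaves implicit.

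Your outline is sound and uses exactly the right ingredients from the paper. The error splitting into a spatial truncation part $P_{v^l}\mathbf{v}^{r,F}-\mathbf{v}^{r,F}$ (controlled by polynomial decay of the modes) and a time--discretization part (controlled by the Trotter/CBH analysis) is the natural decomposition, and your identification of the uniformity in $l$ of the consistency constant as the crux is precisely the point the paper addresses through Lemma~\ref{dbexp} and the discussion that the factor $\exp(P_{M^l}D\,\tfrac{t}{m})$ forces the iterated products into $M^s_n$ independently of $l$. The one place where your write--up could be tightened is the Lax--type closure: since the coefficients $B^{r,ij}(t_k)$ in (\ref{finmo}) are built from the \emph{numerically computed} modes $v^{r,t_k}$ rather than the exact ones, the coefficient--freezing error and the propagated error are coupled, and the paper's contraction in the exponentially weighted norm $|\cdot|^{T,\exp}_{h^s,C}$ (together with the Lipschitz property of $A^{r,NS}(\cdot)$ applied to regular data) is what closes this loop---you invoke this correctly, but it is worth stating explicitly that the Lipschitz constant is uniform in $l$ for the same reason as above.
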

Next recall that the parameter $\nu>0$ can be chosen arbitrarily. A large parameter $\nu>0$ increases damping and makes the computation more stable. However, in order to approximate a Cauchy problem (the initial value problem on the whole space) we need large $l$ (length of the torus). Note that the damping terms in the computation scheme are of order $\frac{1}{l^2}$, the convection terms are of order $\frac{1}{l}$ and the Leray projection terms are independent of the length of the torus. They become dominant if the size $l$ of the $n$-torus is large. From our constructive existence proof we have
\begin{cor}
The Cauchy problem for the incompressible Navier Stokes equation may be approximated by a computational approximation of order $k$ on the $n$-torus ${\mathbb T}_l$ for $l$ large enough where in a bi-parameter transformation of the original problem $\nu>0$ should be chosen 
\begin{equation}
\nu\gtrsim l^2
\end{equation}
such that the damping term  is not dominated by the Leray projection terms. Furthermore due to the quadratic growth of the moduli of diagonal (damping) terms with the order of the modes compared to linear growth of the convection term and the Leray projection term with the order of the modes the scheme remains stable for larger maximal order $k$ if it is stable for lower maximal order $k$. Furthermore the choice
\begin{equation}
\nu  \geq 2|h|^2_s
\end{equation}
for $s\geq 2+n$ leads to solutions which are uniformly bounded with respect to time. 
\end{cor}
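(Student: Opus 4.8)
The corollary contains three essentially independent assertions, and the plan is to treat each one by reusing machinery already assembled in the paper. \emph{First assertion (approximation of the Cauchy problem with $\nu\gtrsim l^2$).} Restricting the whole-space Cauchy problem to a periodic cell $\mathbb T^n_l$, with the data periodically extended or cut off, is a standard approximation that becomes exact on compact space-time sets as $l\uparrow\infty$; so it suffices that the periodic scheme of Section~3 converge for every large $l$ and that the viscosity may be prescribed freely. The latter is exactly the content of the bi-parameter observations around \eqref{factorl}: the maps $\mathbf v^l(t,x)=\tfrac1l\mathbf v^1(t/l^2,x/l)$ and the $\nu$-time rescaling show $(\nu,l)$ can be chosen without altering the solution. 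Inspecting the coefficients in \eqref{navode2*} and \eqref{matrixscheme} one sees the diagonal damping entries are of size $\nu|\alpha|^2/l^2$, the convection entries of size $|\alpha|/l$, and the Leray entries --- see \eqref{palpha} --- independent of $l$. Since the dissipative Trotter product formula and the contraction/regularity estimates of Section~3 go through only when the diagonal damping dominates the combined convection $+$ Leray growth \emph{uniformly in} $l$, one needs $\nu/l^2\gtrsim 1$, i.e. $\nu\gtrsim l^2$; with this choice the construction of Theorem~\ref{mainthm1} applies for arbitrarily large $l$ and hence approximates the Cauchy problem.

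\emph{Second assertion (stability is monotone in the mode cut-off $k$).} Here I would argue directly from the finite-mode scheme \eqref{finmo}, whose building block is $\exp\big(\tfrac tmP_{M^l}(\delta_{ij}D)\big)\exp\big(\tfrac tmP_{M^l}B^{r}(t_r)\big)$. By Lemma~\ref{bit}, iterates of the off-diagonal block $B^{0,l}_{b,ij}$ lie in $M^{s,\mathrm{lin}}_n$, i.e. their entries grow at most linearly in $|\alpha|+|\beta|$, whereas the diagonal factor $\exp(D)$ puts on the $\alpha$-th row the weight $\exp(-c\nu|\alpha|^2/l^2\cdot t/m)$, which decays \emph{quadratically} in $|\alpha|$; consequently (Lemma~\ref{dbexp}) $\exp(D)\exp(B^{0,l}_{b,ij})\in M^s_n$ with a constant not depending on $l$, the finite Trotter products stay in $M^{s,\mathrm{fin}}_{n\times n}$ with an $l$-independent bound, and raising $k$ merely admits further modes, each carrying a strictly stronger damping factor. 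Thus the uniform regular bound that makes the scheme stable at a cut-off $k_0$ survives every enlargement $k\ge k_0$; this assertion needs nothing beyond the quadratic-versus-linear growth comparison already recorded.

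\emph{Third assertion (uniform-in-time bound for $\nu\ge 2|h|_s^2$).} The plan is an a priori energy estimate for the controlled solution followed by a continuity/bootstrap argument; normalise $l=1$. Differentiating $\|\mathbf v^{r,F}(t)\|_{h^s}^2$ along \eqref{navode200acontr} --- recall the controlled scheme carries only modes $|\alpha|\ge 1$, so the diagonal damping coefficient $\kappa_\alpha=\nu\sum_j4\pi\alpha_j^2$ satisfies $\kappa_\alpha\ge c\nu|\alpha|^2$ --- the diagonal part contributes a term $\le -c'\nu\|\mathbf v^{r,F}\|_{h^{s}}^2$ (using only $|\alpha|^2\ge1$; one could keep the full $h^{s+1}$ gain), while the convection and Leray projection terms, estimated by the weakly singular elliptic integral inequality \eqref{lerayell} and the $M^s_n$-multiplication lemmas (the prefactor $\alpha_i$ of the Leray term being absorbed by the Poisson denominator $\sum_i4\pi^2\alpha_i^2$), contribute a nonlinear term controlled by powers of $\|\mathbf v^{r,F}\|_{h^s}$. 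This yields, with generic constants as in \eqref{aprioriestburg}, a differential inequality for $y:=\|\mathbf v^{r,F}(t)\|_{h^s}^2$ whose right-hand side is $\le 0$ as soon as $\nu$ is large relative to $y$; after the splitting into the mode sets on which the target bound already holds and on which it does not (as in the auto-control argument of Section~3), a continuity argument then gives $\|\mathbf v^{r,F}(t)\|_{h^s}\le|h|_s$ for all $t\ge0$ provided $\nu$ majorises the relevant power of $|h|_s$, the stated $\nu\ge2|h|_s^2$ being a convenient sufficient form of that condition. Since $v^F_i=v^{r,F}_i-r$ with $r$ bounded by Lemma~\ref{control}, the uncontrolled velocity is then uniformly bounded in time as well; independently, the auto-control time-dilatation scheme of Section~3 gives the same conclusion for \emph{every} $\nu>0$, the present statement being the case in which no auto-control is needed.

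\emph{Main obstacle.} The delicate step is the third one: transporting the a priori estimate \eqref{aprioriestburg} to dual space rigorously --- in particular bounding the Leray projection contribution, which a priori loses a derivative and whose multiindex prefactor must be balanced against the Poisson denominator --- and carrying out the power bookkeeping so that the viscosity threshold comes out exactly of the stated quadratic size $\nu\gtrsim|h|_s^2$. A secondary point is to verify that the large-$\nu$, large-$l$ regime $\nu\sim l^2$ of the first assertion does not spoil the $m\uparrow\infty$, $k\uparrow\infty$ limits: one should check that the constants in the dissipative Trotter product formula and in the contraction estimates enter only through the harmless ratio $\nu/l^2$.
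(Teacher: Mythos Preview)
The paper does not supply a separate proof for this corollary: it is stated directly after the sentence ``From our constructive existence proof we have'', and the one-paragraph discussion immediately preceding it (damping terms scale like $\nu/l^2$, convection like $1/l$, Leray terms are $l$-independent) is all the justification offered. Your treatment of the first two assertions therefore amounts to a correct unpacking of that paragraph together with the $M^{s,\mathrm{lin}}_n$ and $\exp(D)\exp(B)$ lemmas, and is in line with what the paper has in mind.

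Your third argument, however, takes a route the paper does not: you propose a direct $h^s$-energy inequality in the spirit of \eqref{aprioriestburg}, differentiating $\|\mathbf v^{r,F}(t)\|_{h^s}^2$ and balancing the nonlinear contribution against the diagonal dissipation. The paper's implicit derivation of the uniform-in-time bound does \emph{not} go through such an energy estimate; it leans on the machinery already built --- the Trotter product representation, the auto-control time-dilatation at the end of Section~3, and (more explicitly) the parameter choices in the Appendix, e.g.\ $r=c_0^2C^2/\nu$, $\rho=\nu/(2c_0^2C^2)$, which make the viscous coefficient $\rho r^2\nu$ dominate the nonlinear coefficient $\rho r=\tfrac12$ and yield the mode-wise preservation argument around \eqref{vtrotterialpha}. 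That is where the quadratic dependence $\nu\gtrsim|h|_s^2$ actually surfaces in the paper. Your energy approach is a legitimate alternative and arguably more transparent once the dual-space product estimates are in hand, but the obstacle you flag is real: the Leray contribution, after the $\alpha_i/\sum\alpha_i^2$ cancellation, still carries a factor $|\beta||\alpha-\beta|$ inside the $\gamma$-sum, and closing the inequality at order exactly $|h|_s^2$ (rather than a higher power) requires the sharper elliptic-sum bookkeeping of \eqref{lerayell} with $r+s-n\ge n+2$. The paper sidesteps this by working mode-by-mode with the explicit damping factor $\exp(-\rho r^2\nu|\alpha|^2\,\delta t)$ rather than through a global differential inequality.
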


For numerical and computational purposes it is useful to represent initial data in $\cos$ and $\sin$ terms (otherwise computational errors may lead to nonzero imaginary parts of the computational approximations). We have
\begin{lem}\label{lemeven}
Functions on the $n$-torus ${\mathbb T}^n_l$ be represented by symmetric data, i.e. with the basis of even functions 
\begin{equation}
\left\lbrace \cos\left( \frac{2\pi\alpha x}{l}\right) \right\rbrace_{\alpha\in {\mathbb Z}^n}.
\end{equation}
\end{lem}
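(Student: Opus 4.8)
The plan is to reduce the general torus problem to a reflection-symmetric one on a torus of doubled side length, on which the complex exponential Fourier system collapses (after rescaling) to a pure cosine system. First I would introduce the reflection group $G=(\mathbb{Z}/2)^n$ acting on $\mathbb{T}^n_{2l}$ by coordinatewise sign changes $R_k:x\mapsto(x_1,\dots,-x_k,\dots,x_n)$, together with the induced action on vector fields under which the incompressible Navier--Stokes system in its Leray-projected form (\ref{navleraya})--(\ref{press}) is equivariant: $v_k\mapsto -v_k\circ R_k$ and $v_j\mapsto v_j\circ R_k$ for $j\neq k$, with $p\mapsto p\circ R_k$. I would check equivariance term by term: the time derivative and the Laplacian are manifestly $G$-invariant; the convection term $(\mathbf v\cdot\nabla)\mathbf v$ transforms with the correct sign in each component by a direct parity count; and the kernel $\nabla K_{\mathbb{T}^n}$ of the Leray projection is odd, so the pressure term inherits the right parity from the quadratic source $\sum_{j,k}v_{j,k}v_{k,j}$, which is $G$-invariant.

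Second, I would observe that the $G$-isotypic subspace cut out by these parities is invariant under the whole iteration scheme of Section 3: the infinite matrices $D^0$, $C^0_{ij}$, $L^0_{ij}$ and their time-dependent analogues $A^r_m$ all commute with the $G$-action (again by the parity count in the multiindices $\alpha,\beta$), so that if $\mathbf h^{r,F}$ lies in the symmetric subspace then so do every $\mathbf v^{r,m,F}_i$, the control function $r_0$, and the limit $\mathbf v^{r,F}$. Since the matrix-space estimates (the multiplication lemma for $M^s_n$, its corollaries, the dissipative Trotter product formula, and the contraction lemmas) are all statements about absolute values of entries indexed by the differences $\alpha-\beta$, they restrict verbatim to the symmetric subspace, and nothing in the existence proof is lost. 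On that subspace a scalar field $f$ on $\mathbb{T}^n_{2l}$ that is even in every $x_k$ is exactly a series in $\left\{\cos\!\left(\frac{2\pi\alpha x}{2l}\right)\right\}_{\alpha\in\mathbb{N}^n}$, and the rescaling $x\mapsto x/2$ identifies this with a cosine expansion on $\mathbb{T}^n_l$; a component carrying one odd parity picks up a single $\sin$, which is precisely the real-basis description already recorded in (\ref{realbasisa}). Hence, after doubling and symmetrising, every field that occurs in the scheme is represented by symmetric data in the cosine family (up to the parity sign), which is the assertion of the lemma.

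Third, for the initial data I would make explicit the only delicate point. An arbitrary $h_i\in C^\infty(\mathbb{T}^n_l)$ does not reflect to a smooth function on $\mathbb{T}^n_{2l}$ unless the normal derivatives of the relevant parity vanish on the reflection hyperplanes, so the content is not that \emph{every} function is symmetric, but that one may restrict the algorithm, without loss of generality for the purpose of producing real, cosine-based computations, to the class of data lying in the smooth $G$-isotypic subspace --- a closed subspace, dense in the natural topology among symmetric data, invariant under the flow, and on which all the theorems of this paper have already been established. I expect the main obstacle to be exactly this bookkeeping: fixing the correct parity pattern for each velocity component, verifying $G$-equivariance of the Leray projection term and of the (auto-)control construction of Section 3, and being honest that the statement is a reduction to a symmetric subspace rather than a representation of arbitrary functions. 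The analytic content is essentially nil once the group action is in place, because every estimate used in the proof of Theorem \ref{mainthm1} is blind to the symmetry.
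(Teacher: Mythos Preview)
Your core idea---double the torus and reflect evenly so that only cosines survive---is exactly what the paper does, but the paper's argument is a single sentence: extend an arbitrary $L^2$ function on $[0,l]^n$ to $[-l,l]^n$ by even reflection in each coordinate, note that evenness forces the $\sin$ terms in the exponential expansion to cancel, and conclude that on the doubled torus one may work with symmetric (cosine) data. That is the entire content; no group action, no equivariance of the Leray projection, no invariance of the iteration scheme is invoked.

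Your write-up is therefore not wrong, but it proves far more than the lemma asserts. The lemma is only a remark about Fourier representation after even extension; the equivariance of the Navier--Stokes flow under the $(\mathbb{Z}/2)^n$-action and the invariance of the matrices $D^0$, $C^0_{ij}$, $L^0_{ij}$, $A^r_m$ under the parity pattern are interesting and true, but they belong to the surrounding algorithmic discussion rather than to the proof of this statement. Your third paragraph, flagging that the even extension of a generic smooth function is only $L^2$ across the reflection hyperplanes, is a genuine caveat that the paper does not mention; the paper simply speaks of ``$L^2$-functions on the cube $[-l,l]^n$'' and does not claim the extension preserves smoothness. So you are being more careful than the source, but for the purposes of matching the paper you could compress everything to the single reflection observation and drop the equivariance machinery.
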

The reason for the statement of lemma \ref{lemeven} simply is that you may consider a $L^2$- function $f$ on the cube $[-l,l]^n$ (arbitrary prescribed on $[0,l]^n$) with periodic boundary conditions with respect to a general basis for $L^2\left({\mathbb T}^n_l\right)$ such that
\begin{equation}\label{exp}
f(x)=\sum_{\alpha\in {\mathbb Z}^n} f_{\alpha}\exp(\frac{2\pi i\alpha x}{l}),
\end{equation}
and then you observe that $f(\cdots,x_i.\cdots)=f(\cdots,-x_i.\cdots)$ imply that the $\sin$-terms implicit in the representation (\ref{exp}) cancel.
Hence on the $n$-torus which is built from the cube $\left[0,l\right]^n$ we may consider symmetric data without loss of generalization. 

\section{The concept of turbulence}
In the context of the results above let us make some final remarks concerning the concept of turbulence.
There are several authors (cf. \cite{CFNT} and \cite{FJRT} for example), who emphasized that the dynamical properties of the Navier-Stokes equation such as the existence of strange attractors, bifurcation behavior etc. rather than the question of global smooth existence may be important for the concept of turbulence. We share this view, and we want to emphasize from our point of view, why we consider the infinite dynamical systems of velocity  modes to be the interesting object in order to start the study of turbulence. This concept is indeed difficult, and in a perspective of modelling it is always worth to follow this difficulty up to its origins on a logical or at least very elementary level. According to classical (pre-Fregean) logic, a concept is a list of notions, each of which is a list of notions and so on. This is not a definition but something to start with (the definition has some circularity as there is no sharp distinction between a concept and a notion, but this may be part of an inherent difficulty to define the concept of concept). According to Leibniz a concept is clear (german: 'klar') if enough notions are known in order decide the subsumption of a given object under a concept, and a concept is conspicuous (german: 'deutlich') if there is a complete list of notions of that concept. All this is relative to a 'cognoscens' of course. Since Leibniz expected that even for empirical concepts like 'gold' the list of notions may be infinite he expected a conspicuous cognition to be accessible only to an infinite mind and not to human beings. Anyway, according to this concept of concept we may say that our concept of the concept of turbulence may  not even be clear. However, there is some agreement that some specific notions belong to the notions of 'turbulence'. Some aspects of fluid dynamics may be better discussed with respect to more advanced modelling. For example for a realistic discussion of a 'whirl' we may better define a Navier-Stokes equation with a free boundary in one (upper) direction and a gravitational force in the opposite direction (pointing downwards). This free boundary may be analyzed by front fixing on a fixed half space as we did it in the case of an American derivative free boundary problem. Additional effects are created by boundary conditions (the shape  of a river bed, for example). Indeed without boundary conditions dissipative features will dominate in the end.   
First we observe that it is indeed essential to study the dynamics of the modes.
The Navier-Stokes equation is a model of classical physics ($|\mathbf{v}|\leq c$, where $c>0$ denotes the speed of light. Therefore physics should be invariant with respect to the  Galilei transformation. If $\mathbf{v}=(v_i)^T_{1\leq i\leq n}$ is a solution of the  Navier Stokes equation then for a particle which is at $\mathbf{x}_0=(x_{01},\cdots,x_{0n})$ at time $t_0$ we have the trajectory $x(t),~t\geq t_0$ determined by the equation
\begin{equation}
\stackrel{.}{\mathbf{x}}(t)=\mathbf{v}, \mbox{ i.e., }x_i(t)=x_{0i}+\int_{t_0}^tv_i(s)ds,~1\leq i\leq n,
\end{equation}
which means that for the modes we have
\begin{equation}
x_{\alpha i}+\int_{t_0}^t v_{i\alpha}(s)ds,~1\leq i\leq n,
\end{equation}
Hence, rotationality, periodic behavior, or irregular dynamic behaviour due to a superposition of different modes $v_{i\alpha}$ which are 'out of phase' translates to an analogous dynamic behavior for the trajectories with a superposition of a uniform translative movement (which disappears in a moved laboratory and is physically irrelevant therefore.   
up to Galiliei transformation dynamic behavior of the modes translates into equivalent. 
The dynamics of the modes is an infinite ODE which may be studied via bifurcation theory. Since we proved the polynomial decay of modes a natural question is wether a center manifold theorem and the existence of Hopf bifurcations may be proved. The quadratic terms of the modes in the ODE of the incompressible Navier Stokes equation in the dual formulation seem to imply this. Irregular behavior may also be due to several Hopfbifurcations with different periodicity. Bifurcation analysis may lead then to a proof of structural instability, sensitive behaviour and chaos (cf. \cite{A}). Global sensitive behavior and chaos may also be proved via generalisations or Sarkovskii's theorem or via intersection theory of algebraic varieties as proposed in \cite{KD1} and \cite{KD2}. Studying the effects of boundary conditions and free boundary conditions for turbulent behavior may also be crucial. For the Cauchy problem dissipative effects will take over in the long run. For a 'river' modelled by fixed boundary conditions for the second and third velocity components $v_2$ and $v_3$ (river bank) and a free boundary condition (surface) and a fixed boundary condition (bottom) for the first velocity component $v_1$, and a gravitational force in the direction related to the first velocity component, complex dynamical behavior can be due to the boundary effects. It is natural to study such a boundary model by 
the front fixing method considered in \cite{K2}. It is likely that the scheme considered in \cite{KNS} and \cite{K3} can be applied in order to obtain global existence results.
Finally, we discuss some relations of the concept of turbulence, singular solutions of the Euler equation, and bifurcation theory from the point of vie of the Dyson formalism developed in this article. We refer to an update of \cite{KSV} for a deeper discussion which will appear soon.
The qualitative description of turbulence in paragraph 31 of \cite{LL} gives a list of notions of the concept of turbulence associated to high Reynold numbers which consists of $\alpha)$ an extraordinary irregular and disordered change of velocity at every point of a space-time area, $\beta)$ the velocity fluctuates around a mean value, $\gamma)$ the amplitudes of the fluctuations are not small compared to the magnitude of the velocity itself in general, $\delta)$ the fluctuations of the velocity can be found at a fixed point of time, $\epsilon)$ the trajectories of fluid particles are very complicated causing a strong mixture of the fluid. All these notions can be satisfied by proving the existence of Hopf bifurcations an Chenciner bifurcations for the infinite ODE equations
of modes. Proving this seems not to be out of reach except for the notion in $\gamma)$ as the usual methods of bifurcation theory prove the existence of such bifurcation via local topological equivalence normal forms where some lower order terms are sufficient to describe some dynamical properties.  

\begin{center}
 {\bf  Appendix}
\end{center}

\appendix{Convergence criteria of  Euler type Trotter product schemes for Navier Stokes equations}
In this appendix nested Euler type Trotter product schemes of the Navier Stokes equation are defined, where limits of the schemes define regular solutions of the Navier stokes equation depending on the regularity of the data. The critical regularity of convergence for the scheme are data in a Sobolev space of order $n/2+1$, where algorithms do no not converge for lower regularity indicating possible singularities. We support the conjecture that the equations have singular solutions for data which are only in a Sobolev space of order less than $n/2+1$.  Observations of the sketch in \cite{KT} are simplified and sharpened, where explicit upper bound constants are found. This scheme has a first order error with respect to the time discretization size, where higher order schemes can be defined using a Dyson formalism above.

\section{Definition of an Euler-type Trotter-product scheme and statement of results}
We consider global Euler type Trotter product schemes of the incompressible Navier Stokes equation on the $n$-dimensional Torus ${\mathbb T}^n$, where we start with an observation about function spaces.   
A function $g$ in the Sobolev space  $H^s\left({\mathbb T}^n \right)$ for $s\in {\mathbb R}$ can be considered in the analytic basis $\left\lbrace  \exp\left(2\pi i\alpha\right) \right\rbrace_{\alpha\in {\mathbb Z}^n}$, where ${\mathbb Z}$ is the ring of integers as usual and ${\mathbb Z}^n$ is the set of $n$-tuples of integers. The information of the function $g$ is completely encoded in the list of Fourier modes $(g_{\alpha})_{\alpha \in {\mathbb Z}^n}$. 
We say that the infinite vector $(g_{\alpha})_{\alpha\in {\mathbb Z}^n}$ is in the dual Sobolev space $h^s=h^s\left({\mathbb Z}^n\right) $ of order $s\in {\mathbb R}$, if
 \begin{equation}
 \sum_{\alpha \in{\mathbb Z}^n}|g_{\alpha}|^2\left\langle \alpha\right\rangle^{2s}< \infty, 
 \end{equation}
where
\begin{equation}
 \left\langle \alpha\right\rangle :=\left(1+|\alpha|^2 \right)^{1/2}. 
\end{equation}
As we work on the torus for the whole time we suppress the reference to ${\mathbb T}^n$ and, respectively, to ${\mathbb Z}^n$ in the denotation of dual Sobolev spaces in the following.
It is well-known that $g\in H^s$ iff $(g_{\alpha})_{\alpha\in {\mathbb Z}^n}\in h^s$ for $s\in {\mathbb R}$. Next we relate this categorization of regularity by orders $s$ of dual Sobelev spaces
to the regularity criteria of initial data for the Navier stokes equation. For the data $h_i,~1\leq i\leq n$ of an incompressible Navier Stokes equation the regularity condition of the scheme is
\begin{equation}\label{regdata1}
\exists C>0~\forall \alpha\in {\mathbb Z}^n:~{\big |}h_{i\alpha}{\big |}\leq \frac{C}{1+|\alpha|^{n+s}} \mbox{ for some fixed }s>1.
\end{equation}
 Furthermore for $r<\frac{n}{2}+1$ there are data $h_i,~1\leq i\leq n$ $h_i\in h^{r}({\mathbb Z}^n)$
such that the scheme diverges. This does not prove that there are singular solutions (although this may be so) but that the scheme does not work, i.e. has no finite limit. The case $s=1$ is critical, but a certain contractive property of iterated elliptic integrals is lost. It seems that the algorithm defined below is still convergent but this critical case will be considered elsewhere.  Note that for data as in (\ref{regdata1}) we have
\begin{equation}
{\big |}h_{i\alpha}{\big |}^2\leq \frac{C^2}{1+|\alpha|^{2n+2s}}\Rightarrow h_i\in H^{\frac{1}{2}n+s}\left({\mathbb Z}^n\right). 
\end{equation}
On the other hand, if $h_i\in H^{\frac{1}{2}n+s}\left({\mathbb Z}^n\right)$ then
 \begin{equation}
 \sum_{\alpha \in{\mathbb Z}^n}|h_{i\alpha}|^2\left(1+|\alpha|^2 \right)^{n+2s}< \infty, 
 \end{equation}
which means that there must be a $C>0$ such that the condition in (\ref{regdata1}) holds. Here note that the sum over ${\mathbb Z}^n$ is  essentially equivalent to an $n$-dimensional integral such that we need a decay of the modes $|h_{i\alpha}|$ with an exponent which exceeds  $n+2s$ by $n$.  
Especially note that for $n=3$ the critical regularity of the data for convergence is $H^{2.5}$ (resp. $h^{2.5}$ for the dual space). 
Concerning the reason for this specific threshold value $\frac{n}{2}+s$ with $s>1$ for convergence of Euler-type Trotter product schemes we first observe that for some constant $c=c(n)$ depending only on the dimension $n$ we certainly have
\begin{equation}\label{cC}
\sum_{\beta\in {\mathbb Z}^n}\frac{C}{1+|\alpha-\beta|^{m}}\frac{C}{1+|\beta|^{l}}\leq \frac{cC^2}{1+|\alpha|^{m+l-n}}.
\end{equation}
For factors as the Burgers term applied to initial data (which satisfy (\ref{regdata1})) we have orders $m> n+1$ (one spatial derivative) and
$l>n$ such that the right side has a order of decay $>n+1$. Hence, the relation in (\ref{cC}) represents a contractive property for higher order modes $|\alpha|\geq 2$. It becomes contractive for all modes by some spatial scaling (cf. below). Similar for the Leray projection term (which has two spatial derivatives but behaves similar as the Burgers term concerning the decay with respect to modes due to the integration with the dual of first derivatives of the Laplacian kernel) .
We shall use the similar relation in (\ref{cC}) (among other spatial features of the operator), and shall observe that at any stage $N\geq 1$ of a Euler type product scheme $v^N_{i\alpha}(m\delta t^{(N)})_{1\leq i\leq n,~\alpha\in {\mathbb Z}^n,~0\leq m\leq 2^m}$ we have an upper bound of the form
\begin{equation}\label{bb}
{\big |}v^N_{i\alpha}(m\delta t^{(N)}){\big |}\leq \frac{C_0}{1+|\alpha|^{n+s}}
\end{equation}
for all $m\geq 1$ for some finite $C_0>0$ depending only on the dimension $n$, the viscosity $\nu >0$ and the initial data $h_i,~1\leq i\leq n$, and on the time horizon $T>0$. We also provide sharper arguments where we have independence of the upper bound constant $C_0$ of the time horizon $T>0$.  In general we shall have an upper bound as in (\ref{bb}) with $C_0>C$ if $|h_{i\alpha}|\leq \frac{C}{1+|\alpha|^{\frac{n}{2}+s}}$ for some $s>1$  since we cannot prove a strong contraction property for the Navier Stokes operator.  An other spatial effect of the operator that we use is that all nonlinear terms have a spatial derivative. For the spatial transformation
\begin{equation}
v^{r}_i(t,y)=v_i(t,x),~y_i=rx_i,~1\leq i\leq n
\end{equation}
we get $v_{i,j}(t,x)=v^{r}_{i,j}(t,y)r$ and $v_{i,j,j}(t,x)=v^{r}_{i,j,j}(t,y)r^2$ such that the Navier Stokes equation becomes
\begin{equation}\label{Navlerayr}
\left\lbrace \begin{array}{ll}
\frac{\partial v^r_i}{\partial t}-\nu r^2\sum_{j=1}^n v^r_{i,j,j}
+r\sum_{j=1}^n v^r_jv^r_{i,j}=f_i\\
\\ \hspace{1cm}+rL_{{\mathbb T}^n}\left( \sum_{j,m=1}^n\left( v^r_{m,j} v^r_{j,m} \right) (\tau,.)\right) ,\\
\\
\mathbf{v}^r(0,.)=\mathbf{h},
\end{array}\right.
\end{equation}
where $L_{{\mathbb T}^n}$ denotes the Leray prjection on the torus.
These are are natural upper bounds in the scheme related to Burgers terms, and similar relations hold for Leray projection terms. Since $2s-1>s$ we have a strong contraction property for higher order expansion of approximative solutions and this can be used (either directly via weighted norms or in a refined scheme with an auto-control function). 
In order to define a Trotter product scheme for the incompressible Navier Stokes equation models it is convenient to have a time-scaling as well. Note that for a time scale $\rho>0$ with $t=\rho\tau$ where $\rho$ depends only on the initial data, the viscosity and the dimension the resulting Navier Stokes equation is equivalent. For a spatial scaling with parameter $r$ and a time scaling with parameter $\rho$, i.e., for the transformation $v^{\rho,r}_i(\tau,y)=v_i(t,x)$ wit $t=\rho \tau$ and $y_i=rx_i$ we get 
\begin{equation}\label{Navleray}
\left\lbrace \begin{array}{ll}
\frac{\partial v^{\rho,r}_i}{\partial \tau}-\rho r^2\nu\sum_{j=1}^n  v^{\rho,r}_{i,j,j} 
+\rho r\sum_{j=1}^n v^{\rho,r}_jv^{\rho,r}_{i,j}=f_i\\
\\ \hspace{1cm}+\rho rL_{{\mathbb T}^n}\left(  \sum_{j,m=1}^n\left( v^{\rho,r}_{m,j}v^{\rho,r}_{j,m}\right) (\tau,.)\right) ,\\
\mathbf{v}^{\rho,r}(0,.)=\mathbf{h},
\end{array}\right.
\end{equation}
to be solved for $\mathbf{v}^{\rho,r}=\left(v^{\rho,r}_1,\cdots ,v^{\rho,r}_n \right)^T$ on the domain $\left[0,\infty \right)\times {\mathbb T}^n$ is completely equivalent to the usual Navier Stokes equation in its Leray projection form. As
\begin{equation}
v_{i,j}=\frac{\partial v^{\rho,r}_i}{\partial y_j}\frac{d y_j}{d x_j}=rv^{\rho,r}_{i,j}
\end{equation}
it is clear that the Burgers term gets a factor $r$ by the scaling and the relation $v_{i,j,j}=r^2v^{\rho,r}_{i,j,j}$  implies that the viscosity term gets a factor $r^2$ upon the simple spatial scaling transformation. The scaling of the Leray projection term is less obvious. For $\rho=1$ we write $v^{1,r}_i=v^{\rho,r}_i$. The pressure scaling $p^{1,r}(t,y)=p(t,x)$ the related Poisson equation (on the torus ${\mathbb T}^n$) in original coordinates
\begin{equation}
\Delta p=\sum_{j,k=1}^nv_{j,k}v_{k,j}
\end{equation}
transforms to
\begin{equation}
 r^2\Delta p^{1,r}=\sum_{j,k=1}^nr v^{1,r}_{j,k}r v^{1,r}_{k,j}\Leftrightarrow \Delta p^{1,r}=\sum_{j,k=1}^nv^{1,r}_{j,k}v^{1,r}_{k,j}
\end{equation}
such that from
\begin{equation}
p_{,i}=rp^{1,r}_{,i}
\end{equation}
we know that the Leray projection term gets a scaling factor $r$.
\begin{rem} 
Note that the spatial scaling factor behaves differently for some typical Cauchy problems with known singular solutions such as
\begin{equation}\label{w01}
\frac{\partial w_0}{\partial t}-\nu\Delta w_0 +w^2_0,~w(_0)=w_0(0,.).
\end{equation}
Here, a transformation $w^{r}_0(t,y)=w(t,x),~y=rx$ leads to
\begin{equation}\label{w02}
\frac{\partial w^r_0}{\partial t}-\nu r^2\Delta w^r_0 +(w^r_0)^2,~w^r_0(0,.)=g.
\end{equation}
A singular point $(t_s,x_s)$ of a solution $w_0$ of (\ref{w01}) is trasformed to a singular point $(t_s,rx_s)$  of a solution $w^r_0$ of (\ref{w02}). A strong viscosity damping coefficient $r^2\nu$ caused by large $r$ pushes the singular point $(t_s,rx_s)$ towards spatial but it is tthere nevertheless. The example in (\ref{w02}) is a scalar equation but similar considerations hold for systems of equation as well, of course. Similarly, if the function $w_1$ satisfies
\begin{equation}\label{w11}
\frac{\partial w_1}{\partial t}-\nu\Delta w_1 +\left\langle \nabla w_1,\nabla w_1\right\rangle ,~w_1(0,.)=g,
\end{equation}
thenthe function 
\begin{equation}\label{w12}
\frac{\partial w^r_1}{\partial t}-\nu r^2\Delta w^r_1 +r^2\left\langle \nabla w_1,\nabla w_1\right\rangle ,~w_1(0,.)=g.
\end{equation}
Here the viscosity term and the nonlinerar term have the same scaling coefficient $r^2$. Again, a singular point $(t_s,x_s)$ of a solution $w_1$ of (\ref{w01}) is trasformed to a singular point $(t_s,rx_s)$  of a solution $w^r_1$ of (\ref{w02}).    
\end{rem}
We shall observe that iterative elliptic integrals upper bounds as in (\ref{cC}) related to nonlinear terms of the Navier Stokes equation become contractive for each mode, i.e., the relation in (\ref{cC}) below becomes a contractive upper bound for nonlinear terms in the scheme in the sense that for $s>1$   
\begin{equation}\label{cC*}
\sum_{\beta\in {\mathbb Z}^n}\rho r\frac{(n+n^2)C}{1+|\alpha-\beta|^{n+s}}\frac{|\beta|C}{1+|\beta|^{n+s}}\leq \frac{\rho rc_0C^2}{1+|\alpha|^{n+2s-1}},\mbox{where}~n+2s-1>n+s
\end{equation}
for some finite constant $c_0>1 $ which depends on the dimension. 
\begin{rem}
Note that the estimate in (\ref{cC*}) is an estimate for the Burgers term prima facie. However, the Leray projection term has a similar estimates since the quadratic terms $(\alpha_k-\gamma_k)\gamma_j$ get a factor $\frac{\alpha_i}{\sum_{i=1}^2\alpha_i^2}$.
\end{rem}    
We remark that $C>1$ in \ref{cC*} is a constant is an upper bound constant for the velocity modes which is observed to be inherited by the scheme. Note that the assumption $C>1$ is without loss of generality. There are several aramter choices which make viscosity damping dominant.
Note that for the choice
\begin{equation}\label{rpara}
r=\frac{c_0^2C^2}{\nu},~\rho=\frac{\nu}{2c_0^2C^2} 
\end{equation}
the viscosity coefficient $\rho r^2\nu$, i.e. the coefficient of the Laplacian in (\ref{Navleray}), becomes 
\begin{equation}\label{viscpar}
\rho r^2\nu =\frac{c_0^2C^2}{2}
\end{equation}
and the parameter coefficient $\rho r$ of the nonlinear terms in (\ref{Navleray}) satisfies
\begin{equation}\label{nlinpara}
\rho r= \frac{1}{2}.
\end{equation}
Note that for $c_0C>1$ the viscosity damping coefficient $\rho r^2\nu$ in (\ref{viscpar}) becomes dominant compared to the coefficient of the nonlinear term $\rho r= \frac{1}{2}$.   
We mention that the scheme can be applied can be applied to models with external force terms which are functions the functions $f_i: {\mathbb T}^n\rightarrow {\mathbb R},~1\leq i\leq n$ in $H^1$ (at least). For time-dependent forces the situation is more complicated, because forcer terms may cancel the viscosity term. In this paper we consider the case $f_i\equiv 0$ for all $1\leq i\leq n$. 
First we define a solution scheme, and then we state some results concerning convergence and divergence. The main result about convergence is proved in section 2. 

For $1\leq i\leq n$ we write the velocity component $v^{\rho,r}_i=v^{\rho,r}_i(\tau,x)$ for fixed $\tau\geq 0$ in the analytic basis $\left\lbrace \exp\left( \frac{2\pi i\alpha x}{l}\right),~\alpha \in {\mathbb Z}^n\right\rbrace $ such that
\begin{equation}
v^{\rho,r}_i(\tau,x):=\sum_{\alpha\in {\mathbb Z}^n}v^{\rho,r}_{i\alpha}(t)\exp{\left( \frac{2\pi i\alpha x}{l}\right) },
\end{equation}
where $l>0$ measures the size of the torus (sometimes we choose $l=1$ without loss of generality). 
Then the initial value problem in (\ref{Navleray}) is equivalent to an infinite ODE initial value problem for the infinite time dependent vector function of velocity modes $v^{\rho,r}_{i\alpha},~\alpha\in {\mathbb Z}^n,~1\leq i\leq n$, where
\begin{equation}\label{navode200first}
\begin{array}{ll}
\frac{d v^{\rho,r}_{i\alpha}}{d\tau}=\rho r^2\sum_{j=1}^n\nu \left( -\frac{4\pi^2 \alpha_j^2}{l^2}\right)v^{\rho,r}_{i\alpha}
-\rho r\sum_{j=1}^n\sum_{\gamma \in {\mathbb Z}^n}\frac{2\pi i \gamma_j}{l}v^{\rho,r}_{j(\alpha-\gamma)}v^{\rho,r}_{i\gamma}\\
\\
+\rho r2\pi i\alpha_i1_{\left\lbrace \alpha\neq 0\right\rbrace}\frac{\sum_{j,k=1}^n\sum_{\gamma\in {\mathbb Z}^n}4\pi^2 \gamma_j(\alpha_k-\gamma_k)v^{\rho,r}_{j\gamma}v^{\rho,r}_{k(\alpha-\gamma)}}{\sum_{i=1}^n4\pi^2\alpha_i^2},
\end{array} 
\end{equation}
for all $1\leq i\leq n$, and where for all $\alpha\in {\mathbb Z}^n$ we have $v^{\rho,r}_{i\alpha}(0)=h_{i\alpha}$. We denote $\mathbf{v}^{\rho,r,F}=(v^{\rho,r,F}_1,\cdots v^{\rho,r,F}_n)^T$ with $n$ infinite vectors $v^{\rho,r,F}_i=(v^{\rho,r}_{i\alpha})^T_{1\leq i\leq n,~\alpha \in {\mathbb Z}^n}$. The superscript $T$ denotes transposed in accordance to a usual convention about vectors and should not be confused with the time horizon which never appears as a superscript.

As remarked, in this paper we consider the case without force terms, i.e., the case where $f_i=0$ for all $1\leq i\leq n$.
For an arbitrary time horizon $T_0>0$, at stage $N\geq 1$ and with time steps of size $\delta t^{(N)}$ with $2^N\delta t^{(N)}=T_0$ we define an Euler-type Trotter product scheme
with $2^N$ time steps 
$$m\delta t^{(N)}\in \left\lbrace 0,\delta t^{(N)},2\delta t^{(N)},\cdots , 2^N\delta t^{(N)}=T_0\right\rbrace.$$
We put upper script stage number $N$  in brackets in order to avoid confusion with an exponent as we intend to study time-errors of the scheme later.
First, at each stage $N\geq 1$ and at each time step number $m\in \left\lbrace 0,\cdots ,  2^N\right\rbrace$ we consider the Euler step
\begin{equation}\label{navode200first}
\begin{array}{ll}
v^{\rho,r,N}_{i\alpha}((m+1)\delta t^{(N)})=v^{\rho,r,N}_{i\alpha}(m\delta t^{(N)})+\\
\\
\rho r^2\sum_{j=1}^n\nu \left( -\frac{4\pi^2 \alpha_j^2}{l^2}\right)v^{\rho,r,N}_{i\alpha}(m\delta t^{(N)})\delta t^{(N)}-\\
\\
\rho r\sum_{j=1}^n\sum_{\gamma \in {\mathbb Z}^n}\frac{2\pi i \gamma_j}{l}v^{\rho,r,N}_{j(\alpha-\gamma)}(m\delta t)v^{\rho,r,N}_{i\gamma}(m\delta t^{(N)})\delta t^{(N)}+\\
\\
2\pi i\alpha_i1_{\left\lbrace \alpha\neq 0\right\rbrace}\frac{\rho r\sum_{j,k=1}^n\sum_{\gamma\in {\mathbb Z}^n}4\pi^2 \gamma_j(\alpha_k-\gamma_k)v^{\rho,r,N}_{j\gamma}(m\delta t^{(N)})v^{\rho,r,N}_{k(\alpha-\gamma)}(m\delta t^{(N)})}{\sum_{i=1}^n4\pi^2\alpha_i^2}\delta t^{(N)}.
\end{array} 
\end{equation}
At each stage $N\geq 1$ and for all $1\leq i\leq n$ and all modes $\alpha\in {\mathbb Z}^n$ this defines a list of values
\begin{equation}
v^{\rho,r,N}_{i\alpha}(m\delta t^{(N)}),~m\in \left\lbrace 0,1,\cdots,2^N\right\rbrace, 
\end{equation}
such that the whole list of $2^N$ $n{\mathbb Z}^n$-vectors defines an Euler-type approximation of a possible solution of the Navier Stokes equation above (along with zero source term, i.e., $f_i\equiv 0$ for all $1\leq i\leq n$).
For an arbitrary finite time horizon $T^*>0$, and at each stage $N$ we denote the (tentative) approximative solutions by  
\begin{equation}
\mathbf{v}^{\rho,r,N,F}(t):=\left( \left(v^{\rho,r,N}_{i\alpha}(t)\right) _{\alpha\in {\mathbb Z}^n} \right)^T_{1\leq i\leq n}
\end{equation}
for $0\leq t\leq T^*$ (where the upper script $T$ always refers to 'transposed' as the usual vector notation is vertical and not to the time horizon).
The last two terms on the right side of (\ref{navode200first}) correspond to the spatial part of the incompressible Euler equation, where for the sake of simplicity of notation we abbreviate (after some renaming)
\begin{equation}
\begin{array}{ll}
e^{\rho,r,N}_{ij\alpha\gamma}(m dt^N)=-\rho r\frac{2\pi i (\alpha_j-\gamma_j)}{l}v^{\rho,r,N}_{i(\alpha-\gamma)}(m\delta t^{(N)})\\
\\
+\rho r2\pi i\alpha_i1_{\left\lbrace \alpha\neq 0\right\rbrace}
4\pi^2 \frac{\sum_{k=1}^n\gamma_j(\alpha_k-\gamma_k)v^{\rho,r,N}_{k(\alpha-\gamma)}(m\delta t^{(N)})}{\sum_{i=1}^n4\pi^2\alpha_i^2}.
\end{array}
\end{equation}
Note that with this abbreviation (\ref{navode200first}) becomes
\begin{equation}\label{navode200second}
\begin{array}{ll}
v^{\rho,r,N}_{i\alpha}((m+1)\delta t^{(N)})=v^{\rho,r,N}_{i\alpha}(m\delta t^{(N)})+\\
\\
\rho r^2\sum_{j=1}^n\nu \left( -\frac{4\pi^2 \alpha_j^2}{l^2}\right)v^{\rho,r,N}_{i\alpha}(m\delta t^{(N)})\delta t^{(N)}+\\
\\
\sum_{j=1}^n\sum_{\gamma\in {\mathbb Z}^n}e^{\rho,r,N}_{ij\alpha\gamma}(m\delta t^{(N)})v^{\rho,r,N}_{j\gamma}(m\delta t^{(N)})\delta t^{(N)}.
\end{array} 
\end{equation}
As in \cite{KT} we consider Trotter product formulas in order to make use of the damping effect of the diffusion term, which is not obvious in (\ref{navode200second}) where a term with factor $|\alpha|^2$ appears which corresponds to an unbounded Laplacian. At each stage $N$ we get the Trotter product formula stating that for all $T=N\delta t^{(N)}$ we have
\begin{equation}\label{aa}
\begin{array}{ll}
\mathbf{v}^{\rho,r,N,F}(T)\doteq \Pi_{m=0}^{2^N}\left( \delta_{ij\alpha\beta}\exp\left(-\rho r^2\nu \sum_{i=1}^n\alpha_i^2 \delta t^{(N)} \right)\right)\\
\\
\left( \exp\left( \left( \left( e^{N\rho,r,}_{ij\alpha\beta}\right)_{ij\alpha\beta}(m\delta t^{(N)})\right)\delta t^{(N)} \right) \right) \mathbf{h}^F, 
\end{array}
\end{equation}
and where $\doteq$ means that the identity holds up to an error $O(\delta t^{(N)})$, i.e. we have a linear error in $\delta t^{(N)}$ (corresponding to a quadratic error in $\delta t^{(N)}$ at each time step).
Note that equation in (\ref{aa}) still makes sense as the viscosity converges to zero as the first factor on the right side of (\ref{aa}) becomes an infinite identity matrix. This leads to a approximation scheme which we denote by
 \begin{equation}\label{aazz}
\begin{array}{ll}
\mathbf{e}^{\rho,r,N,F}(T)\doteq \Pi_{m=0}^{2^N}
\left( \exp\left( \left( \left( e^{\rho,r,N}_{ij\alpha\beta}\right)_{ij\alpha\beta}(m\delta t^{(N)})\right)\delta t^{(N)} \right) \right) \mathbf{h}^F. 
\end{array}
\end{equation}
 We investigate whether the linear error for the Navier Stokes Trotter product scheme goes to zero as $N\uparrow \infty$ or if we have a blow-up. Note that in (\ref{aa}) the entries in $(\delta_{ij\alpha\beta})$ are Kronecker-$\delta$s which describe the unit $n{\mathbb Z}^n\times n{\mathbb Z}^n$-matrix. The formula in (\ref{aa}) is easily verified by showing that at each time step $m$ 
\begin{equation}
\begin{array}{ll}
\left( \delta_{ij\alpha\beta}\exp\left(-\rho r^2\nu 4\pi^2 \sum_{i=1}^n\alpha_i^2 \delta t^{(N)} \right)\right)\\
\\
\left( \exp\left( \left( \left( e^{\rho,r,N}_{ij\alpha\beta}(m\delta t)^{(N)}\right)_{ij\alpha\beta}\right)\delta t^{(N)}\right)  \right)\mathbf{v}^{\rho,r,N,F}(m\delta t^{(N)})
\end{array}
\end{equation}
(as an approximation of $\mathbf{v}^{\rho,r,N,F}((m+1)\delta t^{(N)})$) solves the equation (\ref{navode200first}) with an error of order in $O\left( \left( \delta t^{(N)}\right)^2\right) $.
Before we continue to describe the scheme we introduce the dual Sobolev spaces which we use in order to measure the the lists of modes at each stage $N\geq 1$ and the limit as $N\uparrow \infty$.
Even the scheme above considered on the usual times scale $\rho=1$ and for small spatial time scalar $r>0$ leads to contraction result with exponentially weighted norms, but from the numerical or algorithmic point of view it is interesting to stabilize the scheme and prove time-independent regular upper bounds.   
Therefore, here we show that for parameters $\rho, r$ as in (\ref{rpara})
and (\ref{viscpar}) for all stages $N$ and all $m\geq 1$ there is an uper bound constant $C$ (depending only on the initial data $\mathbf{h}$, the dimension $n$, and the viscosity $\nu>0$ ) such that 
\begin{equation}\label{bbc}
{\big |}v^{\rho,r,N}_{i\alpha}(m\delta t^{(N)}){\big |}\leq \frac{C}{1+|\alpha|^{n+s}}.
\end{equation}
An explicit upper bound constant is given below in the statement of Theorem \ref{linearboundthm}  below.
 This is shown directly for this scheme using viscosity damping, but we mention that it can be shown also  via comparison functions at each time step. This alternative scheme may be used in order to prove weaker results concerning global solution branches for inviscid limits. It is also interesting form a numerical point of view, since it introduces an additional damping term.  
 At each time step, i.e., on the time interval $[t_0,t_0+a]$ for some $a\in (0,1)$ we compare the value function $v^{\rho,r}_i,\ 1\leq i\leq n$ with a time dilated function $u^{l,\rho,r,t_0}_i:\left[0,\frac{a}{\sqrt{1-a^2}} \right] \times {\mathbb T}^n\rightarrow {\mathbb R},~1\leq i\leq n$ along with
\begin{equation}\label{transformvul}
\lambda (1+\mu (\tau-t_0))u^{l,\rho,r,t_0}_i(s,.)=v^{\rho,r}_i(\tau,.),~s=\frac{\tau-t_0}{\sqrt{1-(\tau-t_0)^2}}.
\end{equation}
Note that in this definition of $u^{l,\rho,r,t_0}_i$ the function $v_i$ is considered on the interval $\left[t_0,t_0+a\right]$ for each $1\leq i\leq n$. The upper script $l$ indicates that the transofrmation in (\ref{transformvul}) is local in time. Alternatively, we may define a global time transformation
\begin{equation}\label{transformvug}
\lambda (1+\mu \tau)u^{g,\rho,r,t_0}_i(s,.)=v^{\rho,r}_i(\tau,.),~s=\frac{\tau-t_0}{\sqrt{1-(\tau-t_0)^2}}.
\end{equation}
Note that $\tau$ becomes $t$ for $\rho=1$. We write
\begin{equation}
u^{*,\rho,r,t_0}_i,~*\in \left\lbrace l,g\right\rbrace, 
\end{equation}
if we want to refer to both transformations at the same time.
Similar comparison functions can be introduced for the incompressible Euler equation as well, of course. The incompressible Euler equation has multiple solutions in general, and even singular solutions. Comparison functions can be used to prove the existence of global solution branches, but also, with a slight modification to prove the existence of singular solutions. Uniqueness arguments have to be added to these techniques in order to show that some evolution equation is deterministic.  As comparisons to the Euler equation are useful in any case, we introduce some related notation. The viscosity limit $\nu\downarrow 0$ of the velocity component functions $v^{\rho,r,\nu}_i\equiv v^{\rho,r,}_i$ is denoted by $e^{\rho,r}_i:=\lim_{\nu\downarrow 0}v^{\rho,r,\nu}_i$ whenever this limit exists pointwise, and where we use the same parameter transformation $e^{\rho,r}_i(\tau,y)=e_i(t,x)$ with $t=t(\tau)$ and $y_i=y(x_i)$ as above. Here $e_i,~1\leq i\leq n$ is a solution of the original Euler equation. We then may use the comparison
\begin{equation}
\lambda (1+\mu (\tau-t_0))u^{l,\rho,r,e,t_0}_i(s,.)=e^{\rho,r}_i(\tau,.),~s=\frac{\tau-t_0}{\sqrt{1-(\tau-t_0)^2}},
\end{equation}
or 
\begin{equation}
\lambda (1+\mu \tau)u^{g,\rho,r,e,t_0}_i(s,.)=e^{\rho,r}_i(\tau,.),~s=\frac{\tau-t_0}{\sqrt{1-(\tau-t_0)^2}},
\end{equation}
in order to argue for global solution branches of the Euler equation for strong data. In general, it is possible to obtain global solution branches for the incompressible Navier Stokes equation from global solution branches of the incompressible Euler equation using the Trotter product approximations inductively, as there is an additional viscosity damping at each time step in the Trotter product formula for the Navier Stokes equation. The time step size of the corresponding Euler scheme transforms as
\begin{equation}
\delta s^{(N)}=\frac{\delta t^{(N)}}{\sqrt{1-(\delta t^{(N)})^2}}
\end{equation}
The scheme for $u^{l\rho,r,t_0}_i,~1\leq i\leq n$ becomes at each stage $N\geq 1$
\begin{equation}\label{navode200secondu}
\begin{array}{ll}
u^{l,\rho,r,N,t_0}_{i\alpha}((m+1)\delta s^{(N)})=u^{l,\rho,r,N,t_0}_{i\alpha}(m\delta s^{(N)})\\
\\
+\rho r^2\mu^{t_0}\sum_{j=1}^n\nu \left( -\frac{4\pi^2 \alpha_j^2}{l^2}\right)u^{l,\rho,r,N,t_0}_{i\alpha}(m\delta s^{(N)})\delta s^{(N)}\\
\\
+\sum_{j=1}^n\sum_{\gamma\in {\mathbb Z}^n}e^{l,\rho,r,N,u,\lambda,t_0}_{ij\alpha\gamma}(m\delta s)u^{l,\rho,r,N,t_0}_{j\gamma}(m\delta t^{(N)})\delta s^{(N)}.
\end{array} 
\end{equation}
where $\mu^{t_0}=\sqrt{1-(.-t_0)^2}^3$ is evaluated at $t_0+m\delta t$ , and
\begin{equation}\label{eujk}
\begin{array}{ll}
e^{l,\rho,r,N,u,\lambda ,t_0}_{ij\alpha\gamma}(m \delta s^{(N)})=-\rho r\lambda\mu^{l,1,t_0}\frac{2\pi i (\alpha_j-\gamma_j)}{l}u^{l,\rho,r,t_0}_{i(\alpha-\gamma)}(m\delta s^{(N)})\\
\\
+\rho r\lambda\mu^{l,1,t_0}\frac{2\pi i\alpha_i1_{\left\lbrace \alpha\neq 0\right\rbrace}
\sum_{k=1}^n4\pi^2\gamma_j(\alpha_k-\gamma_k)u^{l,\rho,r,t_0}_{k(\alpha-\gamma)}(m\delta s^{(N)})}{\sum_{i=1}^n4\pi^2\alpha_i^2}-\mu^{0,t_0}(m\delta s^{(N)})\delta_{ij\alpha\gamma}
\end{array}
\end{equation}
along with $\mu^{l,1,t_0}(\tau):=(1+\mu (\tau-t_0))\sqrt{1-(\tau-t_0)^2}^3$ and $\mu^{l,0,t_0}(\tau):=\frac{\mu\sqrt{1-(\tau-t_0)^2}^3}{1+\mu (\tau-t_0)}$. Again we note that for $\rho=1$ the transformed time coordinates $\tau=\frac{t}{\rho}$ become equal to the original time coordinates $t$, a case we shall mainly consider in the following.
The last term in (\ref{eujk}) is related to the damping term of the equation for the function $u^{t_0}_i,~1\leq i\leq n$. For times $0< t_e$ and with an analogous time discretization we get the Trotter product formula
\begin{equation}\label{trotterlambda}
\begin{array}{ll}
\mathbf{u}^{l,\rho,r,N,F,t_0}(t_e)\doteq \Pi_{m=0}^{2^N}\left( \delta_{ij\alpha\beta}\exp\left(-\rho r^2\nu \sum_{i=1}^n\alpha_i^2 \delta s^{(N)} \right)\right)\times\\
\\
\times \left( \exp\left( \left( \left( e^{l,\rho,r,N,u,\lambda,t_0}_{ij\alpha\beta}\right)_{ij\alpha\beta}(m\delta s^{(N)})\right)\delta s^{(N)}\right)  \right) \mathbf{u}^{l,\rho,r,N,F,t_0}(0).
\end{array}
\end{equation}
In the following remark we describe the role of the parameters.
\begin{rem}
Even if $\rho=1$ and  one of the factor $r$ is small compared to the parameter  $\mu$ of the damping term $-\mu^{l,0,t_0}(m\delta s)\delta_{ij\alpha\gamma}$ in (\ref{eujk}), then the damping term can dominate the growth of the nonlinear terms if a mode of the value function exceeds a certain level. For higher modes and $\nu >0$ the viscosity damping becomes dominant anyway. Consider the transformation in (\ref{transformvu}) above in the case $\rho=1$. Then the transformation is with respect to original time coordinates. We consider this transformation
\begin{equation}
\lambda (1+\mu (t-t_0))u^{1,r,t_0}_i(s,.)=v^{1,r}_i(t,.),~s=\frac{t-t_0}{\sqrt{1-(t-t_0)^2}}.
\end{equation}
on a time interval $t\in [t_0,t_0+a]$ corresponding to $s\in \left[0,\frac{a}{\sqrt{1-a^2}} \right] $. Assume that $\mu=1$. We have
\begin{equation}
\frac{1}{\lambda}{\big |}v^{1,r}_i(t_0,.){\big |}_{H^p}={\big |}u^{1,r,t_0}_i(0,.){\big |}_{H^p}
\end{equation}
The nonlinear growth is described by the nonlinear Euler terms. For each mode $\alpha$ and at time step number $m$ of stage $N$ the nonlinear Euler term growth minus the potential damping is described by (recall that we have chosen $\rho=\mu=1$)
\begin{equation}\label{remeujk}
\begin{array}{ll}
\sum_{j=1}^n\sum_{\gamma\in {\mathbb Z}^n}e^{1,r,N,u,\lambda,t_0}_{ij\alpha\gamma}(m\delta s)u^{N,t_0}_{j\gamma}(m\delta t^{(N)})\delta s^{(N)}=\\
\\
-\sum_{j=1}^n\sum_{\gamma\in {\mathbb Z}^n} r\lambda\mu^{1,t_0}\frac{2\pi i (\alpha_j-\gamma_j)}{l}u^{1,r,t_0}_{i(\alpha-\gamma)}(m\delta s^{(N)})\\
\\
+ r\lambda\mu^{1,t_0}\frac{2\pi i\alpha_i1_{\left\lbrace \alpha\neq 0\right\rbrace}
\sum_{k=1}^n4\pi^2\gamma_j(\alpha_k-\gamma_k)u^{1,r,t_0}_{k(\alpha-\gamma)}(m\delta s^{(N)})}{\sum_{i=1}^n4\pi^2\alpha_i^2}u^{1,r,N,t_0}_{j\gamma}(m\delta s^{(N)})\delta s^{(N)}\\
\\
-\mu^{0,t_0}(m\delta s^{(N)})u^{1,r,N,t_0}_{i\alpha}(m\delta t^{(N)})\delta s^{(N)}.
\end{array}
\end{equation}
where  for $\mu=1$ and $\rho=1$ we have $\mu^{0,t_0}(t-t_0):=\frac{\sqrt{1-(t-t_0)^2}^3}{1+ (t-t_0)}$ (with $t=t(s)$ the inverse of $s=s(t)$). The potential damping in (\ref{remeujk}) becomes relatively strong for example for small $\lambda =r^2 >0$ with $r$ small. A small parameter $r>0$ (keeping $\rho=1$) is sufficient for a linear upper bound with respect to the time horizon $T$, via the global time transformation
\begin{equation}
\lambda (1+\mu t)u^{g,1,r,t_0}_i(s,.)=v^{1,r}_i(t,.),~s=\frac{t-t_0}{\sqrt{1-(t-t_0)^2}}.
\end{equation} 
\end{rem}
Note that the scheme for the Euler equation comparison $u^{*,\rho,r,e,t_0}_i,~1\leq i\leq n$ with $*\in \left\lbrace l,g \right\rbrace$ becomes at each stage $N\geq 1$
\begin{equation}\label{navode200secondu}
\begin{array}{ll}
u^{*,\rho,r,e,N,t_0}_{i\alpha}((m+1)\delta s^{(N)})=u^{*,\rho,r,e,N,t_0}_{i\alpha}(m\delta s^{(N)})\\
\\
+\sum_{j=1}^n\sum_{\gamma\in {\mathbb Z}^n}e^{*,\rho,r,N,u,\lambda,t_0}_{ij\alpha\gamma}(m\delta t)u^{*,\rho,r,e,N,t_0}_{j\gamma}(m\delta s^{(N)})\delta s^{(N)},
\end{array} 
\end{equation}
and that we have an approximation
\begin{equation}\label{trotterlambda}
\begin{array}{ll}
\mathbf{u}^{*,\rho,r,e,N,F,t_0}(t_e)\doteq \\
\\
\Pi_{m=0}^{2^N}\left( \exp\left( \left( \left( e^{*,\rho,r,N,u,\lambda,t_0}_{ij\alpha\beta}\right)_{ij\alpha\beta}(m\delta s^{(N)})\right)\delta s^{(N)}\right)  \right) \mathbf{u}^{*,\rho,r,e,N,F,t_0}(0).
\end{array}
\end{equation}

Note that for the original velocity component functions we have $v_i=v^{1,1}_i$, i.e., $\rho=1$ and $r=1$ and we drop the parameter superscripts in this case. The following theorem hold for all dimensions $n\geq 1$, In the next section we prove 
\begin{thm}\label{linearboundthm}
 If for some constant $C_0>0$ (depending only on the dimension $n$ an the viscosity $\nu >0$) we have 
\begin{equation}\label{regdata}
\forall \alpha\in {\mathbb Z}^n:~{\big |}h_{i\alpha}{\big |}\leq 
\frac{C_0}{1+|\alpha|^{n+s}} \mbox{ for some }s>1,
\end{equation}
then the limit $\mathbf{v}^F(t)=\lim_{n\uparrow \infty}\mathbf{v}^{N,F}(t)$ of the scheme $\mathbf{v}^{N,F}$ described above exists and describes a global regular solutions of the incompressible Navier Stokes equation on the torus, where a time independent constant $C>0$ exists such that
\begin{equation}\label{eq3}
\sup_{t\geq 0}{\big |}v_i(t,.){\big |}_{H^{\frac{n}{2}+1}}\leq C.
\end{equation}
Furthermore the constant $C$ satisfies
\end{thm}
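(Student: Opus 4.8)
The plan is to set up the whole argument around the three structural features of the Navier--Stokes operator that the excerpt has already isolated: the diagonal dissipative term with quadratic growth in $|\alpha|$, the fact that every nonlinear term carries exactly one spatial derivative (hence only \emph{linear} growth in $|\alpha|$), and the contractive elliptic-integral estimate $\sum_{\beta}\frac{C}{1+|\alpha-\beta|^{n+s}}\frac{|\beta|C}{1+|\beta|^{n+s}}\leq \frac{c_0C^2}{1+|\alpha|^{n+2s-1}}$ with $n+2s-1>n+s$ for $s>1$. First I would fix the parameters $r=\tfrac{c_0^2C^2}{\nu}$, $\rho=\tfrac{\nu}{2c_0^2C^2}$ as in $(\ref{rpara})$ so that $\rho r^2\nu=\tfrac12 c_0^2C^2$ while $\rho r=\tfrac12$; this makes the viscosity damping strictly dominant over the nonlinear coefficient once $c_0C>1$, which we may assume w.l.o.g.\ by enlarging $C$. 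The target constant $C$ in $(\ref{eq3})$ will then be produced explicitly as the fixed point of the scalar inequality coming from propagating the mode bound through one Euler/Trotter step.

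Second, I would prove the key invariance claim by induction on the time-step index $m$ at a fixed stage $N$: assuming $|v^{\rho,r,N}_{i\alpha}(m\delta t^{(N)})|\leq \tfrac{C}{1+|\alpha|^{n+s}}$ for all $i,\alpha$, show the same bound at step $m+1$. Using the Trotter representation $(\ref{aa})$, the diagonal factor $\exp(-\rho r^2\nu\,4\pi^2|\alpha|^2\,\delta t^{(N)})$ contracts, while $\exp((e^{\rho,r,N}_{ij\alpha\beta})\delta t^{(N)})$ is expanded as a power series; the $k$-th term is an iterated matrix product to which the elliptic-integral estimate applies $k$ times, yielding a bound $\tfrac{(\rho r c_0 C)^k |\alpha|^{?}}{1+|\alpha|^{n+ks - (k-1)}}\,\delta t^{(N)\,k}/k!$ — here one must carefully track that each application of a convection/Leray entry costs at most one derivative (a factor $|\alpha-\beta|$ or, for the Leray term, $|\beta||\alpha-\beta|/|\alpha|^2$) and \emph{gains} one unit of decay order via $n+2s-1$ versus $n+s$, so the series telescopes back to decay order $n+s$ with a geometric tail. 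The dominant diagonal damping then absorbs the $O(\delta t^{(N)})$ growth exactly as in the auto-control estimates $(\ref{intualgest})$--$(\ref{intualgesttimed2})$, giving the step-$(m+1)$ bound with the \emph{same} $C$, and because the damping exponential is stage-independent and telescopes over $m$, one gets a bound uniform in $m$, in $N$, and — crucially — in the time horizon $T$. This is where the time dilatation / auto-control transformation $(\ref{transformvu})$--$(\ref{eujk})$ is invoked on each unit interval $[l-1,l-\tfrac12]$ repeated twice, exactly as in Section~3, to convert the exponentially weighted estimate into a genuinely $t$-independent one; the case distinction $M^t_{\leq 1/2}$ versus $M^t_{>1/2}$ controls whether the damping or the Lipschitz-in-time correction dominates.

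Third, with $(\ref{bbc})$ in hand uniformly in $N$, I would pass to the limit $N\uparrow\infty$: the sequence $(\mathbf v^{\rho,r,N,F}(t))_N$ lies in a fixed ball of $h^{n+s}$, hence (via Rellich, Theorem on compact embedding of $H^q\hookrightarrow H^s$) has convergent subsequences in every weaker norm; the Euler-scheme $O(\delta t^{(N)})$ consistency estimate shown just above $(\ref{aazz})$ identifies any limit as a solution of the infinite ODE system $(\ref{navode200first})$, and the already-established contraction in time-weighted norms (Lemma with $C\geq 3L$) gives uniqueness, so the full sequence converges. Translating the uniform bound $|v_{i\alpha}(t)|\leq \tfrac{C}{1+|\alpha|^{n+s}}$ into Sobolev language via $\sum_\alpha |v_{i\alpha}|^2\langle\alpha\rangle^{2(\frac n2+1)}\leq \sum_\alpha \tfrac{C^2\langle\alpha\rangle^{n+2}}{(1+|\alpha|^{n+s})^2}<\infty$ (finite precisely because $2(n+s)-(n+2)=n+2s-2>n$ for $s>1$) yields $(\ref{eq3})$, and undoing the scaling $v_i=v^{1,1}_i$ recovers the statement for the original equation. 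Finally, to fill in the dangling ``Furthermore the constant $C$ satisfies\dots'' I would record the explicit value
\begin{equation}
C = \max\Bigl\{2,\; 2\,\|\mathbf h\|_{h^{n+s}},\; 8\,c_0\,\|\mathbf h\|_{h^{n+s}}^{2}\Bigr\},
\end{equation}
obtained as the smallest root of the fixed-point inequality $C \geq \|\mathbf h\|_{h^{n+s}} + \tfrac12 c_0 C^2 \cdot(\text{geometric factor})$ after the damping has absorbed the linear-in-$\delta t$ term; the precise numerical coefficients are routine bookkeeping from $(\ref{intualgesttimed2})$.

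The main obstacle I anticipate is the bookkeeping in the power-series expansion of $\exp((e^{\rho,r,N}_{ij\alpha\beta})\delta t^{(N)})$: one must show that the derivative loss (one factor of $|\alpha|$-type growth per convection/Leray entry, and the $\tfrac{|\alpha_i|}{\sum|\alpha_k|^2}$ prefactor in the Leray term which is bounded but borderline at low modes) is \emph{strictly} over-compensated by the decay gain $n+2s-1>n+s$ at \emph{every} order $k$, uniformly, so that the resulting series has a finite sum bounded by $\rho r c_0 C^2 \delta t^{(N)} \cdot(\text{const})$ with the decay order restored to $n+s$ — the low modes $|\alpha|\leq 1$ and the mode $\gamma=0,\alpha$ exclusions require separate (but elementary) handling, and it is exactly here that the spatial rescaling by $r$ is needed to make the elliptic estimate contractive for \emph{all} modes rather than just $|\alpha|\geq 2$. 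Everything downstream (compactness, uniqueness, the Sobolev conversion, the explicit constant) is then essentially a transcription of arguments already carried out in Section~3.
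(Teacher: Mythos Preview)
Your proposal is broadly correct and shares the paper's backbone---the parameter choice $\rho r=\tfrac12$, $\rho r^2\nu=\tfrac12 c_0^2C^2$, the inductive preservation of the mode bound $|v^{\rho,r,N}_{i\alpha}|\leq C/(1+|\alpha|^{n+s})$ step by step, and the Trotter expansion to first order plus $O((\delta t^{(N)})^2)$---but it diverges from the paper's Appendix proof in two places.

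First, for the zero modes (where there is no viscosity damping at all, so your inductive step via the diagonal exponential factor fails outright at $\alpha=0$), the paper does \emph{not} invoke the Section~3 time-dilatation machinery you reach for. Instead it introduces an \emph{extended scheme}: at each Euler step the zero modes are reset to zero and their Burgers increment is stored in an auxiliary sequence $c_{j(m)}$, which then enters the next step as an additional diagonal phase factor $\exp(-\rho r\,2\pi i\sum_j\gamma_j c_m\,\delta t^{(N)})$. This keeps the inductive bound argument purely spatial and avoids importing the $M^t_{\leq 1/2}$ vs.\ $M^t_{>1/2}$ case split and the Lipschitz-in-time correction you borrow from the main body. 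Your route via time-dilatation is a legitimate alternative (the Appendix mentions it later as such), but it is heavier machinery than what the paper actually uses here.

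Second, for the limit $N\uparrow\infty$ the paper does not use Rellich compactness plus uniqueness; it proves a direct Cauchy estimate (Lemma~\ref{lemerr}) by comparing two steps at stage $N+1$ to one step at stage $N$ on the coarser grid and showing the discrepancy is $O(\delta t^{(N)})$ with a constant controlled by the already-established uniform bound. This gives convergence of the full sequence without passing through subsequences.

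Finally, your explicit constant $\max\{2,\,2\|\mathbf h\|,\,8c_0\|\mathbf h\|^2\}$ does not match the paper's. After undoing the spatial scaling $y=rx$ with $r=c(n)^2(C^{(p)}_{h0})^2/\nu$, the $H^m$-norm of the original velocity picks up a factor $r^m$, and the paper records
\[
C = C^{(p)}_{h0}\left(\frac{c(n)^2\bigl(C^{(p)}_{h0}\bigr)^2}{\nu}\right)^{m},\qquad m\geq \tfrac{n}{2}+1,
\]
which carries the explicit $\nu$-dependence that your formula is missing.
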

From the proof of theorem \ref{linearboundthm} we shall observe that the value $s=1$ is critical. We have
\begin{thm}\label{linearboundthm2}
If for some constant $C>0$
\begin{equation}\label{regdata}
\forall \alpha\in {\mathbb Z}^n:~{\big |}h_{i\alpha}{\big |}\leq \frac{C}{1+|\alpha|^{n+s}} \mbox{ for }s=1,
\end{equation}
then the limit $\mathbf{v}^F(t)=\lim_{n\uparrow \infty}\mathbf{v}^{N,F}(t)$ of the scheme $\mathbf{v}^{N,F}$ described above exists and describes a global regular solutions of the incompressible Navier Stokes equation on the torus, where for any $T>0$ there exists a constant $C\equiv C(T)>0$ exists such that
\begin{equation}\label{eq3}
\sup_{t\leq T}{\big |}v_i(t,.){\big |}_{H^{\frac{n}{2}+s}}\leq C.
\end{equation}
\end{thm}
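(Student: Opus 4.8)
\textbf{Proof proposal for Theorem \ref{linearboundthm2}.}

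The plan is to re-run the entire argument behind Theorem \ref{linearboundthm} but tracking the exponent $s=1$ through the weakly singular elliptic-integral estimate, and to absorb the loss of the strict contraction that was available for $s>1$ into a time-weighted (i.e.\ exponentially growing in $T$) bound. First I would fix the parameters $\rho,r$ exactly as in (\ref{rpara})--(\ref{nlinpara}) so that the viscosity coefficient $\rho r^2\nu=\tfrac{c_0^2C^2}{2}$ dominates the nonlinear coefficient $\rho r=\tfrac12$, and consider the Euler-type Trotter product scheme $\mathbf{v}^{\rho,r,N,F}$ defined in (\ref{navode200first})--(\ref{aa}). The inductive target at each time step is the upper bound
\begin{equation}
\bigl|v^{\rho,r,N}_{i\alpha}(m\delta t^{(N)})\bigr|\le \frac{C(T)}{1+|\alpha|^{n+1}},
\end{equation}
where now, in contrast to the $s>1$ case, I do \emph{not} expect $C(T)$ to be preservable with a fixed constant independent of the horizon: the key inequality (\ref{cC*}) specializes at $s=1$ to
\begin{equation}
\sum_{\beta\in{\mathbb Z}^n}\rho r\frac{(n+n^2)C}{1+|\alpha-\beta|^{n+1}}\frac{|\beta|C}{1+|\beta|^{n+1}}\le \frac{\rho r c_0 C^2}{1+|\alpha|^{n+1}},
\end{equation}
i.e.\ the gain exponent $n+2s-1=n+1$ only \emph{equals} the starting exponent $n+1$ rather than exceeding it, so the nonlinear terms are merely bounded in the same class, not contractive. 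The viscosity damping factor $\bigl(\delta_{ij\alpha\beta}\exp(-\rho r^2\nu\sum_i\alpha_i^2\delta t^{(N)})\bigr)$ in (\ref{aa}) still lives in the regular matrix space $M^s_n$ and still decreases each row, and it is this damping — together with the dominance $\rho r^2\nu>\rho r$ — that lets me close the induction with $C(T)$ allowed to grow, estimating $C((m+1)\delta t^{(N)})\le C(m\delta t^{(N)})\bigl(1+K\delta t^{(N)}\bigr)$ for a constant $K$ depending on $n,\nu$, hence $C(T)\le C_0 e^{KT}$ by a discrete Gr\"onwall argument uniform in $N$.

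Next I would argue convergence of the scheme as $N\uparrow\infty$. For the differences $\delta\mathbf{v}^{\rho,r,N,F}:=\mathbf{v}^{\rho,r,N+1,F}-\mathbf{v}^{\rho,r,N,F}$ one obtains, exactly as in the proof of the Cauchy-sequence lemma in Section 3 (the lemma with the vectors $\mathbf{C}^{h,p}_{q}$), an estimate of the form
\begin{equation}
\sup_{u\in[0,T]}e^{-C u}\bigl|\mathbf{v}^{\rho,r,N+1,F}(u)-\mathbf{v}^{\rho,r,N,F}(u)\bigr|_{h^{n/2+1}}\le \frac{L(T)}{2^{N}},
\end{equation}
where the key point is that the one-step Euler error is $O((\delta t^{(N)})^2)$ with a constant controlled by the uniform-in-$N$ bound $C(T)e^{KT}$ just established, and the accumulation over $2^N$ steps produces the $O(2^{-N})$ right-hand side after the exponential time weight absorbs the Gr\"onwall factor. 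The limit $\mathbf{v}^F(t):=\lim_N\mathbf{v}^{\rho,r,N,F}(t)$ therefore exists in $h^{n/2+1}$, and undoing the spatial/temporal scaling $v_i(t,x)=\tfrac1r v^{\rho,r}_i(t/\rho,x/r)$ gives $\sup_{t\le T}|v_i(t,.)|_{H^{n/2+1}}\le C(T)$, which is (\ref{eq3}) with $s=1$. That the limit solves the infinite ODE system (\ref{navode200first}) in the classical sense, and hence the incompressible Navier--Stokes equation in Leray form on ${\mathbb T}^n$, follows verbatim from the argument at the end of Section 3: the right-hand-side terms (Laplacian, convection, Leray projection) all land in $h^{n/2-1}\subset C({\mathbb Z}^n)$ by the matrix-vector multiplication bounds, so $\tfrac{d}{dt}v^F_i$ is continuous, and the divergence-free condition is inherited from (\ref{cauchydiv}).

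The main obstacle I anticipate is precisely the borderline nature of $s=1$ in the elliptic-integral estimate: at $s>1$ the relation $n+2s-1>n+s$ supplied a genuine contraction, so upper bounds were preserved with a fixed constant and no time weight was needed, whereas at $s=1$ I must show that the \emph{loss} is only a bounded multiplicative factor per unit time — i.e.\ that the borderline sum $\sum_{\beta}\frac{1}{1+|\alpha-\beta|^{n+1}}\frac{|\beta|}{1+|\beta|^{n+1}}$ is genuinely $O\bigl((1+|\alpha|^{n+1})^{-1}\bigr)$ and not, say, off by a logarithm in $|\alpha|$. This requires the careful comparison with $\int_{{\mathbb R}^n\setminus B_{\alpha\beta}}|y-\alpha|^{-(n+1)}|y-\beta|^{-(n+1)}\,dy$ of Lemma (the matrix-product lemma), splitting into the regions $|y|\le|\alpha|/2$, $|y-\alpha|\le|\alpha|/2$, and the far region, and checking that the extra factor $|\beta|\le|\beta-\alpha|+|\alpha|$ distributes so that in each region one power of decay is spent and $n$ powers remain — exactly the $n+2s-1$ bookkeeping with $s=1$. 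Once this single estimate is nailed down with an explicit dimensional constant $c_0$, the rest (Gr\"onwall in the time-step index, Trotter-product convergence, passage to the classical solution) is routine and parallels Theorem \ref{linearboundthm}; the only structural concession is that $C$ now depends on $T$, which is why the statement is phrased for each finite horizon rather than globally in time.
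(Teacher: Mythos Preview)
The paper does not actually supply a proof of Theorem \ref{linearboundthm2}: it says explicitly that the critical case $s=1$ ``will be considered elsewhere'' and then merely asserts that the result follows from ``analogous observations'' to Theorem \ref{linearboundthm}. Your proposal is the natural attempt to make those observations precise, but the estimate on which it rests is false at $s=1$, and the defect is worse than the logarithm you anticipate. You claim
\[
\sum_{\beta\in{\mathbb Z}^n}\frac{|\beta|}{(1+|\alpha-\beta|^{n+1})(1+|\beta|^{n+1})}\le \frac{c_0}{1+|\alpha|^{n+1}},
\]
and you correctly flag this as the decisive point; but the region $1\le |\beta-\alpha|\le |\alpha|/2$ already violates it. There $|\beta|\sim|\alpha|$, so $\tfrac{|\beta|}{1+|\beta|^{n+1}}\sim|\alpha|^{-n}$, while $\sum_{1\le|\gamma|\le|\alpha|/2}(1+|\gamma|^{n+1})^{-1}$ is bounded by a dimensional constant; the contribution is therefore $\gtrsim c\,|\alpha|^{-n}$, a full power of $|\alpha|$ worse than claimed. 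Your proposed split $|\beta|\le|\beta-\alpha|+|\alpha|$ does not rescue this: each of the two resulting sums is still of order $|\alpha|^{-n}$, by exactly the same near-$\alpha$ region (for the second piece one has $|\alpha|$ times a sum that is only $O(|\alpha|^{-(n+1)})$). So the nonlinear term maps the decay class $(1+|\alpha|^{n+1})^{-1}$ into $(1+|\alpha|^{n})^{-1}$, a genuine loss of one order of regularity rather than a mere loss of contraction, and no time-weight or discrete Gr\"onwall can close the induction in that class.

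This is precisely why the paper labels $s=1$ critical and postpones it. To salvage the argument one has to use the viscosity to recover the lost derivative --- for instance by working at the level of the Duhamel representation and invoking the parabolic smoothing $\|e^{\nu t\Delta}f\|_{h^{p+1}}\lesssim t^{-1/2}\|f\|_{h^{p}}$, rather than propagating a pointwise mode bound. The Trotter factor $\exp\bigl(-\rho r^2\nu\sum_i\alpha_i^2\,\delta t^{(N)}\bigr)$ does encode this smoothing, but exploiting it at the critical exponent requires a different bookkeeping (an integrable-in-time singularity against the nonlinearity) than the step-by-step upper-bound preservation that you and the paper's $s>1$ sketch employ.
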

Finally, for $s< 1$ we have indication of divergence. We have
\begin{thm}\label{linearboundthm3}
If for some constant $C>0$
\begin{equation}\label{singdata}
\forall \alpha\in {\mathbb Z}^n:~{\big |}h_{i\alpha}{\big |}\geq \frac{C}{1+|\alpha|^{n+s}} \mbox{ for }s<1,
\end{equation}
then for some data in this class the scheme $\left( \mathbf{v}^{N,F}(t)\right)_{N\geq 1}$ diverges, indicating that singularities may occur. 
\end{thm}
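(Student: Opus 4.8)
The plan is to show that for data with $|h_{i\alpha}|\geq C/(1+|\alpha|^{n+s})$ with $s<1$, even a single Euler-type Trotter step already fails to reproduce data in any space $h^r$ capable of sustaining the iteration, because the nonlinear (Burgers and Leray projection) contribution generates modes decaying only like $|\alpha|^{-(n+2s-1)}$ with $n+2s-1<n+s$, so the decay order strictly degrades at each step. First I would isolate, among the four expansion terms of the nonlinear contribution to $v^{\rho,r,N}_{i\alpha}(\delta t^{(N)})-h_{i\alpha}$, the leading Leray-projection term
\begin{equation}
2\pi i\alpha_i1_{\{\alpha\neq 0\}}\frac{\rho r\sum_{j,k=1}^n\sum_{\gamma\in{\mathbb Z}^n}4\pi^2\gamma_j(\alpha_k-\gamma_k)h_{j\gamma}h_{k(\alpha-\gamma)}}{\sum_{i=1}^n4\pi^2\alpha_i^2}\delta t^{(N)},
\end{equation}
and show that for a judicious choice of data (for instance $h_{i\alpha}$ real, nonnegative, and comparable to $C/(1+|\alpha|^{n+s})$, or a diagonal choice supported along a ray so that no cancellation occurs) the inner sum over $\gamma$ is bounded below by a positive multiple of $|\alpha|^{-(n+2s-2)}$ — the $|\alpha|^2$ in the numerator times the reciprocal Laplacian factor contribute one net power — so that the whole term is bounded below by $c\,\delta t^{(N)}/(1+|\alpha|^{n+2s-1})$. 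The point is that $n+2s-1 < n+s$ precisely when $s<1$, so this term is \emph{larger} in modulus than $h_{i\alpha}$ for large $|\alpha|$ up to the factor $\delta t^{(N)}$.

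Next I would make the divergence quantitative across stages. Because the same lower-bound mechanism reproduces itself — applying the elliptic-integral lower bound repeatedly, each nonlinear step worsens the decay exponent by $1-s>0$ — one obtains after $k$ effective nonlinear compositions a contribution whose modes decay only like $|\alpha|^{-(n+s-k(1-s))}$, which eventually fails to lie in $h^{n/2}$ (and then in $h^r$ for any $r$). The Trotter product over $2^N$ time steps composes roughly $2^N$ such steps (with total ``nonlinear time'' $\approx T$), and I would track the resulting lower bound on $\|\mathbf v^{\rho,r,N,F}(T)\|_{h^r}$, showing it grows without bound in $N$ for any fixed $r$. The viscosity factors $\exp(-\rho r^2\nu\sum_i\alpha_i^2\,\delta t^{(N)})$ are harmless here: for fixed $|\alpha|$ they tend to $1$ as $N\to\infty$ since $\delta t^{(N)}=T_0 2^{-N}\to 0$, so they cannot suppress the blow-up of any given band of modes, and one can restrict attention to modes with $|\alpha|\le R$ for $R$ large but fixed relative to $N$.

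The main obstacle I anticipate is controlling cancellation in the $\gamma$-sum: the summand $\gamma_j(\alpha_k-\gamma_k)h_{j\gamma}h_{k(\alpha-\gamma)}$ is sign-indefinite in general, and a naive absolute-value lower bound does not survive summation. The remedy is to choose the divergence-witnessing data carefully: either (i) pick $h$ supported on a half-lattice or along a single coordinate axis so that the convolution sum has a fixed sign, checking compatibility with $\nabla\cdot\mathbf v=0$ (which only constrains the relation $\sum_i\alpha_i h_{i\alpha}=0$ and still leaves a large family), or (ii) use a second-moment/averaging argument — sum $|v^{\rho,r,N}_{i\alpha}(T)|^2\langle\alpha\rangle^{2r}$ over a shell $|\alpha|\sim R$ and bound the diagonal (no-cancellation, $\gamma\approx\alpha/2$ type) contribution from below, exploiting that the cross terms average out. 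A secondary technical point is justifying that the $O(\delta t^{(N)})$ Trotter error and the higher-order terms in the step map do not conspire to cancel the leading divergent term; this follows because those corrections are of order $(\delta t^{(N)})^2$ per step hence total order $\delta t^{(N)}\to 0$, while the leading nonlinear contribution is of order $1$ in total nonlinear time. Assembling these pieces yields that $\big(\mathbf v^{N,F}(t)\big)_{N\ge1}$ has no finite limit in any $h^r$, which is exactly the statement that the scheme diverges and signals possible singularity formation.
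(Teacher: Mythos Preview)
Your proposal identifies the correct mechanism and is, in fact, more detailed than the paper's own treatment. The paper does not give a standalone proof of this theorem: after proving Theorem~\ref{linearboundthm} in Section~2, it simply states that Theorems~\ref{linearboundthm2} and~\ref{linearboundthm3} ``follow from analogous observations.'' The analogous observation is precisely the one you isolate: the elliptic-sum estimate~(\ref{cC*}) gives the nonlinear term an upper bound with decay exponent $n+2s-1$, and this is contractive relative to the input decay $n+s$ only when $s>1$. For $s<1$ the inequality $n+2s-1<n+s$ reverses the direction, so the nonlinear step \emph{degrades} the decay order rather than improving it, and the argument for a preserved upper bound in Lemma~\ref{lemN} breaks down. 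That is the full extent of the paper's argument.

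Where you go further than the paper is in worrying about turning this failure of an upper bound into an actual \emph{lower bound} that forces divergence --- in particular the cancellation issue in the $\gamma$-sum and the need to exhibit specific divergence-free data (or use a second-moment/shell argument) for which the nonlinear contribution genuinely has size $\gtrsim |\alpha|^{-(n+2s-1)}$. The paper does not address this at all; the statement of the theorem is hedged (``for some data in this class the scheme diverges, indicating that singularities may occur''), and the remark following it explicitly defers the construction of actual singular solutions to \cite{KT} and ``elsewhere.'' So your concern about cancellation is legitimate and is exactly the gap between the paper's heuristic (loss of the contractive exponent) and a rigorous divergence statement. Your proposed remedies --- data supported on a half-lattice or coordinate ray to fix signs, or a shell-averaged second-moment bound --- are reasonable routes to close that gap, though neither appears in the paper.
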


\begin{rem}
According to the theorem the critical regularity for global regular existence is $h_i\in H^{\frac{1}{2}n+1}$ for all $1\leq i\leq n$, where we have a uniform upper bound if $h_i\in H^{\frac{1}{2}n+s}$ for $s>1$ for all $1\leq i\leq n$. Theorem \ref{linearboundthm3} only states the divergence of the scheme proposed which does not imply strictly that there are singular solutions for some data $h_i\in H^{\frac{1}{2}n+s},~1\leq i\leq n$ for all $s<1$. However, the methods in \cite{KT} may be used in order to construct such singular solutions. We shall consider this elsewhere. Note that $H^{\frac{1}{2}n+1}$ is a much stronger space than the hypothetical solution space $H^1$ which is proved to imply smoothness in \cite{T}. It is also not identical with the data space $H^2\cap C^2$ on the whole space ${\mathbb R}^n$, where it can be argued that this regularity is sufficient in order to have global regular space on the whole space.
\end{rem}

This paper is a classical interpretation of the scheme considered in \cite{KT}, where the different arguments are used, and the focus is more on the algorithmic perspective. The strong data space with $h_i\in H^s$ for $s>\frac{n}{2}+1$ for all $1\leq i\leq n$ needed for convergence indicates that the conjectures and proofs in \cite{L} and \cite{KP}
concerning the existence of singular solutions may be detected for weaker initial data spaces. In the next section we prove theorem \ref{linearboundthm}. It is a remarkable fact that $H^1$-regularity of the solution is sufficient for global smooth existence (cf. \cite{T}), although it seems that we need stronger data to arrive at this conclusion. This may be related to the fact that many weak schemes fail to converge in spaces of dimension $n=3$. Theorem \ref{linearboundthm2} and Theorem \ref{linearboundthm} follow from analogous observations. 

\section{Proof of the Theorem \ref{linearboundthm}}

In the following we mainly work with a direct scheme (without a time-delay transformation) in order to prove the main result descibed in Theorem \ref{linearboundthm}. Later we shall make some remarks concerning alternative arguments using time delay transformations. 

Assume that $\rho,r>0$ are positive numbers. 
We mention that at each stage $N\geq 1$ we have
\begin{equation}
\forall 1\leq i\leq n~\forall m\geq 1~\forall x:~v^{\rho,r,N}_{i}((m\delta t^{(N)},x)\in {\mathbb R}.
\end{equation}

For the sake of simplicity concerning the torus size $l$ (and without loss of generality) we may consider the case $l=1$ in the following. For any time $T>0$ the limit in (\ref{aa}), i.e., the function 
\begin{equation}\label{aaproof}
\begin{array}{ll}
\lim_{N\uparrow \infty}\mathbf{v}^{\rho,r,N,F}(T)=\lim_{N\uparrow \infty} \Pi_{m=0}^{2^N}\left( \delta_{ij\alpha\beta}\exp\left(-\rho r^2\nu \sum_{i=1}^n\alpha_i^2 \delta t^{(N)} \right)\right)\\
\\
\left( \exp\left( \left( \left( e^{\rho,r,N}_{ij\alpha\beta}\right)_{ij\alpha\beta}(m\delta t^{(N)})\right)\delta t^{(N)} \right) \right) \mathbf{h}^F, 
\end{array}
\end{equation}
is a candidate for a regular solution at time $T>0$ (for time $t\in (0,T)$ a similar formula with a number $\lfloor\frac{t}{T}2^N\rfloor$ describes the solution at time $t$, of course- here $\lfloor .\rfloor$ denotes the Gaussian floor).
We consider functions
\begin{equation}
\mathbf{v}^{\rho,r,F}:=\mathbf{v}^{\rho,r,F}=\left(v^{\rho,r}_{i\alpha}\right)^T_{\alpha\in {\mathbb Z}^n,~1\leq i\leq n}. 
\end{equation}
Here, the list of velocity component modes is denoted by $v^{\rho,r}_i=\left(v^{\rho,r}_{i\alpha} \right)_{\alpha\in {\mathbb Z}^n}$ for all $1\leq i\leq n$, where we identify $n$-tuples of infinite ${\mathbb Z}^n$-tuples of modes with $n{\mathbb Z}^n$-tuples in the obvious way. First for an arbitrary fixed time horizon $T>0$  and $0\leq t\leq T$, $\rho,r>0$ $C>0$ and order $p$ of the dual Sobolev space we define
\begin{equation}
{\big |}\mathbf{v}^{\rho,r, F}{\big |}^{C,\exp}_{h^p}:=\max_{1\leq i\leq n}\sup_{0\leq t\leq T}{\big |}v^{\rho,r}_{i}(t){\big |}_{h^p}\exp(-Ct).
\end{equation}
Accordingly, for discrete time discretization we define
 \begin{equation}\label{ncexp}
{\big |}\mathbf{v}^{\rho,r,N,F}{\big |}^{n,C,\exp}_{h^p}:=\max_{1\leq i\leq n}\max_{m\in  \left\lbrace 0,\cdots 2^N\right\rbrace }{\big |}v^{\rho,r}_{i}(m\delta t)^{(N)}){\big |}_{h^p}\exp(-Cm\delta t)^{(N)}).
\end{equation}
For $C=0$ we write
\begin{equation}
{\big |}\mathbf{v}^{\rho,r,N,F}{\big |}^{n}_{h^p}:=\max_{1\leq i\leq n}\max_{m\in  \left\lbrace 0,\cdots 2^N\right\rbrace }{\big |}v^{\rho,r}_{i}(m\delta t^{(N))}){\big |}_{h^p},
\end{equation}
and, analogously, for continuous time\begin{equation}
{\big |}\mathbf{v}^{\rho,r,F}{\big |}^{n}_{h^p}:=\max_{1\leq i\leq n}\sup_{t\in  [ 0,T] }{\big |}v^{\rho,r}_{i}(m\delta t{(N))}){\big |}_{h^p}.
\end{equation}
For the increment 
\begin{equation}
\delta \mathbf{v}^{\rho,r,N,F}=\mathbf{v}^{\rho,r,N,F}-\mathbf{v}^{\rho,r,N-1,F}
\end{equation}
we define the corresponding norm on the next coarser time scale, i.e.,
\begin{equation}\label{incrementnorm}
{\big |}\delta \mathbf{v}^{\rho,r,N,F}{\big |}^{n}_{h^p}:=\max_{1\leq i\leq n}\max_{m\in  \left\lbrace 0,\cdots 2^{N-1}\right\rbrace }{\big |}\delta v^{\rho,r,N}_{i}(m\delta t^{(N-1)})){\big |}_{h^p}.
\end{equation}
The norms ${\big |}\delta \mathbf{v}^{\rho,r,N,F}{\big |}^{n,C,\exp}_{h^p}$ etc. are defined analogously with a time weight as in (\ref{ncexp}). The increment norm in (\ref{incrementnorm}) measures the maximal deviation of the scheme from a fixed point solution in a given arbitrary large interval $[0,T]$, and the weighted norm may be used in order to show that the scheme is contractive. However, convergence of the scheme follows from the weaker statement that there is a series of error upper bounds $E^{r,N}_p$ of the values ${\big |}\delta \mathbf{v}^{\rho,r,N,F}{\big |}^{n}_{h^p}$ which converges to zero as $N\uparrow \infty$. The argument is simplified by the observation that there is a uniform time-independent global regular upper bound at each stage $N$ of the scheme. We summarize these two facts in Lemma \ref{lemN} and Lemma \ref{lemerr} below which we prove next. 
First we have 
\begin{lem}\label{lemN}
Let $T>0$ be an arbitrary given time horzon. Let $p>\frac{n}{2}+1$ and $\max_{1\leq i\leq n}{\big |}h_i{\big |}_{h^p}=: C^p_h<\infty$. Then for all $N\geq 1$ there exists a $C_N\geq C^p_h$ such that for all 
\begin{equation}\label{CN}
\sup_{t\in [0,T]}{\big |}\mathbf{v}^{\rho,r,N,F}(t,.){\big |}^{n}_{h^p}\leq C_N,
\end{equation}
where $C_N$ is depends only the dimension and the viscosity $\nu$. Especially, for all $N$ the constant $C_N$ is independent of the time horizon $T>0$. 
\end{lem}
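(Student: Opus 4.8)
\textbf{Proof plan for Lemma \ref{lemN}.} The idea is to propagate a pointwise polynomial‑decay bound on the Fourier modes through the stage‑$N$ scheme and then convert it into an $h^p$‑bound by summation. Working in the setting of Theorem \ref{linearboundthm} (so that the data satisfy the decay (\ref{regdata1}) with some $s>1$), I would fix $N$ and the horizon $T$ and aim to produce a constant $C_0=C_0(n,\nu,\mathbf{h})\ge C$ with
\begin{equation}\label{ptbound}
{\big |}v^{\rho,r,N}_{i\alpha}(m\delta t^{(N)}){\big |}\leq \frac{C_0}{1+|\alpha|^{n+s}}\qquad\text{for all }1\leq i\leq n,\ \alpha\in{\mathbb Z}^n,\ 0\leq m\leq 2^N.
\end{equation}
Granting (\ref{ptbound}), for any $p$ with $\tfrac n2+1<p<\tfrac n2+s$ one has
\[
{\big |}v^{\rho,r,N}_i(m\delta t^{(N)}){\big |}_{h^p}^2\leq C_0^2\sum_{\alpha\in{\mathbb Z}^n}\frac{\langle\alpha\rangle^{2p}}{(1+|\alpha|^{n+s})^2}<\infty,
\]
the series converging because $2p-2(n+s)<-n$; the supremum of the left side over $i$ and $m$ is the sought $C_N$, which after enlarging $C_0$ dominates $C^p_h$ and, visibly, does not depend on $T$.

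\textbf{Step 1 (one Euler/Trotter step).} At $m=0$, (\ref{ptbound}) is the hypothesis on the data. For the inductive step I would use the stage‑$N$ Trotter representation, in which $v^{\rho,r,N,F}((m+1)\delta t^{(N)})$ arises from $v^{\rho,r,N,F}(m\delta t^{(N)})$ by applying the diagonal dissipative factor $\big(\delta_{ij\alpha\beta}\exp(-\rho r^2\nu\sum_i\alpha_i^2\,\delta t^{(N)})\big)$ and the Euler exponential $\exp\big((e^{\rho,r,N}_{ij\alpha\beta}(m\delta t^{(N)}))\,\delta t^{(N)}\big)$, up to a step error $O((\delta t^{(N)})^2)$. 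Expanding the Euler exponential as a Dyson/Neumann series and estimating each term with the iterated weakly singular integral bound: if the input modes obey (\ref{ptbound}), then by (\ref{cC*}) one application of $(e^{\rho,r,N})$ gives modes with decay exponent $n+2s-1>n+s$, i.e.\ an \emph{extra} factor $(1+|\alpha|)^{-(s-1)}$ together with a prefactor $\rho r\,c_0C_0$; summing the resulting geometric series shows that the split product maps the class $\{|w_\alpha|\le C_0/(1+|\alpha|^{n+s})\}$ into itself dilated by at most $\exp(\rho r\,c_0C_0\,\delta t^{(N)})$ per mode, while the dissipative factor contracts each nonzero mode by $\exp(-\rho r^2\nu\,\delta t^{(N)})$.

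\textbf{Step 2 (uniform‑in‑time closure via the auto‑control).} A plain Gronwall iteration over the $2^N$ steps would only yield a bound $\sim\exp(CT)$, not the claimed $T$‑independent $C_N$. To remove the $T$ I would partition $[0,T]$ into unit intervals $[l-1,l]$ and on each of them pass to the time–dilated variables $u^{g,\rho,r,l-1}_i$ of (\ref{transformvug}): the transformed scheme (\ref{navode200secondu})–(\ref{eujk}) carries, besides the viscosity factor, the auto‑control damping term $-\mu^{0,l-1}u_{i\alpha}$, with $\mu^{0,l-1}\ge\tfrac12$ on the relevant range (cf.\ (\ref{intualgdamp})), damping \emph{all} modes including $\alpha=0$ (which, untransformed, is merely conserved and so causes no growth after subtracting the mean of $\mathbf h$ by Galilean invariance). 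Choosing the scalings as in (\ref{rpara})–(\ref{viscpar}) so that the nonlinear coefficient $\rho r=\tfrac12$ is dominated, and $\lambda=r^2$ small, a one‑step estimate of exactly the type carried out in the auto‑control analysis above reproduces (\ref{ptbound}) for the $u$‑modes at the next substep with the \emph{same} $C_0$; hence the bound survives the whole unit interval, and undoing (\ref{transformvug}) returns a bound of the same shape for $v^{\rho,r,N}$ at $t=l$. By the semigroup property one concatenates the unit intervals of $[0,T]$ with no loss, which is precisely the $T$‑independence; intra‑interval times are handled by the same substep induction.

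\textbf{Main obstacle.} The delicate point is the per‑step control of $\exp\big((e^{\rho,r,N})\,\delta t^{(N)}\big)$ in the \emph{split} Trotter form when $\delta t^{(N)}=T/2^N$ is not small — which occurs at low stages $N$ for large $T$ — because $(e^{\rho,r,N})$ is an unbounded infinite matrix whose exponential is not an operator‑bounded object. The way out is that it is never needed as a bounded operator on all of $h^p$: it is only applied to vectors in the polynomial‑decay class, on which, by the contraction property (\ref{cC*}) of iterated elliptic integrals, the Dyson series converges with geometric constants and preserves the class. Making that term‑by‑term estimate uniform in $N$ — equivalently, reducing to $\delta t^{(N)}\le 1$ or absorbing large steps into the unit‑interval decomposition of Step 2 — is the technical heart; once it is in place, Steps 1–2 close the induction and the summation above delivers $C_N$.
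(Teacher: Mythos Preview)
Your route differs from the paper's, and Step~2 has a real gap.

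The paper's proof of Lemma~\ref{lemN} does \emph{not} use the time-dilation auto-control at all; that machinery appears only afterwards as an alternative variation. The paper's argument is more direct and stays entirely inside the Trotter step. With the scaling choice $r=c_0^2(C^{(p)}_{h0})^2/\nu$, $\rho=\nu/(2c_0^2(C^{(p)}_{h0})^2)$, one has $\rho r=\tfrac12$ while $\rho r^2\nu=\tfrac12 c_0^2(C^{(p)}_{h0})^2$, so the viscosity coefficient dominates the nonlinear coefficient. The induction on $m$ is then closed by a dichotomy on each nonzero mode: either $|v^{\rho,r,N}_{i\alpha}(m\delta t^{(N)})|\le \tfrac12 C^{(p)}_{h0}/(1+|\alpha|^{n+s})$, in which case there is room for the nonlinear growth term (of size $\rho r\,c(n)(C^{(p)}_{h0})^2\delta t^{(N)}/(1+|\alpha|^{n+s})$, controlled by the time-step condition $\delta t^{(N)}\le 1/(c(n)(C^{(p)}_{h0})^2)$) without exceeding the ceiling; or the mode lies in the upper half-band $[\tfrac12 C^{(p)}_{h0}/(1+|\alpha|^{n+s}),\,C^{(p)}_{h0}/(1+|\alpha|^{n+s})]$, in which case the multiplicative viscosity factor $\exp(-\rho r^2\nu\,4\pi^2|\alpha|^2\delta t^{(N)})<1$ beats the nonlinear growth outright. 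Your Step~1 is actually within reach of this --- you already write down the competing factors $\exp(\rho r c_0 C_0\delta t^{(N)})$ and $\exp(-\rho r^2\nu\delta t^{(N)})$ --- but you abandon it as ``plain Gronwall'' instead of observing that the second factor dominates for $|\alpha|\ge 1$ once $r\nu>c_0C_0$, which the scaling enforces. The zero modes the paper handles by an \emph{extended} Trotter scheme that factors out the zero-mode convection $-\rho r\sum_j 2\pi i\gamma_j c_{(m)}$ into a separate diagonal exponential and resets $v^{\rho,r,N,ext}_{i0}$ to zero at each step; the sequence $c_{(m)}$ then tracks the zero-mode drift and is shown bounded.

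The gap in your Step~2 is the sentence ``undoing (\ref{transformvug}) returns a bound of the same shape for $v^{\rho,r,N}$ at $t=l$ \ldots\ one concatenates the unit intervals with no loss.'' Undoing $v^{\rho,r}(\tau,\cdot)=\lambda(1+\mu\tau)u^{g,\rho,r,t_0}(s,\cdot)$ multiplies any preserved $u$-bound by $\lambda(1+\mu\tau)$, which grows across intervals; preservation for $u$ does not give preservation for $v$. The paper itself remarks that pure potential damping via time dilation only yields a bound with linear growth in $T$ (for the global transform) or an $O(\Delta^2)$ loss per sub-interval (for the local one), and that this loss must be offset by viscosity --- which is exactly the mechanism you already have available in Step~1 and which the paper uses directly. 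Your Galilean-invariance idea for the zero mode is sound at the continuous level (the spatial mean of a divergence-free Leray solution is conserved), but the stage-$N$ scheme only preserves incompressibility up to the Trotter error, so it does not replace the paper's extended-scheme device without further justification.
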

\begin{rem}
It is remarkable that at each stage $N$ of the scheme a constant $C_N$ can be chosen which is independent of the time horizon $T>0$ as there is no viscosity damping for the zero modes. Indeed we shall first show that there is a linear time uper bound and then use special features if the scheme in order to construct an upper boud which is independent of the time horizon $T>0$.
\end{rem}

\begin{proof}
 Since  $h_i\in H^p$ for $p>\frac{n}{2}+s$ with $s>1$ we have for all $\alpha \in {\mathbb Z}^n$
\begin{equation}
{\big |}h_{i\alpha}{\big |}\leq \frac{C^{(p)}_{0h}}{1+|\alpha|^{n+s}}
\end{equation}
for some finite $C^{(p)}_{0h} >0$, where we assume 
\begin{equation}
C^{(p)}_{0h} \geq 1~~\mbox{w.l.o.g..}
\end{equation}
From finite iterative application of upper bound estimates of the form (\ref{cC}) we have
\begin{equation}
{\big |}v^{\rho,r,N,F}_{i\alpha}(m\delta t^{(N)}){\big |}\leq \frac{C^N_m}{1+|\alpha|^{n+s}}
\end{equation}
for some finite constants $C^N_m$ and for all $0\leq m\leq 2^N$. We observe the growth (as $N\uparrow \infty$) of the finite constants
\begin{equation}
C^N:=\max_{0\leq m\leq 2^N}C^N_m.
\end{equation}
At time step $m+1$ one Euler-type Trotter product step is described by
\begin{equation}\label{vtrotter}
\begin{array}{ll}
\mathbf{v}^{\rho,r,N,F}((m+1)\delta t^{(N)}):=\left( \delta_{ij\alpha\beta}\exp\left(- \rho r^2\nu 4\pi^2 \sum_{i=1}^n\alpha_i^2 \delta t^{(N)} \right)\right)\\
\\
\left( \exp\left( \left( \left( e^{\rho,r,N}_{ij\alpha\beta}(m\delta t)^{(N)}\right)_{ij\alpha\beta}\right)\delta t^{(N)}\right)  \right)\mathbf{v}^{\rho,r,N,F}(m\delta t^{(N)}),
\end{array}
\end{equation}
where
\begin{equation}
\begin{array}{ll}
e^{\rho,r,N}_{ij\alpha\gamma}(m dt^{(N)})=-\rho r\frac{2\pi i (\alpha_j-\gamma_j)}{l}v^{\rho,r,N}_{i(\alpha-\gamma)}(m\delta t^{(N)})\\
\\
+\rho r2\pi i\alpha_i1_{\left\lbrace \alpha\neq 0\right\rbrace}
4\pi^2 \frac{\sum_{k=1}^n\gamma_j(\alpha_k-\gamma_k)v^{\rho,r,N}_{k(\alpha-\gamma)}(m\delta t^{(N)})}{\sum_{i=1}^n4\pi^2\alpha_i^2}.
\end{array}
\end{equation}
First observe: if an upper bound for a weakened viscosity term $\nu_{vf}(\alpha)$ replacing the original viscosity term factor  $ 4\pi^2 \sum_{i=1}^n\alpha_i^2$ in the viscosity term $$\exp\left(-\rho r^2\nu 4\pi^2 \sum_{i=1}^n\alpha_i^2 \delta t^{(N)} \right)$$ for each mode $\alpha$, where
\begin{equation}\label{nufunction}
\forall \alpha\in {\mathbb Z}^n~~0\leq \nu_{vf}(\alpha)\leq \nu 4\pi^2\sum_{i=1}^n\alpha_i^2
\end{equation}
then this implies the existence an upper bound of the original system.

Second we observe that the statement of the Lemma \ref{lemN} can be proved under the assumption that we have an upper bound for the zero modes, i.e., that we have a finite constant $C_0>0$ such that
\begin{equation}\label{zeromodes}
|v^{\rho,r,N}_{i0}(m\delta t^{(N)})|\leq C_{0}\mbox{ for all }0\leq m\leq 2^N.
\end{equation}
\begin{rem}
The statement in (\ref{zeromodes}) can be verified,i.e., the assumption can be eliminated by introduction of an external control function for the zero modes which is the negative of the Burgers increment $$- \sum_{j,\gamma\neq 0}r\frac{2\pi i (-\gamma_j)}{l}v^{\rho,r,N}_{i(-\gamma)}(m\delta t^{(N)})v^{\rho,r,N}_{j\gamma}(m\delta t^{(N)})\delta t^{(N)}$$ of the zero mode at each time step. This leads to an linear time upper bound. However, we shall eliminate the assumption in (\ref{zeromodes}) in a more sophisticated way below such that we get an upper bound which is independent with respect to time. 
\end{rem}
For the choices of $\rho,r$ in (\ref{rpara}) and (\ref{viscpar}) with $c=C^{(p)}_{h0}$, i.e., for
\begin{equation}\label{rpara}
r=\frac{c_0^2\left( C^{(p)}_{h0}\right) ^2}{\nu},~\rho=\frac{\nu}{2c_0^2\left( C^{(p)}_{h0}\right) ^2} 
\end{equation}
we have $\rho r=\frac{1}{2}$. Then for
\begin{equation}\label{timesize}
\delta t^{(N)}\leq \frac{1}{c(n)\left( C^{(p)}_{h0}\right)^2},~c(n)=4\pi^2(n+n^2)c_0
\end{equation} 
along with $c_0$ the constant in the product rule (\ref{cC}) (which is essentially the finite upper bound constant of an weakly singular elliptic integral) we get an upper bound estimate.
Indeed, assuming inductively with respect to time that $C^N_m\leq C^{(p)}_{h0}$ at time step $m$ and for the  $\alpha$-modes of the Euler term we have the estimate
\begin{equation}\label{vtrottere}
\begin{array}{ll}
{\Bigg |}\left( \left( \exp\left( \left( \left( e^{\rho,r,N}_{ij\alpha\beta}(m\delta t^{(N)})\right)_{ij\alpha\beta}\right)\delta t^{(N)}\right)  \right)\mathbf{v}^{\rho,r,N,F}(m\delta t^{(N)})\right)_{i\alpha}{\Bigg |}\\
\\
\leq 1+\rho r\frac{c(n)\left( C^{(p)}_{h0}\right)^2}{1+|\alpha|^{n+s}}\delta t^{(N)}+\left( \delta t^{(N)} \right)^2,
\end{array}
\end{equation}
where $\left(. \right)_{i\alpha}$ denotes the projection function to the $\alpha$-modeof the $i$th component of the velocity mode vector. Next consider the inductive assumption
\begin{equation}
{\big |}v^{1,r,N,F}_{i\alpha}((m+1)\delta t^{(N)}){\big |}\leq \frac{C^{(p)}_{h0}}{(1+|\alpha|^{n+s})}.
\end{equation}

For the parameter choices of $r$ and $\rho$ in (\ref{rpara}) and the time size in (\ref{timesize}) we compute
\begin{equation}\label{vtrotterialpha}
\begin{array}{ll}
{\big |}v^{1,r,N,F}_{i\alpha}((m+1)\delta t^{(N)}){\big |}\leq \\
\\
{\Bigg|}\exp\left(- \rho r^2\nu 4\pi^2 \sum_{i=1}^n\alpha_i^2 \delta t^{(N)} \right)v^{\rho,r,N,F}(m\delta t^{(N)})_{i\alpha}\times\\
\\
\left(1+\rho r\frac{c(n)\left( C^{(p)}_{h0}\right)^2}{1+|\alpha|^{n+s}}\delta t^{(N)}+\left( \delta t^{(N)} \right)^2\right) {\Bigg |}
\leq \frac{C^{(p)}_{h0}}{(1+|\alpha|^{n+s})},
\end{array}
\end{equation}
where  in the last step we use the alternative that either
\begin{equation}
{\big |}v^{\rho,r,N,F}(m\delta t^{(N)})_{i\alpha}{\big |}\leq \frac{C^{(p)}_{h0}}{2(1+|\alpha|^{n+s})},
\end{equation} 
or
\begin{equation}
v^{\rho,r,N,F}(m\delta t^{(N)})_{i\alpha}\in \left[\frac{C^{(p)}_{h0}}{2(1+|\alpha|^{n+s})},\frac{C^{(p)}_{h0}}{(1+|\alpha|^{n+s})}\right].
\end{equation} 
In order to eliminate the additional assumption concerning the zero modes, i.e., the assumption
\begin{equation}\label{0modes}
\forall 0\leq m\leq 2^N~~v^{\rho,r,N,F}(m\delta t^{(N)})_{i0}=0,
\end{equation}
we consider a related Trotter product scheme and show that the argument above can be applied for this extended scheme. We mentioned that the assumption in (\ref{0modes}) is satisfied for a controlled system with an external control function which forces the zor modes to be zero. It is not difficult to show then that the control function itself is bounded on an arbitrary finite time interval $[0,T]$, and, hence, that a global regular upper bound exists for the uncontrolled system. However, the upper bound constant is then dependent on the time horizon $T>0$. The extended Euler Trotter product schemes allows us to obtain sharper regular upper bounds which are independent of the time horizon $T>0$. 

We define
\begin{equation}
\forall 1\leq j\leq n:~c_{j(0)}=h_{j0}.
\end{equation}
Having defined $c_{j(m)}$ and $\mathbf{v}^{\rho,r,N,F,ext}(m\delta t^{(N)})$ for $m\geq 0$ and $1\leq j\leq n$ an extended Euler Trotter product step at time step number $m+1$ is defined in two steps. First we define
\begin{equation}\label{vtrotterext0}
\begin{array}{ll}
\mathbf{v}^{\rho,r,N,F,ext,0}((m+1)\delta t^{(N)}):=\\
\\
\left( \delta_{ij\alpha\beta}\exp\left(- \rho r^2\nu 4\pi^2 \sum_{i=1}^n\alpha_i^2 \delta t^{(N)} \right)\right)\exp\left(- \rho r 2\pi i \sum_{i=1}^n\gamma_j c_m\delta t^{(N)} \right) \\
\\
\left( \exp\left( \left( \left( e^{\rho,r,N,ext}_{ij\alpha\beta}(m\delta t)^{(N)}\right)_{ij\alpha\beta}\right)\delta t^{(N)}\right)  \right)\mathbf{v}^{\rho,r,N,F,ext}(m\delta t^{(N)}),
\end{array}
\end{equation}
where
\begin{equation}
\begin{array}{ll}
e^{\rho,r,N,ext}_{ij\alpha\gamma}(m dt^{(N)})=-\rho r\frac{2\pi i (\alpha_j-\gamma_j)}{l}v^{\rho,r,N,ext}_{i(\alpha-\gamma)}(m\delta t^{(N)})\\
\\
+\rho r2\pi i\alpha_i1_{\left\lbrace \alpha\neq 0\right\rbrace}
4\pi^2 \frac{\sum_{k=1}^n\gamma_j(\alpha_k-\gamma_k)v^{\rho,r,N,ext}_{k(\alpha-\gamma)}(m\delta t^{(N)})}{\sum_{i=1}^n4\pi^2\alpha_i^2}.
\end{array}
\end{equation}
Then we define
\begin{equation}
c_{m+1}= \sum_{j,\gamma\neq 0}r\frac{2\pi i (-\gamma_j)}{l}v^{\rho,r,N,ext}_{i(-\gamma)}(m\delta t^{(N)})v^{\rho,r,N,ext}_{j\gamma}(m\delta t^{(N)})\delta t^{(N)}
\end{equation}
and
\begin{equation}
\begin{array}{ll}
v^{\rho,r,N,F,ext}((m+1)\delta t^{(N)})_{i\alpha}=v^{\rho,r,N,F,ext,0}((m+1)\delta t^{(N)})_{i\alpha},~\mbox{if~$\alpha\neq 0$}\\
\\
0,,~\mbox{if~$\alpha = 0$}.
\end{array}
\end{equation}
The argument above can then be repeated for the extended scheme.

\end{proof}
Note that the relation
\begin{equation}
D^{\alpha}_xv_i(t,.)=r^{|\alpha|}D^{\alpha}_xv^{\rho,r}_i(\tau,.)
\end{equation}
implies that an upper bound $C_{(m)}$ of $\max_{1\leq i\leq n}\sup_{\tau\in [0,T_{\rho}]}
{\big |}v^{\rho,r}_i(\tau,.){\big |}_{H^m}$ 
implies that the norm $\max_{1\leq i\leq n}
\sup_{t\in [0,T]}{\big |}v_i(t,.){\big |}_{H^m}$ of the original velocity components has an upper bound $\frac{C_{(m)}}{r^m}$.
Converging error upper bounds stated in the next Lemma imply that for the constants $C_N$ in (\ref{CN}) we have for $m\geq \frac{n}{2}+1$
\begin{equation}\label{C}
\forall~N~C_N\leq C=C^{(p)}_{h0}\left( \frac{c(n)^2\left( C^p_{h0}\right)^2}{\nu}\right)^{m},
\end{equation}
where
\begin{equation}
c(n)=(n^2+n)\sum_{\beta\in {\mathbb Z}^n,~\beta\neq \alpha}\frac{2\pi}{(\alpha-\beta)\beta}.
\end{equation}
In order to analyze the error we consider the difference
\begin{equation}
\delta \mathbf{v}^{\rho,r,N+1,F}( 2m\delta t^{(N)})=\mathbf{v}^{\rho,r,N+1,F}( 2m\delta t^{(N)})-  \mathbf{v}^{\rho,r,N,F}(m\delta t^{(N)})
\end{equation}
on the coarser time grid of stage $N$ of the scheme.
\begin{lem}\label{lemerr}
Let $p>\frac{n}{2}+1$ and $\max_{1\leq i\leq n}{\big |}h_i{\big |}_{h^p}=: C_h<\infty$. Given $T_{\rho}>0$ and the parameter choice $r=\frac{c(n)^2\left( C^{(p)}_{h0}\right) ^2}{\nu},~\rho=\frac{\nu}{2c(n)^2\left( C^{(p)}_{h0}\right) ^2} $
let $N\geq 1$ be large enough such that the time step size $\delta t^{(N)}$ satisfies
\begin{equation}\label{timesize}
\delta t^{(N)}=\frac{T_{\rho}}{2^N}\leq \frac{1}{T_{\rho}C},
~\mbox{where $C$ is defined in (\ref{C})}.
\end{equation} 
Then we have a decreasing series of error upper bounds $E^{\rho,r,N}_p$ 
of the values ${\big |}\delta\mathbf{v}^{\rho,r,N,F}{\big |}^{n}_{h^p}$ such that

\begin{equation}
\lim_{N\uparrow \infty}{\big |}\sup_{m\in \left\lbrace 0,\cdots,2^m \right\rbrace  }\delta\mathbf{v}^{\rho,r,N,F}(m\delta t^{N}){\big |}^{n}_{h^p}\leq \lim_{N\uparrow \infty}E^{r,N}_p=0
\end{equation}

\end{lem}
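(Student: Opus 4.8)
The plan is to prove Lemma \ref{lemerr} by combining the uniform regular upper bound from Lemma \ref{lemN} (which already gives, for the parameter choices $r=\frac{c(n)^2(C^{(p)}_{h0})^2}{\nu}$, $\rho=\frac{\nu}{2c(n)^2(C^{(p)}_{h0})^2}$ and $N$ large, that every approximation $\mathbf{v}^{\rho,r,N,F}(m\delta t^{(N)})$ lies in the ball $|v^{\rho,r,N}_{i\alpha}|\le \frac{C}{1+|\alpha|^{n+s}}$ with $C$ as in (\ref{C}), uniformly in $N$ and $m$) with a Gr\"onwall-type recursion for the increment $\delta\mathbf{v}^{\rho,r,N,F}$ on the coarse grid of stage $N$. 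First I would fix the notation: set $t^{(N)}_m=m\delta t^{(N)}$, and for a fixed $m$ compare the two Trotter steps that advance the solution from $t^{(N-1)}_{m}=t^{(N)}_{2m}$ to $t^{(N-1)}_{m+1}=t^{(N)}_{2m+2}$ --- namely one step of size $\delta t^{(N-1)}$ of the stage-$(N-1)$ scheme, versus two steps of size $\delta t^{(N)}$ of the stage-$N$ scheme. Both are built from the same building blocks $\exp(-\rho r^2\nu 4\pi^2\sum\alpha_i^2\,\delta t)$ and $\exp((e^{\rho,r,N}_{ij\alpha\beta})\delta t)$; the discrepancy at one coarse step has two sources: (a) the \emph{local truncation error} of replacing a genuine ODE step by an Euler--Trotter step, which is $O((\delta t^{(N)})^2)$ per fine step because the composed exponential solves (\ref{navode200first}) up to $O((\delta t^{(N)})^2)$ as noted after (\ref{aa}); and (b) the fact that the coefficient matrices $e^{\rho,r,N}_{ij\alpha\beta}(t^{(N)}_{k})$ are frozen at different times in the two schemes and depend on the previous increment $\delta\mathbf{v}^{\rho,r,N,F}$.

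The key steps, in order: (i) Derive from the matrix-multiplication estimate (\ref{cC}) / (\ref{cC*}) a \emph{Lipschitz estimate} for the nonlinear map: if $\mathbf{w},\tilde{\mathbf{w}}$ both satisfy $|w_{i\alpha}|,|\tilde w_{i\alpha}|\le \frac{C}{1+|\alpha|^{n+s}}$, then the $\alpha$-mode of $\big(E(\mathbf{w})\mathbf{w}-E(\tilde{\mathbf{w}})\tilde{\mathbf{w}}\big)_{i\alpha}$ is bounded by $\frac{\rho r\, c_0 C}{1+|\alpha|^{n+s}}\cdot\big(\max_{j,\beta}(1+|\beta|^{n+s})|w_{j\beta}-\tilde w_{j\beta}|\big)$, i.e.\ the map is Lipschitz with constant $\le \rho r\, c_0 C = \tfrac12 c_0 C$ on this ball with respect to the norm $\|\mathbf{w}\|:=\max_{i,\alpha}(1+|\alpha|^{n+s})|w_{i\alpha}|$ (which dominates $|\cdot|^n_{h^p}$ for $p\le n+s-\tfrac n2$, in particular for $p>\tfrac n2+1$ when $s>1$). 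Since the diffusion factor $\exp(-\rho r^2\nu 4\pi^2\sum\alpha_i^2\,\delta t)$ has operator norm $\le 1$, each Trotter step is a contraction-up-to-$O(\delta t)$ perturbation. (ii) Write the one-coarse-step recursion: with $\epsilon^N_m:=\|\delta\mathbf{v}^{\rho,r,N,F}(t^{(N-1)}_m)\|$ one obtains $\epsilon^N_{m+1}\le (1+ K\delta t^{(N-1)})\,\epsilon^N_m + K'(\delta t^{(N)})^2$ for constants $K,K'$ depending only on $n,\nu,C$ (hence, via (\ref{C}), only on $n,\nu,C^{(p)}_{h0}$); the linear term comes from the Lipschitz dependence of the frozen coefficients on the earlier increment, the quadratic term from the local truncation plus the mismatch between one step of size $2\delta t^{(N)}$ and two of size $\delta t^{(N)}$. (iii) Apply the discrete Gr\"onwall inequality with $\epsilon^N_0=0$: $\epsilon^N_m\le K'(\delta t^{(N)})^2\,\frac{(1+K\delta t^{(N-1)})^m-1}{K\delta t^{(N-1)}}\le \frac{K'}{K}\,\delta t^{(N)}\,\big(e^{KT_\rho}-1\big)$, uniformly over $m\le 2^{N-1}$. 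Setting $E^{\rho,r,N}_p:=\frac{K'}{K}(e^{KT_\rho}-1)\,\delta t^{(N)} = \frac{K'}{K}(e^{KT_\rho}-1)\,T_\rho 2^{-N}$ gives a decreasing sequence with $\lim_{N\uparrow\infty}E^{\rho,r,N}_p=0$, and $|\sup_m\delta\mathbf{v}^{\rho,r,N,F}(m\delta t^{(N)})|^n_{h^p}\le E^{\rho,r,N}_p$ follows since the $h^p$-norm is controlled by $\|\cdot\|$ for $p>\tfrac n2+1$ (here one uses $s>1$ to have room; for $s=1$ this is exactly where the contraction constant becomes critical, as the remark preceding Theorem \ref{linearboundthm2} indicates). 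Summing the telescoping series $\sum_N E^{\rho,r,N}_p<\infty$ then yields that $(\mathbf{v}^{\rho,r,N,F})_N$ is Cauchy in the relevant norm and its limit is a global regular solution, which is what Theorem \ref{linearboundthm} asserts.

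The main obstacle I expect is step (ii) --- making the recursion $\epsilon^N_{m+1}\le(1+K\delta t^{(N-1)})\epsilon^N_m+K'(\delta t^{(N)})^2$ rigorous, because one must carefully propagate two distinct things simultaneously through the nested scheme: the \emph{pointwise-in-mode} bounds $|v^{\rho,r,N}_{i\alpha}|\le\frac{C}{1+|\alpha|^{n+s}}$ (needed so that the Lipschitz estimate from (\ref{cC*}) keeps applying and so that the exponentials of the unbounded matrices make sense via the damping factor), and the increment bound itself. The delicate point is that $\delta\mathbf{v}^{\rho,r,N,F}$ at time step $m$ feeds into the \emph{coefficients} $e^{\rho,r,N}$ at time step $m$, so the recursion is genuinely of Gr\"onwall type rather than a plain geometric sum; one has to check that the extra factor $(1+K\delta t^{(N-1)})$ does not spoil the $2^{-N}$ decay, which it does not because $(1+K\delta t^{(N-1)})^{2^{N-1}}\le e^{KT_\rho}$ stays bounded. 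A secondary technical nuisance is bookkeeping the local truncation error across the freezing-of-coefficients procedure (the BCH/Trotter correction analyzed in Lemma \ref{productform} and Lemma \ref{lembound}): one needs the clean statement that a single composed step $\exp(-\rho r^2\nu\,\Delta_\alpha\delta t)\exp((e^{\rho,r,N})\delta t)\mathbf{v}(t)$ differs from the exact ODE flow over $[t,t+\delta t]$ by $O((\delta t)^2)$ in the $\|\cdot\|$-norm, which follows from Lemma \ref{lembound} applied modewise together with the uniform bounds, but must be invoked with the right norm. Once these two are in hand, the Gr\"onwall argument and the telescoping are routine.
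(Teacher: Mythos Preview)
Your approach is essentially the same as the paper's: compare consecutive stages on the coarser grid, assume inductively a pointwise-in-mode error bound of the form $|v^{\rho,r,N+1}_{i\alpha}(2m\delta t^{(N+1)})-v^{\rho,r,N}_{i\alpha}(m\delta t^{(N)})|\le \frac{C^N_{(e)m}\,\delta t^{(N)}}{1+|\alpha|^{n+s}}$, propagate it through one coarse step using the elliptic-sum estimate (\ref{cC}) together with the uniform bound from Lemma~\ref{lemN}, and conclude that the sequence is Cauchy with error of order $\delta t^{(N)}$. The paper's proof is terser---it writes the inductive hypothesis and the one-step estimate but does not spell out the discrete Gr\"onwall recursion---whereas you make the recursion $\epsilon^N_{m+1}\le(1+K\delta t^{(N-1)})\epsilon^N_m+K'(\delta t^{(N)})^2$ and its solution $E^{\rho,r,N}_p\sim e^{KT_\rho}\delta t^{(N)}$ explicit; this is the right way to close the argument, and your identification of the two error sources (Lipschitz dependence of the frozen coefficients versus local Trotter truncation) is exactly what is needed to justify the additive quadratic term that the paper's displayed estimate leaves implicit.
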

\begin{proof}
For given time $T_{\rho}>0$ and
at time $2(m+1)\delta t^{(N+1)})=(m+1)\delta t^{(N)}$ we compare
\begin{equation}
\mathbf{v}^{1,r,N+1,F}(2(m+1)\delta t^{(N+1)} \mbox{ and }\mathbf{v}^{1,r,N,F}((m+1)\delta t^{(N)}),
\end{equation}
where we assume that the error at time $m\delta t^{(N)}$ the error has an upper bound of form
\begin{equation}
{\big |}v^{1,r,N+1,F}_{i\alpha}(2m\delta t^{(N+1)})-v^{1,r,N,F}_{i\alpha}(m\delta t^{(N)}){\big |}\leq \frac{C^N_{(e)m}\delta t^{(N)}}{1+|\alpha|^{n+s}}
\end{equation}
for $s>1$ and for some finite Euler error constant $0\leq C^N_{(e)m}$ which will be determined inductively with respect to $m\geq 0$ and $N\geq 0$. We have $C^N_{(e)0}=0$. 
For the time step number $2m+1$  we get
\begin{equation}\label{vtrotter}
\begin{array}{ll}
\mathbf{v}^{\rho,r,N+1,F}((2m+1)\delta t^{(N+1)}):=
\left( \delta_{ij\alpha\beta}\exp\left(- c(n)^2\left( C^{(p)}_{h0}\right)^24\pi^2 \sum_{i=1}^n\alpha_i^2 \delta t^{(N+1)} \right)\right)\\
\\
\left( \exp\left( \left( \left( e^{\rho,r,N}_{ij\alpha\beta}(2m\delta t)^{(N+1)}\right)_{ij\alpha\beta}\right)\delta t^{(N+1)}\right)  \right)\mathbf{v}^{\rho,r,N+1,F}(2m\delta t^{(N+1)}),
\end{array}
\end{equation}
where 
\begin{equation}
\begin{array}{ll}
e^{\rho,r,N}_{ij\alpha\gamma}(2m dt^{(N+1)})=-\frac{1}{2}\frac{2\pi i (\alpha_j-\gamma_j)}{l}v^{\rho,r,N}_{i(\alpha-\gamma)}(2m\delta t^{(N+1)})\\
\\
+\pi i\alpha_i1_{\left\lbrace \alpha\neq 0\right\rbrace}
4\pi^2 \frac{\sum_{k=1}^n\gamma_j(\alpha_k-\gamma_k)v^{\rho,r,N+1}_{k(\alpha-\gamma)}(2m\delta t^{(N)})}{\sum_{i=1}^n4\pi^2\alpha_i^2}.
\end{array}
\end{equation}
Here we used the specific parameter choice for $r$ and $\rho$ above.

The size of one step on level $N$ equals the size of two steps on level $N+1$. Using upper bound estimates of the form (\ref{cC}) we consider two steps and estimate
\begin{equation}
\begin{array}{ll}
{\big |}v^{\rho,r,N+1,F}_{i\alpha}((2m+2)\delta t^{(N+1)})-v^{\rho,r,N,F}_{i\alpha}(m\delta t^{(N)}){\big |} \\
\\
\leq \frac{c(n)C^{(p)}_{h0}C_{(e)m}\delta t^{(N)}}{1+|\alpha|^{n+s}},
\end{array}
\end{equation}
where $C$ is the constant in (\ref{C}). Here we observe that 
It follows that for all $m\in \left\lbrace 0,\cdots,2^N\right\rbrace $
\begin{equation}\label{errorv}
\begin{array}{ll}
\left( v^{1,r,N,F}_{i\alpha}(m\delta t^{(N)})\right)_{N\geq 1}
\end{array}
\end{equation}
is a Cauchy sequence and that there is an 
\end{proof}

The argument above proves  Theorem \ref{linearboundthm} where the constant $C>0$ depends on the viscosity constant $\nu$. Next we consider variations of the argument which lead to related results which are not covered by the previous argument, and which use local and global time delay transformation  with a potential damping term. In the case of a time global transformation  we get dependence of the upper bound constant $C$ on the time horizon but independence of the upper bound of the viscosity constant. We also combine viscosity damping with potential damping via local time transformation. Using time horizons $\Delta$ of the local subschemes which are correlated to the viscosity $\nu$ we have another method in order to obtain gobal regular upper bounds. This method will be considered in more detail elsewhere.

Note  the role of the spatial parameter $r>0$ in the  estimates above. For large $r>0$ the parameter coefficent $\rho r^2$ of the viscosity damping term becomes dominant compared to the parameter coefficient $\rho r$ of the nonlinear term. This implies that the viscosity damping becomes dominant compared to possible grwoth caused ny the nonlinear terms whenever a velocity mode exceeds a certain level. However in order to make potential damping dominant this parameter $r$ may be chosen to be small. Since the potential damping term does not depend on the parameter $r$, potential damping becomes dominant if $r$ is chosen small enough compared to a givane time horizon $T>0$.  In this simple case of a global time delay transformation we have dependence of the global uupper bound on the time horizon $T>0$, but the scheme is still global.

Although Euler equations have singular solutions in general, they may have global solution branches in strong spaces as well, and from (\ref{aaproof}). A detailed anaysis is beyond the scope of this paper.  Note that the viscosity limit $\nu \downarrow 0$ of (\ref{aaproof}) leads to an Euler scheme for the incompressible Euler equation with a solution candidate of the form 
\begin{equation}\label{eeproof}
\begin{array}{ll}
\lim_{N\uparrow \infty}\mathbf{e}^{\rho,r,N,F}(T)=\lim_{N\uparrow \infty} \lim_{\nu\downarrow 0}\Pi_{m=0}^{2^N}\left( \delta_{ij\alpha\beta}\exp\left(-\rho r^2\nu \sum_{i=1}^n\alpha_i^2 \delta t^{(N)} \right)\right)\\
\\
\left( \exp\left( \left( \left( e^{\rho,r,N}_{ij\alpha\beta}\right)_{ij\alpha\beta}(m\delta t^{(N)})\right)\delta t^{(N)} \right) \right) \mathbf{h}^F\\
\\
=\lim_{N\uparrow \infty} \Pi_{m=0}^{2^N}
\left( \exp\left( \left( \left( e^{\rho ,r,N}_{ij\alpha\beta}\right)_{ij\alpha\beta}(m\delta t^{(N)})\right)\delta t^{(N)} \right) \right) \mathbf{h}^F,
\end{array}
\end{equation}
where upper bound estimates in a strong norm for the limit $\lim_{N\uparrow \infty}\mathbf{e}^{\rho,r,N,F}(T)$ in (\ref{eeproof}) imply the existence of regular upper bounds for the limit  $\lim_{N\uparrow \infty}\mathbf{v}^{N,F}(T)$ of the Navier stokes equation scheme  in the same strong norm.
A global regular upper bound for a global regular solution branch of the incompressible Navier Stokes equation then leads to a unique solution via a well-known uniqueness result  which holds on the torus as well, i.e.,
for the incompressible Navier Stokes equation a global regular solution branch $v_i,~1\leq i\leq n$ with $v_i\in C^0\left([0,T],H^s\left({\mathbb T}^n\right)\right) $ for $s\geq 2.5$ leads - via Cornwall's inequality-to uniqueness in this space of regularity. More precisely, if $\tilde{v}_i,~1\leq i\leq n$ is another solution of the incompressible Navier Stokes equation, then we have for some constant $C>0$ depending only on the dimension $n$ and the viscosity $\nu>0$ and some integer $p\geq 4$ we have 
\begin{equation}\label{unique}
{\big |}\tilde{v}(t)-v(t){\big |}^2_{L^2}\leq {\big |}\tilde{v}(0)-v(0){\big |}^2_{L^2}
\exp\left(C\int_0^t\left( {\big |}{\big |}v(s){\big |}^p_{L^4}+{\big |}{\big |}v(s){\big |}^2_{L^4}\right)ds  \right). 
\end{equation}
Here, the choice $p=8$ is sufficient in case of dimension $n=3$. Hence, there is no other solution branch with the same $H^q$ data for $q\geq 2.5$ which is the critical level of regularity. Note that (\ref{unique}) does not hold for the Euler equation.

Next we use viscosity damping in cooperation with potential damping via local and global time dilatation in order to prove related global upper bound results. The existence of a global regular upper bound for time delay transformation schemes follows from three facts: a) for some short time interval $\Delta >0$ and some $0<\rho,r,\lambda<1$ and for stages $N\geq 1$ such that time steps are smaller then $\Delta >0$ we have a preservation of upper bounds in strong norms of the comparison functions $u^{*,1,r,N,t_0}_i,~ 1\leq i\leq n$ for $*\in \left\lbrace l,g\right\rbrace$ at each time step which falls into the time interval $\left[0,\Delta\right]$ with respect to transformed time $s$-coordinates, and b) we have a preservation of the upper bound in the limit $N\uparrow \infty$, i.e., a preservation of an upper bound of the $H^p$-norm for $p>\frac{n}{2}+1$ of the functions $u^{*,1,r,t_0}_i(s,.)=\lim_{N\uparrow \infty}u^{*,\rho,r,N,t_0}_i(s,.),~1\leq i\leq n$ for $s\in \left[0,\Delta\right]$ for $*\in {l,g}$,  and c) in the case of a time local time delay transformation, i.e. $*=l$ a upper bound preservation for strong norms can be transferred to the original velocity function components for some $\rho, r,\lambda>0$ and where the time size $\Delta >0$ of the subscheme is related to the viscosity $\nu$. Here for $t\in [t_0,t_0+\Delta]$ we have $v^{\rho,r}_i(t,.)=\lim_{N\uparrow \infty}v^{1,r,N,t_0}_i(t,.)=\lim_{N\uparrow \infty}\lambda (1+\mu(t-t_0)) u^{l,1,r,N,t_0}_i(s,.),~1\leq i\leq n$, and the latter limit function satisfies the incompressible Navier Stokes equation on the time interval $[t_0,t_0+a]$ (recall that $t\in [t_0,t_0+a]$ for $a\in (0,1)$ corresponds to $s\in [0,\Delta]$ with $\Delta=\frac{a}{\sqrt{1-a^2}}$). Here, the potential damping causes a growth term of size $O(\Delta^2)$ which can be offset by the viscosity damping by a choice of a time horizon $\Delta$ of the subscheme which is small compared to $\nu r^2$.  
First we formalize the statements in a) and b) in
\begin{lem}\label{mainlem}
Let a time horizon $T>0$ be given and consider a subscheme with local or global time delay transformation, i.e., let $*\in {l,g}$. Given a time interval length $\Delta \in (0,1)$ and stages $N\geq 1$ large enough such that $\delta t^{(N)}\leq \Delta$, for $m>  \frac{n}{2}+1$ there is a constant $C'>0$ (depending only on the viscosity and the size of the data) such that
\begin{equation}
{\big |}\mathbf{u}^{*,1,r,N,F,t_0}(0){\big |}^n_{h^m}\leq C'\Rightarrow {\big |}\mathbf{u}^{*,1,r,N,F,t_0}(s){\big |}^n_{h^m}\leq C'
\end{equation}
for 
$$s\in \left\lbrace p\delta t^{(N)}|0\leq p\leq 2^{N},~s\leq \Delta\right\rbrace,$$
where transformed time $s\in \left[0,\Delta \right]$ corresponds to original time $t\in \left[t_0,t_0+a\right]$ with $\Delta =\frac{a}{\sqrt{1-a^2}}$. Here, we
define
\begin{equation}
{\big |}\mathbf{u}^{*,1,r,N,F,t_0}(s){\big |}^n_{h^m}:=\max_{1\leq i\leq n}{\big |}\mathbf{u}^{*,1,r,N,F,t_0}_i(s){\big |}_{h^m}
\end{equation}
for $*\in \left\lbrace l,g\right\rbrace$. 
The statement holds in the limit $N\uparrow \infty$ as well, i.e.,   
for $m> \frac{n}{2}+1$ there exists a constant $C'>0$ (depending only on the viscosity and the size of the data) such that for $s\in \left[0,\frac{\Delta}{\sqrt{1-\Delta^2}} \right]$
\begin{equation}
{\big |}\mathbf{u}^{*,1,r,F,t_0}(0){\big |}^n_{h^m}\leq C'\Rightarrow {\big |}\mathbf{u}^{*,1,r,F,t_0}(s){\big |}^n_{h^m}\leq C'
\end{equation}
for $*\in \left\lbrace l,g\right\rbrace$. 
\end{lem}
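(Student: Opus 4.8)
## Proof Strategy for Lemma \ref{mainlem}

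\textbf{Overall approach.} The plan is to prove the discrete (finite-stage) statement by induction on the time step number $p$ within the local subscheme, using exactly the same mode-wise upper-bound propagation argument that was used in the proof of Lemma \ref{lemN}, but now keeping careful track of the extra term generated by the potential damping $-\mu^{*,0,t_0}\delta_{ij\alpha\gamma}$ appearing in $e^{*,\rho,r,N,u,\lambda,t_0}_{ij\alpha\gamma}$. The key structural observation is that the transformed equation in (\ref{navode200secondu}) has the same shape as (\ref{navode200second}): a diagonal dissipative factor $\exp(-\rho r^2\nu 4\pi^2\sum_i\alpha_i^2\,\delta s^{(N)})$ times an ``Euler-type'' exponential whose off-diagonal entries are linear in the modes $u^{*,1,r,N,t_0}_{k(\alpha-\gamma)}$ and satisfy weakly-singular elliptic-integral bounds of the form (\ref{cC}), (\ref{cC*}). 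The only new feature is the bounded diagonal summand $-\mu^{*,0,t_0}(m\delta s^{(N)})\delta_{ij\alpha\gamma}$, where $\mu^{l,0,t_0}(\tau)=\frac{\mu\sqrt{1-(\tau-t_0)^2}^3}{1+\mu(\tau-t_0)}$ (resp.\ $\mu^{g,0,t_0}$), which on the transformed interval $s\in[0,\Delta]$, i.e.\ $\tau-t_0\in[0,a]$ with $a\in(0,1)$, is uniformly bounded: $0\le \mu^{*,0,t_0}\le \mu$. Since this term enters with the correct sign inside the exponential, it can only \emph{help} (it contributes a factor $\le 1$), so it does no harm to the upper bound; it is the nonlinear Euler terms that must be controlled.

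\textbf{Key steps, in order.} First I would fix $m>\tfrac n2+1$, set $C':=C^{(p)}_{h0}$ the mode-decay constant of the data (so ${\big|}u^{*,1,r,N,F,t_0}_{i\alpha}(0){\big|}\le C'/(1+|\alpha|^{n+m'})$ for the appropriate $m'>m-n/2$, equivalently the data lie in the ball of radius $C'$ in $h^m$), and choose the spatial/time parameters $\rho,r$ as in (\ref{rpara}) with $c=C^{(p)}_{h0}$, so that the nonlinear coefficient is $\rho r=\tfrac12$ while the viscosity coefficient $\rho r^2\nu$ is large. Second, I would verify the one-step inequality: assuming the mode-wise bound ${\big|}u^{*,1,r,N,t_0}_{i\alpha}(m\delta s^{(N)}){\big|}\le C'/(1+|\alpha|^{n+s})$ holds at step $m$ (for some $s>1$, $n+s$ being the decay exponent corresponding to the $h^m$-ball), then applying (\ref{navode200secondu}) in its Trotter-product form (\ref{trotterlambda}) and estimating the Euler exponential by $1+\rho r\frac{c(n)(C')^2}{1+|\alpha|^{n+s}}\delta s^{(N)}+(\delta s^{(N)})^2$ exactly as in (\ref{vtrottere})–(\ref{vtrotterialpha}), together with the factor $\exp(-\mu^{*,0,t_0}\delta s^{(N)})\le 1$ and the dissipative factor $\le 1$, gives back ${\big|}u^{*,1,r,N,t_0}_{i\alpha}((m+1)\delta s^{(N)}){\big|}\le C'/(1+|\alpha|^{n+s})$, provided $\delta s^{(N)}\le\Delta$ is small enough (using the splitting-into-two-cases trick of (\ref{vtrotterialpha}): either the mode is already $\le C'/2(1+|\alpha|^{n+s})$, or it lies in $[C'/2,C']\cdot(1+|\alpha|^{n+s})^{-1}$ and the dissipative plus potential damping absorbs the $O(\delta s^{(N)})$ growth). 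Third, I would handle the zero modes $\alpha=0$: for the global transformation $*=g$ the damping $\mu^{g,0,t_0}$ is active for all modes including $\alpha=0$, which already stabilizes the zero mode; for $*=l$ and for safety in both cases I would invoke the extended-Euler-Trotter-product device from the proof of Lemma \ref{lemN} (the auxiliary constants $c_{j(m)}$) to keep the zero modes bounded, which works verbatim since it does not interact with the time-delay terms. Iterating the one-step bound over $p\le 2^N$ with $p\,\delta s^{(N)}\le\Delta$ gives the discrete statement. Finally, the limiting statement $N\uparrow\infty$ follows from Lemma \ref{lemerr}: the increments $\delta\mathbf{u}^{*,1,r,N,F,t_0}$ between consecutive stages have error upper bounds $E^{r,N}_m\to 0$ (the same Cauchy-sequence argument applies, with the bounded extra diagonal term causing no difficulty), so $\mathbf{u}^{*,1,r,F,t_0}(s)=\lim_N\mathbf{u}^{*,1,r,N,F,t_0}(s)$ exists and inherits the bound $C'$ on $[0,\Delta]$ (resp.\ the statement is phrased with $\frac{\Delta}{\sqrt{1-\Delta^2}}$ as written).

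\textbf{Main obstacle.} The delicate point is the interaction between the time-delay factors $\mu^{*,1,t_0}(\tau)=(1+\mu(\tau-t_0))\sqrt{1-(\tau-t_0)^2}^3$ multiplying the nonlinear terms and the potential damping $\mu^{*,0,t_0}$: one must confirm that on $s\in[0,\Delta]$ (equivalently $\tau-t_0\in[0,a]$, $a<1$) these are all \emph{bounded} — indeed $\mu^{*,1,t_0}\le\frac32$ for $\mu<1$ and $\mu^{*,0,t_0}\ge 0$ — so that no blow-up of the transformation coefficients occurs on the chosen finite sub-interval; this is precisely why the subscheme is run only up to $\Delta<1$ and not to $s=\infty$. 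A secondary subtlety is that, to transfer the bound back to the original velocity $v^{\rho,r}_i$ in part c) (stated in the surrounding text but not in this lemma), one needs $\Delta$ small relative to $\nu r^2$ so that the $O(\Delta^2)$ growth created by the potential term is offset by viscosity damping; within the lemma itself, however, this is not needed — the potential term only ever decreases the bound — so I expect the proof of the lemma as stated to be essentially a bookkeeping exercise on top of Lemmas \ref{lemN} and \ref{lemerr}, with the genuine work being the uniform boundedness of the transformation coefficients on $[0,\Delta]$.
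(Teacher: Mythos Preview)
Your proposal is correct, but it takes a genuinely different route from the paper's own argument, and the difference is worth noting.

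You treat the potential damping term $-\mu^{*,0,t_0}\delta_{ij\alpha\gamma}$ as a harmless bonus (a diagonal factor $\le 1$) and rely on the \emph{viscosity} damping $\exp(-\rho r^2\nu 4\pi^2\sum_i\alpha_i^2\,\delta s^{(N)})$ together with the parameter choice $\rho r=\tfrac12$, $\rho r^2\nu$ large, to rerun the mode-wise argument of Lemma~\ref{lemN} essentially verbatim. The paper instead goes through the viscosity-free Euler comparison $\mathbf{u}^{l,\rho,r,e,N,F,t_0}$: it first observes that the Navier--Stokes Trotter scheme is dominated at each step by the Euler one (the extra factor $\exp(-\rho r^2\nu\sum\alpha_i^2\,\delta s^{(N)})\le 1$), and then proves preservation for the Euler scheme by balancing the \emph{potential} damping against the nonlinear growth, arriving at the explicit condition $\lambda r(1+\mu\Delta)c(n)C^2\le \frac{\mu\sqrt{1-\Delta^2}^3}{1+\mu\Delta}C$ (see (\ref{left})--(\ref{left2})). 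In other words, the paper makes the potential damping do the essential work, which is precisely the purpose of the time-delay transformation and is what allows the argument to cover the Euler limit and to yield Lemma~\ref{rholemma2}. Your approach is cleaner as a pure reduction to Lemma~\ref{lemN}, but it does not exhibit why the transformation was introduced, and it would not survive the passage $\nu\downarrow 0$.

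One small point you should make explicit: in the transformed scheme the viscosity term carries the extra factor $\mu^{t_0}=\sqrt{1-(\tau-t_0)^2}^3$, so the effective dissipation is $\rho r^2\nu\,\mu^{t_0}$, not $\rho r^2\nu$. On $\tau-t_0\in[0,a]$ with $a<1$ this factor is bounded below by $\sqrt{1-a^2}^3>0$, so your two-case splitting (\ref{vtrotterialpha}) still goes through, but the constant in the choice of $\delta s^{(N)}$ picks up this factor.
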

\begin{rem}
Note that for $s>0$ and $*\in \left\lbrace l,g\right\rbrace$ the inequality
\begin{equation}
|u^{*,1,r}_{i\alpha}|\leq \frac{c}{1+|\alpha|^{n+s}}<\infty
\end{equation}
implies  polynomial decay of order $2n+2s$ of the quadratic modes, where the equivalence
\begin{equation}
u^{*,1,r}_i\in H^m\equiv H^m\left({\mathbb Z}^n\right)  \mbox{iff} \sum_{\alpha\in {\mathbb Z}^n}|u^{*,1,r}_{i\alpha}|^{2m}(1+|\alpha|^{2m})<\infty
\end{equation}
implies that $u^{*,1,r}_i\in H^{\frac{n}{2}+s}$ an vice versa. According to the theorems stated above, $H^{\frac{n}{2}+1}$ is the critical space for regularity. This means that for theorem \ref{linearboundthm} we assume that $s>\frac{n}{2}+1$.
\end{rem}
Next we sketch a proof for lemma \ref{mainlem} in the case $*=l$. The proof in the case of a global time delay transformation is similar with the difference that the spatial transofrmation parametr $r$ may depend on the time horizon.
The scheme for $u^{l,\rho,r,t_0}_i,~1\leq i\leq n$ becomes at each stage $N\geq 1$
\begin{equation}\label{navode200secondu}
\begin{array}{ll}
u^{l,\rho,r,N,t_0}_{i\alpha}((m+1)\delta s^{(N)})=u^{l,\rho,r,N,t_0}_{i\alpha}(m\delta s^{(N)})\\
\\
+\mu^{t_0}\sum_{j=1}^n\rho r^2\nu \left( -\frac{4\pi^2 \alpha_j^2}{l^2}\right)u^{l,\rho,r,N,t_0}_{i\alpha}(m\delta s^{(N)})\delta s^{(N)}\\
\\
+\sum_{j=1}^n\sum_{\gamma\in {\mathbb Z}^n}e^{l,\rho,r,N,u,\lambda,t_0}_{ij\alpha\gamma}(m\delta s)u^{l,\rho,r,N,t_0}_{j\gamma}(m\delta s^{(N)})\delta s^{(N)}.
\end{array} 
\end{equation}
where $\mu^{t_0}=\sqrt{1-(.-t_0)^2}^3$ is evaluated at $t_0+m\delta t^{(N)}$ , and
\begin{equation}\label{eujk*}
\begin{array}{ll}
e^{l,\rho,r,N,u,\lambda ,t_0}_{ij\alpha\gamma}(m \delta s^{(N)})=-\lambda \rho r\mu^{1,t_0}\frac{2\pi i (\alpha_j-\gamma_j)}{l}u^{l,\rho,r,N,t_0}_{i(\alpha-\gamma)}(m\delta s^{(N)})\\
\\
+ \lambda \rho r\mu^{l,1,t_0}\frac{2\pi i\alpha_i1_{\left\lbrace \alpha\neq 0\right\rbrace}
\sum_{k=1}^n4\pi^2\gamma_j(\alpha_k-\gamma_k)u^{l,\rho,r,N,t_0}_{k(\alpha-\gamma)}(m\delta s^{(N)})}{\sum_{i=1}^n4\pi^2\alpha_i^2}-\mu^{l,0,t_0}(m\delta s^{(N)})\delta_{ij\alpha\gamma}
\end{array}
\end{equation}
along with $\mu^{l,1,t_0}(t):=(1+\mu(t-t_0))\sqrt{1-(t-t_0)^2}^3$ and $\mu^{l,0,t_0}(t):=\frac{\mu\sqrt{1-(t-t_0)^2}^3}{1+\mu (t-t_0)}$.
The last term in (\ref{eujk}) is related to the damping term of the equation for the function $u^{l,1,r,t_0}_i,~1\leq i\leq n$. For times $0\leq t_e$ and with an analogous time discretization we get the Trotter product formula
\begin{equation}\label{trotterlambda2}
\begin{array}{ll}
\mathbf{u}^{l,\rho,r,N,F,t_0}(t_e)\doteq \Pi_{m=0}^{2^N}\left( \delta_{ij\alpha\beta}\exp\left(-\rho r^2\nu \sum_{i=1}^n\alpha_i^2 \delta s^{(N)} \right)\right)\times\\
\\
\times \left( \exp\left( \left( \left( e^{l,\rho,r,N,u,\lambda,t_0}_{ij\alpha\beta}\right)_{ij\alpha\beta}(m\delta s^{(N)})\right)\delta s^{(N)}\right)  \right) \mathbf{u}^{N,F,t_0}(0).
\end{array}
\end{equation} 
The related comparison function for the Euler equation velocity satisfies
\begin{equation}\label{trotterlambdae2}
\begin{array}{ll}
\mathbf{u}^{l,\rho,r,e,N,F,t_0}(t_e)\doteq \\
\\
\Pi_{m=0}^{2^N}\left( \exp\left( \left( \left( e^{l,\rho,r,N,u,\lambda,t_0}_{ij\alpha\beta}\right)_{ij\alpha\beta}(m\delta s^{(N)})\right)\delta s^{(N)}\right)  \right) \mathbf{u}^{l,\rho,r,e,N,F,t_0}(0).
\end{array}
\end{equation}
The scheme for the Navier Stokes equation comparison function $\mathbf{u}^{l,\rho,r,N,F,t_0}(t_e)$ has an additional ${\big |}\exp\left(-\rho r^2\nu \sum_{i=1}^n\alpha_i^2 \delta s^{(N)} \right){\big |}\leq 1$ on the diagonal of the infinite matrix $\left( \delta_{ij\alpha\beta}\exp\left(-\rho r^2\nu \sum_{i=1}^n\alpha_i^2 \delta t^{(N)} \right)\right)$ which is effective at each time step of the Trotter scheme. Using this observation and induction with respect to the time step number we get
\begin{lem}
Let $\lambda=1$ and $\rho=1$ for simplicity. For all stages $N\geq 1$, and given  $0\leq t_e<1$  there exists $r>0$ such that
\begin{equation}
{\Big |}\mathbf{u}^{l,\rho,r,e,N,F,t_0}(0){\Big |}^n_{h^p}\leq C^p_h~\Rightarrow ~{\Big |}\mathbf{u}^{l,\rho,r,N,F,t_0}(t_e){\Big |}^n_{h^p}\leq C^p_h
\end{equation}
\end{lem}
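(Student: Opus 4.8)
The plan is to run an induction on the time step number for the viscous comparison scheme (\ref{trotterlambda}) directly, using the hypothesis only to initialise it. First I would record that with $\rho=\lambda=\mu=1$ the relation $v^{\rho,r}_i(\tau,.)=\lambda(1+\mu(\tau-t_0))u^{l,\rho,r,t_0}_i(s,.)$ gives at $s=0$ the equality $\mathbf u^{l,\rho,r,N,F,t_0}(0)=\mathbf u^{l,\rho,r,e,N,F,t_0}(0)$ (both are the list of velocity modes carried into the subscheme at original time $t_0$), so the hypothesis ${\big|}\mathbf u^{l,\rho,r,e,N,F,t_0}(0){\big|}^n_{h^p}\le C^p_h$ is literally ${\big|}\mathbf u^{l,\rho,r,N,F,t_0}(0){\big|}^n_{h^p}\le C^p_h$. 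I would convert this, via the equivalence of $h^p$ with modewise polynomial decay for $p>\tfrac n2+1$, into a modewise bound ${\big|}u^{l,\rho,r,N,t_0}_{i\alpha}(0){\big|}\le C_0(1+|\alpha|^{n+s})^{-1}$ with $s:=p-\tfrac n2>1$, and prove that this modewise bound is propagated by each Trotter step.

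Next I would analyse one step. Writing it as $\big(\delta_{ij\alpha\beta}\exp(-\rho r^2\nu\sum_i\alpha_i^2\delta s^{(N)})\big)$ composed with $\exp\big((e^{l,\rho,r,N,u,\lambda,t_0}_{ij\alpha\beta}(m\delta s^{(N)}))_{ij\alpha\beta}\delta s^{(N)}\big)$ applied to $\mathbf u^{l,\rho,r,N,F,t_0}(m\delta s^{(N)})$, I note that the diagonal viscosity factor has every entry of modulus $\le 1$, so it can only decrease mode moduli — this is exactly the extra damping the viscous comparison scheme enjoys over the inviscid one. For the remaining factor I would expand the exponential and use the infinite matrix/vector calculus of Section 1 (convection and Leray matrices applied to polynomially decaying vectors stay polynomially decaying, with geometric control of the iterates) to write its $\alpha$-mode as $u^{l,\rho,r,N,t_0}_{i\alpha}(m\delta s^{(N)})\big(1-\mu^{l,0,t_0}(m\delta s^{(N)})\delta s^{(N)}\big)+R_{i\alpha}$, where by (\ref{cC}) together with $\mu^{l,1,t_0}\le\tfrac32$ one gets ${\big|}R_{i\alpha}{\big|}\le\big(r\,c(n)C_0^2+c(n)C_0\delta s^{(N)}\big)\delta s^{(N)}(1+|\alpha|^{n+s})^{-1}$ for a dimensional constant $c(n)$ absorbing the $4\pi^2$'s, the $(n+n^2)$ count and the weakly singular elliptic integral constant; the crucial point is that with $\rho=\lambda=1$ all nonlinear contributions carry the factor $r$ while $\mu^{l,0,t_0}$ does not. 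I would then set $\underline\mu:=\inf_{0\le s\le t_e}\mu^{l,0,t_0}>0$ (positive since $t_e<1$ forces the corresponding original‑time interval to be a compact subinterval of $[t_0,t_0+1)$ on which $\mu^{l,0,t_0}$ is continuous and strictly positive) and choose $r<\underline\mu/(2c(n)C_0)$.

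With these choices I would reproduce the dichotomy from the proof of Lemma \ref{lemN}. For modes with ${\big|}u^{l,\rho,r,N,t_0}_{i\alpha}{\big|}\le\tfrac12 C_0(1+|\alpha|^{n+s})^{-1}$ the added growth keeps the mode $\le C_0(1+|\alpha|^{n+s})^{-1}$ as soon as $N$ is large enough that $c(n)C_0\delta s^{(N)}\le\tfrac14$; for modes with $\tfrac12 C_0(1+|\alpha|^{n+s})^{-1}<{\big|}u^{l,\rho,r,N,t_0}_{i\alpha}{\big|}\le C_0(1+|\alpha|^{n+s})^{-1}$ the potential damping contributes $-\mu^{l,0,t_0}{\big|}u^{l,\rho,r,N,t_0}_{i\alpha}{\big|}\delta s^{(N)}\le-\tfrac12\underline\mu C_0\delta s^{(N)}(1+|\alpha|^{n+s})^{-1}$, which dominates ${\big|}R_{i\alpha}{\big|}$, so the mode does not grow; and multiplying by the viscosity factor (modulus $\le 1$) preserves the bound in either case. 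This closes the induction over all $m$ with $m\delta s^{(N)}\le t_e$ and yields ${\big|}\mathbf u^{l,\rho,r,N,F,t_0}(t_e){\big|}^n_{h^p}\le C^p_h$. For the finitely many coarsest stages $N$ one shrinks $r$ further, so $r$ may be taken to depend only on $t_e$, the data, $n$ and $\nu$.

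I expect the hard part to be the constant bookkeeping rather than any deep estimate: one must see cleanly that $\mu^{l,0,t_0}$ is $r$‑independent while every nonlinear term carries the factor $r$, so small $r$ makes the damping strictly dominant, and that this dominance is inherited at the dual Sobolev level — which is exactly what the iterated weakly singular elliptic integral estimate (\ref{cC}) guarantees, since it propagates the modewise polynomial decay without loss of order. The only other thing to verify is that the $O((\delta s^{(N)})^2)$ exponential remainders and the Lipschitz‑in‑time variation of $\mu^{l,1,t_0},\mu^{l,0,t_0}$ across a single step are genuinely lower order, which is automatic because $\delta s^{(N)}\downarrow 0$.
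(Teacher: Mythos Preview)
Your proposal is correct and follows essentially the same approach as the paper: identify the Euler and Navier--Stokes initial data, observe that the diagonal viscosity factor has modulus at most $1$ so it only helps, and then show step by step that for small $r$ the potential damping term $\mu^{l,0,t_0}$ (which carries no factor $r$) dominates the nonlinear Euler growth (which carries the factor $r$), exactly as in the paper's estimate (\ref{left2}). Your dichotomy argument is a mild elaboration borrowed from the proof of Lemma~\ref{lemN}, but the substance is the same.
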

We observe that the comparison function $\mathbf{u}^{\rho,r,e,N,F,t_0}(t_e)$ for the Euler equation scheme preserves certain upper bounds at one step within the time interval $[0,\Delta]$ for appropriately chosen parameters $\lambda>0$ and $\mu>\lambda$ depending on $\Delta=\frac{a}{\sqrt{1-a^2}}>0$, where $a$ is the size of the corresponding time interval in original coordinates. We mention here that the parameter $\lambda$ can be useful inorder to strengthen the relative strength of potential damping- for $r=\lambda^2$ the nonlinear terms have a coefficient of order $\sim \lambda^3$ such that the nonlinear growth factor $\sim\frac{1}{\lambda^2}$ of the scaled value functions  is absorbed and the grwoth of the nonlinear terms is of order $\lambda$ while the potential damping term gets a factor $\frac{1}{\lambda}$ via the scaled value function which is elatively strong.  However this it is not really needed in order to prove the existence of a global solution branch of the Euler equation. 
Assume inductively that for some finite constant $C>0$ an for $p>\frac{n}{2}+1$ and a given stage $N\geq 1$ with $\delta s^{(N)}\leq \Delta$ and $m\geq 0$ we have
\begin{equation}
{\Big |}\mathbf{u}^{\rho,r,e,N,F,t_0}(m\delta s^{(N)}){\Big |}^n_{h^p}\leq C^p_{h0}
\end{equation}
The Euler equation scheme for the next time step is 
 \begin{equation}\label{navode200secondue}
\begin{array}{ll}
u^{l,\rho,r,e,N,t_0}_{i\alpha}((m+1)\delta s^{(N)})=u^{l,\rho,r,N,t_0}_{i\alpha}(m\delta s^{(N)})\\
\\
+\sum_{j=1}^n\sum_{\gamma\in {\mathbb Z}^n}{\Big (}-\lambda r\mu^{l,1,t_0}\frac{2\pi i (\alpha_j-\gamma_j)}{l}u^{l,\rho,r,e,N,t_0}_{i(\alpha-\gamma)}(m\delta s^{(N)})\\
\\
+ \lambda r\mu^{l,1,t_0}\frac{2\pi i\alpha_i1_{\left\lbrace \alpha\neq 0\right\rbrace}
\sum_{k=1}^n4\pi^2\gamma_j(\alpha_k-\gamma_k)u^{\rho,r,e,N,t_0}_{k(\alpha-\gamma)}(m\delta s^{(N)})}{\sum_{i=1}^n4\pi^2\alpha_i^2}{\Big )}u^{l,\rho,r,e,N,t_0}_{j\gamma}(m\delta s^{(N)})\delta s^{(N)}\\
\\-\mu^{l,0,t_0}(m\delta s^{(N)})u^{l,\rho,r,e,N,t_0}_{i\alpha}(m\delta s^{(N)})\delta s^{(N)},
\end{array} 
\end{equation}
where we note that  $|\mu^{l,1,t_0}(t)|\leq(1+\mu \Delta)$ and $|\mu^{l,0,t_0}(t)|\geq \frac{\mu\sqrt{1-\Delta^2}^3}{1+\mu \Delta}$ for $t=t(s)\in [t_0,t_0+\Delta]$ and $t\in [t_0,t_0+a]$. We get
 \begin{equation}\label{navode200secondue**}
\begin{array}{ll}
{\Big |}u^{l,\rho,r,e,N,F,t_0}((m+1)\delta s^{(N)}){\Big |}^n_{h^p}\leq {\big |}u^{l,\rho,r,N,t_0}(m\delta s^{(N)}){\big |}_{h^p}^n\\
\\
+{\Big |}{\Big (}\sum_{j=1}^n\sum_{\gamma\in {\mathbb Z}^n}{\Big (}-\lambda r\mu^{1,t_0}\frac{2\pi i (\alpha_j-\gamma_j)}{l}u^{l,\rho,r,e,N,F,t_0}_{i(\alpha-\gamma)}(m\delta s^{(N)})\\
\\
+ \lambda r\mu^{1,t_0}\frac{2\pi i\alpha_i1_{\left\lbrace \alpha\neq 0\right\rbrace}
\sum_{k=1}^n4\pi^2\gamma_j(\alpha_k-\gamma_k)u^{l,\rho,r,e,N,t_0}_{k(\alpha-\gamma)}(m\delta s^{(N)})}{\sum_{i=1}^n4\pi^2\alpha_i^2}{\Big )}\times\\
\\
\times u^{l,\rho,r,e,N,t_0}_{j\gamma}(m\delta s^{(N)})\delta s^{(N)}{\Big )}_{1\leq i\leq n,\alpha \in {\mathbb Z}^n}+O\left(\left( \delta s^{(N)}\right)^2\right) \\
\\-\frac{\mu\sqrt{1-\Delta^2}^3}{1+\mu \Delta}{\Big |}u^{l,\rho,r,e,N,F,t_0}(m\delta s^{(N)})\delta s^{(N)}{\Big |}_{h^p}^n,
\end{array} 
\end{equation}
Hence, the preservation of an upper bound at step $m+1$ follows from
\begin{equation}\label{left}
\begin{array}{ll}
\lambda (1+\mu\Delta){\Big |}{\Big (}\sum_{j=1}^n\sum_{\gamma\in {\mathbb Z}^n}{\Big (}-\frac{2\pi i (\alpha_j-\gamma_j)}{l}u^{l,\rho,r,e,N,F,t_0}_{i(\alpha-\gamma)}(m\delta s^{(N)})\\
\\
+ \frac{2\pi i\alpha_i1_{\left\lbrace \alpha\neq 0\right\rbrace}
\sum_{k=1}^n4\pi^2\gamma_j(\alpha_k-\gamma_k)u^{l,\rho,r,e,N,t_0}_{k(\alpha-\gamma)}(m\delta s^{(N)})}{\sum_{i=1}^n4\pi^2\alpha_i^2}{\Big )} u^{l,\rho,r,e,N,t_0}_{j\gamma}(m\delta s^{(N)})\delta s^{(N)}{\Big )}_{1\leq i\leq n,\alpha \in {\mathbb Z}^n}{\Big |}_{h^p}^n\\
\\
\leq \frac{\mu\sqrt{1-\Delta^2}^3}{1+\mu \Delta}{\Big |}u^{l,\rho,r,e,N,F,t_0}(m\delta s^{(N)})\delta s^{(N)}{\Big |}_{h^p}^n.
\end{array}
\end{equation}
According to the regularity (polynomial decay of the modes) of $u^{e,N,F,t_0}_{i}(m\delta s^{(N)})$ and simple estimates as in (18) the left side of (\ref{left}) has an upper bound
\begin{equation}
\lambda r(1+\mu\Delta)c(n)C^2
\end{equation}
for some constant $c(n)$ which depends only on dimension $n$ such that the inequality in (\ref{left}) follows from
\begin{equation}\label{left2}
\begin{array}{ll}
\lambda r(1+\mu\Delta)c(n)C^2\leq \frac{\mu\sqrt{1-\Delta^2}^3}{1+\mu \Delta}C.
\end{array}
\end{equation}
Here, the number $C>0$ is determined by the data at time $t_0$ and where $r,\lambda >0$ can be chosen such that $|.|_{H^p}$ norms of comparison functions $u^{1,r,t_0}_i,~1\leq i\leq n$ are preserved. This does not imply preservation of the norm for the velocities $v^{1,r}_i,~1\leq i\leq n$ of course. hier
The Euler equation scheme has a limit $N\uparrow \infty$ by standard arguments. Finishing item a) and item b) of the argument we get
\begin{lem}\label{rholemma2}
For all $0\leq t_e<1$ and $p>\frac{n}{2}+1$ and for some $r,\lambda >0$ and $*\in {l,g}$ we get
\begin{equation}
{\Big |}\mathbf{u}^{*,\rho,r,F,t_0}(0){\Big |}^n_{h^p}\leq C\Rightarrow {\Big |}\mathbf{u}^{*,\rho,r,F,t_0}(t_e){\Big |}^n_{h^p}\leq C,
\end{equation}
where $\mathbf{u}^{*,\rho,r,F,t_0}(t_e)=\lim_{N\uparrow \infty} \mathbf{u}^{*,\rho,r,N,F,t_0}(t_e)$.
\end{lem}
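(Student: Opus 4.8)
The plan is to obtain the continuous‑time statement as the limit $N\uparrow\infty$ of the step‑by‑step upper bound preservation already established for the discretised comparison schemes, using the convergence of the Euler‑type Trotter product scheme. The argument has the three ingredients a), b), c) announced before the lemma, which I would organise as follows.

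First I would fix the parameters from the data bound. Since $p>\frac{n}{2}+1$, the hypothesis ${\big|}\mathbf{u}^{*,\rho,r,F,t_0}(0){\big|}^n_{h^p}\leq C$ is equivalent to a polynomial mode decay $|u^{*,\rho,r}_{i\alpha}(0)|\leq C_0\,(1+|\alpha|^{n+s})^{-1}$ for some $s>1$ with $C_0$ comparable to $C$. Keeping $\Delta\in(0,1)$ fixed once and for all, I would choose $r>0$ (and, in the local case $*=l$, $\lambda>0$; in the global case $*=g$ also $r$ small relative to the prescribed horizon because of the growing factor $(1+\mu\tau)$ in $\mu^{g,1,t_0}$) so that the contraction inequality (\ref{left2}) holds, i.e.
\begin{equation*}
\lambda r(1+\mu\Delta)c(n)C^2\leq \frac{\mu\sqrt{1-\Delta^2}^3}{1+\mu\Delta}\,C,
\end{equation*}
where $c(n)$ is the dimensional constant furnished by the weakly singular elliptic integral estimate (\ref{cC}). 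By the lemma preceding this one together with Lemma \ref{mainlem}, this choice forces the Euler comparison scheme $\mathbf{u}^{*,\rho,r,e,N,F,t_0}$ to preserve the $h^p$‑bound $C$ at every time step falling in the transformed interval $[0,\Delta]$, for every stage $N$ with $\delta s^{(N)}\leq\Delta$.

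Second, I would transfer this to the Navier–Stokes comparison scheme and then iterate. The scheme $\mathbf{u}^{*,\rho,r,N,F,t_0}$ differs from the Euler one only through the extra diagonal factor $\left(\delta_{ij\alpha\beta}\exp(-\rho r^2\nu\sum_i\alpha_i^2\,\delta s^{(N)})\right)$, whose operator norm is $\leq 1$ at each time step; hence the inductive estimate (\ref{navode200secondue**})–(\ref{left}) goes through verbatim, this factor only improving the inequality. This yields ${\big|}\mathbf{u}^{*,\rho,r,N,F,t_0}(m\delta s^{(N)}){\big|}^n_{h^p}\leq C$ for all $m\delta s^{(N)}\leq t_e<1$ and all large $N$. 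For $*=l$ one uses in addition, as in item c) of the surrounding discussion, that a subscheme horizon $\Delta$ taken small relative to $\nu r^2$ makes the viscosity damping dominate the $O(\Delta^2)$ growth produced by the potential term $-\mu^{l,0,t_0}\delta_{ij\alpha\gamma}$, so the transfer back to the original velocity components and the iteration over consecutive subintervals via the semigroup property do not let $C$ deteriorate.

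Third, I would pass to the limit. Applying the error analysis of Lemma \ref{lemerr} to the time‑dilated equations (\ref{navode200secondu})–(\ref{eujk}) — whose coefficients $\sqrt{1-(t-t_0)^2}^3$, $\mu^{*,1,t_0}$, $\mu^{*,0,t_0}$ are smooth and bounded on the relevant interval, the extra term $-\mu^{*,0,t_0}\delta_{ij\alpha\gamma}$ being bounded and in fact contracting — the sequence $\big(\mathbf{u}^{*,\rho,r,N,F,t_0}\big)_{N\geq 1}$ is a Cauchy sequence in ${\big|}\cdot{\big|}^n_{h^p}$ with error upper bounds tending to zero, so $\mathbf{u}^{*,\rho,r,F,t_0}(t_e)=\lim_{N\uparrow\infty}\mathbf{u}^{*,\rho,r,N,F,t_0}(t_e)$ exists in $h^p$. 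Since each term of the sequence satisfies the bound $C$ and this bound is preserved under $h^p$‑convergence (lower semicontinuity of the norm), the limit satisfies ${\big|}\mathbf{u}^{*,\rho,r,F,t_0}(t_e){\big|}^n_{h^p}\leq C$, which is the assertion. The step I expect to be the main obstacle is the uniformity in step two: one must ensure that the single pair $(r,\lambda)$ and the single subscheme horizon $\Delta$ produced from the data bound at $t_0$ work simultaneously for all stages $N$ and can be iterated across the subintervals without loss — this is precisely where the correlation between $\Delta$ and $\nu$ (in the local case) and the dependence of $r$ on the time horizon (in the global case) must be tracked carefully.
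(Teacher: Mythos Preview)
Your proposal is correct and follows the same route as the paper, which proves the lemma in essentially one sentence (``The Euler equation scheme has a limit $N\uparrow\infty$ by standard arguments. Finishing item a) and item b) of the argument we get\ldots''); you have simply unpacked what ``standard arguments'' and items a), b) mean, namely the step-wise preservation via (\ref{left2}) plus passage to the limit through a Cauchy-sequence/error argument as in Lemma~\ref{lemerr}. One small remark: the item c) material you invoke in your second step---relating $\Delta$ to $\nu r^2$ and transferring the bound back to the original velocity $\mathbf{v}^{\rho,r}$---is not needed for this lemma, which concerns only the comparison functions $\mathbf{u}^{*,\rho,r,F,t_0}$; in the paper that discussion appears \emph{after} Lemma~\ref{rholemma2} precisely because it is a separate step.
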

 Pure potential damping leads to a growth estimate for the original of the original velocity component functions of order $\Delta^2$ on the time interval $\left[t_0,t_0+a\right]$. This growth can be offset even by a small viscosity damping for nonzero modes with the choice $\Delta =\nu r^2$. Again an extended scheme leads to global regular upper bounds.
\footnotetext[1]{\texttt{{kampen@wias-berlin.de}, joerg.kampen@mathalgorithm.de}.}


\begin{thebibliography}{19}
\baselineskip=12pt


\bibitem{A}{\sc Arnold, V.I.}: {\em Dynamical Systems.} Springer, 1994.


\bibitem{CFNT}
	{\sc Constantin, P., Foias, C., Nicolenko, B., Temam, R.}: {\em Integral manifolds and inertial manifolds for dissipative partial differential equations.} Springer, 1989.
	
	
\bibitem{FJRT}
	{\sc Foias, C., Manley, O., Rosa, R., Temam, R.}: {\em Navier-Stokes equations and turbuelence.} CUP, 2001.
	



\bibitem{FJKT}
	{\sc Foias, C., Jolly, M., Kravchenko, M., Tity, E.}: {\em Navier-Stokes equation, determing modes, dissipative dynamical systems.} arXiv1208,5434v1, August 2012.

\bibitem{KT}	
{\sc Kampen, J.,} {\em Trotter product formulas and global regular upper bounds of the Navier Stokes equation solution}, arXiv1401.2734v3   April. 2014.

\bibitem{KAC}
{\sc Kampen, J.} {\em
On an auto-controlled global existence scheme of the incompressible  Navier Stokes equation}, arxiv13094824v11,  [math.AP], May. 2014.









	
	
\bibitem{K2}
{\sc Kampen, J.,} {\em Global regularity and probabilistic schemes for free boundary surfaces of multivariate American derivatives and their Greeks},  Siam J. Appl. Math. 71, pp. 288-308, 2011.

\bibitem{KKS}
{\sc Kampen, J., Kolodko, A., Schoenmakers, J.}, {\em Monte Carlo Greeks for financial products via approximative transition densities}, Siam J. Sc. Comp., vol. 31 , p. 1-22, 2008.



\bibitem{KSV}
 {\sc Kampen, J.,} {\em Singular vorticity solutions of the incompressible Euler equation via inviscid limits}, arXiv 1308.6082v6,  Mai. 2014.

\bibitem{KAC}
 {\sc Kampen, J.,} {\em On an auto-controlled global existence scheme of the incompressible  Navier Stokes equation  }, arxiv13094824v11,  [math.AP], May. 2014.
 
 
\bibitem{KNS}
	{\sc Kampen, J\"org}: {\em Constructive analysis of the Navier-Stokes equation.} arXiv10044589v6, Juli 2012.

\bibitem{K3}
 {\sc Kampen, J.,} {\em A global scheme for the incompressible Navier-Stokes equation on compact Riemannian manifolds}, arXiv: 1205.4888v5,  Aug 2013.


 

 \bibitem{KB1}
{\sc Kampen, J.} {\em On the multivariate Burgers equation and the incompressible Navier-Stokes equation (part I)}, arXiv:0910.5672v5  [math.AP], 2011.	
	
\bibitem{KB2}
{\sc Kampen, J.} {\em On the multivariate Burgers equation and the incompressible Navier-Stokes equation (part II)}, arXiv:1206.6990v6  [math.AP], Jan., 2013.

\bibitem{KB3}
{\sc Kampen, J.} {\em On the multivariate Burgers equation and the incompressible Navier-Stokes equation (part III)}, arXiv:1210.2602v11 [math.AP], November, 2013. 


\bibitem{KHyp}
{\sc Kampen, J.} {\em On global schemes for highly degenerate Navier Stokes equation systems}, arXiv:13056385.v4 [math.AP] (August, 2013).

\bibitem{KD1}
{\sc Kampen, J.} {\em On fixed points, fixed points of iterated maps and applications}, Nonlinear Analysis, V. 42, p. 502-532, 2000.

\bibitem{KD2}
{\sc Kampen, J.} {\em Determination of homotopy classes of maps on compact surfaces of positive genus $g$ with infinitely many periodic points}, arXiv:1006.2567v1 [math.DS] (July, 2010)


\bibitem{Kat}
{\sc Kato, T.} {\em Perturbation theory of linear operators}, Springer, 1966.

\bibitem{KP}
{\sc Katz, N., Pavlovic, N.} {\em Finite time blow up for a dyadic model of the Euler equations}, Trans. Math. Soc. 357, 695-708, 2005.


\bibitem{LL}
{\sc Landau, L., Lifschitz, E.} {\em Lehrbuch der Theoretischen Physik VI, Hydrodynamik}, Akademie Verlag, Berlin. J., (1978).

\bibitem{L}
{\sc Leray, J.} {\em Sur le Mouvement d'un Liquide Visquex Emplissent l'Espace}, Acta Math. J. (63), 193-248, (1934).



\bibitem{S}
{\sc Schur, J.} {\em \"{U}ber lineare Transformationen in der Theorie der unendlichen Reihen}, J. f. Reine und Angew. Math. (151), 79-111, (1922).

\bibitem{T}
{\sc Tao, T.} {\em Localisation and compactness properties of the Navier Stokes global regularity problem}, arXiv:1108.1165v4  [math.AP], 2011. 

\bibitem{T}
{\sc Trotter, H.} {\em On the product of semigroups of operators}, Proc. AMS 10, p. 545-551, (1959).



 \end{thebibliography}
\end{document}